\newtheorem{thm}{Theorem}[section]
\newtheorem{prop}[thm]{Proposition}
\newtheorem{defi}[thm]{Definition}
\newtheorem{lem}[thm]{Lemma}
\newtheorem{cor}[thm]{Corollary}
\newtheorem{conj}[thm]{Conjecture}
\newtheorem{example}[thm]{Example}
\newtheorem{rem}[thm]{Remark}
\newtheorem{crt}[thm]{Criterion}
\def\Cat{{\rm{\overline{Cay}(Sym_n}},T_n)}
\def\Cayy{{\rm{Cay(Sym_n}},S)}
\def\Cay{{\rm{Cay(Sym_n}},S_n)}
\def\Aut{\mbox{\rm Aut}}
\def\Sym{\mbox{\rm Sym}}
\newcommand{\cupdot}{\mathbin{\mathaccent\cdot\cup}}
\title{\texorpdfstring{Combinatorial Properties of Block \\ Transpositions in Symmetric Groups}{LaTeX2e}}
\author{Annachiara Korchmaros}
\keywords{{LaTeX} {PhD Thesis} {Mathematics} {Universit\`{a} degli Studi della
Basilicata}}
\begin{document}

\frontmatter

\begin{titlepage}
  \maketitle
\end{titlepage}

\include{dedication}

\begin{declaration}

I hereby declare that except where specific reference is made to the work of 
others, the contents of this dissertation are original and have not been 
submitted in whole or in part for consideration for any other degree or 
qualification in this, or any other university. This dissertation is my own 
work and contains nothing which is the outcome of work done in collaboration 
with others, except as specified in the text and Acknowledgements. This 
dissertation contains fewer than 65,000 words including appendices, 
bibliography, footnotes, tables and equations and has fewer than 150 figures.


\end{declaration}

\begin{acknowledgements}
Completion of this doctoral thesis was possible with the support of many people, in many countries. I would like to express my sincere gratitude to all of them.

I am particularly indebted to my adviser Giorgio Faina for allowing me to grow as a mathematician. Giorgio provided me with every bit of guidance and expertise that I needed during my first year; then, when I felt ready to venture into research on my own and branch out into new research fields, Giorgio gave me the freedom to do whatever I wanted while at the same time continuing to contribute advice and encouragement. Similar, profound gratitude goes to Mikl\'{o}s B\'{o}na for inviting me to the Department of Mathematics of the University of Florida in the fall and spring of $2013$. He has been actively interested in my work and has always been available to advise me. I am very grateful for his patience and immense knowledge in combinatorics that made him a great mentor. I am deeply thankful to Marien Abreu and Domenico Labbate for introducing me to the study of graph theory. Marien and Domenico have been a good source of inspiration and novel ideas. I also thank Angelo Sonnino for his help in compiling correctly the references of this thesis. I gratefully acknowledge the University of Basilicata, not only for providing the funding which allowed me to undertake my Ph.D. program, but also for giving me the opportunity to attend several international conferences and meet many eminent personalities in discrete mathematics.

I would like to thank many student colleagues for providing a stimulating and fun environment in which to learn and grow at University of Perugia, J\'{a}nos Bolyai Mathematical Institute, University of Florida, and University of Basilicata. I am especially grateful to Daniele Bartoli who started with me the studying of the fascinating world of block transpositions. My time in Gainesville and Szeged was made enjoyable in large part due to all friends that became part of my life. A special thank to Silvana Maiorano for helping me get through the difficult times, and for all the emotional support she provided. My gratitude goes to my loving and encouraging friend Matteo Zancanella whose faithful support during the final stages of this Ph.D. was immensely appreciated.

I would like to thank my large family for providing a loving environment for me. My grandmother, my sister, uncles, ants, and cousins were particularly supportive. Lastly, and most importantly, I wish to thank my parents. They raised me, supported me, taught me, and loved me, to them I dedicate this thesis.

\end{acknowledgements}

\begin{abstract}

A major problem in the study of combinatorial aspects of permutation group theory is to determine the distances in the symmetric group $\Sym_n$ with respect to a generator set. The difficulty in developing an adequate theory, as well as the hardness of the computational complexity, may dramatically vary depending on the particular features of the generator set. Tricky cases often occur, especially when
 the choice of the generator set is made by practical need. One well-known such a case is when the generator set $S_n$ consists of block transpositions which are special permutations defined in computational biology. It should be noted that ``the block transposition distance of a permutation'' is the distance of the permutation from the identity permutation in the Cayley graph $\Cay$, and ``sorting a permutation by block transpositions'' is equivalent to finding shortest paths in $\Cay$. Also, the automorphism group $\Aut(\Cay)$ of the associated Cayley graph $\Cay$ is the automorphism group of the metric space arising from the block transposition distance.

The original results in our thesis concern two issues, namely the lower and upper bounds on the block transpositions diameter of $\Sym_n$ with respect to $S_n$ and the automorphism group $\Aut(\Cay)$. A significant contribution is to show how from the toric equivalence can be obtained very useful bijective maps on $\Sym_n$ that we call \emph{toric maps}. Using the properties of the toric maps, we give a proof for the invariance principle of the block transposition distance within toric classes and discuss its role in the proof of the Eriksson bound.
Furthermore, we prove that $\Aut(\Cay)$ is the product of the right translation group by $\textsf{N}\rtimes\textsf{D}_{n+1}$, where $\textsf{N}$ is the subgroup fixing $S_n$ elementwise and $\textsf{D}_{n+1}$ is a dihedral group of order $2(n+1)$ whose maximal cyclic subgroup is generated by the toric maps.
Computer aided computation carried out for $n\leq 8$ supports our conjecture that $\textsf{N}$ is trivial. Also, we prove that the subgraph $\Gamma$ with vertex set $S_n$ is a $2(n-2)$-regular graph whose automorphism group is $\textsf{D}_{n+1}$. We show a number of interesting combinatorial aspects of $\Cay$, notably $\Gamma$ has as many as $n+1$ maximal cliques of size $2$, its subgraph $\Gamma(V)$ whose vertices are those in these cliques is a $3$-regular Hamiltonian graph, and $\textsf{D}_{n+1}$ acts faithfully on $V$ as a vertex regular automorphism group.

\end{abstract}


\tableofcontents

\listoffigures

\listoftables



\mainmatter

\chapter{Introduction}\label{intro}

The general problem of determining the distances in $\Sym_n$ with respect to a generator set has been intensively investigated in
combinatorial group theory and enumerative combinatorics. Its study has also been motivated and stimulated by practical applications, especially in computational biology, where the choice of the generator set depends on practical need that may not have straightforward connection with pure mathematics; see Chapter \ref{biol}.

In our thesis we mostly deal with ``\emph{sorting a permutation by block transpositions}''. This problem asks for the block transposition distance of a permutation with respect to the generator set of certain permutations called block transpositions. The idea of a block transposition arises from computational biology as an operator which acts on a string by removing a block of consecutive entries and inserting it somewhere else. The formal definition in terms of permutations is given in Chapter \ref{c3}.

Our original contributions concern two issues, namely the lower and upper bounds on the diameter of $\Sym_n$ with respect to the set $S_n$ of block transpositions and the group of automorphisms of the metric space arising from the block transposition distance.

Historically, the problem of sorting by block transpositions was introduced by Bafna and Pevzner in their seminal paper dating back 1998; see \cite{BP}.
The distribution of block transposition distances is currently known for $n\le 14$. It was computed by Eriksson et al. for $n\leq 10$; see \cite{EE}, by G\~{a}lvao and Diaz for $n=11,12,13$; see \cite{GD}, and by Gon\c{c}alves et al. for $n=14$; see \cite{GBH}. Bafna and Pevzner \cite{BP} also provided a polynomial-time $\textstyle{\frac{3}{2}}$-approximation algorithm to compute the distances.
Ever since, numerous investigations aim at designing polynomial-time approximation algorithms; see \cite{BE,FZ,Gu}, the best-known fixed-ratio algorithm is the $\textstyle{\frac{11}{8}}$-approximation due to Elias and Hartman \cite{EH}. Bulteau, Fertin, and Rusu \cite{BF} addressed the issue of determining the complexity class of the sorting by block transpositions. They were able to give a polynomial-time reduction from SAT which proves the $\textsf{NP}$-hardness of this problem. Therefore, it is challenging to determine the diameter $d(n)$, that is, the maximum of the block transposition distances.

In this direction, several papers have pursued lower and upper bounds on $d(n)$.
{}From \cite{EE}, $d(n)$ is known for $n\leq 15$:
\[\arraycolsep=1.4pt\def\arraystretch{2.2}
d(n)=\left\{
\begin{array}{cl}
\left\lfloor\dfrac{n+2}{2}\right\rfloor, &  3\leq n\leq 12\quad n=14,\\
\dfrac{n+3}{2}, & n=13,15.
\end{array}
\right.
\]In Section \ref{17}, we show that $d(17)=10$. For $n>15$, $$\left\lceil\dfrac{n+2}{2}\right\rceil\leq d(n)\leq \left\lfloor\frac{2n-2}{3}\right\rfloor.$$ The lower bound here is the Elias and Hartman \cite{EH}. That improves the previous lower bound due to Bafna and Pevzner \cite{BP}, and independently to Labarre \cite{la}. In Section \ref{labarre}, we give a survey of the approach used in \cite{la}. The upper bound on $d(n)$ available in the literature is named the Eriksson bound and stated in 2001; see \cite{EE}. However, the proof of the Eriksson bound given in \cite{EE} is incomplete, since it implicitly relies on the invariance of the block transposition distance within toric classes. For the definition of the toric equivalence in $\Sym_n$, see Chapter \ref{c2}. It should be noticed that this invariance principle has been claimed explicitly in a paper appeared in a widespread journal only recently; see \cite{CK}. Although Hausen had already mentioned it and sketched a proof in his unpublished Ph.D. thesis \cite{Ha}, Elias and Hartman were not aware of Hausen's work and quoted the Eriksson bound in a weaker form which is independent of the invariance principle; see \cite{EH}.

Our original contribution in this direction is to show how from the toric equivalence can be obtained bijective maps on $\Sym_n$ that leave the block transposition distances invariant; see Chapter \ref{c5}. Using the properties of these maps, we give an alternative proof for the above invariance principle. We also revisit the proof of the key lemma in \cite{EE} giving more technical details and filling some gaps.

A metric space on $\Sym_n$ arises from the block transposition distance in the usual way. Furthermore, the Cayley graph $\Cay$ on $\Sym_n$ with the generator set $S_n$ is a very useful tool in the study of this metric space. In fact, ``sorting by block transpositions'' is equivalent to finding the shortest paths in $\Cay$, as it was pointed out in \cite{EE,FL,Mo} without in-depth analysis. Our contribution is a study of $\Cay$, its combinatorial properties, and automorphism group in the spirit of the papers in the vast literature on this subject; see \cite{AB,ES,LL,Mo}. Our results in Chapter \ref{c6} show that $\Cay$ presents several interesting features. The subgraph $\Gamma$ with vertex set $S_n$, named \emph{block transposition graph}, has especially nice properties. As we prove in Section \ref{s61}, $\Gamma$ is a $2(n-2)$-regular graph whose automorphism group is a dihedral group $\textsf{D}_{n+1}$ of order $2(n+1)$ arising from the toric equivalence in $\Sym_n$ and the reverse permutation. Furthermore, we show that $\Gamma$ has as many as $n+1$ maximal cliques of size $2$ and look inside the subgraph $\Gamma(V)$ whose vertices are the $2(n+1)$ vertices of these cliques. We prove that $\Gamma(V)$ is $3$-regular. We also prove that $\Gamma(V)$ is Hamiltonian and $\textsf{D}_{n+1}$ is an automorphism group of $\Gamma(V)$ acting transitively (and hence regularly) on $V$. This confirms the famous Lov\'asz conjecture for $\Gamma(V)$. Regarding the automorphism group $\Aut(\Cay)$ of $\Cay$, our original contribution is given in Section \ref{s62}, where we prove that $\Aut(\Cay)$ is the product of the right translation group $R(\Cay)$ by $\textsf{N}\rtimes \textsf{D}_{n+1}$, where $\textsf{N}$ is the subgroup fixing every block transposition. Computer aided computation carried out for $n\leq 8$ and performed by using the package ``grape'' of GAP \cite{gap} supports our conjecture that $\textsf{N}$ is trivial, equivalently $\Aut(\Cay)=R(\Cay) \textsf{D}_{n+1}$. We also prove that $R(\Cay) \textsf{D}_{n+1}$ is isomorphic to the direct product of $\Sym_{n+1}$ by a group of order $2$.

\chapter{Connection with biology}\label{biol}

Our thesis is concerned with the study of problems motivated by biology. In this chapter, we give an informal overview of the concepts and problems
that we discuss in our thesis.

The blueprint of every organism is contained in the genome. The genome is specific for each species and changes only slowly over time. This is how
the species evolves, and the process is called \emph{evolution}. For the seek of simplicity, we say \emph{genes} for homologous markers of DNA (segments extractable from species which support the hypothesis that they belonged to the common ancestor of these species), \emph{chromosome} for the set of genes, and \emph{genome} for the set of all chromosomes of a given species; see Figure \ref{fig:DNAA}.
\begin{figure}[hb!]
\centering
\includegraphics[width=\textwidth]{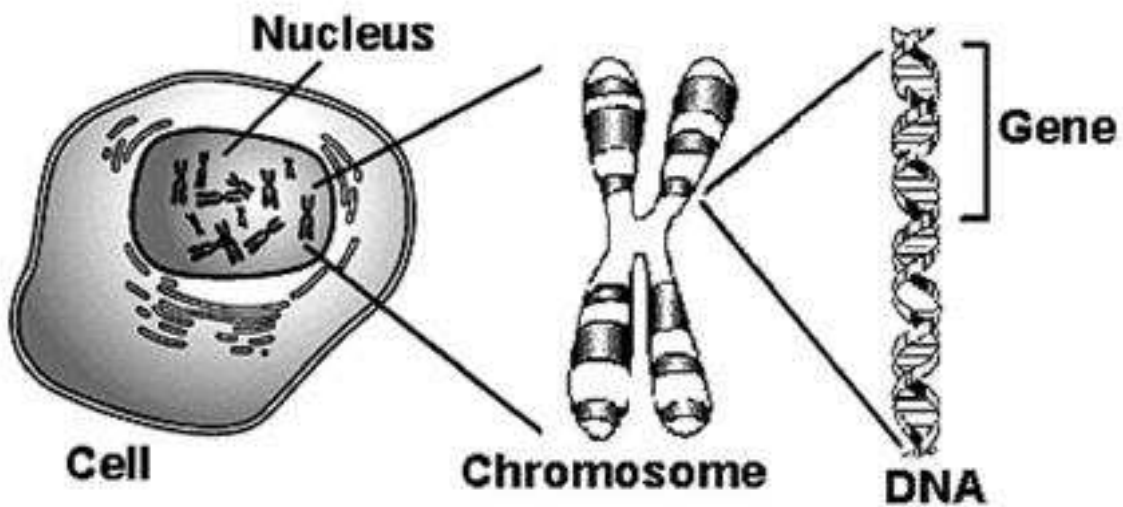}
\caption[Chromosome, DNA, and gene]{The relationship of a cell, chromosome, DNA, and gene (reprinted from \cite{BN}).}
\label{fig:DNAA}
\end{figure}
The possibility of extract a large amount of information from the DNA has given rise to methods for genome comparison. By comparing the genome of two species, we can estimate the time since these two species diverged. This number is the \emph{evolutionary distance} between the two species since it is relevant to the study of the evolution of the species.

It is known that DNA segments evolve by small and large mutations. Small or \emph{point mutations} involve nucleotides while structural variations of a DNA segment depend on large scale mutations called \emph{genome rearrangements}. Analysis of genomes in molecular biology began in the late $1930$s by Dobzhansky and Sturtevant \cite{DS} and continued by Palmer et al. in the late $1980$s \cite{P} demonstrated that different species may have essentially the same genes, but the gene order may differ between species. The rearrangements we consider in our thesis are the reversal of a substring in a chromosome, namely \emph{reversal} or inversion; see Figure \ref{fig:reversals}, and the deletion and subsequent reinsertion of a substring far from its original site, namely \emph{transpositions} or intrachromosomal translocations; see Figure \ref{fig:transpositions}.
\begin{figure}[hbt!]
\centering
\includegraphics[width=\textwidth]{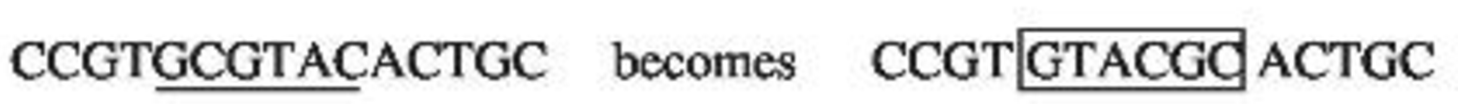}
\caption[Reversal in a chromosome]{Reversal of the underlined segment, resulting in the boxed segment (reprinted from \cite{FL}).}
\label{fig:reversals}

\includegraphics[width=\textwidth]{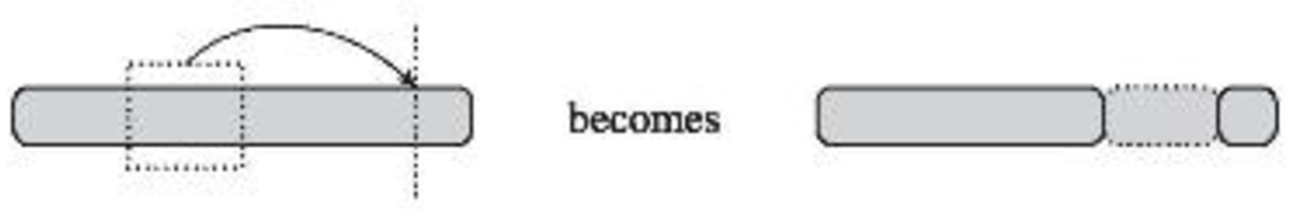}
\caption[Transposition in a chromosome]{Transposition of the dotted region in a chromosome (reprinted from \cite{FL}).}
\label{fig:transpositions}

\end{figure}

For example, the only major difference between the gene orders of two of the most well-known bacteria, Escherichia coli and Salmonella typhimurium, is an inversion of a long substring of the chromosomal sequence; see \cite{O}. For plants, in Palmer et al. \cite{P} compare the mitochondrial genomes of Brassica oleracea (cabbage) and Brassica campestris (turnip) and discover that only five inversions need to ``transform'' a cabbage into a turnip. In \cite{BP}, Bafna and Pevzner stress that researches on genomes of Epstein-Barr virus and Herpes simplex virus revealed that evolution of herpes viruses involved a number of inversions and transpositions of large fragments. In particular, a gene common in herpes virus precursor ``jumped'' from one location in the genome to another; see \cite{BP}. Also, Bafna and Pevzner assert that such examples convincingly prove that using genome rearrangements is a common mode of molecular evolution in mitochondrial, chloroplast, viral, and bacterial DNA. Therefore, a method to determine the distance between genomes of two different species applies a series of rearrangements to the blocks of genes of the first genome until the second one is obtained. The \emph{rearrangement distance} is the minimum number of mutations needed to transform a genome into another, and the \emph{genome rearrangement problem} consists in finding such distances for a specific set of rearrangements. The reason why the minimum number of mutations is studying comes from the \emph{parsimony hypothesis}: the most parsimonious scenario requires the least amount of changes since rearrangements are rarer events than point mutations.

It was not until $1982$ since combinatorialists started to formalize and be involved in the rearrangement problems, but in the last decade, a large body of work was devoted to these problems. The main reason why mathematicians became interested in this topic is the equivalence between the well-known sorting permutation problem and the rearrangement problem modeled by permutations; see Proposition \ref{sorting}. In their book \cite{FL}, Fertin et al. assume that genomes consist of a single chromosome, the order of genes in each genomes is known, and genomes share the same set and number of genes with a single copy of each gene, we can represent genomes and rearrangements by permutations on $\{1,2,\ldots,n\},$ where the labels are genes if permutations are genomes. Furthermore, working out an evolutionary scenario between two species requires to solve the problem of transforming a permutation to another by a minimum number of rearrangements. The interested reader is referred to \cite{AA,BP,FL,HP,KS}. 
\chapter{Background}\label{c2}

In this chapter, we fix notation and terminology concerning symmetric groups and graph theory. We limit ourselves to basic concepts, others will be introduced as they enter in play.
\section{Symmetric groups}\label{s21}
Throughout our thesis, $n$ denotes a positive integer. In our investigation, the cases $n\leq 3$ are trivial.
For a set $X$ of size $n$, $\Sym_n$ stands for the group of all permutations on $X$. For the seek of simplicity, $[n]=\{1,2,\ldots,n\}$ is usually taken for $X$. We mostly adopt the functional notation for permutations. Accordingly, a permutation
$$\pi=\left(
\begin{array}{cccc}
 1 & 2 & \cdots & n \\
\pi_1 & \pi_2 & \cdots & \pi_n \\
\end{array}
\right)$$on $[n]$ is denoted by $\pi=[\pi_1\,\pi_2\cdots \pi_n]$ with $\pi(t)=\pi_t$ for every $t\in[n]$. In particular, the \emph{reverse permutation} is $w=[n\,n-1\cdots 1]$ and $\iota=[1\,2\cdots n]$ is the \emph{identity permutation}.
For any $\pi,\nu\in\Sym_n$, $\pi\circ\nu$ is carried out by $\pi(\nu(t))$ for every $t\in [n]$.

For $k > 1$, a \emph{$k$-cycle} is a permutation $\pi = (i_1, \ldots, i_k)$ such that
$$
\pi_t =\left\{\begin{array}{ll}
t, &  t\notin\{i_1,\ldots,i_k\},\\
i_{j+1}, & t=i_j\quad\,1\le j\le k-1,\\
i_1,& t=i_k.
\end{array}
\right.
$$
It is well-known that every permutation can be written as a product of finitely many $2$-cycles. A permutation is \emph{even} if it can be written as a product of an even number of $2$-cycles. The \emph{alternating group} ${\rm{Alt}}_{n}$ is the set of all even permutations on $[n]$.

The \emph{conjugate of a permutation} $\pi$ by a permutation $\nu$ with $\pi,\nu\in\Sym_n$ is the permutation
$$\pi^{\nu}=\nu\circ\pi\circ\nu^{-1},$$ where $\nu\circ\pi$ is carried out by $\nu(\pi(t))$ for every $t\in [n]$.

\subsection{Rearrangement distances on symmetric groups}\label{rear dist}
For any inverse closed generator set $S$ of a finite group $G$ that does not contain the identity of $G$, a standard method provides a metric space whose points are the element $g\in G$. In our thesis, $G=\Sym_n$, and the choice of $S$ is motivated by applications to the rearrangement problem; see Chapter \ref{biol}. Accordingly, we name $S$
the \emph{rearrangement set} and use the term rearrangement distance defined as follows.

Let $\pi,\nu\in \Sym_n$ and let $\sigma_1,\cdots,\sigma_k\in S$ such that
\begin{equation}\label{dis}
\nu=\pi\circ\sigma_1\circ\cdots\circ\sigma_k.
\end{equation}The minimum number $d(\pi,\nu)$ of rearrangements occurring in (\ref{dis}) is the \emph{rearrangement distance of $\pi$ and $\nu$}.
Then, let us consider the map $$d\colon{\rm{Sym}_n}\times{\rm{Sym}_n}\to \mathbb{N}_0,$$ where $\mathbb{N}_0$ stands for the set of non-negative integers. Now, we show some properties of $d$.
\begin{lem}\label{matnov5}
The rearrangement distance is a distance on $\Sym_n$.
\end{lem}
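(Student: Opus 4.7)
The plan is to verify the four axioms of a distance function, namely nonnegativity, the coincidence axiom (the distance vanishes exactly on equal arguments), symmetry, and the triangle inequality. Before any of these, however, I would first check that $d$ is well defined, i.e.\ that the minimum in (\ref{dis}) actually exists. This uses only the hypothesis that $S$ generates $\Sym_n$: given $\pi,\nu\in\Sym_n$, the element $\pi^{-1}\circ\nu$ admits at least one expression as a product of generators from $S$, so the set of admissible lengths $k$ is a nonempty subset of $\mathbb{N}_0$ and therefore has a least element. Nonnegativity is then immediate from $d(\pi,\nu)\in\mathbb{N}_0$.

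For the coincidence axiom, I would argue as follows. If $\pi=\nu$, then (\ref{dis}) holds with the empty product ($k=0$), giving $d(\pi,\nu)=0$. Conversely, suppose $d(\pi,\nu)=0$: then (\ref{dis}) holds with no factors, forcing $\nu=\pi$. The hypothesis $\id\notin S$ is what makes $k=0$ unavailable when $\pi\neq\nu$ and thus keeps the axiom clean; note also that a single factor cannot give $\nu=\pi$ for the same reason.

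Symmetry is where the inverse-closedness of $S$ enters. Starting from a shortest expression $\nu=\pi\circ\sigma_1\circ\cdots\circ\sigma_k$ with $k=d(\pi,\nu)$, I invert both sides to obtain $\pi=\nu\circ\sigma_k^{-1}\circ\cdots\circ\sigma_1^{-1}$. Since $S$ is inverse closed, each $\sigma_i^{-1}$ lies in $S$, so this is an admissible expression of length $k$ for the pair $(\nu,\pi)$, yielding $d(\nu,\pi)\leq d(\pi,\nu)$. Exchanging the roles of $\pi$ and $\nu$ gives the reverse inequality.

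For the triangle inequality, fix $\pi,\nu,\mu\in\Sym_n$ and take shortest expressions $\nu=\pi\circ\sigma_1\circ\cdots\circ\sigma_h$ and $\mu=\nu\circ\tau_1\circ\cdots\circ\tau_k$ with $h=d(\pi,\nu)$ and $k=d(\nu,\mu)$. Concatenating yields $\mu=\pi\circ\sigma_1\circ\cdots\circ\sigma_h\circ\tau_1\circ\cdots\circ\tau_k$, which is an admissible expression of length $h+k$ for the pair $(\pi,\mu)$; hence $d(\pi,\mu)\leq h+k=d(\pi,\nu)+d(\nu,\mu)$. None of the steps is a real obstacle; the only subtlety worth underlining in the write-up is how the three structural hypotheses on $S$ (generating set, inverse closed, not containing $\id$) are used in exactly the three nontrivial axioms (well-definedness, symmetry, coincidence), which is what makes the lemma essentially formal.
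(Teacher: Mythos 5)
Your proof is correct and follows essentially the same route as the paper's: existence of a decomposition from the generating property, symmetry from inverse-closedness, and concatenation for the triangle inequality; you are in fact somewhat more careful than the paper about well-definedness and the coincidence axiom. One small quibble: the unavailability of $k=0$ when $\pi\neq\nu$ has nothing to do with $\id\notin S$ (the empty product forces $\nu=\pi$ regardless), so that aside slightly misattributes where that hypothesis matters, but this does not affect the validity of the argument.
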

\begin{proof}
For any $\pi,\nu,\mu\in Sym_n$ we have to show that the following axioms are satisfied.
\begin{itemize}
\item[\rm(I)] $d(\pi,\nu)\geq 0$ with equality if and only if $\pi=\nu$;
\item[\rm(II)] $d(\pi,\nu)=d(\nu,\pi)$;
\item[\rm(III)] $d(\pi,\nu)+d(\nu,\mu)\geq d(\pi,\mu)$ (\emph{triangular inequality}).
\end{itemize}
(I) Since $S$ is a generator set of $\Sym_n$, (\ref{dis}) holds for any $\pi,\nu\in Sym_n$. In fact, there exist $\tau_1,\cdots,\tau_l,\xi_1,\cdots,\xi_m\in S$ such that $$\nu=\tau_1\circ\cdots\circ\tau_l;\quad \pi^{-1}=\xi_1\circ\cdots\circ\xi_m.$$Thus
$$\nu=\pi\circ\xi_1\circ\cdots\circ\xi_m\circ\tau_1\circ\cdots\circ\tau_l=\pi\circ\sigma_1\circ\cdots\circ\sigma_k,$$ whenever $k=l+m$. Therefore, the first statement holds true.

(II) Since $S$ is inverse closed, (\ref{dis}) yields
$$\pi=\nu\circ\sigma_k^{-1}\circ\cdots\circ\sigma_1^{-1}.$$From this, the second statement holds.

(III) Assume $\nu=\pi\circ\sigma_1\circ\cdots\circ\sigma_{d(\pi,\nu)}$ and $\mu=\nu\circ\tau_1\circ\cdots\circ\tau_{d(\nu,\mu)}$ with $\sigma_1,\cdots,\sigma_{d(\pi,\nu)},\\\tau_1,\cdots,\tau_{d(\nu,\mu)}\in S$. Then
$$\mu=\pi\circ\sigma_1\circ\cdots\circ\sigma_{d(\pi,\nu)}\circ\tau_1\circ\cdots\circ\tau_{d(\nu,\mu)}$$whence the third statement follows. This concludes the proof.
\end{proof}
A distance $\delta$ on $\Sym_n$ is \emph{left-invariant} if for $\mu,\pi,\nu\in\Sym_n$,
$$\delta(\pi,\nu)=\delta(\mu\circ\pi,\mu\circ \nu).$$
\begin{prop}\label{distance}
The rearrangement distance is left-invariant.
\end{prop}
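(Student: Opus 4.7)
The plan is to prove the two inequalities $d(\mu\circ\pi,\mu\circ\nu)\le d(\pi,\nu)$ and $d(\pi,\nu)\le d(\mu\circ\pi,\mu\circ\nu)$ directly from the definition in equation (\ref{dis}). The key observation is that the factorization witnessing the distance survives left multiplication by any element of $\Sym_n$, because the associative composition law allows the factor $\mu$ to be slotted in (or removed) at the far left without affecting the string of generators $\sigma_1,\ldots,\sigma_k\in S$ on the right.

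First I would set $k=d(\pi,\nu)$ and pick $\sigma_1,\ldots,\sigma_k\in S$ with $\nu=\pi\circ\sigma_1\circ\cdots\circ\sigma_k$, which exists by part (I) of Lemma \ref{matnov5}. Composing on the left by $\mu$ and using associativity of $\circ$, I obtain
\[
\mu\circ\nu=\mu\circ\pi\circ\sigma_1\circ\cdots\circ\sigma_k,
\]
which exhibits a factorization of $\mu\circ\nu$ starting from $\mu\circ\pi$ with exactly $k$ generators from $S$. By the minimality in the definition of $d(\mu\circ\pi,\mu\circ\nu)$, this yields $d(\mu\circ\pi,\mu\circ\nu)\le k=d(\pi,\nu)$.

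For the reverse inequality I would apply the same argument with the roles played by $(\pi,\nu)$ and $(\mu\circ\pi,\mu\circ\nu)$ interchanged, using $\mu^{-1}$ in place of $\mu$. Concretely, letting $m=d(\mu\circ\pi,\mu\circ\nu)$ and picking $\tau_1,\ldots,\tau_m\in S$ with $\mu\circ\nu=\mu\circ\pi\circ\tau_1\circ\cdots\circ\tau_m$, left multiplication by $\mu^{-1}$ gives $\nu=\pi\circ\tau_1\circ\cdots\circ\tau_m$, whence $d(\pi,\nu)\le m$. Combining the two inequalities yields the desired equality.

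No real obstacle is to be expected: the proof is entirely formal and relies only on the group structure of $\Sym_n$ together with the definition of $d$. The statement would in fact hold for the word metric associated with any inverse-closed generator set of any group, and no particular property of $S=S_n$ is used. The only point worth stressing in the write-up is that the right-invariance analogue need not hold in general, since conjugating the string $\sigma_1\circ\cdots\circ\sigma_k$ by a right factor $\mu$ need not produce a string in $S$; this asymmetry is what motivates isolating the left-invariance as a separate proposition here.
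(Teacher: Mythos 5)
Your proof is correct and follows essentially the same route as the paper's: both directions are obtained by left-multiplying a minimal factorization by $\mu$ (respectively $\mu^{-1}$) and invoking minimality. If anything, your write-up of the reverse inequality is cleaner than the paper's, which restates the same displayed equation twice where it means to start from a minimal factorization of $\mu\circ\nu$ from $\mu\circ\pi$.
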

\begin{proof}
For any $\mu,\pi,\nu\in\Sym_n$, multiplying by $\mu$ both sides in (\ref{dis}) gives
$$\mu\circ\nu=\mu\circ\pi\circ\sigma_1\circ\cdots\circ\sigma_{d(\pi,\nu)}$$ whence $d(\pi,\nu)\geq d(\mu\circ\pi,\mu\circ \nu)$.

On the other hand, (\ref{dis}) applied to $\mu\circ\pi$ and $\mu\circ\nu$ shows
$$\mu\circ\nu=\mu\circ\pi\circ\sigma_1\circ\cdots\circ\sigma_{d(\pi,\nu)}$$ whence $d(\pi,\nu)\leq d(\mu\circ\pi,\mu\circ \nu)$.
\end{proof}
Since $S$ is a generator set, the following definition is meaningful.
\begin{defi}
{\em{The \emph{rearrangement distance of a permutation} $\pi$ is $d(\pi)$ if $\pi$ is the product of $d(\pi)$ rearrangements, but it cannot be obtained as the product of less than $d(\pi)$ rearrangements. The maximum of the rearrangement distances of permutations on $[n]$ is the \emph{rearrangement diameter} $d(n)$ of the symmetric group.}}
\end{defi}

\begin{rem}\label{link}
{\em{Note that $d(\pi)=d(\iota,\pi)=d(\pi,\iota)$ by Lemma \ref{matnov5}.}}
\end{rem}
Obviously, Lemma \ref{matnov5} and Proposition \ref{distance} hold true for the rearrangement distance of a permutation as well.

\begin{lem}\label{prop d}
Any permutation and its inverse have the same rearrangement distance.
\end{lem}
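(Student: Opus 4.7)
The plan is to derive the equality $d(\pi)=d(\pi^{-1})$ directly from the left-invariance established in Proposition \ref{distance} together with the symmetry of $d$ from Lemma \ref{matnov5}. By Remark \ref{link}, $d(\pi^{-1})=d(\iota,\pi^{-1})$. Taking $\mu=\pi$ in the left-invariance formula gives
$$d(\iota,\pi^{-1})=d(\pi\circ\iota,\pi\circ\pi^{-1})=d(\pi,\iota),$$
and by Remark \ref{link} again the right-hand side equals $d(\pi)$. Chaining these identities yields $d(\pi^{-1})=d(\pi)$.

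Alternatively (and perhaps more instructively, since it highlights the role of the hypothesis that $S$ is inverse closed), one can argue directly from the definition in (\ref{dis}). Start with a shortest factorization $\pi=\sigma_1\circ\cdots\circ\sigma_{d(\pi)}$ with $\sigma_i\in S$, and invert to obtain $\pi^{-1}=\sigma_{d(\pi)}^{-1}\circ\cdots\circ\sigma_1^{-1}$. Because $S$ is inverse closed, every factor $\sigma_i^{-1}$ still belongs to $S$, so this is a valid expression of $\pi^{-1}$ as a product of $d(\pi)$ elements of $S$; hence $d(\pi^{-1})\le d(\pi)$. Applying the same reasoning with the roles of $\pi$ and $\pi^{-1}$ exchanged gives the reverse inequality, and equality follows.

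There is effectively no obstacle here: the statement is a two-line corollary of material already in place. The only point worth emphasising in the write-up is that the inverse-closed assumption on $S$ (used implicitly in Lemma \ref{matnov5}(II) and explicitly in the second argument above) is essential, since without it the inverted factorization would not necessarily consist of rearrangements, and a permutation could conceivably have a distance different from that of its inverse.
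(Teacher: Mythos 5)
Your first argument is correct and is essentially the paper's own proof: both chain $d(\pi)=d(\pi,\iota)=d(\iota,\pi^{-1})=d(\pi^{-1})$ using Remark \ref{link} and the (left-)invariance/symmetry of $d$, with your version actually being slightly more careful in citing Proposition \ref{distance} for the middle equality. The alternative factorization argument via inverse-closedness is also valid, but it is just the mechanism already underlying Lemma \ref{matnov5}(II), so it does not constitute a genuinely different route.
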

\begin{proof}
By Lemma \ref{matnov5} and Remark \ref{link},
$$d(\pi)=d(\pi,\iota)=d(\iota,\pi^{-1})=d(\pi^{-1})$$hence the statement holds.
\end{proof}

In his book \cite{B} B\'{o}na uses the general term of ``sorting a permutation'' on $[n]$ as the task of arranging $1,\ldots,n$ in increasing order efficiently. Referring to the rearrangement problem we consider in our thesis, ``arranging in increasing order'' means that starting of with a permutation $\pi$, each step is carried out multiplying the permutation obtained at the previous step and a rearrangement; while ``efficiently'' means with the minimum number of steps. Formally, \emph{sorting $\pi$ by rearrangements} consists in finding the minimum number of rearrangements $\sigma_1,\ldots,\sigma_k$ such that
$$\iota=\pi\circ\sigma_1\circ\cdots\circ\sigma_k$$
\begin{prop}
\label{sorting}
Computing the rearrangement distance between two permutations is equivalent to sorting a permutation by the same set of rearrangements.
\end{prop}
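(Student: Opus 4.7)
The plan is to use the left-invariance of the rearrangement distance (Proposition \ref{distance}) as the main tool, together with Remark \ref{link} which identifies $d(\pi)$ with $d(\iota,\pi)$. The claim is a two-way reduction, so I would split the argument into two short directions.

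First, I would show that computing $d(\pi,\nu)$ for arbitrary $\pi,\nu\in\Sym_n$ can be reduced to sorting a single permutation. Applying Proposition \ref{distance} with $\mu=\pi^{-1}$ yields
$$d(\pi,\nu)=d(\pi^{-1}\circ\pi,\pi^{-1}\circ\nu)=d(\iota,\pi^{-1}\circ\nu),$$
and by Remark \ref{link} the right-hand side equals $d(\pi^{-1}\circ\nu)$. Hence determining the rearrangement distance between $\pi$ and $\nu$ is exactly the same as sorting the permutation $\pi^{-1}\circ\nu$ by rearrangements, and any minimum-length factorisation of $\pi^{-1}\circ\nu$ into generators from $S$ yields such a factorisation witnessing $\nu=\pi\circ\sigma_1\circ\cdots\circ\sigma_k$ with $k=d(\pi,\nu)$.

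Conversely, sorting an arbitrary permutation $\mu$ by rearrangements is, by definition and by Remark \ref{link}, the computation of $d(\mu)=d(\iota,\mu)$, which is a particular instance of computing the distance $d(\pi,\nu)$ between two permutations (take $\pi=\iota$ and $\nu=\mu$). Combining the two directions proves the equivalence.

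I do not expect any serious obstacle here: the proposition is essentially a formal consequence of the fact that $(\Sym_n,d)$ is a left-invariant metric, so the Cayley-graph distance between any two vertices coincides with the distance from the identity to a suitable single element. The only thing to be careful about is to invoke Proposition \ref{distance} with the correct left multiplier ($\pi^{-1}$ rather than $\pi$) and to translate a minimum factorisation on one side into a minimum factorisation on the other, which is immediate from associativity of composition.
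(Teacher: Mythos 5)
Your proposal is correct and follows essentially the same route as the paper: both arguments reduce $d(\pi,\nu)$ to a distance from the identity via the left-invariance of Proposition \ref{distance} (the paper multiplies by $\nu^{-1}$ to get $d(\nu^{-1}\circ\pi,\iota)$, you multiply by $\pi^{-1}$ to get $d(\iota,\pi^{-1}\circ\nu)$, which are interchangeable by the symmetry axiom) and then invoke Remark \ref{link}. No gap.
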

\begin{proof}
By Proposition \ref{distance}, for any $\mu,\nu$ permutations on $[n]$, $d(\pi,\nu)=d(\nu^{-1}\circ\pi,\iota)$\\ whence
$$\{d(\pi,\nu)|\,\pi,\nu\in \Sym_n\}=\{d(\mu,\iota)|\,\mu\in \Sym_n\}.$$The statement follows from Remark \ref{link}.
\end{proof}

\section{Graph theory}\label{gt}
In our thesis $\Gamma=\Gamma(V)$ is a finite simple undirected graph with vertex set $V$. For any two distinct vertices $u,v\in V$, if the pair $\{u,v\}$ is an edge of $\Gamma$, then $u$ and $v$ are the \emph{endpoints} of $e$. Also, $u$ and $v$ are \emph{incident} with $e$, and vice versa. Two vertices which are incident with a common edge are \emph{adjacent}, as are two edges which are incident with a common vertex.

The \emph{degree} of a vertex $v\in V$ in $\Gamma$ is the number of edges of $\Gamma$ incident with $v$. The degree of any vertex is at most $|V|-1$, if equality holds for every vertex, $\Gamma$ is a \emph{complete graph}. Furthermore, $\Gamma$ is a \emph{k-regular graph} if all vertices have the same degree $k$.

Suppose $C$ is an nonempty subset of $V$. The subgraph of $\Gamma$ whose vertex set is $C$ and whose edge set is the set of those edges of $\Gamma$ that have both endpoints in $C$ is the \emph{subgraph of $\Gamma$ induced by $C$}. A \emph{clique} $C$ of a graph $\Gamma$ is a subset of $V$ such that the subgraph induced by $C$ is a complete graph. When a clique $C$ cannot be extended by including one more adjacent vertex, then $C$ is a \emph{maximal clique}.

A \emph{bipartite graph} is one whose vertex set is partitioned into two subsets $U,T$ so that each edge has one endpoint in $U$ and one endpoint in $T$; such a partition $(U,T)$ is a \emph{bipartition} of the graph with components $U$ and $T$. A bipartite graph $(U,T)$ is \emph{biregular} if all vertices of $U$ have the same degree as well as all vertices of $V$. If the degree of the vertices in $U$ is $a$ and the degree of the vertices in $T$ is $b$, then the graph is a \emph{$(a,b)$-biregular}.

A \emph{walk} in a graph $\Gamma$ is a finite non-null sequence $W=v_0\,e_1\,v_1\,e_2\cdots e_k\,v_k$, whose terms are alternately vertices and edges, such that the endpoints of $e_i$ are $v_{i-1}$ and $v_i$, for $1\le i\le k$. Furthermore, $W$ is a walk \emph{beginning} with $v_0$ and \emph{ending} with $v_k$, and $W$ is a \emph{closed walk} if $v_0=v_k$. If the edges $e_1,\ldots,e_k$ are distinct, then $W$ is a \emph{trial} in $\Gamma$. A closed trial is a $\emph{cycle}$, and $W$ is a \emph{Hamiltonian cycle} in $\Gamma$ whether the cycle $W$ contains every vertex of $\Gamma$. When a graph $\Gamma$ contains a hamiltonian cycle, then $\Gamma$ is a \emph{Hamiltonian graph}. When the vertices $v_0,\ldots,v_k$ in the trial $W$ are distinct, $W$ is a \emph{path} in $\Gamma$. To seek of simplicity, in our thesis we indicate a path with respect its vertices, i.e.,
$$W=v_0, v_1,\cdots ,v_k.$$If for every two distinct vertices $u,v$ of $\Gamma$ there exists a path beginning with $u$ and ending with $v$, then $\Gamma$ is a \emph{connected graph}. In a connected graph $\Gamma(V)$, for any two vertices $u,v$, the length $d_{\Gamma}(u,v)$ of a shortest path beginning with $u$ and ending with $v$ is the \emph{distance between the vertices} $u$ and $v$. Obviously, $d_{\Gamma}$ is a metric on $V$, and the maximum distance between to vertices of $\Gamma$ is the \emph{diameter} of $\Gamma$.

An automorphism of a graph is an edge-invariant bijection of the vertex set $V$. In our thesis, $\Aut(\Gamma)$ denotes the group of the automorphisms of $\Gamma$. A graph $\Gamma$ is \emph{vertex-transitive} if, for any two vertices $u,v$ of $\Gamma$, there is an automorphism $\textsf{h}$ of $\Gamma$ such that $\textsf{h}(u)=v$. We end this section by stating the famous Lov\'asz conjecture.
\begin{conj}{\rm{[The Lov\'asz Conjecture]}}
{\em{Every finite connected vertex-transitive graph contains a Hamiltonian cycle except the five known counterexamples; see \cite{LL,BL}. }}
\end{conj}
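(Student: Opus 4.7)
This is the Lov\'{a}sz Conjecture itself, one of the oldest open problems in algebraic graph theory, so no honest plan can deliver a complete proof. What I can sketch is the strategy I would adopt and the place where any serious attack eventually breaks down.

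My first move would be to exploit the standard structural dichotomy on connected vertex-transitive graphs: each such graph either is a Cayley graph on some transitive subgroup of its automorphism group, or falls into the comparatively thin family of non-Cayley vertex-transitive graphs. The five known exceptions ($K_2$, the Petersen and Coxeter graphs, and the two truncated graphs built from them) are all non-Cayley, so the natural reduction is to the Cayley case. I would then stratify by the group structure of the underlying group $G$: abelian, nilpotent, solvable, and finally groups with non-trivial simple composition factors. For the abelian and nilpotent strata I would invoke the Chen--Quimpo theorem and its descendants, which actually yield Hamilton-connectedness. For solvable groups I would induct on the derived length, lifting Hamiltonian cycles through normal subgroups in the spirit of Keating--Witte and Gallian, by factoring $G$ by a minimal normal subgroup $N$ and gluing together copies of a Hamiltonian cycle of the quotient through coset transversals compatible with a generating set.

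The hard part, and the reason no one has finished the programme, is the last stratum. There is no uniform construction of a Hamiltonian cycle that works across all Cayley graphs of all finite simple groups simultaneously: one must appeal to the classification of finite simple groups together with Hamiltonicity results for Cayley graphs of ${\rm Alt}_n$, classical groups, and sporadic groups, combined with generating-pair constructions of Pak--Radoi\v{c}i\'{c} type, and each generating set essentially demands a fresh combinatorial argument. This is precisely why the thesis, rather than pursuing the general conjecture, confirms it only for the concrete graph $\Gamma(V)$ arising from block transpositions, by combining the explicit description of $\Aut(\Gamma(V))$ obtained in Section~\ref{s61} with a direct combinatorial construction of a Hamiltonian cycle on $V$.
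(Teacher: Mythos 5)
You have correctly recognized that this statement is the Lov\'asz Conjecture itself, which the paper states only as a background conjecture with no proof; the thesis, exactly as you observe, merely verifies it for the particular graph $\Gamma(V)$ via the vertex-transitivity of $\textsf{D}_{n+1}$ on $V$ and an explicit Hamiltonian cycle. Your refusal to supply a proof, together with the accurate account of where the general programme stalls, is the right response here.
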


The \emph{graph of a permutation} $\pi$ on $[n]$ is the directed graph $\Gamma(\pi)$ on the vertex set $[n]$ and edges $(i,j)$ whenever $\pi_i=j$, for every $i\in[n]$. Clearly, the cycles of $\Gamma(\pi)$ are the cycles of the decomposition of $\pi$ in disjoint cycles.

For further background on graph theory; see \cite{BM,BL}.

\subsection{Cayley graphs}
\label{cay}
Let $G$ be a group generated by an inverse closed set $Y$. Let $Y'$ denote the set of all nontrivial elements of $Y$. According to the handbook \cite[Chapter 27.3]{BL}, the \emph{(left-invariant) Cayley graph} $\rm{Cay}(G,Y')$ is an undirected simple graph with vertex set $G$ whose edges are the pairs $\{g,g\sigma \}$ with $g\in G$ and $\sigma\in Y'$. Figure \ref{fig:pgraph} shows the Cayley graph ${\rm{Cay(Sym_4}},Y')$, where $Y'$ is the set of all exchanges of adjacent elements.
\begin{figure}[hb!]
\centering
\includegraphics[width=\textwidth]{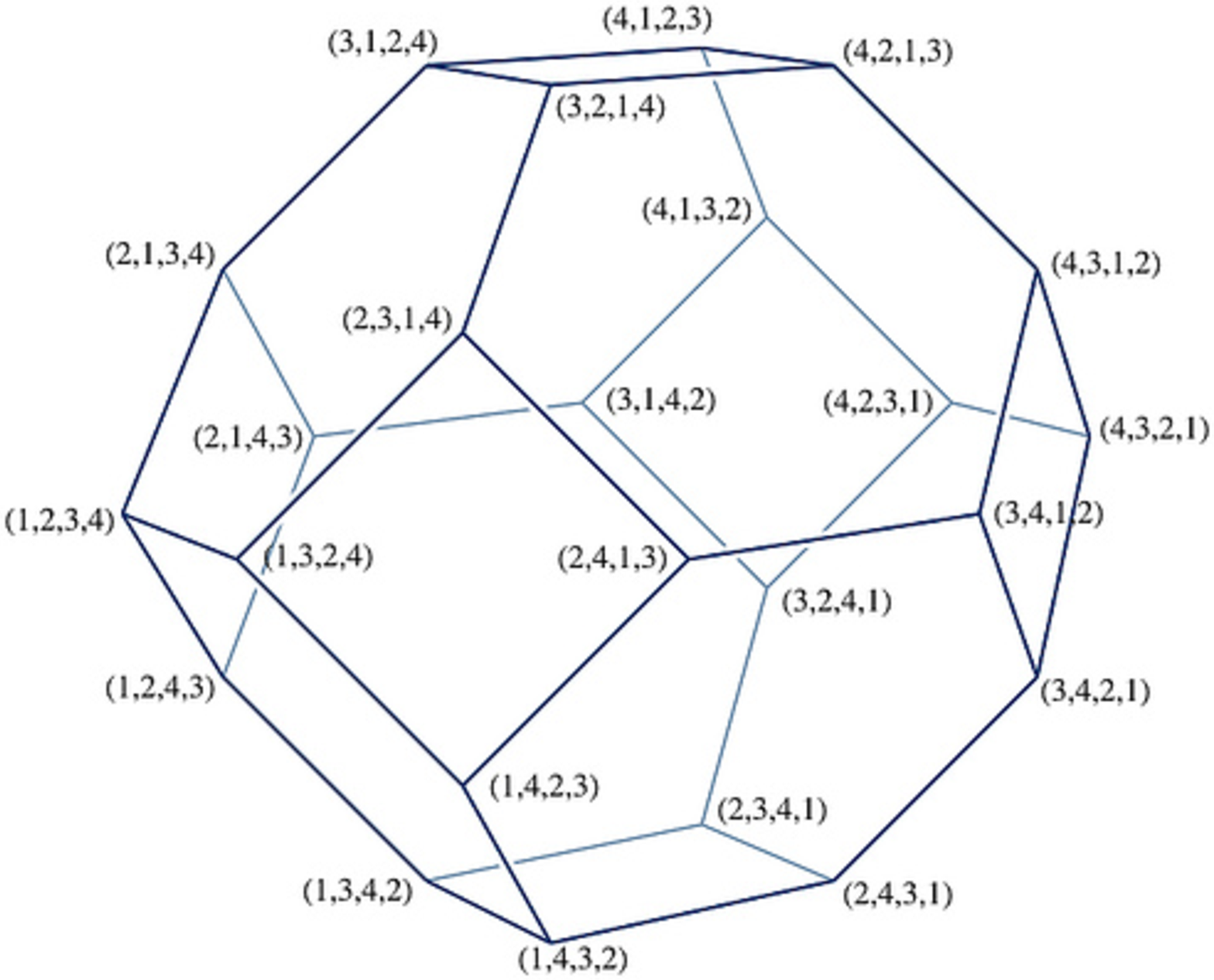}
\caption[The Permutohedron of order 4]{The \emph{Permutohedron} of order $4$, (reprinted from \cite{FL}).}
\label{fig:pgraph}
\end{figure}

In our thesis $G$ stands for the symmetric group $\Sym_n$ and $Y'$ is a rearrangement set $S$; see Section \ref{rear dist}.

By a classical result of Cayley, every $h\in \Sym_n$ defines a left translation $\textsf{h}$ which is the automorphism of $\Cayy$ that takes the vertex $\pi$ to the vertex $h\circ \pi$, and hence the edge $\{\pi,\rho\}$ to the edge $\{h\circ\pi,h\circ \rho\}$. Since we use the functional notation, we refer to $h$ as \emph{right translation}. These automorphisms form the \emph{right translation group} $R(\Cayy)$ of $\Cayy$. Clearly, $\Sym_n$ is isomorphic to $R(\Cayy)$. One may also consider the right-invariant Cayley graph whose edges are the pais $\{g,\sigma g\}$. This is admissible since the left-invariant and right-invariant Cayley graphs are isomorphic. In fact, the map taking any permutation to its inverse is such an isomorphism.

Since Cayley graphs are connected graphs, the distance between two permutations $\pi,\,\nu$, viewed as vertices of the right-invariant $\Cayy$ is the rearrangement distance between $\pi^{-1},\,\nu^{-1}$, and the rearrangement diameter of $\Sym_n$ is the diameter of $\Cayy$. Clearly, the rearrangements are the vertices of $\Cayy$ with distance $1$ from $\iota$.

There exists a vast literature on Cayley graphs. The interested reader is referred to \cite{AB}.

\section{Approximation algorithms}

This last section is dedicated to the reader unfamiliar with approximation algorithms.

It is common knowledge that many discrete optimization problems are $\textsf{NP}$-hard. Therefore, unless $\textsf{P}=\textsf{NP}$, there are no efficient algorithms to find optimal solutions to such problems, where an \emph{efficient algorithm} is one that runs in time bounded by a polynomial in its input size. If the widely verified conjecture that $\textsf{P}\neq\textsf{NP}$ were proved, we would not simultaneously have algorithms that find optimal solutions in polynomial time for any instance. At least one of these requirements must be relaxed in any approach to dealing with an $\textsf{NP}$-hard optimization problem. By far the most common approach is to relax the requirement of finding an optimal solution. This simplification has led to an enormous study of various types of heuristics such as genetic algorithms, and these techniques often yield good results in practice.

Throughout our thesis, we consider approximation algorithms for \emph{discrete optimization problems}. These algorithms try to find a solution that closely approximates the optimal solution in terms of its value. We assume that there is some objective function mapping each possible solution of an optimization problem to some nonnegative value, and an optimal solution to the optimization problem is one that either minimizes or maximizes the value of this objective function.

In his book \cite{WS} Williamson and Shmoys define an \emph{$\alpha$-approximation algorithm} for an optimization problem as polynomial time algorithm that for all instances of the problem produces a solution whose value is within a factor of $\alpha$ of the value of an optimal solution. For an $\alpha$-approximation algorithm, $\alpha$ is the \emph{performance guarantee} of the algorithm. In the literature, $\alpha$ is also often called the approximation ratio or approximation factor of the algorithm. Williamson and Shmoys follow the convention that $\alpha>1$ for minimization problems,
while $\alpha < 1$ for maximization problems. Since in our thesis we only discuss minimization problems, all approximation algorithms have performance guarantee bigger than $1$. Thus, a $2$-approximation algorithm is a polynomial-time algorithm that always returns a solution whose value is at most double the optimal value. The interested reader is referred to \cite{WS}.

\chapter{Block transpositions}\label{c3}

The most well-studied rearrangement is the block transposition. It should be noticed that a few authors use the shorter term of transposition that we avoid since the world ``transposition'' has a different meaning in the theory of permutation groups.

A block transposition, informally, is the operation that cuts out a certain portion (block) of a permutation and pastes it elsewhere in the same permutation. Equivalently, a block transposition is the operation that interchanges two adjacent substrings (blocks) of a permutation so that the order of entries within each block is unchanged.

\section{Notation and preliminaries}\label{s31}
For any three integers, named \emph{cut points}, $i,j,k$ with $0\leq i< j< k\leq n$, the \emph{block transposition} $\sigma(i,j,k)$ acts on a permutation $\pi$ on $[n]$ switching two adjacent subsequences of $\pi$, named \emph{blocks}, without altering the order of integers within each block. We define $\sigma(i,j,k)$ as a function:
\begin{equation}\label{feb9}
\sigma(i,j,k)_t =\left\{\begin{array}{ll}
t, &  1\leq t\leq i\quad k+1\leq t\leq n,\\
t+j-i, & i+1\leq t\leq k-j+i,\\
t+j-k, & k-j+i+1\leq t\leq k.
\end{array}
\right.
\end{equation}This shows that $\sigma(i,j,k)_{t+1}=\sigma(i,j,k)_{t} +1$ in the intervals:
\begin{equation}
\label{function}
[1,i],\quad[i+1, k-j+i],\quad[k-j+i+1,k],\quad[k+1, n],
\end{equation}where
\begin{align}
\label{cuppoints}
\sigma(i,j,k)_i&=i;&\sigma(i,j,k)_{i+1}&=j+1;&\sigma(i,j,k)_{k-j+i}&=k;\\
\sigma(i,j,k)_{k-j+i+1}&=i+1;&\sigma(i,j,k)_{k}&=j;&\sigma(i,j,k)_{k+1}&=k+1.\notag
\end{align}
Actually, $\sigma(i,j,k)$ can also be represented as the permutation
\begin{equation}
\label{eq22ott12}
\sigma(i,j,k)=\left\{\begin{array}{ll}
[1\cdots i\,\, j+1\cdots k\,\, i+1\cdots j\,\, k+1 \cdots n], & 1\leq i\quad k< n,\\
{[j+1\cdots k\,\, 1\cdots j\,\, k+1 \cdots n]}, & i=0\quad k< n,\\
{[1\cdots i\,\,j+1\cdots n\,\, i+1\cdots j]}, & 1\leq i\quad k=n,\\
{[j+1\cdots n\,\, 1\cdots j]}, & i=0\quad k=n
\end{array}
\right.
\end{equation}such that the action of $\sigma(i,j,k)$ on $\pi$ is defined as the product
$$\pi\circ\sigma(i,j,k)=[\pi_1\cdots \pi_i\,\,\pi_{j+1}\cdots \pi_k\,\,\pi_{i+1}\cdots \pi_{j}\,\,\pi_{k+1}\cdots \pi_n].$$Therefore, applying a block transposition on the right of $\pi$ consists in switching two adjacent subsequences of $\pi$, namely \emph{blocks}, without changing the order of the integers within each block. This may also be expressed by $$[\pi_1\cdots\pi_i|\pi_{i+1}\cdots\pi_j|\pi_{j+1}\cdots\pi_k|\pi_{k+1}\cdots\pi_n].$$

{}From now on, $S_n$ denotes the set of all block transpositions on $[n]$. The following example shows that $S_n$ is not a subgroup of $\rm{Sym_n}.$
\begin{example}
\em{Assume $n=8$. By (\ref{eq22ott12}), $\sigma(2,4,6)=[1\,2\,5\,6\,3\,4\,7\,8]$. Thus, from (\ref{feb9}),
$$\sigma(0,1,2)\circ\sigma(2,4,6)=[2\,1\,5\,6\,3\,4\,7\,8],$$ where there are five increasing substrings, namely $1-2-56-34-78.$ From (\ref{function}), a block transposition has at most four increasing substrings, then $\sigma(0,1,2)\circ\sigma(2,4,6)$ is not a block transposition.}
\end{example}

\section{Equations involving block transpositions} \label{equabl}

Now, we prove several equations involving block transpositions that are meaningful in Section \ref{sorting eq}.
\begin{lem}\label{j_1}
For any integers $i,j_1,j_2,k$ such that $0\leq i<j_1,j_2<k\leq n$ the following properties.
\begin{itemize}
\item[\rm(i)] $\sigma(i,j,k)=\sigma(i,i+1,k)^{j-i}$;
\item[\rm(ii)] $\sigma(i,j_2,k)\circ\sigma(i,j_1,k)=\sigma(i,i+t,k)$
\end{itemize}
hold, where $t$ is the smallest positive integer such that $t\equiv j_1+j_2-2i{\pmod{k-i}}$.
\end{lem}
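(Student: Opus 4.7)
The plan is to exploit the observation that, on the window $\{i+1, \ldots, k\}$ of length $k-i$, the block transposition $\sigma(i,j,k)$ acts as a cyclic shift by $j-i$ positions, while fixing every element outside the window. Once this is established, both parts of the lemma reduce to the fact that cyclic shifts on a finite set of size $k-i$ form a cyclic group of order $k-i$.

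First I would read off from (\ref{feb9}) that $\sigma(i,j,k)_t = t$ for $t \in [1,i]\cup[k+1,n]$, so $\sigma(i,j,k)$ is the identity outside the window. After reindexing the window by $s=t-i \in \{1,\ldots,k-i\}$, equation (\ref{feb9}) rewrites as $s \mapsto s+(j-i)$ for $s \leq k-j$, and $s \mapsto s+(j-i)-(k-i)$ for $s > k-j$; these two cases are a single formula for the cyclic shift $\rho_d$ by $d=j-i$ positions modulo $k-i$ on $\{1,\ldots,k-i\}$. I record, for use below, that $\rho_d \circ \rho_{d'} = \rho_{d+d'}$ with addition taken modulo $k-i$ and that $\rho_0 = \id$.

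For part (i) I would verify by induction on $m = j-i$ that the $m$th power of $\sigma(i,i+1,k)$ corresponds to $\rho_m$ on the window and is the identity outside: the base case $m=1$ is the very definition of $\rho_1$, and the inductive step is just $\rho_1 \circ \rho_{m-1} = \rho_m$. Since the hypothesis $i<j<k$ gives $1 \leq j-i < k-i$, no reduction modulo $k-i$ takes place, and $\rho_{j-i}$ is exactly the shift that (\ref{feb9}) associates with $\sigma(i,j,k)$, giving $\sigma(i,i+1,k)^{j-i} = \sigma(i,j,k)$.

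For part (ii) I would combine (i) twice with the composition rule. Writing $\sigma(i,j_1,k) = \sigma(i,i+1,k)^{j_1 - i}$ and $\sigma(i,j_2,k) = \sigma(i,i+1,k)^{j_2-i}$, the product equals $\sigma(i,i+1,k)^{j_1+j_2-2i}$. Because the shift $\rho_1$ has order $k-i$, this power coincides with $\sigma(i,i+1,k)^t$, where $t$ is the smallest positive integer congruent to $j_1+j_2-2i$ modulo $k-i$; a final application of (i) identifies this with $\sigma(i,i+t,k)$. The main technical point, and really the only step with content, is matching (\ref{feb9}) to the cyclic shift on the window; once that is done, everything else is bookkeeping modulo $k-i$. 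Some extra care is needed in the boundary case $t=k-i$, since $\sigma(i,k,k)$ is not a valid block transposition under the convention $j<k$ of Section \ref{s31}; presumably the statement implicitly excludes $j_1+j_2 \equiv 2i \pmod{k-i}$, the situation where the composition collapses to the identity.
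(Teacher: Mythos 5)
Your proof is correct and follows essentially the same route as the paper: identifying $\sigma(i,j,k)$ as the $(j-i)$-th power of the cyclic shift $\sigma(i,i+1,k)=(i+1,\ldots,k)$ on the window $\{i+1,\ldots,k\}$, which is exactly what the paper's computation of $\sigma(i,i+1,k)\circ\sigma(i,j,k)=\sigma(i,j+1,k)$ and the subsequent induction establish, with (ii) then following from the cyclic group of order $k-i$. Your remark about the boundary case $t=k-i$ (where the product collapses to the identity and $\sigma(i,i+t,k)$ is not a legal block transposition) is a fair observation that applies equally to the paper's own statement and proof.
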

\begin{proof}
(i) A straightforward computation of (\ref{feb9}) shows that
\begin{equation}\label{pg}
(\sigma(i,i+1,k)\circ\sigma(i,j,k))_t=\left\{\begin{array}{ll}
t, & 1\leq t\leq i\quad k+1\leq t\leq n,\\
t+j+1-i, & i+1\leq t\leq k-j+i-1,\\
t+j+1-k, & k-j+i\leq t\leq k.
\end{array}
\right.
\end{equation}Therefore, the above product is equal to $\sigma(i,j+1,k)$. From this, by induction on $m$,
$$\sigma(i,i+1,k)^m=\sigma(i,i+m,k),\quad \mbox{ for } m=1,\ldots, k-i-1,$$ and (i) follows.

(ii) Furthermore, by (\ref{pg}), $$\sigma(i,i+1,k)^{k-i}=\iota.$$This shows that for any two integers $i,k$ with $1\leq i<k\leq n$, the set of block transpositions $\sigma(i,j,k)$ with $j$ ranging in the interval $(i,k)$ is a cyclic group of order $k-i$, generating by $\sigma(i,i+1,k)$. Hence (ii) follows.
\end{proof}

\begin{cor}\label{matnov2}
\begin{itemize} $S_n$ has the following properties.
\item[\rm(i)] $|S_n|=n(n+1)(n-1)/6$.
\item[\rm(ii)] For any two positive integers $i,\,k$ with $i<k\le n$, the subgroup generated by $\sigma(i,i+1,k)$ consists of all $\sigma(i,j,k)$
together with the identity.
\item[\rm(iii)] $S_n$ is power and inverse closed. In particular, for any cut points $i,j,k$ and a positive integer $m$,
\begin{equation}\label{feb11++}
\sigma(i,j,k)^m=\sigma(i,i+t,k),
\end{equation}where $t$ is the smallest positive integer such that $t\equiv m(j-i) {\pmod{k-i}}$, and
\begin{equation}\label{eqa18oct}
\sigma(i,j,k)^{-1}=\sigma(i,k-j+i,k).
\end{equation}
\end{itemize}
\end{cor}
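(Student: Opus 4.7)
The plan is to reduce all three parts to the cyclic structure already established in Lemma \ref{j_1}, with a small additional counting argument needed only for part (i). Parts (ii) and (iii) should follow essentially by inspection once one recognizes that for fixed cut points $i<k$ the generator $\sigma(i,i+1,k)$ has order exactly $k-i$.

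For part (i), I would count triples of cut points $(i,j,k)$ with $0\le i<j<k\le n$. This is simply $\binom{n+1}{3}=n(n+1)(n-1)/6$. To conclude, I need injectivity of the map $(i,j,k)\mapsto\sigma(i,j,k)$. This is read off directly from (\ref{cuppoints}) and (\ref{function}): the cut points $i$, $j$, $k$ can be recovered from the positions of the three ``descents'' (or, more precisely, the three places where consecutive values jump), namely the unique triple of indices at which the values of $\sigma(i,j,k)$ increase by $j-i$, drop to $i+1$, and resume at $k+1$. So distinct triples give distinct permutations, and the count is exact.

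For part (ii), I invoke Lemma \ref{j_1}(i): every $\sigma(i,j,k)$ with $i<j<k$ is a power of $\sigma(i,i+1,k)$, and in the proof of that lemma it was also shown that $\sigma(i,i+1,k)^{k-i}=\iota$. Hence the cyclic group $\langle\sigma(i,i+1,k)\rangle$ has order dividing $k-i$ and contains the $k-i-1$ distinct elements $\sigma(i,i+1,k),\ldots,\sigma(i,i+(k-i-1),k)=\sigma(i,k-1,k)$, which are pairwise distinct by the injectivity established in (i); adding the identity gives exactly $k-i$ elements, so equality of orders holds, and the subgroup consists precisely of $\iota$ together with all $\sigma(i,j,k)$ for $i<j<k$.

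For part (iii), power closure is immediate from (ii): $\sigma(i,j,k)^m\in\langle\sigma(i,i+1,k)\rangle$, so it is either $\iota$ or another block transposition with the same outer cut points $i,k$. The explicit formula (\ref{feb11++}) comes from writing $\sigma(i,j,k)^m=\sigma(i,i+1,k)^{m(j-i)}$ via Lemma \ref{j_1}(i) and then reducing the exponent modulo $k-i$; letting $t$ be the smallest positive residue of $m(j-i)\pmod{k-i}$ yields $\sigma(i,i+t,k)$. The inverse formula (\ref{eqa18oct}) is then the special case $m=-1$: since $-(j-i)\equiv k-j\pmod{k-i}$ and $0<k-j<k-i$, we get $\sigma(i,j,k)^{-1}=\sigma(i,i+(k-j),k)=\sigma(i,k-j+i,k)$. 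The only mildly delicate point is to verify that $k-j+i$ is a legitimate middle cut point, i.e.\ that $i<k-j+i<k$, which follows at once from $i<j<k$, so no real obstacle arises; this entire corollary is essentially bookkeeping on the cyclic group produced by Lemma \ref{j_1}.
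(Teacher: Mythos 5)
Your proposal is correct and takes essentially the same route as the paper, which states this result as an immediate corollary of Lemma \ref{j_1} with no separate proof: parts (ii) and (iii) are exactly the cyclic-group facts established in the proof of that lemma (that $\sigma(i,i+1,k)$ generates a cyclic group of order $k-i$ whose nontrivial elements are the $\sigma(i,j,k)$), and the reduction of exponents modulo $k-i$ gives (\ref{feb11++}) and (\ref{eqa18oct}). For (i) you make explicit the counting of triples and the injectivity of $(i,j,k)\mapsto\sigma(i,j,k)$, a detail the paper leaves implicit; your recovery of the cut points from (\ref{cuppoints}) and (\ref{function}) is sound.
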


Here, we give some properties of block transpositions in terms of cycle permutations. By (\ref{feb9}),
\begin{equation}
\label{eq31nov}
\sigma(i,i+1,k)=(i+1,\cdots, k).
\end{equation}Therefore, from Corollary \ref{matnov2} (iii), a block transposition $\sigma(i,j,k)$ is a power of the cycle $(i+1,\cdots, k)$. More precisely,
\begin{equation}\label{pg1}
\sigma(i,j,k)=(i+1,\ldots,k)^t,\qquad t\equiv j-i{\pmod{k-i}}.
\end{equation}
\begin{rem}
{\em{By (\ref{eq31nov}), $\sigma(i,i+1,k)$ is a cycle of size $k-(i+1)$. Therefore, $\sigma(i,j,k)$ is a cycle if and only if the smallest positive integer $t$ such that $t\equiv j-i{\pmod{k-i}}$ is prime to $k-(i+1)$. This follows from (\ref{pg1}), taking into account that if $c$ is a cycle of $\Sym_n$ of size $d$, then a necessary and sufficient condition for $c^t$ to be a cycle is that $gcd(t,d)=1$.}}
\end{rem}
\begin{prop}
\label{22nov2013} For any two integers $k_1,k_2$ with $2\le k_1<k_2$,
$$\sigma(0,1,k_1)^{-1}\circ\sigma(0,1,k_2)=\sigma(k_1-1,k_1,k_2).$$
\end{prop}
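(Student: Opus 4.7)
The plan is to reduce to a cycle identity using the representations already derived in this section, after which the proof becomes a short case check.

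First I would rewrite everything in cycle form via (\ref{eq31nov}). That formula gives
\[
\sigma(0,1,k_1)=(1,2,\ldots,k_1),\qquad \sigma(0,1,k_2)=(1,2,\ldots,k_2),\qquad \sigma(k_1-1,k_1,k_2)=(k_1,k_1+1,\ldots,k_2).
\]
Applying (\ref{eqa18oct}) (or just inverting the cycle) yields $\sigma(0,1,k_1)^{-1}=(1,2,\ldots,k_1)^{-1}$, so the proposition reduces to the cycle identity
\[
(1,2,\ldots,k_1)^{-1}\circ(1,2,\ldots,k_2)=(k_1,k_1+1,\ldots,k_2).
\]

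Then I would verify this identity by tracking where each $t\in[n]$ is sent, using the convention $(\pi\circ\nu)(t)=\pi(\nu(t))$ from Section \ref{s21}. The natural case split is $t\in[1,k_1-1]$, $t=k_1$, $t\in[k_1+1,k_2-1]$, $t=k_2$, and $t>k_2$. In the first range, $(1,\ldots,k_2)$ sends $t$ to $t+1\in[2,k_1]$, and $(1,\ldots,k_1)^{-1}$ sends it back to $t$, so these elements are fixed. For $t=k_1$ we obtain $k_1\mapsto k_1+1\mapsto k_1+1$ since $k_1+1>k_1$ is outside the support of the second factor. For $t\in[k_1+1,k_2-1]$ one similarly gets $t\mapsto t+1$. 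For $t=k_2$ we have $k_2\mapsto 1\mapsto k_1$. Elements larger than $k_2$ are fixed by both factors. Together these cases show that the composition fixes $[1,k_1-1]$ and acts as the cycle $(k_1,k_1+1,\ldots,k_2)$, completing the proof.

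There is no real obstacle here; the only thing to be careful about is the direction of composition when applying the cycle $(1,\ldots,k_1)^{-1}$ after $(1,\ldots,k_2)$, since swapping the order would give a different permutation. An equivalent, equally short route would be to rewrite the claim as $\sigma(0,1,k_2)=\sigma(0,1,k_1)\circ\sigma(k_1-1,k_1,k_2)$ and verify it componentwise from the one-line expression (\ref{eq22ott12}) for each of the three factors; I prefer the cycle version because it makes the underlying combinatorial content transparent.
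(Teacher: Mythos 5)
Your proof is correct and follows essentially the same route as the paper: both convert the three block transpositions to cycles via (\ref{eq31nov}) and reduce the claim to the cycle identity $(1,\ldots,k_1)^{-1}\circ(1,\ldots,k_2)=(k_1,k_1+1,\ldots,k_2)$. The only difference is that you verify that cycle composition element by element, while the paper asserts it directly; your case check is accurate, including the careful handling of the composition order.
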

\begin{proof} From (\ref{eq31nov}), $\sigma(0,1,k_1)^{-1}=(k_1,k_1-1,\ldots,1)$ and $\sigma(0,1,k_2)=(1,\ldots,k_2)$. Therefore, $$\sigma(0,1,k_1)^{-1}\circ\sigma(0,1,k_2)=(k_1,k_1+1,\ldots k_2).$$ Since $(k_1,k_1+1,\ldots k_2)=\sigma(k_1-1,k_1,k_2)$ by (\ref{pg1}), the statement follows.\qquad\end{proof}

{}From now on $\beta$ stands for $\sigma(0,1,n)$.  In particular, by Corollary \ref{matnov2} (ii), $\beta$ generates a subgroup of order $n$ that often appears in our arguments.
\begin{prop}
\label{nov23} For any two integers $i,k$ with $0\leq i<k<n$,
$$\beta\circ\sigma(i,i+1,k)\circ\beta^{-1}=\sigma(i+1,i+2,k+1).$$
\end{prop}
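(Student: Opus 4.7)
The plan is to reduce the identity to a cycle-conjugation computation by invoking the cycle representation of block transpositions given in (\ref{eq31nov}). First I would rewrite both sides in cycle notation: by (\ref{eq31nov}), $\sigma(i,i+1,k)=(i+1,i+2,\ldots,k)$ and $\beta=\sigma(0,1,n)=(1,2,\ldots,n)$, while the right-hand side is $\sigma(i+1,i+2,k+1)=(i+2,i+3,\ldots,k+1)$.

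Next I would apply the standard formula for conjugation of a cycle in $\Sym_n$, namely
\[
\nu\circ(a_1,a_2,\ldots,a_m)\circ\nu^{-1}=(\nu(a_1),\nu(a_2),\ldots,\nu(a_m)),
\]
with $\nu=\beta$. This yields
\[
\beta\circ\sigma(i,i+1,k)\circ\beta^{-1}=(\beta(i+1),\beta(i+2),\ldots,\beta(k)).
\]
The hypothesis $0\le i<k<n$ guarantees that every entry $i+1,i+2,\ldots,k$ lies in the interval $[1,n-1]$, hence $\beta$ acts on each of them simply as $t\mapsto t+1$ (the wrap-around $n\mapsto 1$ never intervenes). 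Therefore the cycle on the right becomes $(i+2,i+3,\ldots,k+1)$, which by (\ref{eq31nov}) is exactly $\sigma(i+1,i+2,k+1)$.

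I do not expect a real obstacle here; the only subtle point is the constraint $k<n$, which must be used to exclude the case where $n$ appears among $i+1,\ldots,k$ and would be sent by $\beta$ to $1$ instead of $k+1$, thereby destroying the block-transposition form. For completeness I would note that the conjugation formula used is a particular instance of the general relation $\pi^{\nu}=\nu\circ\pi\circ\nu^{-1}$ introduced in Section \ref{s21}, so no result outside the excerpt is needed.
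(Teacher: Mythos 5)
Your proof is correct and follows essentially the same route as the paper: both express $\sigma(i,i+1,k)$ and $\beta$ as cycles via (\ref{eq31nov}), conjugate, and read off the result as $\sigma(i+1,i+2,k+1)$ using (\ref{eq31nov}) again. Your version merely spells out the cycle-conjugation formula and the role of the hypothesis $k<n$ in avoiding the wrap-around, details the paper leaves implicit.
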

\begin{proof} By (\ref{eq31nov}), $\beta\circ \sigma(i,i+1,k)\circ \beta^{-1}$ is the cycle $(i+2,i+3,\ldots,k+1)$. Hence the statement follows from (\ref{eq31nov}).
\end{proof}

\begin{cor}\label{pz3}
For any cut points $i,j,k$ with $k\neq n$,
$$\beta\circ \sigma(i,j,k)\circ \beta^{-1}=\sigma(i+1,j+1,k+1).$$
\end{cor}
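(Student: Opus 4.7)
The plan is to reduce the general case to the one already handled by Proposition \ref{nov23}, using Lemma \ref{j_1}(i) to write every block transposition as a power of a ``canonical'' one of the form $\sigma(i,i+1,k)$. The point is that conjugation by $\beta$ is a group homomorphism on $\Sym_n$, so it commutes with taking powers, and the exponent is preserved when we translate cut points by one.

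First I would observe that by Lemma \ref{j_1}(i), for cut points $i,j,k$ with $0\le i<j<k\le n$ we have
$$\sigma(i,j,k)=\sigma(i,i+1,k)^{\,j-i}.$$
Since $k\ne n$, the hypotheses of Proposition \ref{nov23} are satisfied for the cut points $i<k<n$, so
$$\beta\circ\sigma(i,i+1,k)\circ\beta^{-1}=\sigma(i+1,i+2,k+1).$$
Now I would combine these two facts: because conjugation by $\beta$ distributes over composition, $(\beta\circ\alpha\circ\beta^{-1})^{m}=\beta\circ\alpha^{m}\circ\beta^{-1}$ for every $\alpha\in\Sym_n$ and every integer $m$. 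Applying this to $\alpha=\sigma(i,i+1,k)$ and $m=j-i$ gives
$$\beta\circ\sigma(i,j,k)\circ\beta^{-1}=\bigl(\beta\circ\sigma(i,i+1,k)\circ\beta^{-1}\bigr)^{\,j-i}=\sigma(i+1,i+2,k+1)^{\,j-i}.$$

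Finally, I would invoke Lemma \ref{j_1}(i) a second time with cut points $i+1<i+2<k+1$: since $0\le i+1<j+1<k+1\le n$ (the last inequality uses $k\ne n$) and $(j+1)-(i+1)=j-i$, we obtain
$$\sigma(i+1,i+2,k+1)^{\,j-i}=\sigma(i+1,j+1,k+1),$$
which is the desired identity. The only mild subtlety is the range condition $j-i<k-i$ needed to invoke Lemma \ref{j_1}(i); this is automatic from $j<k$, so no obstacle arises. The argument is essentially mechanical once Proposition \ref{nov23} is in hand, the main conceptual step being the recognition that conjugation commutes with the cyclic-power structure described in Lemma \ref{j_1}.
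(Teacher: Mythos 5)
Your proposal is correct and follows essentially the same route as the paper: both decompose $\sigma(i,j,k)$ as $\sigma(i,i+1,k)^{j-i}$ via Lemma \ref{j_1}(i), conjugate the base by $\beta$ using Proposition \ref{nov23}, and then recombine the power; the paper closes with equation (\ref{feb11++}) where you reapply Lemma \ref{j_1}(i), but these are interchangeable here. No gaps.
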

\begin{proof} By Lemma \ref{j_1} (i) and Proposition \ref{nov23},
$$\beta\circ\sigma(i,j,k)\circ\beta^{-1}= \sigma(i+1,i+2,k+1)^{j-i}.$$ Now, the claim follows from (\ref{feb11++}).
\end{proof}

We stress that the hypothesis $k<n$ in Proposition \ref{nov23} cannot be dropped as $\sigma(i,j+1,k+1)$ is not a block transposition when $k=n$. This gives a motivation for the following proposition.
\begin{prop}\label{23nov2013a}
For every integer $i$ with $0\leq i\leq n-2$,
$$\beta^{-i}\circ\sigma(i,i+1,n)\circ\beta^{-1}=\left\{
\begin{array}{ll}
\iota, & i=0,\\
\sigma(1,n-i,n), & i\geq 1.
\end{array}
\right.$$
\end{prop}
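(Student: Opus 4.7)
The plan is to dispose of the case $i = 0$ by direct inspection and, for $i \ge 1$, to reduce the claim via Corollary \ref{pz3} to a simpler product, which can then be identified with $\sigma(1, n - i, n)$ by a pointwise verification. When $i = 0$, the equality $\sigma(0, 1, n) = \beta$ from the very definition of $\beta$ gives $\beta^{0} \circ \sigma(0, 1, n) \circ \beta^{-1} = \beta \circ \beta^{-1} = \iota$ at once.

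For $i \ge 1$, I would rewrite Corollary \ref{pz3} in the equivalent form $\beta^{-1} \circ \sigma(i', j', k') \circ \beta = \sigma(i' - 1, j' - 1, k' - 1)$, which is applicable whenever $i' \ge 1$ (the condition $k' - 1 \ne n$ inherited from Corollary \ref{pz3} being automatic since $k' \le n$). Starting from $(i', j', k') = (i, i + 1, n)$ and applying this identity $i$ consecutive times, the cut points remain admissible at every step (the leading entry $i - m$ stays non-negative for $m = 0, \ldots, i - 1$), yielding
$$\beta^{-i} \circ \sigma(i, i + 1, n) \circ \beta^{i} = \sigma(0, 1, n - i),$$
and hence
$$\beta^{-i} \circ \sigma(i, i + 1, n) \circ \beta^{-1} = \sigma(0, 1, n - i) \circ \beta^{-(i + 1)}.$$

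It then remains to check that $\sigma(0, 1, n - i) \circ \beta^{-(i + 1)} = \sigma(1, n - i, n)$. By (\ref{eq31nov}), $\sigma(0, 1, n - i)$ is the cycle $(1, 2, \ldots, n - i)$, and $\beta^{-(i + 1)}$ acts as $t \mapsto t - i - 1$ modulo $n$. Splitting $[n]$ into the ranges $t = 1$, $2 \le t \le i + 1$, and $i + 2 \le t \le n$, determined by whether $\beta^{-(i + 1)}(t)$ falls inside or outside $\{1, \ldots, n - i\}$, one computes the respective images $1$, $n + t - 1 - i$, and $t - i$; these coincide term by term with the values of $\sigma(1, n - i, n)$ read off from (\ref{feb9}) on the intervals $[1, 1]$, $[2, i + 1]$, and $[i + 2, n]$ listed in (\ref{function}).

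The only delicate point is the wrap-around of $\beta^{-(i + 1)}$ at $t = 1$, which forces a separate treatment of that single argument; once it is isolated, the remaining case analysis reduces to a plain substitution, so I do not expect any deeper obstruction.
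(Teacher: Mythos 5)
Your argument is correct, but it follows a genuinely different route from the paper's. The paper first eliminates the right-hand factor $\beta^{-1}$ by establishing $\sigma(i,i+1,n)\circ\beta^{-1}=\sigma(0,i,i+1)$, which it deduces from Proposition \ref{22nov2013} (the cycle identity $\sigma(0,1,k_1)^{-1}\circ\sigma(0,1,k_2)=\sigma(k_1-1,k_1,k_2)$) together with (\ref{eqa18oct}); it then writes $\beta^{-i}=\sigma(0,n-i,n)$ and evaluates the product $\sigma(0,n-i,n)\circ\sigma(0,i,i+1)$ directly from (\ref{feb9}). You instead absorb the left-hand factor first, iterating the conjugation identity of Corollary \ref{pz3} $i$ times to get $\beta^{-i}\circ\sigma(i,i+1,n)\circ\beta^{i}=\sigma(0,1,n-i)$ (your bookkeeping that the leading cut point stays positive through the first $i-1$ steps, so that each application is legitimate, is the one place where care is needed, and you handle it), and then you are left with $\sigma(0,1,n-i)\circ\beta^{-(i+1)}$, which you verify equals $\sigma(1,n-i,n)$ pointwise on the three ranges $t=1$, $2\le t\le i+1$, $i+2\le t\le n$; I checked these values against (\ref{feb9}) and they match. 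The paper's route is shorter because Proposition \ref{22nov2013} does the cancellation in one stroke with no iteration; your route trades that for a mechanical induction on the shift, which has the advantage of using only the generic conjugation formula and of anticipating the systematic shifting arguments of Lemma \ref{22novbis2013} and the Shifting Lemma later in the chapter. Both end in an explicit two-factor product computation of comparable length.
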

\begin{proof}
For $i=0$, the claim is a straightforward consequence of the definition of $\beta$.
Therefore, $i\ge 1$ is assumed. We show that
\begin{equation}\label{pz2}
\sigma(i,i+1,n)\circ\beta^{-1}=\sigma(0,i,i+1).
\end{equation}By (\ref{eqa18oct}), $\sigma(0,i,i+1)^{-1}=\sigma(0,1,i+1),$  then (\ref{pz2}) is equivalent to $$\sigma(i,i+1,n)=\sigma(0,1,i+1)^{-1}\circ\sigma(0,1,n).$$ Hence (\ref{pz2}) follows from Proposition \ref{22nov2013}. Now, since $\beta^{-i}=\beta^{n-i}=\sigma(0,n-i,n)$, the claim follows from (\ref{feb9}) and (\ref{pz2}).
\end{proof}

\begin{cor}
\label{19novbis2013}
For any two integers $i,j$ with $1\leq i<j\leq n-1$,
\begin{itemize}
\item[\rm(i)] $\sigma(i,j,n)\circ \beta^i=\beta^i\circ\sigma(0,j-i,n-i)$;
\item[\rm(ii)] $\beta^{n-j}\circ \sigma(i,j,n)\circ \beta^i=\sigma(n-j,n-j+i,n)$;
\item[\rm(iii)] $\beta^{n-j+1}\circ \sigma(i,j,n)\circ \beta^{-1}=\sigma(1,n-j+1,n-j+1+i)$.
\end{itemize}
\end{cor}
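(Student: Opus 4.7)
My plan is to prove the three parts in order, leveraging the cycle representation $\sigma(i, j, n) = (i+1, i+2, \ldots, n)^{j-i}$ from (\ref{pg1}) (valid since $0 < j - i < n - i$) together with $\beta = (1, 2, \ldots, n)$ from (\ref{eq31nov}). For \textbf{(i)}, I note that $\beta^{-i}$ sends $i + t \mapsto t$ for $1 \leq t \leq n - i$, so conjugation transforms the cycle $(i+1, i+2, \ldots, n)$ into $(1, 2, \ldots, n-i)$. Raising to the $(j-i)$-th power and invoking (\ref{pg1}) again gives $\beta^{-i} \circ \sigma(i, j, n) \circ \beta^i = \sigma(0, j-i, n-i)$, and left-multiplication by $\beta^i$ yields (i).

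For \textbf{(ii)}, the identity in (i) gives $\beta^{n-j} \circ \sigma(i, j, n) \circ \beta^i = \beta^{n-j+i} \circ \sigma(0, j-i, n-i)$, so the task reduces to checking that this product equals $\sigma(n-j, n-j+i, n)$. I would verify this by a positionwise case analysis based on (\ref{feb9}): the permutation $\sigma(0, j-i, n-i)$ shifts $[1, n-i]$ cyclically by $j-i$ modulo $n-i$ and fixes $[n-i+1, n]$, while $\beta^{n-j+i}$ shifts cyclically by $n-j+i$ modulo $n$. Splitting $[1, n]$ into the three intervals $[1, n-j]$, $[n-j+1, n-i]$, and $[n-i+1, n]$ and tracing each case confirms that the composition fixes $[1, n-j]$ and acts on $[n-j+1, n]$ as $(n-j+1, \ldots, n)^i$; by (\ref{pg1}), this is $\sigma(n-j, n-j+i, n)$.

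For \textbf{(iii)}, I would exploit (ii). Rewriting it as $\beta^{n-j} \circ \sigma(i, j, n) = \sigma(n-j, n-j+i, n) \circ \beta^{-i}$ and inserting it in the left-hand side of (iii) gives
$$\beta^{n-j+1} \circ \sigma(i, j, n) \circ \beta^{-1} = \beta \circ \sigma(n-j, n-j+i, n) \circ \beta^{-i-1}.$$
A parallel case analysis yields the auxiliary identity $\sigma(n-j, n-j+i, n) \circ \beta^{-i} = \sigma(0, n-j, n-j+i)$, so the right-hand side above simplifies to $\beta \circ \sigma(0, n-j, n-j+i) \circ \beta^{-1}$. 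Since $n-j+i < n$, Corollary \ref{pz3} now applies and produces $\sigma(1, n-j+1, n-j+i+1)$, as required.

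The main obstacle is the bookkeeping in the positionwise case analyses in (ii) and the auxiliary identity used in (iii): the modular wrap-around of $\beta^m$ must be tracked carefully at the boundaries of the three relevant intervals. The algebraic skeleton built on part (i) and Corollary \ref{pz3} is clean, but turning it into a rigorous verification is the delicate part.
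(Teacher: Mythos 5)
Your proposal is correct, and its skeleton is the same as the paper's: prove (i), obtain (ii) from (i) by a positionwise computation, and obtain (iii) from (ii). The differences are in the local tactics. For (i) the paper reduces $\sigma(i,j,n)$ to a power of $\sigma(i,i+1,n)$ via Lemma \ref{j_1} (i), then invokes Proposition \ref{23nov2013a} and the intermediate identity $\sigma(1,n-i,n)\circ\beta^{i+1}=\sigma(0,1,n-i)$ before finishing with (\ref{feb11++}); your direct cycle-conjugation argument, $\beta^{-i}\circ(i+1,\ldots,n)\circ\beta^{i}=(1,\ldots,n-i)$ followed by raising to the $(j-i)$-th power and (\ref{pg1}), reaches the same conclusion with less machinery and is arguably more transparent. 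For (ii) both arguments are the same direct verification from (\ref{feb9}); your interval bookkeeping checks out (the composition fixes $[1,n-j]$ and acts as $(n-j+1,\ldots,n)^i$ on the rest). For (iii) the paper simply writes the left-hand side as $\beta\circ(\beta^{n-j}\circ\sigma(i,j,n)\circ\beta^{i})\circ\beta^{-i-1}$ and appeals to (ii) and (\ref{feb9}); you interpose the auxiliary identity $\sigma(n-j,n-j+i,n)\circ\beta^{-i}=\sigma(0,n-j,n-j+i)$ (which is correct, and is the natural generalization of equation (\ref{pz2})) so that Corollary \ref{pz3} can be applied cleanly since $n-j+i<n$. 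This buys you a conceptual final step in place of another raw computation, at the cost of one extra identity to verify; both routes are sound.
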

\begin{proof}
(i) By Lemma \ref{j_1} (i), $\sigma(i,j,n)$ is a power of $\sigma(i,i+1,n)$. This together with Proposition \ref{23nov2013a} gives
$$\beta^{-i}\circ\sigma(i,j,n)\circ\beta^{i}=(\sigma(1,n-i,n)\circ\beta^{i+1})^{j-i}.$$ By (\ref{feb9}),
$\sigma(1,n-i,n)\circ\beta^{i+1}=\sigma(0,1,n-i)$. Then, the claim in case (i) follows from (\ref{feb11++}).

(ii) Since $\beta^{n-j}\circ \sigma(i,j,n)\circ \beta^i=\beta^{n-j+i}\circ(\beta^{-i}\circ \sigma(i,j,n)\circ \beta^i)$, (ii) follows from (i) and (\ref{feb9}).

(iii) Also, as
$$\beta^{n-j+1}\circ \sigma(i,j,n)\circ\beta^{-1}=\beta\circ(\beta^{n-j}\circ \sigma(i,j,n)\circ\beta^i)\circ\beta^{-i-1},$$ (iii) follows from (ii) and (\ref{feb9}).
\end{proof}

We may observe that Corollary \ref{19novbis2013} (i) remains valid for $i=0$ while the products on the left-hand side of (ii), as well as of (iii), give the identity permutation.
\begin{lem}
\label{22novbis2013}
For any three integers $i,k,t$ with $1\leq i< k \leq n$ and $2\leq t\leq n-1$, the following hold.
\begin{itemize}
\item[\rm(i)] $\beta^{t-1}\circ \sigma(i,i+1,n)\circ \beta^{-t}=\sigma(t-1,i+t-1,i+t)$, for $i+t-1<n$;
\item[\rm(ii)] $\beta^{t}\circ \sigma(i,i+1,n)\circ \beta^{-t}=\sigma(i-n+t,i-n+t+1,r)$, for $i+t-1\geq n$;
\item[\rm(iii)] $\beta^{t}\circ \sigma(i,i+1,k)\circ \beta^{-t}=\sigma(i+t,i+t+1,k+t)$, for $t\le n- k$;
\item[\rm(iv)] $\beta^{n-i}\circ \sigma(i,i+1,k)\circ \beta^{-(n-k+1)}=\sigma(1,k-i,n)$;
\item[\rm(v)] $\beta^{t-1}\circ \sigma(i,i+1,k)\circ \beta^{-t}=\sigma(k-n-1+t,t+i-1,t+i)$, for $n-k+2\leq t\leq n- i$;
\item[\rm(vi)] $\beta^{t}\circ \sigma(i,i+1,k)\circ \beta^{-t}=\sigma(i+t-n,i+t-n+1,k+t-n)$, for $n-i+1\leq t\leq n- 1$.
\end{itemize}
\end{lem}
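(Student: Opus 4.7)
My plan is to exploit the cycle representation provided by (\ref{eq31nov}), namely $\sigma(i,i+1,k)=(i+1,\ldots,k)$ and $\beta=(1,2,\ldots,n)$; in particular $\beta^s$ acts on $[n]$ as the shift $j\mapsto j+s\pmod{n}$, with representatives in $\{1,\ldots,n\}$. The engine of the proof is the classical conjugation rule $\nu c\nu^{-1}=(\nu(a_1),\ldots,\nu(a_m))$ for a cycle $c=(a_1,\ldots,a_m)$, which specialises here to the shift rule $\beta^s\circ c\circ\beta^{-s}=(a_1{+}s,\ldots,a_m{+}s)\pmod{n}$. A useful preliminary observation is that (i) and (ii) are exactly the specialisations $k=n$ of (v) and (vi) respectively; so it suffices to establish the four cases (iii)--(vi).

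Cases (iii) and (vi) are pure conjugations and are settled directly by the shift rule. In (iii), the hypothesis $t\le n-k$ guarantees $j+t\le n$ for every $j\in\{i+1,\ldots,k\}$, so no entry wraps and the image cycle is $(i+t+1,\ldots,k+t)=\sigma(i+t,i+t+1,k+t)$ by (\ref{eq31nov}). In (vi), the hypothesis $t\ge n-i+1$ forces every entry to wrap modulo $n$, producing $(i+t-n+1,\ldots,k+t-n)=\sigma(i+t-n,i+t-n+1,k+t-n)$. For the mixed-exponent cases (iv) and (v), I would first factor
\[
\beta^{a}\circ\sigma\circ\beta^{-b}=(\beta^{a}\circ\sigma\circ\beta^{-a})\circ\beta^{a-b},
\]
reducing each of them to a pure conjugation followed by right-multiplication by a small power of $\beta$. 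Concretely for (v), the pure conjugation by $\beta^{t-1}$ yields the partially wrapped cycle $(i+t,\ldots,n,1,\ldots,k+t-n-1)$ of length $k-i$; right-multiplying by $\beta^{-1}$ then turns the stretches $\{1,\ldots,k+t-n-1\}$ and $\{i+t+1,\ldots,n\}$ into fixed points and collapses the long cycle into the $(n+i-k+1)$-cycle $(k+t-n,\,i+t,\,i+t-1,\ldots,k+t-n+1)$, which is the inverse of the consecutive cycle $(k+t-n,\ldots,i+t)$ and therefore equals $\sigma(k-n-1+t,\,t+i-1,\,t+i)$ by (\ref{pg1}). Case (iv) is analogous: the pure conjugation $\beta^{n-i}\circ(i+1,\ldots,k)\circ\beta^{-(n-i)}$ gives the cycle $(1,2,\ldots,k-i)$, and right-multiplying by $\beta^{k-i-1}$ produces the permutation $[1,\,k-i+1,\ldots,n,\,2,\ldots,k-i]=\sigma(1,k-i,n)$, which is verified directly from (\ref{feb9}).

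The main obstacle is the collapse step in the mixed cases: one must keep exact track of which residues become fixed after the residual multiplication by a power of $\beta$, and then recognise the resulting shorter cycle as an \emph{inverse} of a consecutive-integer cycle and hence as a power of such a cycle, invoking (\ref{pg1}) rather than the simpler (\ref{eq31nov}). The numerical hypotheses of the lemma --- notably the precise pair $(n-i,\,n-k+1)$ in (iv) and the range $n-k+2\le t\le n-i$ in (v) --- are calibrated so that the partial wraparound and the residual shift combine to land exactly on a valid block transposition; verifying this alignment at the boundary of each range is the bulk of the technical bookkeeping.
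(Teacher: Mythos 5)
Your proposal is correct, but it takes a genuinely different route from the paper's. The paper proves each case by induction on $t$, bootstrapping from base cases built out of Proposition \ref{23nov2013a}, Corollary \ref{19novbis2013} and Corollary \ref{pz3}; you instead work entirely in cycle notation via (\ref{eq31nov}), note that (i) and (ii) are exactly the $k=n$ specializations of (v) and (vi) (incidentally, this reading resolves the undefined symbol $r$ in the statement of (ii) as $r=k+t-n=t$, which agrees with what the paper's own induction produces, e.g.\ $\sigma(1,2,t)$ in its base case), settle (iii) and (vi) by the conjugation rule alone, and handle (iv) and (v) by splitting off a pure conjugation and tracking the collapse under the residual power of $\beta$. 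I verified the two collapse computations: in (v) the conjugate $(i+t,\dots,n,1,\dots,k+t-n-1)$ composed with $\beta^{-1}$ fixes $\{1,\dots,k+t-n-1\}\cup\{i+t+1,\dots,n\}$ and reduces to $(k+t-n,i+t,i+t-1,\dots,k+t-n+1)$, which by (\ref{eqa18oct}) or (\ref{pg1}) is $\sigma(k+t-n-1,i+t-1,i+t)$; and in (iv) the product $(1,\dots,k-i)\circ\beta^{k-i-1}$ is indeed $[1\,\,k-i+1\cdots n\,\,2\cdots k-i]=\sigma(1,k-i,n)$. Your approach buys a shorter, induction-free argument in which the hypotheses on $t$ transparently govern which cycle entries wrap modulo $n$, and it reduces six cases to four; the paper's route avoids explicit wraparound bookkeeping but is longer and leans on a chain of auxiliary identities. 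To make your sketch complete you only need to write out the fixed-point verification in the two mixed cases, which you have correctly identified as the technical core.
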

\begin{proof}
The proof is by induction on $t$.

(i) For $t=2$, the left-hand side of (i) is
\begin{equation}\label{pz4}
\beta^{i+1}\circ(\beta^{-i}\circ \sigma(i,i+1,n)\circ\beta^{-1})\circ\beta^{-2}.
\end{equation}Proposition \ref{23nov2013a} applied to $i\geq 1$ shows that (\ref{pz4}) is the same as $\beta^{i+1}\circ \sigma(i,n-i,n)\circ\beta^{-1}$. Since $i+2<n$, (i) for $t=2$ follows from Corollary \ref{19novbis2013} (iii). Suppose that (i) holds for $t-1$. As $n>i+t-1>i+t-2$, the inductive hypothesis yields that
$$\beta^{t-1}\circ \sigma(i,i+1,n)\circ \beta^{-t}=\beta\circ\sigma(t-2,i+t-2, i+t-1)\beta^{-1}.$$Therefore, (i) follows from Corollary \ref{pz3} by $i+t-1<n$.

(ii) Since $i\leq n-2$, the hypothesis in (ii) yields $t\ge 3$. Let $t=3$. Then $n-i=2$, and the left-hand side of (ii) reads
\begin{equation}\label{alf}
\beta^{n-i}\circ(\beta\circ\sigma(i,i+1,n)\circ\beta^{-2})\circ\beta^{-1}.
\end{equation}Observe that $\beta\circ\sigma(i,i+1,n)\circ\beta^{-2}$ can be computed applying (i) to $t=2$. The result is $\sigma(1,i+1,i+2)=\sigma(1,n-1,n)$, showing that (\ref{alf}) and $\beta^{n-i}\circ\sigma(1,n-1,n)\circ\beta^{-1}$ are the same. Now, (ii) for $t=3$ follows from Corollary \ref{19novbis2013} (iii) applied to $n-j+1=2$. Suppose that (ii) holds for $t-1$. According to the hypothesis $i+t-1\geq n$, two cases are distinguished, namely $i+t-2\geq n$ and $i+t-2=n-1$. In the former case, write the left-hand side of (ii) as
\begin{equation}\label{alf1}
\beta\circ(\beta^{t-1}\circ\sigma(i,i+1,n)\circ\beta^{-t+1})\circ\beta^{-1}.
\end{equation}By the inductive hypothesis, (\ref{alf1}) is the same as $$\beta\circ\sigma(i-n+t-1,i-n+t,t-1)\circ\beta^{-1}.$$Thus, (ii) follows from Corollary \ref{pz3} since $t-1\leq n-2<n$. In the latter case, write the left-hand side of (ii) as
\begin{equation}\label{alf2}
\beta^2\circ(\beta^{t-2}\circ\sigma(i,i+1,n)\circ\beta^{-(t-1)})\circ\beta^{-1}.
\end{equation}As $i+t-2<n$, (i) applied to $t-1$ shows that (\ref{alf2}) and $\beta^2\circ\sigma(t-2,n-1,n)\circ\beta^{-1}$ are the same.
Since $\sigma(i+n-t,i-n+t+1,r)=\sigma(1,2,t)$, it remains to observe that $$\beta^2\circ\sigma(t-2,n-1,n)\circ\beta^{-1}=\sigma(1,2,t)$$follows from Corollary \ref{19novbis2013} (iii) applied to $n-j+1=2$. Hence the statement holds in case (ii).

(iii) Since $t\leq n- k$, a straightforward inductive argument depending on Proposition \ref{nov23} completes the proof for case (iii).

(iv) The left-hand side of (iv) can be written as
\begin{equation}\label{us-3}
\beta^{k-i}\circ(\beta^{n-k}\circ\sigma(i,i+1,k)\circ\beta^{-n+k})\circ\beta^{-1}.
\end{equation}Observe that $\beta^{n-k}\circ\sigma(i,i+1,k)\circ\beta^{-n+k}$ can be computed using (iii) for $t=n-k$. The result is  $\sigma(i+n-k,i+n-k+1,n)$, showing that (\ref{us-3}) coincides with $$\beta^{k-i}\circ\sigma(i+n-k,i+n-k+1,n)\circ\beta^{-1}.$$ Hence (iv) follows from Proposition \ref{23nov2013a} applied to $i\geq 1$.

(v) Let $t=n-k+2$, the smallest value of $t$ admitted in (v). Then, the left-hand side of (v) reads
\begin{equation}\label{alf3}
\beta^{-k+i+1}\circ(\beta^{n-i}\circ\sigma(i,i+1,k)\circ\beta^{-n+k-1})\circ\beta^{-1}.
\end{equation}{}From (iv), $\beta^{n-i}\circ\sigma(i,i+1,k)\circ\beta^{-n+k-1}=\sigma(1,k-i,n)$ which shows that $\beta^{-k+i+1}\circ\sigma(1,k-i,n)\circ\beta^{-1}$ and (\ref{alf3}) are the same. Here, to show (v) for $t=n-k+2$, compute first Corollary \ref{19novbis2013} (iii) for $i=1$ and $j=k-i>1$. The result is $$\beta^{n-k+i+1}\circ\sigma(1,k-i,n)\circ \beta^{-1}=\sigma(1,n-k+i+1,n-k+i+2).$$ Since the right-hand side of this equation is equal to that in (v) for $t=n-k+2$, we are done. Suppose that (v) holds for $t-1$. As $n-i\geq t>t-1$, the inductive hypothesis yields that
$$\beta^{t-1}\circ \sigma(i,i+1,k)\circ \beta^{-t}=\beta\circ\sigma(k-n+t-2,t+i-2,t+i-1)\circ\beta^{-1}.$$Since $t+i-1<n$, Corollary \ref{pz3} applies and the claim follows in case (v).

(vi) For $t=n+1-i$, the left-hand side of (vi) is
\begin{equation}\label{pz7}
\beta^2\circ(\beta^{n-i-1}\circ \sigma(i,i+1,k)\circ\beta^{-n+i})\circ\beta^{-1}.
\end{equation} (v) applied to $t=n-i$ shows that (\ref{pz7}) is the same as $\beta^2\circ \sigma(k-i-1,n-1,n)\circ\beta^{-1}$. Corollary \ref{19novbis2013} (iii) after replacing $i$ by $k-i-1$ and $j$ by $n-1$ gives $$\beta^2\circ \sigma(i,n-1,n)\circ\sigma^{-1}=\sigma(1,2,k-i+1)$$
which is exactly the right-hand side in (vi) for $t=n+i-1$. Here, suppose that (vi) holds for $t-1$. As $n-1\geq t>n+1-i$, the inductive hypothesis yields that
$$\beta^{r}\circ \sigma(i,i+1,k)\circ \beta^{-t}=\beta\circ\sigma(i+t-n-1,i+t-n, k+t-n-1)\circ\beta^{-1}.$$ Since $k+t-1-n-1<n$, the claim follows from Corollary \ref{pz3} in case (vi). This completes the proof.
\end{proof}

\begin{thm}\label{mainsec}
For any three integers $i,k,t$ with $1\leq i<k\leq n$, there exists an integer $s$ such that
$$\beta^s\circ\sigma(i,i+1,k)\circ\beta^{-t}=\sigma(i',j',k')$$ with $1\leq i'<j'<k'\leq n$.
\end{thm}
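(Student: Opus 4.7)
My plan is to carry out a case analysis on $t$, relying almost entirely on the catalogue of conjugation formulas assembled in Lemma \ref{22novbis2013} and Proposition \ref{23nov2013a}. Because $\beta$ has order $n$, I would first observe that $\beta^{-t}=\beta^{-(t \bmod n)}$, so without loss of generality I restrict to $0\le t\le n-1$; the case $t=0$ is trivial with $s=0$ because $\sigma(i,i+1,k)$ already has the required shape (we use here that $i\ge 1$, and $k\ge i+2$ since $\sigma(i,i+1,k)$ is a block transposition).

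Next I split on the position of $k$. Suppose $k<n$. Then for $1\le t\le n-k$ take $s=t$ and invoke Lemma \ref{22novbis2013}(iii), which yields $\sigma(i+t,i+t+1,k+t)$; the inequalities $1\le i+t<i+t+1<k+t\le n$ follow at once from $i\ge 1$, $k\ge i+2$, $t\le n-k$. For $t=n-k+1$ take $s=n-i$ and apply Lemma \ref{22novbis2013}(iv) to obtain $\sigma(1,k-i,n)$, which is valid because $k\ge i+2$ and $k-i<n$. For $n-k+2\le t\le n-i$ take $s=t-1$ and apply Lemma \ref{22novbis2013}(v). For $n-i+1\le t\le n-1$ take $s=t$ and apply Lemma \ref{22novbis2013}(vi). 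In each case the explicit right-hand side $\sigma(i',j',k')$ of the cited formula satisfies $1\le i'<j'<k'\le n$, as one verifies from the inequalities on $i$, $k$, $t$ stated in the hypothesis of that case.

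Suppose now $k=n$. For $t=1$ use Proposition \ref{23nov2013a} with $s=n-i$ (so that $\beta^{s}=\beta^{-i}$), giving $\sigma(1,n-i,n)$; this is a valid block transposition since $i\ge 1$ and, because $\sigma(i,i+1,n)$ requires $i+1<n$, also $i\le n-2$ so $n-i\ge 2$. For $2\le t\le n-i$ take $s=t-1$ and apply Lemma \ref{22novbis2013}(i), getting $\sigma(t-1,i+t-1,i+t)$ with $t-1\ge 1$ and $i+t\le n$. For $n-i+1\le t\le n-1$ take $s=t$ and apply Lemma \ref{22novbis2013}(ii).

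The union of these $t$-ranges is $\{0,1,\ldots,n-1\}$ in both the $k<n$ and $k=n$ situations, so the case analysis is exhaustive. The only real obstacle is bookkeeping: one must check in each of the six or seven cases that the triple $(i',j',k')$ produced by the lemma genuinely satisfies the strict inequalities $1\le i'<j'<k'\le n$, and in particular that $i'\neq 0$. These checks are immediate from the hypothesis $i\ge 1$ (ruling out $i'=0$ in the ``shift up'' cases) together with the range bounds on $t$ dictating each subcase, and they are the reason the lemma was stated in this precise form in the first place.
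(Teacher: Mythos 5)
Your proof is correct and follows essentially the same route as the paper's: reduce modulo $n$ using $\beta^n=\iota$, treat $t=1$ via Proposition \ref{23nov2013a} (and the shift formula for $k<n$), and dispatch $2\le t\le n-1$ through the case catalogue of Lemma \ref{22novbis2013}, checking $i'\ge 1$ throughout. The only quibble is that for $k<n$ and $t=1$ you cite Lemma \ref{22novbis2013}(iii), whose stated hypothesis requires $t\ge 2$; the correct reference for that single value is Proposition \ref{nov23} (equivalently Corollary \ref{pz3}), which gives the same formula.
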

\begin{proof}
As $\beta^n=\iota$, it suffices to prove the theorem for $1\leq t\leq n-1$. If $t=1$, the claim follows from Proposition \ref{23nov2013a}, applied to $i\geq 1$, and Proposition \ref{nov23}.

For $2\leq t\leq n-1$, the claim follows from Lemma \ref{22novbis2013}. In fact, $i'\geq 1$ holds in all cases.
\end{proof}

\section{The toric equivalence in the symmetric group}\label{s32}

Before discussing and stating the contributions obtained in our thesis, it is convenient to exhibit some useful equivalence relations on permutations introduced by Eriksson and his coworkers; see \cite{EE}. For this purpose, we consider permutations on the set $[n]^0=\{0,\ldots,n\}$ together with its block transpositions acting on the symmetric group $\Sym_n^0$ on $[n]^0$. For any $-1\leq i< j < k\le n$,
we have such a block transposition $\overline{\sigma}(i,j,k)$ on $[n]$. They form the set $\overline{S_n}$ containing $S_n$, in the sense that every block transposition $\sigma(i,j,k)$ is naturally embedded in $\overline{S_n}$ by the map $\sigma(i,j,k)\mapsto [0\,\sigma(i,j,k)]$. Here, and in the sequel, $[0\,\pi]$ stands for the permutation $[0\,\pi_1\,\cdots \pi_n]$ on $[n]^0$ arising from a permutation $\pi$ on $[n]$.

The equations obtained in Section \ref{equabl} apply to $\overline{S_n}$ whenever one takes into account that $n$ is replied by $n+1$, and $x-1$ replies every $x$ with $1\leq x\leq n$. Therefore, $\beta$ is replaced by $\alpha=\overline{\sigma}(-1,0,n)$, where $\alpha^{n+1}=\iota$ and $\beta^{-1}$ is substituted by $\alpha^n$. In particular, the statement of Theorem \ref{mainsec} applied to $[n]^0$ reads:
\begin{prop}\label{22novter2013a}
For any three integers $i,k,t$ with $0\leq i<k\leq n$, there exists an integer $s$ such that
$$\alpha^s\circ\bar\sigma(i,i+1,k)\circ\alpha^{-t}=\bar\sigma(i',j',k')$$ with $0\leq i'<j'<k'\leq n$.
\end{prop}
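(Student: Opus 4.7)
The plan is to obtain this statement as a direct translation of Theorem \ref{mainsec}. The underlying observation is that $\Sym_n^0$ acting on $[n]^0 = \{0,1,\ldots,n\}$ is isomorphic to $\Sym_{n+1}$ acting on $[n+1]$ via the index shift $\phi\colon x\mapsto x-1$. Since $\phi$ just relabels the ground set, it carries the combinatorial structure of block transpositions faithfully: a cut after position $c$ on $[n+1]$ (with $0\le c\le n+1$) becomes a cut after position $c-1$ on $[n]^0$ (with $-1\le c-1\le n$), and so $\phi$ induces a bijection $\sigma(i,j,k)\mapsto \bar\sigma(i-1,j-1,k-1)$ between $S_{n+1}$ and $\overline{S_n}$. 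In particular, $\beta=\sigma(0,1,n+1)$ for $\Sym_{n+1}$ corresponds to $\alpha=\bar\sigma(-1,0,n)$.

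First I would spell out this conjugation by $\phi$ at the level of permutations and verify directly from (\ref{feb9}) that it takes $\sigma(i,j,k)\in S_{n+1}$ to $\bar\sigma(i-1,j-1,k-1)\in \overline{S_n}$; this is just a matter of rewriting the piecewise formula after the shift, and it immediately gives $\phi(\beta)=\alpha$ as well as the compatibility $\phi(\mu\circ\nu)=\phi(\mu)\circ\phi(\nu)$. Consequently, for every integer $m$, $\phi(\beta^m)=\alpha^m$.

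Next I would apply Theorem \ref{mainsec} to $\Sym_{n+1}$: for any integers $I,K,t$ with $1\le I<K\le n+1$, there exists $s$ with
\[
\beta^s\circ\sigma(I,I+1,K)\circ\beta^{-t}=\sigma(I',J',K'),\qquad 1\le I'<J'<K'\le n+1.
\]
Setting $I=i+1$ and $K=k+1$, the hypothesis $1\le I<K\le n+1$ is equivalent to the hypothesis $0\le i<k\le n$ of the proposition. Applying $\phi$ to both sides of the above identity and using $\phi(\beta^m)=\alpha^m$ together with the correspondence on block transpositions, we obtain
\[
\alpha^s\circ\bar\sigma(i,i+1,k)\circ\alpha^{-t}=\bar\sigma(I'-1,J'-1,K'-1),
\]
where $0\le I'-1<J'-1<K'-1\le n$. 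Setting $i'=I'-1$, $j'=J'-1$, $k'=K'-1$ yields the required form. No step here carries real difficulty; the only point to check carefully is that the shift $\phi$ is compatible with both the group operation and with the block-transposition parametrisation, after which the result is an immediate translation of Theorem \ref{mainsec}.
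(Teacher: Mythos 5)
Your proposal is correct and follows essentially the same route as the paper: the paper obtains Proposition \ref{22novter2013a} by declaring that Theorem \ref{mainsec} and the equations of Section \ref{equabl} ``apply to $\overline{S_n}$'' after replacing $n$ by $n+1$ and shifting every index by $-1$, which is exactly the relabelling isomorphism $\phi\colon x\mapsto x-1$ that you make explicit. Your version merely spells out the compatibility checks (on the group operation, on the cut-point parametrisation, and $\phi(\beta)=\alpha$) that the paper leaves implicit.
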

The equations of Corollary \ref{19novbis2013} applied to $[n]^0$ reads:\\
For any two integers $i,j$ with $-1\leq i-1<j-1\leq n-2$,
\begin{itemize}
\item[\rm(i)] $\bar\sigma(i-1,j-1,n)\circ \alpha^{i-1}=\alpha^{i-1}\circ\bar\sigma(-1,j-i,n-i+1)$;
\item[\rm(ii)] $\alpha^{n-j+2}\circ \bar\sigma(i-1,j-1,n)\circ \alpha^{i-1}=\bar\sigma(n-j+1,n-j+i,n)$;
\item[\rm(iii)] $\alpha^{n-j+2}\circ \bar\sigma(i-1,j-1,n)\circ \alpha^{n}=\bar\sigma(0,n-j+2,n-j+2+i)$.
\end{itemize} However, replying $i-1$ with $i$ and $j-1$ with $j$, the equations of Corollary \ref{19novbis2013} hold also for $[n]^0$.

Eriksson and his coworkers observed that the $n+1$ permutations arising from $\overline{\pi}\in \Sym_n^0$ under cyclic index shift form an equivalence class $\pi^\circ$ containing a unique permutation of the form $[0\,\pi]$. A formal definition of $\pi^\circ$ is given below.
\begin{defi}\label{circ}
{\em{Let $\pi$ be a permutation on $[n]$. The \emph{circular permutation class} $\pi^\circ$ is obtained from $\pi$ by inserting an extra element $0$ that is considered a predecessor of $\pi_1$ and a successor of $\pi_n$ and taking the equivalence class under cyclic index shift. So $\pi^\circ$ is circular in positions being represented by $[0\,\pi_1\cdots \pi_{n-1}\,\pi_n],\,[\pi_n\, 0\,\pi_1\cdots\pi_{n-1}],$ and so on. The \emph{linearization of a circular permutation $\pi^\circ$} is a permutation $\pi$ obtained by removing the e-\\lement $0$ and letting its successor be the first element of $\pi$. It is customary to denote by $\pi^\circ$ any representative of the circular class of $\pi$ and $\equiv^\circ$ the equivalence relation.}}
\end{defi}
A necessary and sufficient condition for a permutation $\overline\pi$ on $[n]^0$ to be in $\pi^\circ$ is the existence of an integer $r$ with $0\le r \le n$ such that
\begin{equation}\label{aug4}
\overline\pi_x=[0\,\pi]_{x+r},\quad \mbox{ for }0\le x \le n,
\end{equation}where the indices are taken mod$(n+1)$.
The second equivalence class is an expansion of the circular permutation class, as it also involves cyclic value shifts.
\begin{defi}
{\em{Let $\pi$ be a permutation on $[n]$, and let $m$ be an integer with $1\leq m\leq n$. The $m$-step cyclic value shift of the circular permutation $\pi^\circ$ is the circular permutation $m+\pi^\circ =[m\,m + \pi_1\cdots m+ \pi_n]$, where the integers are taken mod$(n+1)$. The \emph{toric class in $\Sym_n^0$} $\pi_\circ^\circ$ is obtained from $\pi^\circ$ by taking the $m$-step cyclic value shifts of the circular permutations in $\pi^\circ$. So, $\pi_\circ^\circ$ is circular in values, as well as in positions.}}
\end{defi}
In general, the toric class of $\pi$ comprises $(n+1)^2$ permutations, but it may consist of a smaller number of permutations and can even collapse to a unique permutation. The latter case occurs when $\pi$ is the identity permutation or the reverse permutation. The number of elements in a toric class is always a divisor of $n+1$, and there are exactly $\varphi(n+1)$ classes that have only one element, where $\varphi$ is the Euler function; see \cite{C}.
The following example comes from \cite{la}.
\begin{example}\label{exla}
\em{Let $n=7$ and let $\pi=[4\,1\, 6\, 2\, 5\, 7\, 3]$. Then $\pi^\circ=[0\,4\,1\, 6\, 2\, 5\, 7\, 3]$, and $\pi_\circ^\circ$ consists of the permutations below together with their circular classes.
$$\begin{array}{lll}
0 + \pi^\circ = [0\, 4\, 1\, 6\, 2\, 5\, 7\, 3],&1 + \pi^\circ = [1\, 5\, 2\, 7\, 3\, 6\, 0\, 4],&2 + \pi^\circ = [2\, 6\, 3\, 0\, 4\, 7\, 1\, 5],\\
3 + \pi^\circ = [3\, 7\, 4\, 1\, 5\, 0\, 2\, 6],&4 + \pi^\circ = [4\, 0\, 5\, 2\, 6\, 1\, 3\, 7],&5 + \pi^\circ = [5\, 1\, 6\, 3\, 7\, 2\, 4\, 0],\\
6 + \pi^\circ = [6\, 2\, 7\, 4\, 0\, 3\, 5\, 1],&7 + \pi^\circ = [7\, 3\, 0\, 5\, 1\, 4\, 6\, 2].\\
\end{array}$$}
\end{example}A necessary and sufficient condition for two permutations $\overline{\pi},\overline{\rho}\in\Sym_n^0$ to be in the same toric class is the existence of integers $r,s$ with $0\le r,s \le n$ such that $\rho=\pi_{x+r}-\pi_s$ holds for every $1\le x \le n,$ where the indices are taken mod $n+1$. In particular, for $\overline{\pi}=[0\,\pi]$ and $\overline{\rho}=[0\,\rho]$, this necessary and sufficient condition reads: there exists an integer $r$ with $0\le r \le n$ such that\begin{equation}\label{circ_pw}
\rho=\pi_{x+r}-\pi_r,\quad \mbox{ for }1\le x \le n,
\end{equation}where the indices are taken mod$(n+1)$. This gives rise to the following definition already introduced in \cite[Definition 7.3]{la} but not appearing explicitly in \cite{EE}.
\begin{defi}
{\em{Two permutations $\pi$ and $\rho$ on $[n]$ are \emph{torically equivalent} if $[0\,\pi]$ and $[0\,\rho]$ are in the same toric class.}}\end{defi}In Example \ref{exla}, the torically equivalent permutations are
$$\begin{array}{llll}
{[4\, 1\, 6\, 2\, 5\, 7\, 3],}&[4\, 1\, 5\, 2\, 7\, 3\, 6],&[4\, 7\, 1\, 5\, 2\, 6\, 3],&[2\, 6\, 3\, 7\, 4\, 1\, 5],\\
{[5\, 2\, 6\, 1\, 3\, 7\, 4],}&[5\, 1\, 6\, 3\, 7\, 2\, 4],&[3\, 5\, 1\, 6\, 2\, 7\, 4],&[5\, 1\, 4\, 6\, 2\, 7\, 3].\\
\end{array}$$
Since $\alpha=[1\,2\cdots n\,0]$, by Definition \ref{circ}, every circular permutation $\pi^\circ$ is the product of $[0\,\pi]$ by a power of $\alpha$, namely
\begin{equation}\label{alpha_powers}
\alpha^r_x\equiv x+r {\pmod{n+1}},\quad \mbox{ for } 0\leq x\leq n.
\end{equation}Take a permutations $\pi$ on $[n]$. From (\ref{aug4}), a necessary and sufficient condition for a permutation $\overline\pi$ on $[n]^0$ to be in $\pi^\circ$ is the existence of an integer $r$ with $0\le r \le n$ such that $\overline\pi=[0\,\pi]\circ\alpha^r$. Therefore, by (\ref{circ_pw}), a permutation $\rho$ on $[n]$ is torically equivalent to $\pi$ if and only if
\begin{equation}\label{toric_zero}
[0\,\rho]=\alpha^{-\pi_r}\circ[0\,\pi]\circ\alpha^r,\quad \mbox{ for } 0\le r \le n.
\end{equation}
Since $(\alpha^{-\pi_r}\circ[0\,\pi]\circ\alpha^r)_x=\pi_{x+r}-\pi_r$ for every $0\le x \le n$, this gives rise to the \emph{toric map} $\textsf{f}_r$ on $\Sym_n$ with $0\le r\le n,$ defined by
\begin{equation}\label{eq2oct9}
\textsf{f}_r(\pi)=\rho\Longleftrightarrow[0\,\rho]= \alpha^{-\pi_r}\circ [0\,\pi] \circ \alpha^r.
\end{equation}
\begin{defi}
\em{The \emph{toric class in $\Sym_n$} of $\pi$ is
\begin{equation}\label{eq1oct10}
\textsf{F}(\pi)=\{\textsf{f}_r(\pi)|\,r=0,1,\ldots,n\}.
\end{equation}}
\end{defi}
Since
\begin{equation}
\label{eq9oct}
(\textsf{f}_r(\pi))_t=\pi_{r+t}-\pi_r,\qquad\mbox{ for }1\leq t\leq n,
\end{equation}where the indices are taken mod$(n+1)$.

{}From (\ref{eq2oct9}), $\textsf{f}_s\circ \textsf{f}_r=\textsf{f}_{s+r}$, where the indices are taken mod$(n+1)$. Hence, $\textsf{f}_r=\textsf{f}^{\,r}$ and $\textsf{f}_{-r}=\textsf{f}_{\,n+1-r}$ with $\textsf{f}=\textsf{f}_1$, and the set
$$\textsf{F}=\{\textsf{f}_r|\,r=0,1,\ldots, n\}$$ is a cyclic group of order $n+1$ generated by $\textsf{f}$.

If $\pi\in \Sym_n$ and $0\le r \le n$, then
\begin{equation}
\label{lem3oct11}
\textsf{f}^{-1}_{r}(\pi)=\textsf{f}_{\pi_r}(\pi^{-1}).
\end{equation}
In particular, $\textsf{f}^{-1}_{r}(\pi)=\textsf{f}_{r}(\pi^{-1})$ provided that $\pi_r=r$.

The \emph{reverse map} $\textsf{g}$ on $\rm{Sym_n}$ is defined by
\begin{equation}
\label{eq2oct9b}
\textsf{g}(\pi)=\rho\Longleftrightarrow[0\,\rho]=[0\,w]\circ [0\,\pi] \circ [0\,w].
\end{equation}
$\textsf{g}$ is an involution, and
\begin{equation}
\label{eq9octb}
(\textsf{g}(\pi))_t=n+1-\pi_{n+1-t},\qquad\mbox{ for }1\leq t\leq n.
\end{equation}
Also, for all $0\le r\le n$,
\begin{equation}
\label{eqoct15a}
\textsf{g}\circ\textsf{f}_r \circ \textsf{g}=\textsf{f}_{n+1-r}
\end{equation}since $[0\,w]\circ\alpha^r \circ [0\,w]=\alpha^{-r}.$

\section{The equivalence with sorting circular permutations}\label{sorting eq}

In the study of the problem of sorting a permutation by block transpositions, several authors have tacitly allowed the possibility of replacing permutations $\pi$ on $[n]$ with the corresponding circular permutations $\pi^\circ$. Doing so, the following claim has actually been accepted to be true.
\begin{prop}\label{sort eq}
The problem of sorting permutations by block transpositions is equivalent to the problem of sorting circular permutations by block transpositions.
\end{prop}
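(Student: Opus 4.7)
The plan is to establish the equivalence by proving equality of the corresponding distances, $d^\circ(\pi)=d(\pi)$, where $d^\circ(\pi)$ denotes the minimum length of a sequence of block transpositions in $\overline{S_n}$ carrying $[0\,\pi]$ into the circular class $\iota^\circ=\{\alpha^s:0\le s\le n\}$. The inequality $d^\circ(\pi)\le d(\pi)$ is immediate: the natural embedding $\sigma(i,j,k)\mapsto\bar\sigma(i,j,k)$ lifts any minimal linear sorting of $\pi$ to a sequence of the same length in $\overline{S_n}$ ending exactly at $[0\,\iota]\in\iota^\circ$.

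For the reverse inequality, I would take a minimal circular sorting $\bar\sigma_1,\ldots,\bar\sigma_m$ and project it onto a linear sorting by cyclically rotating each intermediate configuration so that $0$ returns to position $0$. Setting $v_i=[0\,\pi]\circ\bar\sigma_1\circ\cdots\circ\bar\sigma_i$, the sequence terminates at $v_m=\alpha^s$ for some $s\in\{0,\ldots,n\}$. Let $s_i$ be the unique index in $\{0,\ldots,n\}$ with $v_i(s_i)=0$, and set $u_i=v_i\circ\alpha^{s_i}$; then each $u_i$ fixes $0$, so $u_i=[0\,\rho_i]$ for a unique $\rho_i\in\Sym_n$, with $u_0=[0\,\pi]$ and $u_m=[0\,\iota]$. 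A short calculation yields $u_{i+1}=u_i\circ\tau_{i+1}$, where
$$\tau_{i+1}=\alpha^{-s_i}\circ\bar\sigma_{i+1}\circ\alpha^{s_{i+1}}\quad\text{and}\quad s_{i+1}=\bar\sigma_{i+1}^{-1}(s_i).$$
The whole argument then reduces to showing that each $\tau_{i+1}$ is either the identity or a block transposition lying in $S_n$; deleting any trivial factors produces a linear sorting of $\pi$ of length at most $m$.

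The key tool for this reduction is Proposition \ref{22novter2013a}. Using Lemma \ref{j_1}(i), I would write $\bar\sigma_{i+1}$ as a power of $\bar\sigma(a,a+1,c)$. When $a\ge 0$, the proposition produces a block transposition $\bar\sigma(i',j',k')$ with $0\le i'<j'<k'\le n$ of the form $\alpha^{s'}\circ\bar\sigma(a,a+1,c)\circ\alpha^{-t'}$, and the constraint that $\tau_{i+1}$ fixes $0$ pins down the admissible exponents to match our $-s_i$ and $s_{i+1}$. For the boundary case $a=-1$, I would invoke the identity $\bar\sigma(-1,0,c)=\alpha^{-1}\circ\bar\sigma(0,1,c+1)\circ\alpha$ when $c<n$ to reduce to the previous case, and handle $c=n$ separately via $\bar\sigma(-1,0,n)=\alpha$: a direct computation then gives $s_{i+1}=s_i-1$, whence $\tau_{i+1}=\iota$, and this step is simply dropped.

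The main obstacle I anticipate is the compatibility check: Proposition \ref{22novter2013a} asserts only the existence of a pair $(s',t')$ rendering $\alpha^{s'}\circ\bar\sigma(a,a+1,c)\circ\alpha^{-t'}$ a linear block transposition, whereas our exponents $-s_i,s_{i+1}$ are rigidly dictated by where $0$ happens to sit in $v_i,v_{i+1}$. Proving that these two specifications coincide is equivalent to the combinatorial observation that there is a unique cyclic relabelling of $[n]^0$ restoring $0$ to the origin, namely the one encoded by $\alpha^{s_i}$; making this precise is the technical heart of the argument and recycles much of the case analysis behind Lemma \ref{22novbis2013} once translated into $[n]^0$.
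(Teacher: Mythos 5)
Your proposal is in substance the route the paper itself takes (via Proposition \ref{th23jan}, which proves $d(\pi)=d([0\,\pi])$): the easy direction is the natural embedding, and the hard direction consists in commuting powers of $\alpha$ past the factors of a minimal factorization in $\overline{S_n}$, converting each factor into an element of the embedded copy of $S_n$ (or deleting it), and killing the residual power of $\alpha$ by the observation that $\alpha^t$ fixes $0$ only for $t\equiv 0\pmod{n+1}$. Your ``rolling normalization'' --- tracking the position $s_i$ of $0$ in each partial product $v_i$ and setting $u_i=v_i\circ\alpha^{s_i}$ --- is a cleaner packaging of the same mechanism, and aiming at the whole class $\iota^\circ$ rather than at $[0\,\iota]$ is a marginally more faithful reading of ``sorting circular permutations''; these are presentational rather than substantive differences.

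Two remarks on the step you correctly identify as the crux. First, the ``compatibility check'' you anticipate is a one-line observation, not the technical heart: if $\alpha^{s'}\circ\bar\sigma_{i+1}\circ\alpha^{s_{i+1}}$ is a block transposition fixing $0$ and your $\alpha^{-s_i}\circ\bar\sigma_{i+1}\circ\alpha^{s_{i+1}}$ also fixes $0$, then $\alpha^{s'+s_i}$ fixes $0$, which forces $s'\equiv-s_i\pmod{n+1}$; no case analysis is needed. Second --- and this is the genuine soft spot --- Proposition \ref{22novter2013a} combined with Lemma \ref{j_1}(i) is not quite the right tool. Writing $\bar\sigma_{i+1}=\bar\sigma(a,a+1,c)^{\,b-a}$ and shifting $\alpha^{s_{i+1}}$ leftwards through the power one generator at a time produces a product of $b-a$ embedded block transpositions whose conjugating exponents change at each step, so the factors need not coincide and their product is not visibly a single block transposition; this would only yield $d(\pi)\le\sum_u(j_u-i_u)$ instead of $d(\pi)\le m$. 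The statement you actually need is the Shifting Lemma (Lemma \ref{lemc19ott2013}), which treats a general $[0\,\sigma(i,j,k)]$ in one stroke and names the left exponent explicitly: $[0\,\sigma(i,j,k)]\circ\alpha^r=\alpha^{[0\,\sigma(i,j,k)]_r}\circ[0\,\sigma(i',j',k')]$. Taking $r=s_{i+1}$ and using $\bar\sigma_{i+1}(s_{i+1})=s_i$, this says precisely that $\tau_{i+1}=[0\,\sigma(i',j',k')]$, with nothing left to match. Your handling of the boundary cases $a=-1$ (conjugating into the previous case when $c<n$, and noting that $\bar\sigma(-1,b,n)$ is a pure power of $\alpha$, whence $\tau_{i+1}=\iota$ and the step is dropped) is correct.
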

That this has been an issue, it was observed by Hartman and Shamir \cite{HS}, even though they did not address the question whether such replacements might cause gaps in the proofs. Here, we settle this question definitely by proving the proposition below from which Proposition \ref{sort eq} follows, being $[0\,\pi]$ a representant of the circular class of $\pi$.
\begin{prop}
\label{th23jan} For any permutation $\pi$ on $[n]$,
$$d(\pi)=d([0\,\pi]).$$
\end{prop}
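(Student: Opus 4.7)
The plan is to establish the two inequalities $d([0\,\pi])\le d(\pi)$ and $d(\pi)\le d([0\,\pi])$ separately. The first is immediate: the embedding $\sigma(i,j,k)\mapsto\overline\sigma(i,j,k)$ (with $0\le i$) turns any $S_n$-sorting $\pi\circ\sigma_1\circ\cdots\circ\sigma_m=\iota_n$ into an $\overline{S_n}$-sorting of $[0\,\pi]$ of the same length, so $d([0\,\pi])\le d(\pi)$.

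The substantive direction, $d(\pi)\le d([0\,\pi])$, I would prove by a rotation construction. Fix a minimum sorting $[0\,\pi]\circ\overline\sigma_1\circ\cdots\circ\overline\sigma_m=[0\,\iota_n]$ in $\Sym_n^0$, set $\overline\tau_t=[0\,\pi]\circ\overline\sigma_1\circ\cdots\circ\overline\sigma_t$, and let $p_t$ denote the position of $0$ in $\overline\tau_t$; then $p_0=p_m=0$ and $p_{t+1}=\overline\sigma_{t+1}^{-1}(p_t)$. Define the rotated states $\overline\mu_t=\overline\tau_t\circ\alpha^{p_t}$; since $(\overline\mu_t)(0)=\overline\tau_t(p_t)=0$, we have $\overline\mu_t=[0\,\nu_t]$ for a unique $\nu_t\in\Sym_n$, with $\nu_0=\pi$ and $\nu_m=\iota_n$. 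The transition becomes $\overline\mu_{t+1}=\overline\mu_t\circ\overline\sigma'_{t+1}$ where $\overline\sigma'_{t+1}:=\alpha^{-p_t}\circ\overline\sigma_{t+1}\circ\alpha^{p_{t+1}}$, and a direct computation using $\overline\sigma_{t+1}(p_{t+1})=p_t$ shows $\overline\sigma'_{t+1}(0)=0$.

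The main obstacle is the key claim that each $\overline\sigma'_{t+1}$ is either $\iota$ or lies in $\overline{S_n}$ with first cut point $i'\ge 0$, so that it corresponds to an actual $\sigma'_{t+1}\in S_n$. Conceptually, elements of $\overline{S_n}$ are block swaps on the cycle $\mathbb{Z}/(n+1)$, conjugation by a power of $\alpha$ rotates that cycle, and the specific exponents $-p_t$ and $p_{t+1}$ are precisely those that rotate the swapped arcs away from position $0$; the exceptional case $\overline\sigma_{t+1}=\alpha^r$ collapses to $\overline\sigma'_{t+1}=\iota$. Formally, the claim can be verified case by case using Proposition \ref{22novter2013a} together with the $[n]^0$-version of Lemma \ref{22novbis2013}, reducing first to the cycles $\overline\sigma(i,i+1,k)$ and then recovering the general $\overline\sigma(i,j,k)$ by powers via Lemma \ref{j_1}, for each admissible combination of the two exponents. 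Once the key claim is in hand, discarding any identity transitions from $\pi=\nu_0,\nu_1,\ldots,\nu_m=\iota_n$ yields an $S_n$-sorting of $\pi$ of length at most $m$, so $d(\pi)\le m=d([0\,\pi])$, and the desired equality follows.
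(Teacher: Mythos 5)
Your argument is correct and, at bottom, runs on the same engine as the paper's proof: both reduce the hard direction $d(\pi)\le d([0\,\pi])$ to the fact that a power of $\alpha$ can be shifted past a block transposition of $[n]^0$ at the cost of replacing it by another block transposition, the powers of $\alpha$ being chosen so that the new factors fix $0$. The difference is purely in the bookkeeping. The paper first rewrites each $\overline\sigma_u$ as a product of powers of $\alpha$ and embedded elements $[0\,\sigma_u]$ (via Corollary \ref{pz3} and the identity $\overline\sigma(-1,j,n)=\alpha^{j-1}$), then invokes Proposition \ref{22novter2013a} to accumulate all rotations on the far left as a single $\alpha^t$, and finally kills $t$ by evaluating both sides at $0$. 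You instead insert the telescoping rotations $\alpha^{p_t}\alpha^{-p_t}$ keyed to the position $p_t$ of $0$ in the intermediate states; this normalization makes every conjugated factor fix $0$ from the outset and leaves no residual power of $\alpha$ because $p_0=p_m=0$, which is arguably tidier. The one piece you leave as ``the main obstacle'' --- that $\alpha^{-p_t}\circ\overline\sigma_{t+1}\circ\alpha^{p_{t+1}}$ with $\overline\sigma_{t+1}(p_{t+1})=p_t$ is either $\iota$ or of the form $[0\,\sigma']$ with $\sigma'\in S_n$ --- is precisely the content of the Shifting Lemma (Lemma \ref{lemc19ott2013}), which the paper proves by a four-case computation in the section immediately following this proposition; the cut points $i=-1$ are handled there implicitly by first conjugating with $\alpha$ via Corollary \ref{pz3} when $k<n$, and by the degeneration $\overline\sigma(-1,j,n)=\alpha^{j-1}$ (whence your $\overline\sigma'=\iota$) when $k=n$. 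So your sketch of how to verify the key claim is the right one, and citing Lemma \ref{lemc19ott2013} would close it cleanly; as written, that verification is the only part not carried out in detail.
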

\begin{proof}
A minimum factorization of $\pi$ induces a factorization of $[0\,\pi]$, then $d([0\,\pi])\leq d(\pi)$. Now, we show that $d([0\,\pi])\geq d(\pi)$.
Let $m=d([0\,\pi])$ and take $\overline{\sigma}_1,\ldots,\overline{\sigma}_m\in \overline{S_n}$ such that
\begin{equation}
\label{eqjan82}
[0\,\pi]=\overline{\sigma}_1\circ\cdots \circ\overline{\sigma}_m,
\end{equation}where $\overline{\sigma}_u=\overline{\sigma}(i,j,k)$ with some $-1\le i<j<k\le n$ depending on $u$ for $1\leq u\leq m$.
For $k<n$, Corollary \ref{pz3} applied to $\overline{S_n}$ yields $$\overline{\sigma}_u=
\alpha^{-1}\circ\overline{\sigma}(i+1,j+1,k+1)\circ\alpha.$$ As $i+1\geq 0$, we have $\overline{\sigma}(i+1,j+1,k+1)=[0\,\sigma(i+1,j+1,k+1)]$. Then, denoting $\sigma(i+1,j+1,k+1)$ by $\sigma_u(i_u,j_u,k_u)$, we get
$$\overline{\sigma}_u=\alpha^{-1}\circ [0\,\sigma_u]\circ \alpha.$$
Therefore, each such $\overline{\sigma}_u$ with $k<n$ may be replaced by $\alpha^{-1}\circ [0\,\sigma_u]\circ \alpha$ in (\ref{eqjan82}). If $k=n$ and $i\geq 0$, $$\overline{\sigma}_u=\overline{\sigma}(i,j,n)=\alpha^{-n+j-1}\circ[0\,\sigma_u]\circ\alpha$$with $\sigma_u=\sigma(i_u,j_u,n)\in S$. On the other hand, $\overline{\sigma}(-1,j,n)=\alpha^{j-1}$, by Lemma \ref{j_1} (i). From this, $[0\,\pi]$ is product of powers of $\alpha$ and
block transpositions of $S_n$ embedded in $\overline{S_n}$. Using Lemma \ref{j_1} (i), we may also replace any block transposition $[0\,\sigma_u]$ by $[0\,\sigma(i_u,i_u+1,k_u)]^{j_u-i_u}$. Now, Proposition \ref{22novter2013a} shows that
\begin{equation}\label{jo}
[0\,\pi]=\alpha^t\circ[0\,\tau_1]\circ \cdots \circ [0\,\tau_m],
\end{equation}where $\tau_1,\ldots,\tau_m\in S_n$ and $0\le t \le n$. Actually $t=0$, since $[0\pi]$ begins with $0$, and the image
of $0$ in the right-hand side of (\ref{jo}) is $\alpha^t_0=t$. Therefore
$$[0\,\pi]=[0\,\tau_1]\circ \cdots \circ [0\,\tau_m],$$whence $\pi=\tau_1\circ\cdots \circ \tau_m$. This proves that $d([0\,\pi])\geq d(\pi)$.
\end{proof}

\section{The Shifting lemma}

Proposition \ref{22novter2013a} shows that for any integers $i,k,r$ with $0\leq i<k\leq n$, there exists an integer $s$  and cut points $i',j',k'$ such that $$\sigma(i,i+1,k)\circ\alpha^{n+1-t}=\alpha^{n+1-s}\circ\sigma(i',j',k').$$In this section we compute the exact values of $s,i',j',k'$.

\begin{lem}{\rm{[Shifting Lemma]}}
\label{lemc19ott2013}
Let $\sigma(i,j,k)$ be any block transposition on $[n]$. Then, for every integer $r$ with $0\leq r\leq n$, there exists a block transposition $\sigma(i',j',k')$ on $[n]$ such that
$$[0\,\sigma(i,j,k)]\circ\alpha^r=\alpha^{[0\,\sigma(i,j,k)]_r}\circ[0\,\sigma(i',j',k')].$$
\end{lem}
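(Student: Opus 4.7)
The target equation, upon left-multiplication by $\alpha^{-[0\,\sigma(i,j,k)]_r}$, is equivalent to
$$[0\,\sigma(i',j',k')] \;=\; \alpha^{-[0\,\sigma(i,j,k)]_r}\circ[0\,\sigma(i,j,k)]\circ\alpha^r,$$
which is precisely the defining relation (\ref{eq2oct9}) of the toric map applied to $\sigma(i,j,k)$, that is, $\sigma(i',j',k')=\textsf{f}_r(\sigma(i,j,k))$. Here, for $r=0$ we read $[0\,\sigma(i,j,k)]_0=0$, in agreement with the convention $\pi_0=0$ used in (\ref{eq9oct}). Hence the Shifting Lemma is equivalent to the assertion that $\textsf{f}_r$ carries block transpositions to block transpositions for every $r\in\{0,1,\ldots,n\}$.

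Since $\textsf{F}=\langle\textsf{f}\rangle$ is cyclic of order $n+1$ with $\textsf{f}_r=\textsf{f}^{\,r}$, a straightforward induction on $r$ reduces the proof to the single case $r=1$, the case $r=0$ being trivial with $(i',j',k')=(i,j,k)$. Formula (\ref{eq9oct}) then gives
$$\bigl(\textsf{f}(\sigma(i,j,k))\bigr)_t \;=\; \sigma(i,j,k)_{t+1}-\sigma(i,j,k)_1 \pmod{n+1},\qquad 1\le t\le n,$$
with $\sigma(i,j,k)_0=0$ and $\sigma(i,j,k)_{n+1}=\sigma(i,j,k)_0=0$. I then split on whether $i\ge 1$ or $i=0$. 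When $i\ge 1$, (\ref{cuppoints}) yields $\sigma(i,j,k)_1=1$, and a direct term-by-term computation using the four-interval description (\ref{function}) of $\sigma(i,j,k)$, together with the wrap-around contribution $\sigma_{n+1}-\sigma_1=-1\equiv n\pmod{n+1}$ at position $t=n$, gives $\textsf{f}(\sigma(i,j,k))=\sigma(i-1,j-1,k-1)$, uniformly for $k<n$ and $k=n$. When $i=0$, one has $\sigma(0,j,k)_1=j+1$, and an analogous calculation identifies $\textsf{f}(\sigma(0,j,k))$ with $\sigma(k-j-1,\,n-j,\,n)$, again uniformly for $k<n$ and $k=n$.

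The main obstacle is the careful bookkeeping of the case distinctions: in each subcase one must check that the extracted triple $(i',j',k')$ genuinely satisfies $0\le i'<j'<k'\le n$, and that the computed piecewise-linear-with-slope-$1$ description of $\textsf{f}(\sigma(i,j,k))$ matches the appropriate boundary form of (\ref{eq22ott12}) according as $i'=0$ or $k'=n$. The cleanest way to organize the verification is to compute $\textsf{f}(\sigma(i,j,k))$ separately on each of the three non-trivial intervals of (\ref{function}) and on the wrap-around index, showing in each case that the subtraction of $\sigma(i,j,k)_1$ modulo $n+1$ preserves the slope-$1$ behaviour and produces exactly three jumps in the resulting sequence, which are then read off as the new cut points $(i',j',k')$. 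Once the $r=1$ step is checked in both subcases, iteration of this explicit rule yields the statement for arbitrary $r$.
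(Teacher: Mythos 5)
Your proof is correct, but it is organized quite differently from the paper's. The paper proves the lemma for arbitrary $r$ in one pass: it distinguishes four cases according to where $r$ falls relative to the cut-point structure (namely $0\leq i-r<k-j+i-r<k-r\leq n$, and its three cyclic variants), computes $\sigma\circ\alpha^r$ at the boundary indices via (\ref{cuppoints}) and (\ref{alpha_powers}), reads off the value $\sigma_r$ to be peeled off as a power of $\alpha$ on the left, and then extracts explicit closed-form cut points $(i',j',k')$ in each case; these formulas are recorded as Corollary \ref{eqbt} and reused later. You instead exploit the group law $\textsf{f}_s\circ\textsf{f}_r=\textsf{f}_{s+r}$ (which the paper establishes from (\ref{eq2oct9}) independently of whether $\textsf{f}$ preserves $S_n$, so there is no circularity) to reduce everything to the single step $r=1$, which needs only the dichotomy $i\ge 1$ versus $i=0$. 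Your two one-step formulas check out: for $i\ge1$ one gets $\sigma(i-1,j-1,k-1)$, and for $i=0$ one gets $\sigma(k-j-1,n-j,n)$, which agree with the paper's case (I) and case (II) specialized to $r=1$ (and with the computation of $\textsf{f}(\sigma(0,k-j,k))$ hidden inside the paper's proof of Lemma \ref{22marchC2015}). The trade-off is clear: your route is shorter and conceptually cleaner, with far less bookkeeping, but it does not directly deliver the explicit cut points of $\textsf{f}_r(\sigma(i,j,k))$ for general $r$ that the paper's four-case analysis produces as a by-product; to recover Corollary \ref{eqbt} from your argument one would have to iterate the one-step rule and track which of the two branches is taken at each iteration, which is essentially where the paper's case distinction reappears.
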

\begin{proof}
Let $\sigma=[0\,\sigma(i,j,k)]$. Since $0\leq i<j<k\leq n$, one of the following four cases can only occur:
\begin{itemize}
\item[\rm(I)] $0\leq i-r<k-j+i-r<k-r\leq n$;
\item[\rm(II)]$0\leq k-j+i-r<k-r<n+1+i-r\leq n$;
\item[\rm(III)]$0\leq k-r<n+1+i-r<n+1+k-j+i-r\leq n$;
\item[\rm(IV)]$0\leq n+1+i-r<n+1+k-j+i-r<n+1+k-r\leq n$.
\end{itemize}
If the hypothesis in case (I) is satisfied, we have, by (\ref{cuppoints}) and (\ref{alpha_powers}),
\begin{align}
(\sigma\circ\alpha^r)_0&=r;&(\sigma\circ\alpha^r)_{i-r}&=i;&(\sigma\circ\alpha^r)_{i+1-r}&=j+1;\notag \\
(\sigma\circ\alpha^r)_{k-j+i-r}&=k;&(\sigma\circ\alpha^r)_{k-j+i+1-r}&=i+1;&(\sigma\circ\alpha^r)_{k-r}&=j;\notag\\
(\sigma\circ\alpha^r)_{k+1-r}&=k+1;&(\sigma\circ\alpha^r)_{n-r}&=n;&(\sigma\circ\alpha^r)_{n+1-r}&=0;\notag\\
(\sigma\circ\alpha^r)_n&=r-1,\notag
\end{align}where superscripts and indices are taken mod$(n+1)$. By (\ref{function}), $(\sigma\circ\alpha^r)_{t+1}=(\sigma\circ\alpha^r)_{t} +1$ in the intervals $[0,i-r],\,[i-r+1,k-j+i-r],\,[k-j+i-r+1,k-r],\,[k-r+1,n-r],\,[n-r+1,n]$. From this,
$$(\alpha^{-r}\circ\sigma\circ\alpha^r)_{t+1}=(\alpha^{-r}\circ\sigma\circ\alpha^r)_{t} +1$$in the intervals $[0,i-r],\,[i-r+1,k-j+i-r],\,[k-j+i-r+1,k-r],\,[k-r+1,n]$, and
\begin{align}
(\alpha^{-r}\circ\sigma\circ\alpha^r)_0&=0;&(\alpha^{-r}\circ\sigma\circ\alpha^r)_{i-r}&=i-r; \notag\\
(\alpha^{-r}\circ\sigma\circ\alpha^r)_{i+1-r}&=j+1-r;&(\alpha^{-r}\circ\sigma\circ\alpha^r)_{k-j+i-r}&=k-r;\notag\\
(\alpha^{-r}\circ\sigma\circ\alpha^r)_{k-j+i-r}&=k-r;&(\alpha^{-r}\circ\sigma\circ\alpha^r)_{k-j+i+1-r}&=i+1-r;\notag\\
(\alpha^{-r}\circ\sigma\circ\alpha^r)_{k-r}&=j-r;&(\alpha^{-r}\circ\sigma\circ\alpha^r)_{k+1-r}&=k+1-r;\notag\\
(\alpha^{-r}\circ\sigma\circ\alpha^r)_n&=n.\notag
\end{align}Since $\sigma_r=r$ and $0\leq i-r<j-r<k-r\leq n$, the statement follows in case (I) from (\ref{function}) and (\ref{cuppoints}) with $i'=i-r,\,j'=j-r,\,k'=k-r$.
Now, suppose $0\leq k-j+i-r<k-r<n+1+i-r\leq n$. By (\ref{cuppoints}) and (\ref{alpha_powers}),
\begin{align}
(\sigma\circ\alpha^r)_0&=\sigma_r;&(\sigma\circ\alpha^r)_{k-j+i-r}&=k;&(\sigma\circ\alpha^r)_{k-j+i+1-r}&=i+1; \notag\\
(\sigma\circ\alpha^r)_{k-r}&=j;&(\sigma\circ\alpha^r)_{k+1-r}&=k+1;&(\sigma\circ\alpha^r)_{n-r}&=n; \notag\\
(\sigma\circ\alpha^r)_{n+1-r}&=0;&(\sigma\circ\alpha^r)_{n+1+i-r}&=i;&(\sigma\circ\alpha^r)_{n+2+i-r}&=j+1; \notag\\
(\sigma\circ\alpha^r)_n&=\sigma_r-1, \notag
\end{align}where superscripts and indices are taken mod$(n+1)$. Therefore, we obtain $\sigma_r-j-2=n-(n+2+i-r)$ hence $\sigma_r=-(i-j-r)$, and
\begin{align}
(\alpha^{i-j-r}\circ\sigma\circ\alpha^r)_0&=0;&(\alpha^{i-j-r}\circ\sigma\circ\alpha^r)_{k-j+i-r}&=k+i-j-r; \notag\\
(\alpha^{i-j-r}\circ\sigma\circ\alpha^r)_{k-j+i+1-r}&=n+2+2i-j-r;&(\alpha^{i-j-r}\circ\sigma\circ\alpha^r)_{k-r}&=n+1+i-r; \notag\\
(\alpha^{i-j-r}\circ\sigma\circ\alpha^r)_{k+1-r}&=k+1+i-j-r;\notag\\
(\alpha^{i-j-r}\circ\sigma\circ\alpha^r)_{n+1+i-r}&=n+1+2i-j-r;&(\alpha^{i-j-r}\circ\sigma\circ\alpha^r)_{n+2+i-r}&=n+2+i-r; \notag\\
(\alpha^{i-j-r}\circ\sigma\circ\alpha^r)_n&=n.\notag
\end{align}(\ref{function}) gives $(\sigma\circ\alpha^r)_{t+1}=(\sigma\circ\alpha^r)_{t} +1$ in the intervals $[0,k-j+i-r],\,[k-j+i-r+1,k-r],\,[k-r+1,n-r],\,[n-r+1,n+1+i-r],\,[n+2+i-r,n]$. Therefore, $(\alpha^{i-j-r}\circ\sigma\circ\alpha^r)_{t+1}=(\alpha^{i-j-r}\circ\sigma\circ\alpha^r)_{t} +1$ in the above intervals. Since $k-r=n+1+i-r-(n+1+2i-j-r)+k-j+i-r$, the statement follows in case (II) from (\ref{function}) and (\ref{cuppoints}) with $i'=k-j+i-r,\,j'=n+1+2i-j-r,\,k'=n+1+i-r$. Assume $0\leq k-r<n+1+i-r<n+1+k-j+i-r\leq n$. By (\ref{cuppoints}) and (\ref{alpha_powers}),
\begin{align}
(\sigma\circ\alpha^r)_0&=\sigma_r;&(\sigma\circ\alpha^r)_{k-r}&=j;&(\sigma\circ\alpha^r)_{k+1-r}&=k+1; \notag\\
(\sigma\circ\alpha^r)_{n-r}&=n;&(\sigma\circ\alpha^r)_{n+1-r}&=0;&(\sigma\circ\alpha^r)_{n+1+i-r}&=i; \notag\\
(\sigma\circ\alpha^r)_{n+2+i-r}&=j+1;&(\sigma\circ\alpha^r)_{n+1+k-j+i-r}&=k;&(\sigma\circ\alpha^r)_{n+2+k-j+i-r}&=i+1; \notag\\
(\sigma\circ\alpha^r)_n&=\sigma_r-1,\notag
\end{align}where superscripts and indices are taken mod$(n+1)$. It is straightforward to check that $\sigma_r-2-i=n-(n+2+k-j+i-r)$ hence $\sigma_r=-(k-j-r)$. Furthermore, the following relations
\begin{align}
(\alpha^{k-j-r}\circ\sigma\circ\alpha^r)_0&=0;&(\alpha^{k-j-r}\circ\sigma\circ\alpha^r)_{k-r}&=k-r; \notag\\
(\alpha^{k-j-r}\circ\sigma\circ\alpha^r)_{k+1-r}&=2k-j-r+1; \notag\\
(\alpha^{k-j-r}\circ\sigma\circ\alpha^r)_{n+1+i-r}&=n+1+k-j+i-r; \notag\\
(\alpha^{k-j-r}\circ\sigma\circ\alpha^r)_{n+2+i-r}&=k-r+1; \notag\\
(\alpha^{k-j-r}\circ\sigma\circ\alpha^r)_{n+1+k-j+i-r}&=2k-j-r; \notag\\
(\alpha^{k-j-r}\circ\sigma\circ\alpha^r)_{n+2+k-j+i-r}&=n+2+k-j+i-r;&(\alpha^{k-j-r}\circ\sigma\circ\alpha^r)_n&=n \notag
\end{align}hold. By (\ref{function}), $(\sigma\circ\alpha^r)_{t+1}=(\sigma\circ\alpha^r)_{t} +1$ in the intervals $[0,k-r],\,[k-r+1,n-r],\,[n-r+1,n+1+i-r],\,[n+2+i-r,n+1+k-j+i-r],\,[n+2+k-j+i-r,n]$. Then we obtain $(\alpha^{k-j-r}\circ\sigma\circ\alpha^r)_{t+1}=(\alpha^{k-j-r}\circ\sigma\circ\alpha^r)_{t} +1$ in the same intervals. Since $n+1+i-r=n+1+k-j+i-r-(2k-j-r)+k-r$, the statement follows in case (III) from (\ref{function}) and (\ref{cuppoints}) with $i'=k-r,\,j'=2k-j-r,\,k'=n+1+k-j+i-r$. To deal with case (IV), it is enough to use the same argument of case (I) replying $i-r$ with $n+1+i-r$, $j-r$ with $n+1+j-r$, and $k-r$ with $n+1+k-r$. Hence the statement follows with $i'=n+1+i-r,\,j'=n+1+j-r,\,k'=n+1+k-r$ and $\sigma_r=r$.
\end{proof}
In the proof of Lemma \ref{lemc19ott2013}, several equations linking $\alpha$ and block transpositions are given. Some of these are also useful for the present investigation and listed below in terms of toric maps.
\begin{cor}\label{eqbt}
For any positive integer $r\le n,$
\begin{itemize}
\item[\rm(i)] $\emph{\textsf{f}}_r(\sigma(i,j,k))=\sigma(i-r,j-r,k-r)$ if $0\leq i-r<k-j+i-r<k-r\leq n$;
\item[\rm(ii)] $\emph{\textsf{f}}_r(\sigma(i,j,k))=\sigma(k-j+i-r,n+1+2i-j-r,n+1+i-r)$ if $0\leq k-j+i-r<k-r<n+1+i-r\leq n$;
\item[\rm(iii)] $\emph{\textsf{f}}_r(\sigma(i,j,k))=\sigma(k-r,2k-j-r,n+1+k-j+i-r)$ if $0\leq k-r<n+1+i-r<n+1+k-j+i-r\leq n$;
\item[\rm(iv)] $\emph{\textsf{f}}_r(\sigma(i,j,k))=\sigma(n+1+i-r,n+1+j-r,n+1+k-r)$ if $0\leq n+1+i-r<n+1+j-r<n+1+k-r\leq n$.
\end{itemize}
\end{cor}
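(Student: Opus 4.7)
The plan is to derive the corollary as a direct translation of Lemma \ref{lemc19ott2013} into the language of toric maps. By the defining relation (\ref{eq2oct9}) of $\textsf{f}_r$ applied to $\pi = \sigma(i,j,k)$,
\[
[0\,\textsf{f}_r(\sigma(i,j,k))] = \alpha^{-\sigma(i,j,k)_r}\circ[0\,\sigma(i,j,k)]\circ\alpha^r,
\]
while the Shifting Lemma asserts that
\[
[0\,\sigma(i,j,k)]\circ\alpha^r = \alpha^{[0\,\sigma(i,j,k)]_r}\circ[0\,\sigma(i',j',k')]
\]
for a suitable block transposition $\sigma(i',j',k')$. Since $[0\,\sigma(i,j,k)]_r$ coincides with $\sigma(i,j,k)_r$ (with the convention $\sigma(i,j,k)_0 = 0$ inherited from $[0\,\sigma(i,j,k)]_0 = 0$), composing on the left with $\alpha^{-\sigma(i,j,k)_r}$ yields
\[
[0\,\textsf{f}_r(\sigma(i,j,k))] = [0\,\sigma(i',j',k')],
\]
whence $\textsf{f}_r(\sigma(i,j,k)) = \sigma(i',j',k')$.

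It remains to identify the triple $(i',j',k')$ explicitly. This is precisely what the four-case analysis inside the proof of Lemma \ref{lemc19ott2013} accomplishes: the range $0\le r\le n$ is partitioned according to where the positions $i-r,\,k-j+i-r,\,k-r,\,n+1+i-r$ lie after reduction mod $(n+1)$, which in turn is governed by the intervals of monotonicity of $\sigma(i,j,k)$ described in (\ref{function}). Reading the values of $(i',j',k')$ off that proof gives exactly the four formulas claimed: $(i-r,\,j-r,\,k-r)$ in Case (I); $(k-j+i-r,\,n+1+2i-j-r,\,n+1+i-r)$ in Case (II); $(k-r,\,2k-j-r,\,n+1+k-j+i-r)$ in Case (III); and $(n+1+i-r,\,n+1+j-r,\,n+1+k-r)$ in Case (IV).

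There is no genuine obstacle here: the corollary is essentially a reformulation of Lemma \ref{lemc19ott2013}, since all the delicate case-by-case bookkeeping of cut points was already carried out in its proof. The only point to verify carefully is that the exponent of $\alpha$ produced by the Shifting Lemma on the right matches the exponent $-\sigma(i,j,k)_r$ appearing in the definition of the toric map, which is immediate from the definition of $[0\,\sigma(i,j,k)]$. The four hypotheses (i)--(iv) correspond one-to-one with the four cases of the Shifting Lemma, so matching them completes the proof.
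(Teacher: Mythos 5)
Your proposal is correct and matches the paper's intent exactly: the paper gives no separate proof of Corollary \ref{eqbt}, presenting it as the equations "listed \ldots in terms of toric maps" that are read off directly from the four cases in the proof of Lemma \ref{lemc19ott2013}, combined with the definition (\ref{eq2oct9}) of $\textsf{f}_r$ --- precisely the cancellation of $\alpha$-powers you describe. The identification of $(i',j',k')$ in each case agrees with the values computed in that proof, so nothing further is needed.
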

The following result states the invariance of $S_n$ under the action of toric maps and the reverse map.
\begin{prop}
\label{th1} Toric maps and the reverse map take any block transposition to a block transposition.
\end{prop}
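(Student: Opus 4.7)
The plan is to split the proof into two independent parts: the statement for toric maps and the statement for the reverse map.

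For the toric maps, the key observation is that Corollary \ref{eqbt} (which is a byproduct of the Shifting Lemma \ref{lemc19ott2013}) already lists four explicit formulas for $\textsf{f}_r(\sigma(i,j,k))$, each of the form $\sigma(i',j',k')$, under the respective hypotheses (I)--(IV). The proposition for toric maps therefore reduces to two verifications: (a) for every triple of cut points $0\le i<j<k\le n$ and every $r$ with $0\le r\le n$, at least one of the four hypotheses (I)--(IV) holds; and (b) in each such case, the resulting triple $(i',j',k')$ is a legal triple of cut points, that is, $0\le i'<j'<k'\le n$.

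For (a), I would observe that the hypotheses of (I)--(IV) correspond exactly to $r$ lying in one of the four intervals $[0,i]$, $[i+1,k-j+i]$, $[k-j+i+1,k]$, $[k+1,n]$, whose union is $[0,n]$. For (b), the checks are routine inequality manipulations. For instance in case (II) the output is $(k-j+i-r,\,n+1+2i-j-r,\,n+1+i-r)$, and the bounds $i+1\le r\le k-j+i$ guarantee $i'\ge 0$ and $k'\le n$, while the chain of inequalities $i'<j'<k'$ reduces to $i<j$ and $k\le n$. Cases (I), (III), (IV) are analogous, and in fact (IV) follows from (I) by replacing each cut point $x$ by $n+1+x-r$, while (III) mirrors (II).

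For the reverse map, I would combine the formula $(\textsf{g}(\pi))_t=n+1-\pi_{n+1-t}$ from (\ref{eq9octb}) with the description (\ref{function}) and (\ref{cuppoints}) of $\sigma(i,j,k)$. The substitution $t\mapsto n+1-t$ reverses the four intervals in (\ref{function}), and the outer substitution $x\mapsto n+1-x$ preserves increments of $1$. Tracking how the six distinguished indices transform shows that
\[
\textsf{g}(\sigma(i,j,k))=\sigma(n-k,\,n-k+j-i,\,n-i),
\]
and the triple on the right clearly satisfies $0\le n-k<n-k+j-i<n-i\le n$. Hence $\textsf{g}$ maps $S_n$ into $S_n$, and being an involution by (\ref{eq2oct9b}), it maps $S_n$ onto $S_n$.

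The main obstacle is the bookkeeping in part (a)--(b) for the toric maps: because the cut points of $\textsf{f}_r(\sigma(i,j,k))$ are the residues modulo $n+1$ of $i-r,j-r,k-r$, one must be careful to check that the four intervals for $r$ partition $[0,n]$ without overlap, and that each formula in Corollary \ref{eqbt} is consistent with the inequalities that define its case. Once this case-analysis is carried out, both halves of the proposition follow without further input.
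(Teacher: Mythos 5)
Your treatment of the toric maps is essentially the paper's own argument: the paper simply invokes the Shifting Lemma \ref{lemc19ott2013}, of which Corollary \ref{eqbt} is the explicit restatement, and your observations (a) and (b) — that the four hypotheses correspond to $r$ lying in the intervals $[0,i]$, $[i+1,k-j+i]$, $[k-j+i+1,k]$, $[k+1,n]$, and that each output triple is a legal triple of cut points — are exactly the content of that lemma's case analysis. Nothing new or missing there.

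The reverse-map half, however, contains a computational error. The correct identity, obtained from $(\textsf{g}(\pi))_t=n+1-\pi_{n+1-t}$ applied to (\ref{feb9}), is
$$\textsf{g}(\sigma(i,j,k))=\sigma(n-k,\,n-j,\,n-i),$$
as in (\ref{eq1oct11}), whereas you claim the middle cut point is $n-k+j-i$; the two agree only when $j-i=k-j$. The source of the slip is that conjugation by $w$ not only reverses the four intervals but also exchanges the roles of the two swapped blocks, so the first moved block of the image has length $k-j$ (not $j-i$), forcing $j'-i'=k-j$ and hence $j'=n-j$. A concrete check: for $n=6$, $\sigma(1,2,5)=[1\,3\,4\,5\,2\,6]$ has $\textsf{g}(\sigma(1,2,5))=[1\,5\,2\,3\,4\,6]=\sigma(1,4,5)$, while your formula gives $\sigma(1,2,5)$ itself. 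By (\ref{eqa18oct}) your expression is precisely $\sigma(n-k,n-j,n-i)^{-1}$, which is still a block transposition, so the conclusion of the proposition survives the error; but the stated formula is false, and since (\ref{eq1oct11}) is used later (e.g., in Corollary \ref{lem2oct11} and Lemma \ref{lem1oct18}) the bookkeeping needs to be redone correctly.
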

\begin{proof}
For toric maps, the assertion follows from Lemma \ref{lemc19ott2013}. For the reverse map, (\ref{eq22ott12}) yields
\begin{equation}\label{eq1oct11}
\textsf{g}(\sigma(i,j,k))=\sigma(n-k,n-j,n-i)
\end{equation}
whence the assertion follows.
\end{proof}

\chapter{Block transposition distance}\label{c4}

In Section \ref{rear dist} we have discussed the concept of a rearrangement distance in a general setting. From now on, expect in Chapter \ref{c7}, we focus on the case of $S=S_n$, where $S_n$, as in Chapter \ref{c3}, denotes the set of block transpositions of $\Sym_n$. Consequently, the term of the rearrangement distance (diameter) is replaced by \emph{block transposition distance (diameter)}. Furthermore, sorting a permutation by block transpositions is equivalent to compute the block transposition distance between two permutations. In this chapter and in Chapter \ref{c5}, we treat two important topics on block transpositions, namely the distribution of block transposition distances and bounds on the block transposition diameter.

\section{Distribution of the block transposition distance}\label{s41}
The effective values of block transposition distances are currently known for $n\le 14$ while the block transposition diameter is also known for $n=15$. Table \ref{table:tab3} reports the exact value of $d(n)$ for $n\leq 15$ due to Eriksson et al.; see \cite{EE}. Table \ref{table:tab2} shows the distributions of the block transposition distances. The computation was carried out by Eriksson et al. for $n\leq 10$, see \cite{EE}, and by G\~{a}lvao and Diaz for $n=11,12,13$, see \cite{GD}, and by Gon\c{c}alves et al. for $n=14$; see \cite{GBH}.
It should be stressed that the above tables were obtained by computer. Interestingly, $d(17)=10$ can be directly proven using \cite[Theorem 8]{EH} together with Proposition \ref{sort eq} and hence without the use of a computer; see Section \ref{17}.
\begin{table}[!htbp]
\caption[Block transposition diameter]{Known values of the block transposition diameter of $\Sym_n$.}
\centering
\label{table:tab3}
\begin{tabular}{l*{16}{r}}
\toprule
 $n$
 & 1 & 2 & 3& 4 & 5 & 6 & 7 & 8 & 9 & 10 & 11 & 12 & 13 & 14 & 15\\
 \midrule
 diameter & 0 & 1 & 2 & 3 & 3 & 4 & 4 & 5 & 5 & 6 & 6 & 7 & 8 & 8 & 9\\
 \bottomrule
\end{tabular}
\end{table}
\begin{sidewaystable}[!htbp]
\caption[Block transposition distances]{The number of permutations $\pi$ in $Sym_n$ with $d(\pi)=k$, for $1\leq n\leq 14$.}
\centering
\label{table:tab2}
\begin{tabular}{r*{10}{r}}
\toprule
 $n \backslash k$
 & 0 & 1 & 2 & 3& 4 & 5 & 6 & 7 & 8\\
 \midrule
  1 & 1 & 0 & 0 & 0 & 0 & 0 & 0 & 0 & 0\\
  2 & 1 & 1 & 0 & 0 & 0 & 0  & 0 & 0 & 0\\
  3 & 1 & 4 & 1 & 0 & 0 & 0  &0 &0 & 0\\
  4 & 1 & 10 & 12 & 1 & 0 & 0 &0 &0 & 0\\
  5 & 1 &20&68&31&0&0 &0&0 & 0\\
  6 & 1 &35&259&380&45&0&0&0 & 0\\
  7 & 1 &56&770&2700&1513&0 & 0 & 0 & 0\\
  8 & 1 &84 & 1932 & 13467 &22000 & 2836 & 0 & 0 & 0 \\
  9 & 1 & 120& 4284 & 52512 &191636 & 114327 & 0 & 0 & 0\\
  10 & 1 & 165 & 8646 & 170907 & 1183457 & 2010571 & 255053 & 0 & 0\\
  11 & 1 & 220 & 16203 & 484440 & 5706464 & 21171518 & 12537954 & 0 &0\\
  12 & 1 & 286 & 28600 & 1231230 & 22822293 & 157499810 & 265819779 & 31599601 &0\\
  13 & 1 & 364 & 48048 & 2864719 & 78829491 & 910047453 & 3341572727 & 1893657570 & 427\\
  14 & 1 & 455 & 77441 & 6196333 & 241943403 & 4334283646 & 29432517384 & 47916472532 & 5246800005\\
 \bottomrule
\end{tabular}
\end{sidewaystable}
\section{Lower bounds on the block transposition diameter}

Bulteau, Fertin, and Rusu  proved in \cite{BF} that sorting a permutation by block transpositions is a $\textsf{NP}$-hard problem. Unfortunately, this has prevented the researchers from building a useful database for larger values of $n$. Therefore, the current investigation is aimed at determining lower bounds on the block transposition diameter for $n>15$. As a matter of fact the achievement of such an objective is still challenging.

For the rest of the chapter we deal with lower bounds, upper bounds being treated in Chapter \ref{c5}. In the next section, we present an interesting approach introduced recently by Doignon and Labarre \cite{DL} which also gives an alternative proof for the lower of Bafna and Pevzner appeared more than ten years earlier; see \cite{BP}. Actually, the best known lower bound is better than that one, and it is treated in the last section of this chapter.

\subsection{The Bafna-Pevzner-Labarre lower bound}\label{labarre}

A lower bound on the block transposition diameter appeared the first time in 1998, in the paper \cite{BP} of Bafna and Pevzner. These authors realized that good lower bounds should be close to $n/2$. They looked inside the possible variance from $n/2$ and were able to express it in terms of certain graphs, called cycle graphs. The concept of cycle graph is a very important one, and such as, it has been introduced several times, in slightly different but equivalent way. Here, we present the definition used by Bafna and Pevzner in \cite{BP}. Refer to \cite{FL} for an equivalent definition.
\begin{defi}
{\em{The \emph{cycle graph} $G=G(\pi)$ of a permutation $\pi$ on $[n]$ is the
directed graph on the vertex set $\{0,1,\ldots,n,n+1\}$ and $2n+2$ edges that are colored either black or gray as follows. Let $\pi_0=0$ and let $\pi_{n+1}=n+1$.
\begin{itemize}
\item For $1\leq i\leq n+1$, $(\pi_i,\pi_{i-1})$ is a \emph{black edge} .
\item For $0\leq i\leq n$, $(i, i + 1)$ is a \emph{gray edge}.
\end{itemize}}}
\end{defi}
Figure \ref{fig:graph} shows the cycle graph of a permutation.
\begin{figure}[!htbp]
\centering
\includegraphics[width=\textwidth]{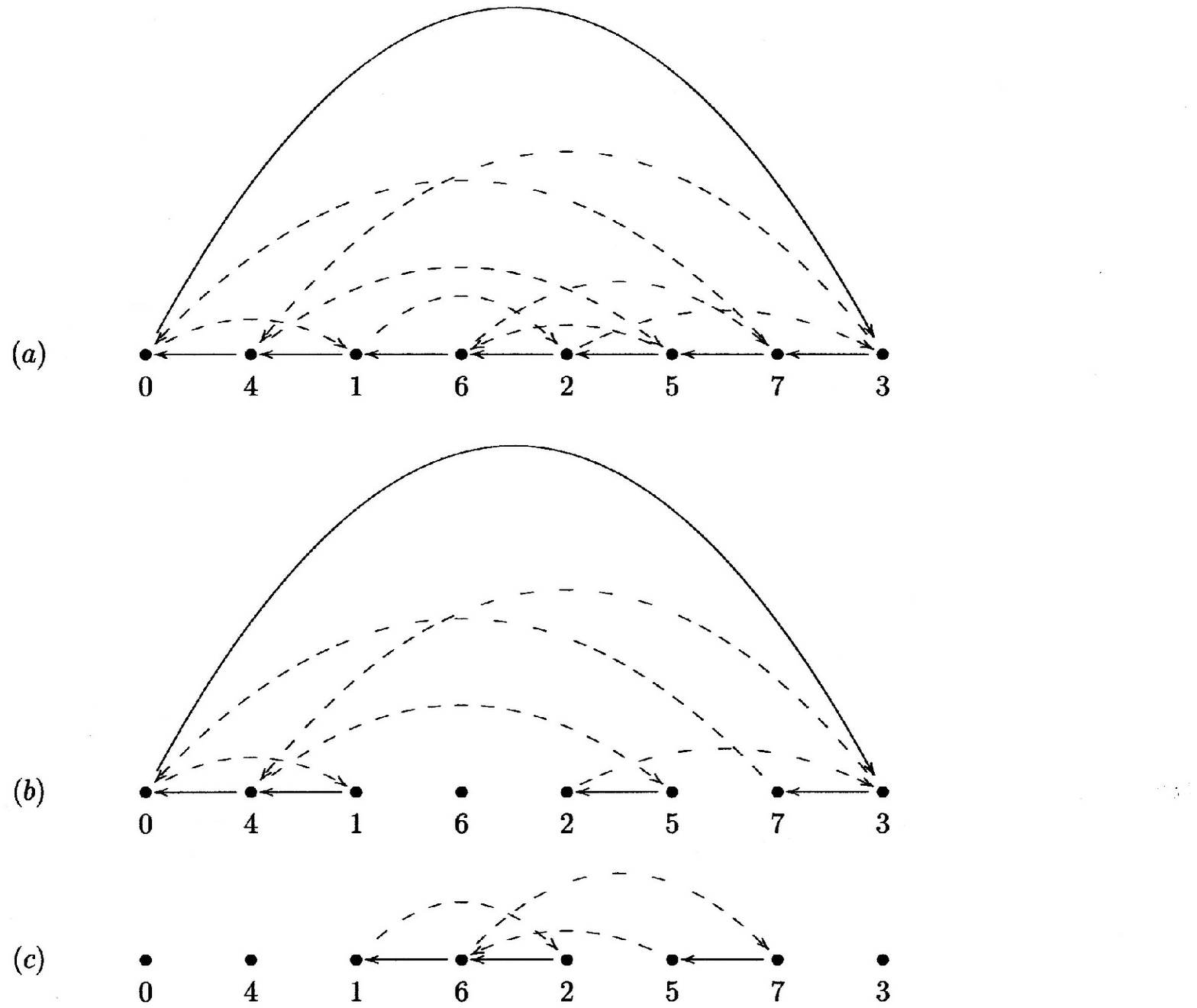}
\caption[Cycle graph and its decomposition]{(a) The cycle graph of $[4\,1\,6\,2\,5\,7\,3]$; (b),(c) its decomposition into two alternating cycles (reprinted from \cite{DL}).}
\label{fig:graph}
\end{figure}An \emph{alternating cycle} in $G$ is a cycle where the colors of the edges alternate. Since any vertex except $0$ and $n+1$ have one incoming edge and one outgoing edge of each color, $G$ is uniquely partitioned into alternating cycles. The \emph{length of an alternating cycle} of $G$ is the number of black edges that it contains, and a $k$-\emph{cycle} in $G$ is an alternating cycle of length $k$. When $k$ is odd, then $k$-cycle is called an \emph{odd cycle}, and $c_{odd}(G(\pi))$ is the number of odd cycles. In \cite[Theorem 2.4]{BP}, Bafna and Pevzner proves that
\begin{equation}\label{matnov7}
d(\pi)\geq \textstyle{\frac{1}{2}} \lfloor n+1-c_{odd}(G(\pi))\rfloor.
\end{equation}

In the context of block transpositions, one may ask about a possible, potential role of the natural map $\varphi$ which turns the permutation $\pi=[\pi_1 \pi_2 \cdots \pi_n]$ on $[n]$ to the permutation on $[n]^0$ represented by the ($n+1$)-cycle $(0,\pi_n,\pi_{n-1}, \ldots, \pi_1)$. Doignon and Labarre \cite{DL} worked in this direction by means of the map $\textsf{p}$ that takes a permutation $\pi$ on $[n]$ to the permutation $\alpha\circ \varphi(\pi)$ on $[n]^0$, $\alpha=[1\,2\cdots 0]=(0,1,\ldots, n)$; see Section \ref{s32}. Here, we give a survey of the results of these authors which had a significant impact on sorting by block transpositions.

The usefulness of $\textsf{p}$ is due to the following property, proved by Labarre in \cite[Lemma 3.1]{la}. For any $\pi,\nu \in \Sym_n$,
$$\textsf{p}(\nu\circ\pi)=\textsf{p}(\nu)\circ \textsf{p}(\pi)^{\nu},
$$where $\textsf{p}(\pi)^{\nu}$ denotes the conjugate of $\textsf{p}(\pi)$ by $[0\,\nu]$.

Obviously, $\textsf{p}$ is injective, and hence the image set $\rm{Im}(\textsf{p})$ of $\textsf{p}$ is a proper subset of $\Sym_n^0$, where $\Sym_n^0$ indicates the group of permutations on $[n]^0$; see Section \ref{s32}. In his investigation of $\textsf{p}$,
Labarre found an interesting relationship between factorizations within a restricted family of permutations on $[n]^0$ and factorizations into block transpositions in $\Sym_{n}$. As stated in the following theorem, this restricted family arises from $\rm{Im}(\textsf{p})\cap {\rm{Alt}}_{n+1}$, where ${\rm{Alt}}_{n+1}$ is the alternating group on $[n]^0$.
\begin{thm}
\label{labarretheorem3.2}
Let $\mathcal{C}$ be the union of the conjugacy classes of $\Sym_{n+1}$ which have nontrivial intersection with ${\rm{Alt}}_{n+1}$. Then, any factorization of $\pi\in \Sym_n$ into $k$ block transpositions yields a factorization of $\emph{\textsf{p}}(\pi)$ into $k$ factors from $\mathcal{C}$.
\end{thm}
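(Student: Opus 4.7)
The plan is to iterate the product formula $\textsf{p}(\nu\circ\pi)=\textsf{p}(\nu)\circ\textsf{p}(\pi)^{\nu}$ of Labarre so as to expand $\textsf{p}(\pi)$ as an ordered product of $k$ conjugates of the images $\textsf{p}(\sigma_i)$, and then to show that each such conjugate belongs to $\mathcal{C}$.

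First I would fix a factorization $\pi=\sigma_1\circ\sigma_2\circ\cdots\circ\sigma_k$ into block transpositions and prove, by induction on $k$, the identity
$$
\textsf{p}(\pi)=\textsf{p}(\sigma_1)\circ\textsf{p}(\sigma_2)^{\sigma_1}\circ\textsf{p}(\sigma_3)^{\sigma_1\circ\sigma_2}\circ\cdots\circ\textsf{p}(\sigma_k)^{\sigma_1\circ\cdots\circ\sigma_{k-1}}.
$$
The case $k=1$ is trivial; the inductive step follows from Labarre's identity applied to $\nu=\sigma_1$ and $\pi'=\sigma_2\circ\cdots\circ\sigma_k$, combined with the routine facts that conjugation distributes over composition and satisfies $(g^{a})^{b}=g^{b\circ a}$. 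In this computation every conjugation takes place inside $\Sym_{n+1}$, where each $\sigma_i$ is viewed as $[0\,\sigma_i]$ acting on $[n]^{0}$ and fixing $0$.

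It then remains to show that every factor on the right-hand side lies in $\mathcal{C}$. By construction each factor is a conjugate in $\Sym_{n+1}$ of one of the permutations $\textsf{p}(\sigma_i)$, so it suffices to verify that the $\Sym_{n+1}$-conjugacy class of $\textsf{p}(\sigma_i)$ meets ${\rm Alt}_{n+1}$ non-trivially. Since any conjugacy class of $\Sym_{n+1}$ is either contained in ${\rm Alt}_{n+1}$ or disjoint from it, this amounts to showing that $\textsf{p}(\sigma_i)$ is an even permutation. I would establish this by a direct parity count: $\textsf{p}(\sigma)=\alpha\circ\varphi(\sigma)$, where both $\alpha=(0,1,\ldots,n)$ and $\varphi(\sigma)=(0,\sigma_n,\sigma_{n-1},\ldots,\sigma_1)$ are $(n+1)$-cycles in $\Sym_{n+1}$, each of which decomposes as a product of $n$ transpositions; hence $\textsf{p}(\sigma)$ is a product of $2n$ transpositions and is therefore even.

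I would remark that this parity argument is insensitive to $\sigma$ being a block transposition, so in fact ${\rm Im}(\textsf{p})\subseteq{\rm Alt}_{n+1}$ and each image $\textsf{p}(\sigma_i)$ automatically lies in one of the conjugacy classes making up $\mathcal{C}$. The main obstacle is purely bookkeeping, namely the careful tracking of the accumulated conjugating elements $\sigma_1\circ\cdots\circ\sigma_{i-1}$ when iterating Labarre's formula; once the displayed identity is in place, the parity computation settles membership in $\mathcal{C}$.
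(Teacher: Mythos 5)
Your proposal is correct. Note, however, that the thesis itself gives no proof of this statement: it is quoted as a result of Labarre from \cite{la} in a survey subsection, so there is no ``paper proof'' to compare against line by line. Your reconstruction is the natural one and it works. The telescoped identity
$\textsf{p}(\pi)=\textsf{p}(\sigma_1)\circ\textsf{p}(\sigma_2)^{\sigma_1}\circ\cdots\circ\textsf{p}(\sigma_k)^{\sigma_1\circ\cdots\circ\sigma_{k-1}}$
follows by induction exactly as you say, using that $\nu\mapsto[0\,\nu]$ is a homomorphism and that, with the paper's convention $g^{a}=a\circ g\circ a^{-1}$, one has $(g\circ h)^{a}=g^{a}\circ h^{a}$ and $(g^{a})^{b}=g^{b\circ a}$. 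Since $\mathcal{C}$ is by definition a union of $\Sym_{n+1}$-conjugacy classes, it is conjugation-invariant, so membership of each factor reduces to membership of $\textsf{p}(\sigma_i)$, and since a conjugacy class of $\Sym_{n+1}$ meets ${\rm Alt}_{n+1}$ nontrivially exactly when it is contained in ${\rm Alt}_{n+1}$, this reduces to parity. Your parity count (product of two $(n+1)$-cycles, hence $2n$ transpositions) is fine and, as you observe, shows more generally that ${\rm Im}(\textsf{p})\subseteq{\rm Alt}_{n+1}$; for the block transpositions themselves you could have short-circuited this step by citing (\ref{labarrelemma4.3}), which says $\textsf{p}(\sigma(i,j,k))=(i,k,j)$ is a $3$-cycle and hence even. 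The only cosmetic caveat is that, as stated in this thesis, $\mathcal{C}$ is simply all of ${\rm Alt}_{n+1}$, so the conclusion you prove is literally ``$\textsf{p}(\pi)$ is a product of $k$ even permutations''; your argument delivers the sharper and more useful fact that each factor is a conjugate of a $3$-cycle, which is what actually feeds into the Jerrum-based lower bound discussed afterwards.
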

A further useful property of $\textsf{p}$ pointed out by Labarre \cite[Lemma 4.3]{la} is that $\textsf{p}$ turns any block transposition on $[n]$ to a $3$-cycle on $[n]^0$:
\begin{equation}
\label{labarrelemma4.3} \textsf{p}(\sigma(i,j,k))=(i,k,j).
\end{equation}
Moreover, for any $\pi,\nu\in \Sym_n$,
$$\textsf{p}((\pi\circ\nu)^{-1})= (\textsf{p}(\nu)^{-1}\circ\textsf{p}(\pi^{-1}))^{\nu^{-1}};$$see \cite[Corollary 7.2]{la} and
$$\textsf{p}(\pi^w)=\textsf{p}({\pi^{-1}})^{[0\,w]\circ\alpha},$$where $w=[n\,n-1\cdots 1]$; see \cite[Lemma 7.3]{la}.

The relationship between $\textsf{p}$ and the toric equivalence was also worked out. The main result is \cite[Lemma 7.8]{la} and stated in the following proposition.
\begin{prop}
\label{labarrelem7.8} Let $\pi,\pi'$ be torically equivalent permutations on $[n]$. If $\pi'=\textsf{f}_r(\pi)$ then $\textsf{p}(\pi')=\textsf{p}(\pi)^{\alpha^r}.$
\end{prop}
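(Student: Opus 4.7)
The plan is to reduce Proposition \ref{labarrelem7.8} to a direct algebraic manipulation in $\Sym_n^0$, exploiting the equation (\ref{toric_zero}) that characterizes toric equivalence.

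The key preliminary observation I would record is that $\varphi(\pi)$ admits a conjugate expression: $\varphi(\pi)=[0\,\pi]\circ\alpha^{-1}\circ[0\,\pi]^{-1}$. Indeed, the standard cycle-conjugation formula, combined with the fact that $[0\,\pi]$ fixes $0$ and sends $t$ to $\pi_t$ for $t\in[n]$, turns the $(n+1)$-cycle $\alpha^{-1}=(0,n,n-1,\ldots,1)$ into the cycle $(0,\pi_n,\pi_{n-1},\ldots,\pi_1)$, which is precisely $\varphi(\pi)$ by definition. Consequently,
$$
\textsf{p}(\pi)=\alpha\circ[0\,\pi]\circ\alpha^{-1}\circ[0\,\pi]^{-1}.
$$

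With this in hand, I would invoke (\ref{toric_zero}) to rewrite the hypothesis $\pi'=\textsf{f}_r(\pi)$ as $[0\,\pi']=\alpha^{-\pi_r}\circ[0\,\pi]\circ\alpha^r$, whose inverse is $[0\,\pi']^{-1}=\alpha^{-r}\circ[0\,\pi]^{-1}\circ\alpha^{\pi_r}$. Substituting these into $\textsf{p}(\pi')=\alpha\circ[0\,\pi']\circ\alpha^{-1}\circ[0\,\pi']^{-1}$ and telescoping the central block $\alpha^{r}\circ\alpha^{-1}\circ\alpha^{-r}=\alpha^{-1}$ would yield
\begin{align*}
\textsf{p}(\pi')
&=\alpha^{-\pi_r}\circ\bigl(\alpha\circ[0\,\pi]\circ\alpha^{-1}\circ[0\,\pi]^{-1}\bigr)\circ\alpha^{\pi_r}\\
&=\alpha^{-\pi_r}\circ\textsf{p}(\pi)\circ\alpha^{\pi_r},
\end{align*}
which exhibits $\textsf{p}(\pi')$ as the desired conjugate of $\textsf{p}(\pi)$ by the power of $\alpha$ dictated by the toric shift.

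The only obstacle is essentially bookkeeping: keeping the powers of $\alpha$ straight so that the outer $\alpha$ coming from the definition of $\textsf{p}(\pi')$ absorbs into $\alpha^{-\pi_r}$, and verifying the telescoping in the middle factor. No deeper structural argument is required; the statement is a transparent algebraic consequence of the commutator-like form of $\textsf{p}(\pi)$ and the characterization of toric equivalence via left-and-right multiplication by powers of $\alpha$.
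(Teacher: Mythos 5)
Your algebraic skeleton is sound as far as it goes: the identity $\varphi(\pi)=[0\,\pi]\circ\alpha^{-1}\circ[0\,\pi]^{-1}$ is correct (conjugating the cycle $\alpha^{-1}=(0,n,n-1,\ldots,1)$ by $[0\,\pi]$ relabels its entries to $(0,\pi_n,\ldots,\pi_1)$), the substitution $[0\,\pi']=\alpha^{-\pi_r}\circ[0\,\pi]\circ\alpha^{r}$ from (\ref{toric_zero}) is the right way to encode the hypothesis, and the telescoping $\alpha^{r}\circ\alpha^{-1}\circ\alpha^{-r}=\alpha^{-1}$ is fine. Note also that the thesis gives no proof of Proposition \ref{labarrelem7.8} at all --- it is quoted from Labarre --- so there is no in-paper argument to compare routes with; a commutator-style computation of this kind is indeed the natural one.

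The gap is in your final sentence. What your computation actually produces is $\textsf{p}(\pi')=\alpha^{-\pi_r}\circ\textsf{p}(\pi)\circ\alpha^{\pi_r}=\textsf{p}(\pi)^{\alpha^{-\pi_r}}$ (with the paper's convention $x^{g}=g\circ x\circ g^{-1}$), whereas the statement asserts $\textsf{p}(\pi)^{\alpha^{r}}=\alpha^{r}\circ\textsf{p}(\pi)\circ\alpha^{-r}$. These agree only when $\alpha^{r+\pi_r}$ centralizes $\textsf{p}(\pi)$, which fails in general: for $n=3$, $\pi=[2\,1\,3]$ and $r=1$ one finds $\pi'=\textsf{f}_1(\pi)=[3\,1\,2]$, $\textsf{p}(\pi)=(1,3,2)$ and $\textsf{p}(\pi')=(0,3,1)=\textsf{p}(\pi)^{\alpha^{2}}=\textsf{p}(\pi)^{\alpha^{-\pi_1}}$, while $\textsf{p}(\pi)^{\alpha^{1}}=(0,3,2)\neq(0,3,1)$. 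So the ``power of $\alpha$ dictated by the toric shift'' that your argument delivers is $-\pi_r$, not $r$, and the proof as written does not establish the displayed equality; the last step silently identifies $\alpha^{-\pi_r}$ with $\alpha^{r}$. (The mismatch is ultimately a conventions problem in this quoted section --- for instance the definition $\textsf{p}(\pi)=\alpha\circ\varphi(\pi)$ is already incompatible, under the paper's composition order, with the quoted formula $\textsf{p}(\sigma(i,j,k))=(i,k,j)$ --- but a complete proof must either land exactly on the stated exponent or explicitly flag and resolve the discrepancy, and yours does neither.)
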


Theorem \ref{labarretheorem3.2} together with (\ref{labarrelemma4.3}) provides a lower bound on $d(n)$ Theorem \ref{labarretheorem3.2} given by the length of a minimal factorization of $\textsf{p}(\pi)$ into $3$-cycles. In \cite{J}, Jerrum showed that such a length is $(n + 1-{\rm{c_{odd}}}(\Gamma(\textsf{p}(\pi)))/2$, where $\Gamma(\textsf{p}(\pi))$ is the permutation graph of $\textsf{p}(\pi)$ and ${\rm{c_{odd}}}(\Gamma(\textsf{p}(\pi)))$ is the number of odd alternative cycles in $\Gamma(\textsf{p}(\pi))$. Therefore,
$$d(\pi)\geq \textstyle{\frac{1}{2}} (n+1-c_{odd}(\Gamma(\textsf{p}(\pi)).$$

Labarre also pointed out that $\Gamma(\textsf{p}(\pi))$ and $G(\textsf{p}(\pi))$ have the same number of $k$-cycles for every $k$. Therefore, his lower bound coincides with (\ref{matnov7}) which we call the \emph{Bafna-Pevzner-Labarre lower bound}.

\subsection{The Elias-Hartman-Eriksson lower bound}\label{eliasHartman}

In \cite[Theorem 8]{EH} Elias and Hartman computed this distance of two classes of permutation for odd values of $n>15$.
\begin{prop}\label{hart}
Let $n>15$ be odd and let $i$ ranging over $0,1,\ldots,(k-2)/2$ with $k$ an even positive integer.

For $n=13+2k$,
$$[0\pi]=[0\,4\,3\,2\,1\,5\,13\,12\,11\,10\,9\,8\,7\,6\,\cdots 14+4i\,17+4i\,16+4i\,15+4i\cdots].$$

For $n=15+2k$,
$$[0\pi]=[0\,4\,3\,2\,1\,5\,15\,14\,13\,12\,11\,10\,9\,8\,7\,6\cdots 16+4i\,19+4i\,18+4i\,17+4i\cdots].$$Then $d([0\,\pi])= \dfrac{n+3}{2}$.
\end{prop}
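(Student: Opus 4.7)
The plan is to prove matching upper and lower bounds on $d([0\,\pi])$, using Proposition \ref{th23jan} to pass freely between $d([0\,\pi])$ and $d(\pi)$.

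For the upper bound $d(\pi)\le (n+3)/2$, I would exhibit an explicit factorization of $\pi$ obtained by sorting each of its natural blocks independently. The permutation $\pi$ decomposes into the reversed block $[4\,3\,2\,1]$ at positions $1$--$4$, the fixed point $5$ at position $5$, a central reversed block of length $\ell$ (with $\ell=8$ when $n=13+2k$ and $\ell=10$ when $n=15+2k$), and $k/2$ disjoint ``swap blocks'' of the form $[a\,\,a+3\,\,a+2\,\,a+1]$, each consisting of a fixed entry followed by a reversed segment of length $3$. From the diameters recorded in Table \ref{table:tab3}, sorting the first block costs $3$ block transpositions, sorting the central block costs $5$ or $6$, and sorting each swap block costs $2$. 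Since the blocks live on disjoint position ranges, their individual factorizations concatenate into a factorization of $\pi$ of total length $3+5+2\cdot(k/2)$ when $n=13+2k$ and $3+6+2\cdot(k/2)$ when $n=15+2k$, both equal to $(n+3)/2$.

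For the lower bound $d(\pi)\ge (n+3)/2$, I would analyse the cycle graph $G(\pi)$ introduced in Section \ref{labarre}. A direct inspection of the alternating-cycle decomposition of $G(\pi)$, based on the block pattern of $\pi$, yields a value of $c_{\mathrm{odd}}(G(\pi))$ for which the Bafna--Pevzner--Labarre inequality (\ref{matnov7}), $d(\pi)\ge \tfrac12\lfloor n+1-c_{\mathrm{odd}}(G(\pi))\rfloor$, gives only $(n+1)/2$, falling short of $(n+3)/2$ by one. To gain the missing unit I would invoke the refined cycle-graph analysis of Elias and Hartman \cite{EH}: they identify configurations of short alternating cycles in $G(\pi)$ that cannot all be processed at the maximum BPL rate simultaneously, so that at least one block transposition along an optimal sorting sequence must fail to be maximally productive. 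An amortised count across the whole factorization then yields the extra $+1$, once one checks that the specific $G(\pi)$ arising from the permutations of the statement realises their obstruction.

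The main obstacle is the refined lower bound. Computing $c_{\mathrm{odd}}(G(\pi))$ and invoking the BPL inequality is routine bookkeeping, but closing the gap of one to reach $(n+3)/2$ essentially requires the structural machinery developed by Elias and Hartman for their $11/8$-approximation argument. My plan would therefore be to cite their Theorem 8 for this step, and to reduce the work to verifying that the block patterns of the permutations in the statement match their obstruction class, rather than reconstructing the full apparatus of \cite{EH}.
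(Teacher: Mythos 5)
The paper offers no proof of this proposition at all: it is quoted as \cite[Theorem 8]{EH}, and the surrounding text merely combines it with Proposition \ref{th23jan} to pass between $d([0\,\pi])$ and $d(\pi)$. Your proposal ultimately rests on the same citation where it matters, since the ``refined cycle-graph analysis'' you invoke to close the gap from $(n+1)/2$ to $(n+3)/2$ \emph{is} the content of \cite[Theorem 8]{EH}; as you yourself note, that step is a citation, not an argument, so your lower bound is not independent of the paper's source. What you add beyond the paper is the explicit upper-bound factorization: since $\pi$ is a direct sum of a reversed block of length $4$, a fixed point, a reversed block of length $8$ (resp.\ $10$), and $k/2$ blocks of pattern $[1\,4\,3\,2]$, and since block transpositions supported on disjoint position ranges concatenate, Proposition \ref{w} applied to each reversed segment gives $3+5+2(k/2)=(n+3)/2$ (resp.\ $3+6+2(k/2)=(n+3)/2$). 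This computation is correct and self-contained, and it is a worthwhile sanity check that the cited value is attainable, even though equality is already asserted in \cite[Theorem 8]{EH}.
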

In Section \ref{th23jan} we proved that $d(\pi)=d([0\,\pi])$. Therefore, Proposition \ref{hart} leads to the following lower bound lower bound, for odd values of $n>15$,
\begin{equation}\label{EHd}
d(n)\geq\dfrac{n+3}{2}.
\end{equation}

For even values of $n$, the lower bound is due Eriksson et al. \cite[Theorem 4.2]{EE} who computed the distance of the reverse permutation.
\begin{prop}\label{w}
For $n\geq 3$, $$d(w)=\left\lfloor\dfrac{n+2}{2}\right\rfloor.$$
\end{prop}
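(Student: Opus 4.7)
The plan is to prove the two inequalities $d(w)\le\lfloor(n+2)/2\rfloor$ and $d(w)\ge\lfloor(n+2)/2\rfloor$ separately.

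For the upper bound, the strategy is to exhibit an explicit factorization of $w$ as a product of $\lfloor n/2\rfloor+1$ block transpositions. I would proceed by strong induction on $n$, after dispatching small cases such as $n=3,4,5$ by direct inspection (for instance $w_4\circ\sigma(0,1,4)\circ\sigma(0,1,3)\circ\sigma(0,1,2)=\iota$). The key step is a two-move reduction: find two block transpositions whose composition sends $w_n$ to a permutation that, restricted to an interior window of length $n-2$, agrees with the reverse $w_{n-2}$, with the entries $1$ and $n$ correctly placed at the two ends, so that the recursion $d(w_n)\le d(w_{n-2})+1$ applies. A single block transposition cannot fix both endpoints at once: a direct inspection of $w_n\circ\sigma(i,j,k)$ shows that the candidate permutation $[1,n-1,n-2,\dots,2,n]$ does not lie in $w_n\cdot S_n$, so the pair of moves must be chosen with some care.

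For the lower bound, I would invoke the Bafna-Pevzner-Labarre bound (\ref{matnov7}). A direct analysis of the cycle graph $G(w)$ shows that it consists of a single alternating cycle of length $n+1$ when $n$ is even, and of two alternating cycles of length $(n+1)/2$ when $n$ is odd. A case analysis modulo $4$ then yields $c_{\mathrm{odd}}(G(w))\in\{0,1,2\}$, and substitution into (\ref{matnov7}) recovers the target $\lfloor(n+2)/2\rfloor$ precisely when $n\equiv 3\pmod{4}$; in the other residue classes the bound falls short by exactly $1$. To close this unit gap, I would add a hurdle-type argument: since a block transposition changes $c_{\mathrm{odd}}$ by $-2$, $0$, or $+2$, the highly symmetric structure of $G(w)$ forces at least one block transposition in any sorting sequence for $w$ to yield less than the maximum $+2$ increment, supplying the missing unit.

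The main obstacle is precisely this refinement in the residue classes $n\not\equiv 3\pmod{4}$: the baseline cycle-graph count misses the diameter by exactly one, and bridging the gap requires a hurdle-style obstruction analysis in the spirit of the Hannenhalli-Pevzner techniques developed for reversal sorting, adapted to the symmetric cycle graph of the reverse permutation.
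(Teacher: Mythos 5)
Your upper bound is where the real trouble lies. You describe a ``two-move reduction'' sending $w_n$ to a permutation that collapses to $w_{n-2}$, and then claim the recursion $d(w_n)\le d(w_{n-2})+1$; but two block transpositions give $d(w_n)\le d(w_{n-2})+2$, which telescopes to roughly $n-1$, far above $\left\lfloor(n+2)/2\right\rfloor$. The one-move version that would make the arithmetic work is provably impossible: reducing $w_n$ to a copy of $w_{n-2}$ with $1$ and $n$ at the ends requires creating two bonds in a single step, i.e.\ a $2$-move of $w$, and no such move exists (test Criterion \ref{2-move to the right} and Criterion \ref{2-move to the left} against the decreasing sequence $0\,n\,(n-1)\cdots 1\,(n+1)$; this is exactly why the reverse permutation must be excluded from Proposition \ref{wrong}). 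The paper's route is entirely different: it reproduces the explicit sorting algorithm of Eriksson et al., which sorts $w$ in exactly $\left\lfloor(n+2)/2\right\rfloor$ single moves by assembling increasing runs outward from the middle element $r=(n+1)/2$; the intermediate permutations $w^{(k)}$ are not smaller reverse permutations, so there is no self-similar recursion of the kind you propose.

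On the lower bound, your cycle-graph computation is correct: $G(w)$ is a single alternating cycle of length $n+1$ for $n$ even and splits into two alternating cycles of length $(n+1)/2$ for $n$ odd, so (\ref{matnov7}) gives the exact value only for $n\equiv 3\pmod 4$ and falls short by one unit otherwise. But the ``hurdle-type argument'' meant to recover that unit is only asserted, not proved; showing that every sorting sequence for $w$ must contain a move that fails to gain two odd cycles is precisely the content of the optimality proof in \cite{EE}, which the paper itself does not reproduce but explicitly defers to that reference. So as written neither inequality is established: the upper-bound scheme cannot work in the form given, and the lower-bound refinement is a placeholder for the genuinely hard step.
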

In the proof of Proposition \ref{w} Eriksson et al. give explicitly a sorting algorithm for $w$. Here, we show such an algorithm when $n$ is odd. For a proof of the optimality of this algorithm, see \cite{EE}.

\vspace{0.2cm}
\framebox[14.5cm]{
\begin{minipage}{14.5cm}
\begin{enumerate}
\vspace{0.5cm}
\item[]{\bf{Reverse permutation sorting algorithm on $[n]$}}
\item Let $r={n+1}/{2}$. Cut the block $|r\, r-1|$ and paste it at the beginning of $w$.\\Hence $w$ is turned in $w^{(1)}$, where
$$w^{(1)}=[r\,r-1\, n\cdots r+1\,r-2\cdots 1].$$
\item If $n=5$, then go to 4.; otherwise cut the block $|r+1\,r-2|$ and paste it\\between $r$ and $r-1$. Hence $w^{(1)}$ is turned in $w^{(2)}$, where
$$w^{(2)}=[r\,r+1\,r-2\,r-1\cdots r+2\,r-3\cdots 1].$$
\item If $n=7$, then go to 4.; otherwise, for every $k$ with $3\leq k\leq r-1$, cut the\\block $|r+k-1\,r-k|$ and paste it between $r+k-2$ and $r-k+1$.\\Hence $w^{(2)}$ is turned in $w^{(r-1)}$, where
$$w^{(r-1)}=[r\,r+1\cdots n-1\,1\,2\cdots r-1\,n].$$
\item Cut the block $|r\,r+1\cdots n-1|$ and paste between $r-1$ and $n$. Hence\\$w^{(r-1)}$ is turned in $\iota$.
\vspace{0.5cm}
\end{enumerate}
\end{minipage}
}
\vspace{0.5cm}

To sort $w$ if $n$ is even, apply the reverse sorting algorithm on $[n-1]$ to $w$. This turns $w$ in $\pi\in\Sym_n$ in $n/2$ steps, where
$$\pi=[n\,1\,2\cdots n-1].$$ Cutting $n$ and then pasting it at the end of the permutation turns $\pi$ into $\iota$.

Therefore, from Proposition \ref{w} and (\ref{EHd}), for $n>15$,
\begin{equation}\label{bestl}
d(n)\geq\left\lceil\dfrac{n+2}{2}\right\rceil.
\end{equation}So far nobody have achieved a better lower bound than (\ref{bestl}) which we call the \emph{Elias-Hartman-Eriksson lower bound}.

\chapter{Upper bound on the block transposition diameter}\label{c5}

Regarding upper bounds, the strongest one available in the literature is the \emph{Eriksson bound}, stated in $2001$; see \cite{EE}: For $n\geq 9$,
$$d(n)\leq \left\lfloor\frac{2n-2}{3}\right\rfloor.$$However, the proof of the Eriksson bound given in \cite{EE} is incomplete since it implicitly relies on the invariance of $d(\pi)$ when $\pi$ ranges over a toric class.
It should be noticed that this invariance principle has been claimed explicitly in a paper appeared in a widespread journal only recently; see \cite{CK}, although Hausen had already mentioned it and sketched a proof in his unpublished Ph.D. thesis; see \cite{Ha}. Elias and Hartman were not aware of Hausen's work and quoted the Eriksson bound in a weaker form which is independent of the invariance principle; see \cite{EH}, Proposition \ref{propEE}.

In our thesis, we show how the toric maps on $\Sym_n$ leave the distances invariant. Using the properties of these maps, we give an alternative proof for the above invariance principle which we state in Theorem \ref{torically} and Theorem \ref{principle}. We also revisit the proof of the key lemma in \cite{EE}; see Proposition \ref{propEE}, giving more technical details and filling some gaps. A major related result is the \emph{invariance principle} stated in the following two theorems, where $\pi'$ is torically equivalent to $\pi$ if $\textsf{f}_r(\pi)=\pi'$ for some nonnegative integer $r\leq n$.
\begin{thm}\label{torically}
If two permutations $\pi$ and $\pi'$ on $[n]$ are torically equivalent, then $d(\pi)=d(\pi')$.
\end{thm}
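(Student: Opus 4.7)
The plan is to migrate the cyclic shift $\alpha^{r}$ through a minimum factorization of $\pi$ into block transpositions, using the Shifting Lemma iteratively. Since toric equivalence means $\pi'=\textsf{f}_{r}(\pi)$ for some $r$ with $0\le r\le n$, I first record the defining identity $[0\,\pi']=\alpha^{-\pi_{r}}\circ[0\,\pi]\circ\alpha^{r}$ from (\ref{eq2oct9}).

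Next I would pick a minimum factorization $\pi=\sigma_{1}\circ\cdots\circ\sigma_{m}$ with $\sigma_{u}\in S_{n}$ and $m=d(\pi)$. Embedding each factor via the map $\sigma\mapsto[0\,\sigma]$ gives $[0\,\pi]=[0\,\sigma_{1}]\circ\cdots\circ[0\,\sigma_{m}]$, whence
\begin{equation*}
[0\,\pi']=\alpha^{-\pi_{r}}\circ[0\,\sigma_{1}]\circ\cdots\circ[0\,\sigma_{m}]\circ\alpha^{r}.
\end{equation*}
I would then apply the Shifting Lemma (Lemma \ref{lemc19ott2013}) from right to left: starting with $r_{m}=r$, each application rewrites $[0\,\sigma_{u}]\circ\alpha^{r_{u}}$ as $\alpha^{r_{u-1}}\circ[0\,\sigma'_{u}]$ for some $\sigma'_{u}\in S_{n}$ and exponent $r_{u-1}=[0\,\sigma_{u}]_{r_{u}}$. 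After $m$ such shifts I obtain
\begin{equation*}
[0\,\pi']=\alpha^{\,r_{0}-\pi_{r}}\circ[0\,\sigma'_{1}]\circ\cdots\circ[0\,\sigma'_{m}].
\end{equation*}

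To conclude I would evaluate both sides at $0$. Since each $[0\,\sigma'_{u}]$ fixes $0$, the right-hand side sends $0$ to $\alpha^{\,r_{0}-\pi_{r}}_{0}=r_{0}-\pi_{r}\pmod{n+1}$, while the left-hand side sends $0$ to $0$. Hence $r_{0}\equiv \pi_{r}\pmod{n+1}$ and the leading power of $\alpha$ collapses, giving $[0\,\pi']=[0\,\sigma'_{1}]\circ\cdots\circ[0\,\sigma'_{m}]$, that is, $\pi'=\sigma'_{1}\circ\cdots\circ\sigma'_{m}$. This shows $d(\pi')\le m=d(\pi)$. Finally I would invoke the symmetry of the toric equivalence: since $\textsf{F}=\langle\textsf{f}\rangle$ is a cyclic group of order $n+1$, the map $\textsf{f}_{r}$ is invertible with $\textsf{f}_{r}^{-1}=\textsf{f}_{n+1-r}$, so $\pi=\textsf{f}_{n+1-r}(\pi')$ and the same argument yields $d(\pi)\le d(\pi')$.

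The main obstacle is the bookkeeping of the accumulated powers of $\alpha$ through the $m$-fold shifting: one must verify at each step that the new factor $\sigma'_{u}$ obtained from the Shifting Lemma genuinely belongs to $S_{n}$ (which is exactly the content of Lemma \ref{lemc19ott2013}, guaranteeing $1\le i'<j'<k'\le n$) and that the exponents $r_{u}$ are consistently reduced modulo $n+1$. Once this is in place, the cancellation of the leading $\alpha$-power at the end — which mirrors the final step in the proof of Proposition \ref{th23jan} — is the only nontrivial global check and is forced by the fact that $[0\,\pi']_{0}=0$.
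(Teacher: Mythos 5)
Your proposal is correct and follows essentially the same route as the paper's proof: embed a minimum factorization of $\pi$ into $\Sym_n^0$, migrate $\alpha^r$ leftward through the factors via the Shifting Lemma (Lemma \ref{lemc19ott2013}), kill the residual power of $\alpha$ by evaluating at $0$, and then reverse the roles of $\pi$ and $\pi'$ using the group structure of $\textsf{F}$. Your bookkeeping of the exponents $r_u$ is just a more explicit version of the paper's ``$s$ must be $0$'' step, so there is nothing to add.
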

\begin{thm}\label{principle}
Let $\pi,\varpi,\nu,\mu$ be permutations on $[n]$ such that the toric map $\textsf{f}_r$ takes $\pi$ to $\varpi$ and $\textsf{f}_l$ takes $\nu$ to $\mu$ with $l=(\nu^{-1}\circ\pi)_r$. Then $d(\pi,\nu)=d(\varpi,\mu)$.
\end{thm}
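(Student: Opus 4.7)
The plan is to reduce Theorem \ref{principle} to Theorem \ref{torically} by means of the left-invariance of the block transposition distance (Proposition \ref{distance}). Concretely, set $\tau=\nu^{-1}\circ\pi$. Then left-invariance gives
$$d(\pi,\nu)=d(\nu^{-1}\circ\pi,\nu^{-1}\circ\nu)=d(\tau,\iota)=d(\tau),$$
and, analogously, $d(\varpi,\mu)=d(\mu^{-1}\circ\varpi)$. So I need only show
$$d(\mu^{-1}\circ\varpi)=d(\tau),$$
and for this it suffices, by Theorem \ref{torically}, to prove that $\mu^{-1}\circ\varpi$ is torically equivalent to $\tau$; more precisely, I will show
$$\mu^{-1}\circ\varpi=\textsf{f}_r(\tau).$$

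The key point is the hypothesis $l=(\nu^{-1}\circ\pi)_r=\tau_r$, which yields the identity $\nu_l=\nu(\nu^{-1}(\pi_r))=\pi_r$. Writing $m:=\pi_r=\nu_l$, the defining equation (\ref{eq2oct9}) of toric maps gives
$$[0\,\varpi]=\alpha^{-m}\circ[0\,\pi]\circ\alpha^{r},\qquad [0\,\mu]=\alpha^{-m}\circ[0\,\nu]\circ\alpha^{l}.$$
Using $[0\,\nu]^{-1}=[0\,\nu^{-1}]$ and $[0\,\nu^{-1}]\circ[0\,\pi]=[0\,\nu^{-1}\circ\pi]=[0\,\tau]$, a direct computation gives
\begin{align*}
[0\,\mu^{-1}\circ\varpi]&=[0\,\mu]^{-1}\circ[0\,\varpi]\\
&=\alpha^{-l}\circ[0\,\nu]^{-1}\circ\alpha^{m}\circ\alpha^{-m}\circ[0\,\pi]\circ\alpha^{r}\\
&=\alpha^{-l}\circ[0\,\tau]\circ\alpha^{r}\\
&=\alpha^{-\tau_r}\circ[0\,\tau]\circ\alpha^{r},
\end{align*}
which is exactly the defining relation (\ref{eq2oct9}) for $\textsf{f}_r(\tau)$. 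Hence $\mu^{-1}\circ\varpi=\textsf{f}_r(\tau)$, so $\mu^{-1}\circ\varpi$ and $\tau$ lie in the same toric class.

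Applying Theorem \ref{torically} to the pair $\tau,\,\mu^{-1}\circ\varpi$ gives $d(\tau)=d(\mu^{-1}\circ\varpi)$, and combining this with the two left-invariance reductions yields the desired equality $d(\pi,\nu)=d(\varpi,\mu)$. The only subtle point is the cancellation $\alpha^{-\pi_r}\cdot\alpha^{\nu_l}=\alpha^0$, which is secured precisely by the hypothesis $l=\tau_r$; without it the cancellation fails and the resulting permutation is no longer in the toric class of $\tau$. Once this observation is in place, the argument is essentially a bookkeeping exercise, and Theorem \ref{torically} carries the rest of the weight.
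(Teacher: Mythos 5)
Your proof is correct, and it takes a somewhat more economical route than the one in the thesis. Both arguments begin identically, using the left-invariance of the distance (Proposition \ref{distance}) to reduce the claim to $d(\nu^{-1}\circ\pi)=d(\mu^{-1}\circ\varpi)$, and both hinge on the same cancellation $\alpha^{-\nu_l}\circ\alpha^{\pi_r}=\iota$ secured by the hypothesis $l=(\nu^{-1}\circ\pi)_r$, which yields $[0\,\mu^{-1}]\circ[0\,\varpi]=\alpha^{-l}\circ[0\,\nu^{-1}\circ\pi]\circ\alpha^{r}$. The difference is what happens next: the thesis takes a minimal factorization $[0\,\nu^{-1}]\circ[0\,\pi]=[0\,\sigma_1]\circ\cdots\circ[0\,\sigma_h]$ and then reruns the shifting argument (Lemma \ref{lemc19ott2013}) from the proof of Theorem \ref{torically} to push $\alpha^{r}$ through the factors and conclude that the leading power of $\alpha$ must be trivial. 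You instead observe that, writing $\tau=\nu^{-1}\circ\pi$, the identity above together with $l=\tau_r$ is precisely the defining relation (\ref{eq2oct9}) for $\textsf{f}_r(\tau)$, so that $\mu^{-1}\circ\varpi=\textsf{f}_r(\tau)$ is torically equivalent to $\tau$ and Theorem \ref{torically} can be invoked as a black box. This buys you two things: you avoid duplicating the shifting-lemma induction, and you isolate a cleaner structural fact (the quotient $\mu^{-1}\circ\varpi$ is itself the toric image of $\nu^{-1}\circ\pi$ under the same index $r$) that explains \emph{why} the hypothesis $l=(\nu^{-1}\circ\pi)_r$ is the right one. The cost is negligible; all the ingredients you use ($[0\,\nu]^{-1}=[0\,\nu^{-1}]$, $\nu_l=\pi_r$, and Theorem \ref{torically}) are already established in the text.
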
We give a proof of Theorem \ref{torically} and Theorem\ref{principle} in Section \ref{invariance}.

An important role in the investigations of bounds on $d(n)$ is played by the number of bonds of a permutation, where a \emph{bond} of a permutation $\pi\in\Sym_n$ consists of two consecutive integers $x,x+1$ in the sequence $0\,\pi_1 \cdots \pi_n\, n+1$. $[0\,\pi]$ has a bond if and only if $\pi$ has a bond. It is easily seen that any two permutations in the same toric class have the same number of bonds.
A bond of $\pi^\circ$ is any $2$-sequence $x\,\overline{x}$ of $\pi^\circ$, where $\overline{x}$ denotes the smallest nonnegative integer congruent to $x+1$ mod$(n+1)$. As bonds are rotation-invariant, their number is an invariant of $\pi_\circ^\circ$. The main result on bonds is the following.
\begin{prop}{\rm{(see \cite[Lemma 5.1]{EE})}}\label{wrong}
Let $\pi$ be any permutation on $[n]$ other than the reverse permutation. Then there are block transpositions $\sigma$ and $\tau$ such that either $\pi\circ\sigma\circ\tau,$ or $\sigma\circ\pi\circ\tau,$ or $\sigma\circ\tau\circ \pi$ has three bonds at least.
\end{prop}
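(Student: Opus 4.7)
My plan is to begin by invoking Theorem \ref{torically}: since the number of bonds and the distance are both invariants of the toric class, I may replace $\pi$ by any torically equivalent representative. This lets me normalize $\pi$ (for instance, so that a specified value occupies a specified position), substantially reducing the number of cases to consider. I then rely on the elementary principle that each block transposition $\sigma(i,j,k)$ alters exactly the three adjacencies of $0\,\pi_1\cdots\pi_n\,n+1$ sitting at cut positions $i,j,k$, so a single block transposition creates at most three bonds and destroys at most three, and two in composition alter at most six adjacencies.

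The flexibility among the three possible compositions is essential and offers distinct combinatorial handles. Right multiplication by a block transposition permutes positions, so a block containing $x+1$ can be teleported next to $x$; left multiplication renames values via the cycle representation of Section \ref{equabl}, so a cyclic shift of a value range can make two non-consecutive values become consecutive; and the mixed composition $\sigma\circ\pi\circ\tau$ combines both effects. I then proceed by case analysis on $b=b(\pi)$, the number of bonds of $\pi$. If $b\geq 3$, I pick $\sigma,\tau$ whose three cut positions each avoid the bond positions of the intermediate permutation; since $n+1-b$ cut positions remain free, this works as long as $b\leq n-2$, and the boundary cases $b\in\{n-1,n,n+1\}$ force $\pi$ to be so close to $\iota$ that the conclusion is verified by direct inspection. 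If $1\leq b\leq 2$, I locate a \emph{near-bond} pair $(x,x+1)$ at non-adjacent positions and design one block transposition whose cuts bring them together while preserving existing bonds; iterating with the second transposition yields at least three bonds. If $b=0$, I must manufacture three bonds from scratch by identifying three candidate pairs $(x_i,x_i+1)$ that can be joined independently, and here the choice of composition side matters most: if no right-composition works, a left-composition or a mixed one does.

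The main obstacle is proving that $\pi=w$ is the only exception. The reverse permutation has $0\,n\,n-1\cdots 1\,n+1$ with no bonds, and its rigid pure-descent structure prevents any pair $(\sigma,\tau)$ from producing three bonds; this can be verified by a finite check tracking the small number of possible cut configurations on a purely decreasing string, using equation \eqref{eq1oct11} which shows that $\textsf{g}$ itself permutes block transpositions so the left-right symmetry is total. For any $\pi\neq w$ there is at least one local ascent or asymmetry in $0\,\pi_1\cdots\pi_n\,n+1$, and the crux of the argument is showing that this asymmetry can always be amplified by a suitable triple $(\sigma,\tau,\textrm{side})$ into at least three bonds. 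Verifying this for permutations ``closest'' to $w$ — such as those obtained from $w$ by a single small local modification or by swapping two adjacent descending values — is the delicate computational core that I expect to require the most care, and is where filling the gaps alluded to in the remark preceding the statement will matter.
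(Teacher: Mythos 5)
Your opening reduction already hides the central difficulty of this statement. Replacing $\pi$ by a toric representative $\pi'$ proves the assertion for $\pi'$, not for $\pi$: for the same $\sigma,\tau$ the products $\pi\circ\sigma\circ\tau$ and $\pi'\circ\sigma\circ\tau$ are not torically equivalent, so the bond count does not transfer back automatically. What you obtain this way is exactly Corollary \ref{kitty} (the toric-class version), and converting that into the statement as written requires the Shifting Lemma (Lemma \ref{lemc19ott2013}) to push the powers of $\alpha$ through $[0\,\sigma]$ and $[0\,\tau]$ and produce new block transpositions attached to $\pi$ itself. This is precisely the gap in \cite{EE} that the surrounding text is devoted to: the chapter deliberately proves only Proposition \ref{propEE} and Corollary \ref{kitty}, and then salvages the Eriksson bound via Theorem \ref{torically} rather than asserting the displayed proposition for $\pi$ directly. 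You invoke Theorem \ref{torically}, which is the right tool, but you never say how the normalization is undone at the end.

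The larger problem is that the combinatorial core is not carried out. Everything after the case split on $b(\pi)$ is a description of what a proof would have to do, not a proof. For $b=0$ --- the only substantive case, since an existing bond lets one collapse to a shorter permutation --- you say you will ``identify three candidate pairs that can be joined independently,'' but exhibiting the two block transpositions for an arbitrary bondless permutation is the entire content of the paper's argument: the $2$-move criteria (Criteria \ref{2-move to the right} and \ref{2-move to the left}), the reducible/irreducible dichotomy, the choice of a representative minimizing the length of the segment $0\cdots 1$, and the long case analysis on $x_2=x_1-1$ versus $x_2<x_1-1$. None of that is replaced by anything concrete in your outline; ``if no right-composition works, a left-composition or a mixed one does'' restates the theorem rather than arguing for it. Likewise, for $1\le b\le 2$ the existence of a move that creates a new bond while preserving all existing ones is exactly what must be proved, not assumed. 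As it stands the proposal is a plausible plan with every hard step deferred, and it cannot be accepted as a proof.
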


However, what the authors actually proved in their paper \cite{EE} is the following proposition.
\begin{prop}\label{propEE}
Let $\pi$ be any permutation on $[n]$ other than the reverse permutation. Then $\pi_\circ^\circ$ contains a permutation $\overline\pi$ on $[n]^0$ with $\overline\pi_0=0$ having the following properties. There are block transpositions $\sigma$ and $\tau$ such that either $\overline\pi\circ[0\,\sigma]\circ[0\,\tau],$ or $[0\,\sigma]\circ\overline\pi\circ[0\,\tau],$ or $[0\,\sigma]\circ[0\,\tau]\circ\overline\pi$ has three bonds at least.
\end{prop}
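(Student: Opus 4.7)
The plan is to exploit the toric invariance from Theorem \ref{torically} to reduce to a well-chosen representative of $\pi_{\circ}^{\circ}$ and then to construct $\sigma$ and $\tau$ by a case analysis driven by the positions of the small values. The underlying heuristic is that a single block transposition $[0\,\sigma(i,j,k)]$ alters the adjacencies of $\overline\pi$ at only three places (around the three cut points), as is visible from (\ref{feb9}) and the block-swap picture after (\ref{eq22ott12}); hence two block transpositions create at most six new adjacencies, which is more than enough room to force three bonds provided the positions of the values behave.

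First I would use Theorem \ref{torically} together with the fact that the number of bonds is invariant on each class $\pi_\circ^\circ$ (a direct consequence of Definition \ref{circ}, since bonds are rotation-invariant in both positions and values) to replace $\pi$ by a representative $\overline\pi$ with $\overline\pi_0=0$ in which the first few consecutive values cluster usefully; Proposition \ref{th1} guarantees that conjugating by the toric maps carries block transpositions back to block transpositions, so nothing is lost. I would then label the positions $a,b,c,\ldots$ of $1,2,3,\ldots$ inside $\overline\pi$ and enumerate the pairs $(x,\overline{x})$ that are still not bonds. Because the reverse permutation $w$ is the unique circular arrangement on $[n]^0$ containing no pair of consecutive integers in positional order, the hypothesis $\pi\ne w$ ensures that at least one such pair can be brought adjacent by cutting at prescribed places.

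Next I would split into cases based on the relative positions of $a,b,c$ modulo $n+1$. Representative cases are: (i) two of $a,b,c$ already consecutive, so that one transposition chosen to place $0$ next to $1$ automatically ships $2$ next to $3$, and a second transposition is still free to produce the third bond; (ii) $a,b,c$ pairwise non-adjacent, handled by a ``right-compose'' pattern $\overline\pi\circ[0\,\sigma]\circ[0\,\tau]$ whose six cut points are read off from $a,b,c$; (iii) configurations in which all right-compose candidates collide, in which case one switches to $[0\,\sigma]\circ\overline\pi\circ[0\,\tau]$ or $[0\,\sigma]\circ[0\,\tau]\circ\overline\pi$, the two outer positions acting on values rather than positions and therefore unblocking the obstruction. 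In each case the cut points $i,j,k$ for $\sigma$ and $\tau$ are described explicitly in terms of $a,b,c$, and the bond count is verified by plugging into (\ref{feb9}) and (\ref{cuppoints}), together with the Shifting Lemma \ref{lemc19ott2013} to transfer cut points across the chosen representative.

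The main obstacle is exactly the point where \cite{EE} is thin, namely showing that the case list is exhaustive: one must prove that the only circular arrangement on $[n]^0$ in which no choice of $(\sigma,\tau)$ yields three bonds on any of the three sides is the class of $w$. I would attack this by a pigeonhole argument on the bipartite relation ``value $x$ at position $p$, value $\overline x$ at position $p'$'' across $\pi_\circ^\circ$, combined with the explicit $\alpha$-conjugation formulas from Lemma \ref{22novbis2013} and Corollary \ref{eqbt} to move any candidate obstruction into a canonical form and either produce the required block transpositions or force $\pi\equiv^\circ w$. The technical heart is then the bookkeeping that verifies, uniformly across the cases, that the three newly adjacent pairs are genuinely distinct and that no two of them cancel each other when $\sigma$ and $\tau$ are composed in the specified order.
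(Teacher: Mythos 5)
Your proposal is a plan rather than a proof, and the part you defer is precisely the entire content of the proposition. The paper's argument does not work by counting ``room'' for new adjacencies: the heuristic that two block transpositions create up to six new adjacencies and therefore leave ``more than enough room'' for three bonds is exactly what fails in general, since for many permutations a single move can create only one bond, and the whole difficulty is to show that, after passing to a suitable representative of the toric class, one can always chain a $1$-move with a $2$-move. The paper does this by first proving two explicit pattern criteria (Criterion \ref{2-move to the right} and Criterion \ref{2-move to the left}: a $2$-move exists when $\overline\pi$ contains $x\cdots y\,\overline x\cdots \overline y$, or $x\cdots \underline{x}\,\overline x$, or a positively oriented triple $x\,y\cdots z\,\overline x$), then disposing of reducible permutations by a contraction argument, and finally, for bondless irreducible $\pi$, choosing the representative $\overline\pi=[0\,x_1\cdots x_l\,1\cdots]$ that minimizes the length of the segment from $0$ to $1$ and running a long case analysis on $x_2=x_1-1$ versus $x_2<x_1-1$, with further splittings. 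Your cases (i)--(iii), organized around whether the positions of $1,2,3$ are pairwise adjacent, do not match this structure and there is no indication of how they would be made exhaustive; the ``pigeonhole argument on the bipartite relation'' is named but not carried out, and you yourself identify it as the main obstacle.

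There is also a concrete false statement: the reverse permutation is not ``the unique circular arrangement on $[n]^0$ containing no pair of consecutive integers in positional order.'' Bondless permutations abound (for instance $[0\,2\,4\,1\,3]$ for $n=4$), and the paper's proof explicitly restricts to the bondless case as the hard one. The hypothesis $\pi\neq w$ is not used to guarantee the existence of a bond; it is used, via the minimality and orientation arguments in the irreducible case, to guarantee that one of the $2$-move patterns can be reached after one preparatory $1$-move. Without the criteria, the reduction to a canonical representative, and the exhaustive case list, the proposal does not establish the proposition.
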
An important consequence of Proposition \ref{propEE} is the following result.
\begin{cor}
\label{kitty}
Let $\pi$ be any permutation on $[n]$ other than the reverse permutation. Then there exist a permutation $\pi'$ on $[n]$ torically equivalent to $\pi$ and block transpositions $\sigma$ and $\tau$ such that either $\pi'\circ\sigma\circ\tau,$ or $\sigma\circ\pi'\circ\tau,$ or $\sigma\circ\tau\circ\pi'$ has three bonds at least.
\end{cor}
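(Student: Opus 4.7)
The plan is to derive Corollary \ref{kitty} as essentially a direct translation of Proposition \ref{propEE} from the setting of $\Sym_n^0$ back to $\Sym_n$. Apply Proposition \ref{propEE} to $\pi$: we obtain a permutation $\overline{\pi}\in\pi_\circ^\circ$ with $\overline{\pi}_0=0$, together with block transpositions $\sigma,\tau\in S_n$, such that one of the products
\[
\overline{\pi}\circ[0\,\sigma]\circ[0\,\tau],\qquad [0\,\sigma]\circ\overline{\pi}\circ[0\,\tau],\qquad [0\,\sigma]\circ[0\,\tau]\circ\overline{\pi}
\]
has at least three bonds. Since $\overline{\pi}_0=0$, write $\overline{\pi}=[0\,\pi']$ for a uniquely determined $\pi'\in\Sym_n$. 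Because $[0\,\pi']=\overline{\pi}$ lies in $\pi_\circ^\circ$, the definition of toric equivalence in $\Sym_n$ (given after \eqref{circ_pw}) yields that $\pi'$ is torically equivalent to $\pi$; this produces the required permutation $\pi'$ asserted by the corollary.

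The next step is the translation of the compositions. For any $\mu,\nu\in\Sym_n$, the functional definition of composition gives $([0\,\mu]\circ[0\,\nu])(0)=0$ and $([0\,\mu]\circ[0\,\nu])(t)=\mu_{\nu_t}=(\mu\circ\nu)_t$ for $1\leq t\leq n$, since $\nu_t\in[n]$. Hence $[0\,\mu]\circ[0\,\nu]=[0\,\mu\circ\nu]$, and iterating,
\[
\overline{\pi}\circ[0\,\sigma]\circ[0\,\tau]=[0\,\pi'\circ\sigma\circ\tau],\quad [0\,\sigma]\circ\overline{\pi}\circ[0\,\tau]=[0\,\sigma\circ\pi'\circ\tau],\quad [0\,\sigma]\circ[0\,\tau]\circ\overline{\pi}=[0\,\sigma\circ\tau\circ\pi'].
\]

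Finally, I would invoke the observation (recorded in the excerpt just before Proposition \ref{wrong}) that the number of bonds of $[0\,\rho]$ coincides with the number of bonds of $\rho$: indeed, the bonds of $\pi$ in $\Sym_n$ are the pairs $(x,x+1)$ in the augmented sequence $0\,\pi_1\cdots\pi_n\,n+1$, whereas the (circular) bonds of $[0\,\pi]\in\Sym_n^0$ are the pairs $(x,\overline{x})$ in its cyclic sequence; the correspondence $\pi_n=n\leftrightarrow(\pi_n,0)$ makes the two counts identical. Consequently, one of $\pi'\circ\sigma\circ\tau,\ \sigma\circ\pi'\circ\tau,\ \sigma\circ\tau\circ\pi'$ has at least three bonds, which is exactly what Corollary \ref{kitty} claims.

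The proof is essentially bookkeeping; there is no real obstacle once the bookkeeping is laid out. The only point requiring minor care is to confirm that the \emph{count} (not merely the existence) of bonds is preserved when one passes between $\rho$ in $\Sym_n$ and $[0\,\rho]$ in $\Sym_n^0$, so that the quantitative statement \emph{three bonds at least} transfers from the $\Sym_n^0$ setting of Proposition \ref{propEE} to the $\Sym_n$ setting of Corollary \ref{kitty}.
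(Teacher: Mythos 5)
Your proposal is correct and follows essentially the same route as the paper: the paper likewise deduces the corollary directly from Proposition \ref{propEE} by writing $\overline{\pi}=[0\,\pi']$ with $\pi'$ torically equivalent to $\pi$, identifying the product in $\Sym_n^0$ with $[0\,\pi'\circ\sigma\circ\tau]$ (and similarly in the other two cases), and using the fact that $[0\,\rho]$ has as many bonds as $\rho$. Your write-up is in fact more careful than the paper's two-sentence remark, particularly in verifying $[0\,\mu]\circ[0\,\nu]=[0\,\mu\circ\nu]$ and in checking that the bond \emph{count}, not just the existence of a bond, is preserved.
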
Assume that the first case of Proposition \ref{propEE} occurs. Observe that $\overline\pi\circ[0\,\sigma]\circ[0\,\tau]=[0\,\pi']$. Since $[0\,\pi']$ has as many
bonds as $\pi'$ does, Corollary \ref{kitty} holds. The authors showed in \cite{EE} that Proposition \ref{wrong} together with other arguments yields the following upper bound on the block transposition diameter.
\begin{thm}
\label{EEmainbis}{\rm{[Eriksson Bound]}}
For $n\geq 9$,
$$d(n)\leq \left\lfloor\frac{2n-2}{3}\right\rfloor.$$
\end{thm}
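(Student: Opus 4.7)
My plan is to prove the Eriksson bound by strong induction on $n$, using Corollary \ref{kitty} together with a standard \emph{bond contraction} argument. The key arithmetic behind the bound is that two block transpositions buy us three bonds, each bond being worth a unit reduction in the effective length, so the amortized cost is $2/3$ per position, which matches $\lfloor (2n-2)/3 \rfloor$.

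Base of the induction: I would verify the inequality directly for $n\in\{9,10,11\}$ using Table \ref{table:tab3}, where $d(9)=5$, $d(10)=d(11)=6$, all agreeing with $\lfloor(2n-2)/3\rfloor=5,6,6$ respectively. Inductive step: assume the bound holds for every $9\le m<n$ with $n\ge 12$, and pick an arbitrary $\pi\in\Sym_n$. If $\pi=w$, then by Proposition \ref{w}, $d(\pi)=\lfloor(n+2)/2\rfloor$, and a direct check shows $\lfloor(n+2)/2\rfloor\le\lfloor(2n-2)/3\rfloor$ for $n\ge 9$, so the bound holds. Otherwise, Corollary \ref{kitty} produces $\pi'$ torically equivalent to $\pi$ and block transpositions $\sigma,\tau\in S_n$ such that one of $\pi'\circ\sigma\circ\tau$, $\sigma\circ\pi'\circ\tau$, $\sigma\circ\tau\circ\pi'$, call it $\rho$, has at least three bonds. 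By Theorem \ref{torically}, $d(\pi)=d(\pi')$, and in each of the three cases $\pi'$ differs from $\rho$ by two block transpositions (using that $S_n$ is inverse closed by Corollary \ref{matnov2}), so $d(\pi)=d(\pi')\le d(\rho)+2$.

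The heart of the argument is then the bond contraction. If $\rho\in\Sym_n$ carries $b\ge 3$ bonds in the extended sequence $0\,\rho_1\cdots\rho_n\,(n+1)$, I would construct the \emph{contracted} permutation $\tilde\rho\in\Sym_{n-b}$ by collapsing every maximal run of consecutive integers to a single entry and relabeling monotonically; I would then show $d(\rho)=d(\tilde\rho)$. The inequality $d(\tilde\rho)\le d(\rho)$ is obtained by lifting a minimum factorization of $\tilde\rho$ to one of $\rho$ using the same cut-point structure (contracted runs are never split, because the cut points line up with run boundaries), and the reverse inequality $d(\rho)\le d(\tilde\rho)$ follows from the observation that in any optimal sorting of $\rho$, cut points that split a bond can be shifted to the boundary of the run without increasing the number of block transpositions. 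Granting this, $d(\rho)\le d(n-b)\le d(n-3)$, where the second inequality is the trivial monotonicity $d(m-1)\le d(m)$ (appending $m$ to a permutation on $[m-1]$ preserves the distance).

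Combining these facts and applying the inductive hypothesis to $n-3\ge 9$, I obtain
\begin{equation*}
d(\pi)\le 2+d(n-3)\le 2+\left\lfloor\frac{2(n-3)-2}{3}\right\rfloor=\left\lfloor\frac{2n-2}{3}\right\rfloor,
\end{equation*}
completing the induction. The main obstacle I anticipate is the rigorous justification of the contraction identity $d(\rho)=d(\tilde\rho)$, because although it is intuitively clear that consecutive pairs act as a single symbol, one must argue carefully that no optimal factorization is forced to slice through a bond; this is precisely the lemma left implicit in \cite{EE}. A secondary, purely bookkeeping point is tracking exactly which of the three positions ($\pi'\circ\sigma\circ\tau$, etc.) Corollary \ref{kitty} delivers, and showing that in every case $d(\pi')\le d(\rho)+2$, which follows from the left-invariance in Proposition \ref{distance} together with inverse-closedness of $S_n$.
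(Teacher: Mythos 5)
Your proposal is correct and follows essentially the same route as the paper: dismiss the reverse permutation via Proposition \ref{w}, invoke Corollary \ref{kitty} and Theorem \ref{torically} with the triangle inequality to get $d(\pi)\le d(\rho)+2$, collapse three bonds to land in $\Sym_{n-3}$, and obtain the recursion $d(n)\le d(n-3)+2$, which you then unroll by induction (the paper delegates this last step to the argument of \cite[Theorem 4.2]{EE}). Note that only the easy direction $d(\rho)\le d(\tilde\rho)$ of your contraction identity is needed, and by collapsing exactly three bonds you can avoid relying on monotonicity of the diameter.
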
 Actually, as it was pointed out by Elias and Hartman in \cite{EH}, Proposition \ref{propEE} only ensures the weaker bound
$\left\lfloor\frac{2n}{3}\right\rfloor.$ Nevertheless, Proposition \ref{wrong} and Corollary \ref{kitty} appear rather similar, indeed they coincide in the toric class. This explains why the Eriksson upper bound still holds; see Section \ref{tiger}. In our thesis, we complete the proof of the Eriksson bound. We show indeed that the Eriksson bound follows from Proposition \ref{propEE} together with Theorem \ref{torically}. We also revisit the proof of Proposition \ref{propEE} giving more technical details and filling some gaps.

\section{The proofs of the main theorems}\label{invariance}

Now, we are in a position to prove Theorem \ref{torically}. Take two torically equivalent permutations $\pi$ and $\pi'$ on $[n]$.
Let $d(\pi)=k$ and let $\pi=\sigma_1\circ\cdots \circ \sigma_k$ with $\sigma_1,\ldots,\sigma_k\in S_n$. Then
$$[0\,\pi]=[0\,\sigma_1] \circ \cdots \circ [0\,\sigma_k].$$By (\ref{toric_zero}), there exists an integer $r$ with $0\le r \le n$ such that
\begin{equation}
\label{eq323jan}
[0\,\pi']=\alpha^{-\pi_r}\circ [0\,\sigma_1] \circ \cdots \circ [0\,\sigma_k] \circ \alpha^r.
\end{equation}Lemma \ref{lemc19ott2013} applied to $[0\,\sigma_k]$ allows us to shift $\alpha^r$ to the left in (\ref{eq323jan}), in the sense that $[0\,\sigma_k]\circ\alpha^{r}$ is replaced by $\alpha^t\circ[0\,\rho_k]$ with $t=-(\sigma_k)_r$ and $\rho_k\in S_n$. Repeating this $k$ times yields
$$[0\,\pi']=\alpha^s\circ[0\,\rho_1]\circ \cdots \circ [0\,\rho_k],$$ for some integer $0\le s \le n$. Actually, $s$ must be $0$ as $\alpha^s$ can fix $0$ only for $s=0$. Then $\pi'=\rho_1\circ\cdots\circ\rho_k$, and there exist $r_1,\cdots,r_k$ integers with $0\le r_i \le n$ such that
$$\pi'=\textsf{f}_{r_1}(\sigma_1)\circ \textsf{f}_{r_2}(\sigma_2)\circ\cdots\circ\textsf{f}_{r_k}(\sigma_k).$$
Therefore, $d(\pi')\le k=d(\pi)$. By inverting the roles of $\pi$ and $\pi'$, we also obtain $d(\pi)\le d(\pi')$. Hence the claim in Theorem \ref{torically} follows.

To prove Theorem \ref{principle}, it suffices to show that $d(\nu^{-1}\circ\pi)=d(\mu^{-1}\circ\varpi)$, by the left-invariance of the block transposition distance; see Proposition \ref{distance}. Let $d(\nu^{-1}\circ\pi)=h$ and let $\sigma_1,\ldots,\sigma_h\in S_n$ such that
\begin{equation}\label{a1}
[0\,\nu^{-1}]\circ[0\,\pi]=[0\,\sigma_1]\circ\cdots \circ [0\,\sigma_h].
\end{equation}Since $\textsf{f}_r$ takes $\pi$ to $\varpi$ and $\textsf{f}_l$ $\nu$ to $\mu$, we obtain $\alpha^{s}\circ[0\,\varpi]\circ\alpha^{-r}=[0\,\pi]$ and $\alpha^{l}\circ[0\,\mu^{-1}]\circ\alpha^{-t}=[0\,\nu^{-1}],$ where $r$ is an integer with $0\leq r \leq n$, $t=\nu_l$, and $s=\pi_r$, by (\ref{eq2oct9}). Hence
\begin{equation}\label{a2}
[0\,\nu^{-1}]\circ[0\,\pi]=\alpha^{l}\circ[0\,\mu^{-1}]\circ\alpha^{-t}\circ\alpha^{s}\circ[0\,\varpi]\circ\alpha^{-r}.
\end{equation} Since $\nu_l=\pi_r$, then $[0\,\mu^{-1}]\circ[0\,\varpi]=\alpha^{-l}\circ[0\,\sigma_1]\circ\cdots \circ[0\,\sigma_h]\circ\alpha^{r}$ follows from (\ref{a1}) and (\ref{a2}). By Lemma \ref{lemc19ott2013}, this may be reduced to $[0\,\mu^{-1}]\circ[0\,\varpi]=\alpha^q\circ[0\,\sigma'_1]\circ\cdots\circ[0\,\sigma'_h]$ for some integer $q$ with $0\leq q\leq n$ and $\sigma'_1,\ldots,\sigma'_h\in S_n$. As we have seen in the proof of Theorem \ref{torically}, $q$ must be $0$, and $d(\mu^{-1}\circ\varpi)=d(\nu^{-1}\circ\pi)$. Therefore, the claim  in Theorem \ref{principle} follows.

The proof of Proposition \ref{propEE} is constructive and involves several cases. In the following section, we prove a result on $2$-moves claimed without a proof in \cite{EE} and useful to our aim.

\section{Criteria for the existence of a 2-move}

For every permutation $\pi$, our algorithm will provide two block transpositions $\sigma$ and $\tau$ together with a permutation $\overline\pi\in \pi^\circ_\circ$ so that either $\overline\pi\circ[0\,\sigma]\circ[0\,\tau],$ or $[0\,\sigma]\circ\overline\pi\circ[0\,\tau],$ or $[0\,\sigma]\circ[0\,\tau]\circ\overline\pi$ has three bonds at least. For the seek of the proof, $\pi$ is assumed to be bondless, otherwise all permutations in its toric class has a bond, and two more bonds by two block transpositions can be found easily.

A \emph{k-move (to the right)} of $\overline\pi\in{\rm{Sym_n^0}}$ is a block transposition $\overline\sigma$ on $[n]^0$ such that $\overline\pi\circ\overline\sigma$ has (at least) $k$ more bonds than $\overline\pi$. A block transposition $\overline\sigma$ is a \emph{k-move to the left} of $\overline\pi$ if $\overline\sigma\circ\overline\pi$ has (at least) $k$ more bonds than $\overline\pi$.
\begin{crt}\label{2-move to the right}
A $2$-move of $\overline\pi\in{\rm{Sym_n^0}}$ exists if one of the following holds:
\begin{itemize}
\item[\rm(i)] $\overline\pi=[\cdots x\cdots y\,\overline x\cdots \overline y\cdots]$;
\item[\rm(ii)] $\overline\pi=[\cdots x\cdots \underline x\,\overline x\cdots].$
\end{itemize}
\end{crt}
\begin{proof}
Each of the following block transpositions:
$$x|\cdots y|\overline x\cdots |\overline y,\quad|x|\cdots \underline x|\overline x$$gives two new bonds for (i) and (ii), respectively.\qquad\end{proof}

In a permutation an ordered triple of values $x\cdots y\cdots z$ is \emph{positively oriented} if either $x<y<z$, or $y<z<x$, or $z<x<y$ occurs.
\begin{crt}\label{2-move to the left}
Let $\overline\pi$ be a permutation on $[n]^0$. A $2$-move to the left of $\overline\pi$ occurs if one of the following holds.
\begin{itemize}
\item[\rm(i)] $\overline\pi=[\cdots x\,y\dots z\,\overline x\cdots]$ and $x,y,z$ are positively oriented,
\item[\rm(ii)] $\overline\pi=[\cdots x\,y\,\overline x\cdots].$
\end{itemize}In particular, the following block transpositions on $[n]^0$ create a $2$-move to the left of $\overline\pi$.
\begin{itemize}
\item[]For {\rm{(i)}},
\item[\rm(I)] $\overline\sigma(x,x+z-y+1,z)$ if $x<y<z$;
\item[\rm(II)] $\overline\sigma(y-1,y-1+x-z,x)$ if $y<z<x$;
\item[\rm(III)] $\overline\sigma(z,z+y-1-x,y-1)$ if $z<x<y$.
\item[]For \rm{(ii)},
\item[\rm(IV)] $\overline\sigma(x,x+1,y)$ if $x<y$;
\item[(V)] $\overline\sigma(y-1,x-1,x)$ if $y<x$.
\end{itemize}
\end{crt}
\begin{proof}
Let $\overline\sigma=\overline\sigma(a,b,c)$ for any $a,\,b,\,c$ with $-1\leq a<b<c\leq n$. Then
$$\overline\sigma_t =\left\{\begin{array}{ll}
t, &  0\leq t\leq a\quad c+1\leq t\leq n,\\
t+ b- a, & a+1\leq t\leq c- b+ a,\\
t+ b- c, & c-b+a+1\leq t\leq c
\end{array}\right.$$follows from (\ref{feb9}). In particular, by (\ref{cuppoints}),
\begin{align}\label{tras}
\overline\sigma_a&= a;&\overline\sigma_{a+1}&=b+1;&\overline\sigma_{a+c-b}&=c;\\
\overline\sigma_{a+c-b+1}&=a+1;&\overline\sigma_{c}&=b;&\overline\sigma_{c+1}&=c+1.\notag
\end{align}
Our purpose is to determinate $a,b,c$ so that $\overline\sigma$ is a $2$-move to the left of $\pi$. For this, $\overline\sigma$ must satisfy the following relations:
\begin{equation}\label{2-move}
\overline\sigma_y=\overline\sigma_x+1;\quad\overline\sigma_{\overline x}=\overline\sigma_z+1.
\end{equation}

If the hypothesis in case (I) is satisfied, take $x$ for $a$. By (\ref{tras}), we obtain both $\overline\sigma_x=a$ and $\overline\sigma_{\overline x}=b+1$. This together with (\ref{2-move}) gives $\overline\sigma_y=a+1$ and $\overline\sigma_z=b$. By (\ref{tras}), we get
$$\left\{\begin{array}{l}
a=x\\
c=z\\
b=a+c-y+1.
\end{array}\right.$$Since $\pi$ is bondless, here $x<y-1$ may be assumed. This together with $y<z$ gives $x<x+z-y+1<z$, whence the statement in case (I) follows. To deal with case (IV) it is enough to use the same argument after switching $z$ and $y$.

If the hypothesis in case (II) is satisfied, take $\overline\sigma_x=b$ and $\overline\sigma_z=c$. This choice together with (\ref{2-move}) gives $\overline\sigma_y=b +1$ and $\overline\sigma_{\overline x}=c+1$. By (\ref{tras}), we have
$$\left\{\begin{array}{l}
a=y-1\\
c=x\\
b=a+c-z.\\
\end{array}\right.$$Since $y-1<y-1+x-z<x$, the statement in case (II) follows. Case (V) may be settled with the same argument after switching $z$ and $y$.

In case (III), let $\overline\sigma_x=c$ and $\overline\sigma_z =a$. This together with (\ref{2-move}) gives $\overline\sigma_y=c +1$ and $\overline\sigma_{\overline x}=a+1$. By (\ref{tras}), we obtain
$$\left\{\begin{array}{l}
a=z\\
c=y-1\\
b=a+c-x.\\
\end{array}\right.$$Since $z<z+y-1-x<y-1$, the statement holds.
\end{proof}

\begin{rem}\label{fix}
{\emph{Note that the block permutation on $[n]^0$ appearing in case (I), (III), and (IV) of Criterion \ref{2-move to the left} fixes $0$. In case (II) and (V), this occurs if and only if $y\neq 0$.}}
\end{rem}

\section{Reducible case}

For the proof of Proposition \ref{propEE}, we begin by constructing the required moves for reducible permutations. A permutation $\pi$ on $[n]$ is \emph{reducible} if for some $k$ with $0<k<n$ the segment $0\cdots \pi_k$ contains all values $0,\ldots,k$ while the segment $\pi_k\cdots n$ contains all values $k,\ldots, n$. In particular, $\pi_k=k$ is required. It is crucial to note that a reducible permutation collapses into a smaller permutation by erasing the segment $\pi_k\cdots \pi_n$. If a reverse permutation is produced in this way, we proceed by contracting the segment $0\cdots \pi_k$ to $0$. There may be that both contractions produce a reverse permutation. This only occurs when
\begin{equation}\label{red}
[0\,\pi]=[0\,k-1\, k-2 \cdots 1 \, k \, n \, n-1 \cdots k+1].
\end{equation}
After carrying out the $1$-move $k-1| k-2 \cdots 1 \, k \,n| n-1 \cdots  k+1|,$
Criterion \ref{2-move to the left} (III) applies to $k-1 \,n-1 \cdots 1 \, k$ whence Proposition \ref{propEE} follows in this case.

To investigate the other cases we show that a permutation of the form (\ref{red}) occurs after a finite number of steps. For this purpose, let $[0\,\pi]^0=[0\,\pi]$ and let $[0\,\pi]^l=[0\,\pi_1^l \cdots \pi_{n_l}^l]$ for any integer nonnegative integer $l$. First, we prove that reducing $[0\,\pi]^l$ diminishes the length of $[0\,\pi]^l$ by at least three. Observe that a reduced permutation $[0\,\pi]^l$ is bondless since $[0\,\pi]$ is bondless. As $k_l<n_l$, erasing the segment $\pi^l_{{n_l}-1}\,\pi^l_{n_l}$ gives $\pi^l_{n_l-1}=k_l$ and $\pi^l_{n_l}=k_l+1.$ If we contract $0\,\pi_1^l\,\pi_2^l$ into $0$, we obtain $k_l=\pi^l_2=2$ and $\pi^l_1=1.$ Actually, none of these possibilities can occur as $[0\,\pi]^l$ is bondless. Nevertheless, we are able to reduce the length of $[0\,\pi]^l$ by exactly three. For instance, let $[0\,\pi]^l=[0\,2\,1\,3\cdots]$. After contracting $l\leq\lfloor n/3\rfloor$ times the following four cases:
\begin{itemize}
\item[\rm(i)] $[0\,\pi]^l=[0]$;
\item[\rm(ii)] $[0\,\pi]^l=[0\,1]$;
\item[\rm(iii)] $[0\,\pi]^l=[0\,2\,1]$;
\item[\rm(iv)] $[0\,\pi]^l=[0\,1\,2]$
\end{itemize}
hold. In case (i) and (ii), we have $k_{l-1}=1$ and $k_{l-1}=2$, respectively. As $[0\,\pi]^l$ is bondless, only case (iii) occurs, a contradiction, since collapsing any segment of $[0\,\pi]^{l-1}$ does not produce a reverse permutation. Therefore, the assertion follows.

\section{Irreducible case}

It remains to prove Proposition \ref{propEE} when $\pi$ is bondless and irreducible. We may also assume that no permutation $\overline\pi\in\pi_\circ^\circ$ satisfies either Criterion \ref{2-move to the right} or Criterion \ref{2-move to the left} with a block transposition fixing $0$. Otherwise, getting a further $1$-move is trivial.

Up to toric equivalence, choose $\overline\pi$ fulfilling the minimality condition on $0\cdots 1$, that is, the shortest sequence $m\cdots \overline{m}$ in $\overline\pi$ occurs for $m=0$. To prove that such a permutation exists, we start with $[0\,\pi]=[0\cdots \pi_u\cdots \pi_v \cdots n]$, where $\pi_u=m, \pi_v=\overline{m}$. By (\ref{circ_pw}), $\pi$ is torically equivalent to $\pi'$ on $[n]$, where $\pi'$ is defined by $\pi'_x=\pi'_{x+u}-m$ for every integer $x$ with $1\le x \le n$, and the indices are taken mod$(n+1)$. Let $\overline \pi=[0\,\pi']$. Then $\overline\pi_0=0$ and $\overline\pi_{v-u}=\pi_v-m=1$.

We begin by observing that the minimality condition on $0\cdots 1$ always rules out the case $\overline\pi=[\cdots x\cdots \overline x\cdots 1\cdots]$. The absence of bonds rules out the extremal case $\overline\pi=[0\,1\cdots]$, while the absence of a $2$-move fixing $0$ makes it possible to avoid $\overline\pi=[0\,x_1\,1\cdots]$ by applying Criterion \ref{2-move to the left} (IV) to $0\,x_1\,1$. Therefore, we may write $\overline\pi$ in the form $[0\,x_1\cdots x_l\,1 \cdots]$ with $l\geq 2$. Note that $x_1>x_l$, otherwise Criterion \ref{2-move to the left} (I) applies to $0\,x_1\cdots x_l\, 1$, whence $\overline x_1$ is on the right of $1$ when $x_1\neq n$. Now, one of the $1$-move listed below
\begin{itemize}
\item[\rm(i)] $0|x_1\,x_2\cdots x_l|1\cdots|\overline x_1$, for $x_1\neq n$;
\item[\rm(ii)] $0|x_1\cdots x_l|1\cdots x_n|$, for $x_1=n,\,x_n\neq 1$;
\item[\rm(iii)] $0|x_1\cdots x_l|1|$, for $x_1=n,\,x_n= 1$
\end{itemize}
turns $\overline\pi$ in one of the following forms:
\begin{itemize}
\item[\rm(I)] $[0\cdots x_1\, x_2\cdots x_l\,\overline x_1\cdots]$, for $x_1\neq n,\,l\neq 2$;
\item[\rm(II)] $[0\cdots x_1\,x_2\,\overline x_1\cdots]$, for $x_1\neq n,\,l=2$;
\item[\rm(III)] $[0\cdots x_1\,x_2\cdots x_l]$, for $x_1=n,\,x_n\neq 1,\,l\neq 2$;
\item[\rm(IV)] $[0\cdots x_1\,x_2]$, for $x_1=n,\,x_n\neq 1,\,l=2$;
\item[\rm(V)] $[0\,1\,x_1\,x_2\cdots x_l]$, for $x_1=n,\,x_n= 1,\,l\neq2$;
\item[\rm(VI)] $[0\,1\,x_1\, x_2]$, for $x_1=n,\,x_n= 1,\,l=2$.
\end{itemize}Unless $x_1>x_2>x_l$, Proposition \ref{propEE} holds. In fact, there exists a permutation in $\pi_\circ^\circ$ that satisfies one of the hypotheses of Criterion \ref{2-move to the left}. More precisely, we may apply either Criterion \ref{2-move to the left} (II) or Criterion \ref{2-move to the left} (III) in case (I) and Criterion \ref{2-move to the left} (II) in case (III). For $l=2$, the statement follows from Criterion \ref{2-move to the left} (V). Some block transpositions in Criterion \ref{2-move to the left} (II) and \ref{2-move to the left} (V) may not fix $0$. This cannot actually occur, since we use block transpositions on $[n]^0$ of the form $[0\,\sigma(i,j,k)]$ in all cases; see Remark \ref{fix}.

Therefore, we may assume $\overline\pi=[0\,x_1\,x_2\cdots x_l\,1\cdots]$ with $x_1>x_2>x_l$. Two cases are treated separately according as $x_2=x_1-1$ or $x_2<x_1-1$.

\subsection{Case \texorpdfstring{$x_2= x_1-1$}{Lg}}\label{casee}

If $\overline\pi=[0\,x_1\,x_2\cdots x\cdots1\cdots \overline x\cdots]$ occurs for some $x$, then the $1$-move $$x_1|x_2\cdots x|\cdots 1\cdots|\overline x$$ turns $\overline\pi$ into $[0\,x_1\cdots 1\cdots x_2\cdots]$. After that, the existence of a $2$-move is ensured by Criterion \ref{2-move to the right}, whence Proposition \ref{propEE} holds.

Therefore, we may assume that if $x$ ranges over $x_3,x_4,\ldots,x_i,\ldots x_l$, then $\overline x$ is on the left of $x$. At each stage two cases arise depending upon whether $\overline x_i=x_{i-1}$ or $x_i=n$, where $l\neq i\neq 2$.

\subsubsection{Case $\overline x_i=x_{i-1}$}\label{imp}

Note that $x_1\,x_2\cdots x_l$ is a reverse consecutive sequence, and $n$ is on the right of $1$. As $\pi$ is bondless, two cases arise according as either $1<x_n<x_l$ or $x_1<x_n<n$.

In the former case, carrying out the $1$-move $|\overline {x}_l\,x_l\,1\cdots n|\cdots x_n|$, the resulting permutation is $[\cdots x_n\,\overline {x}_l\,x_l\,1\cdots]$. In the latter case, use the $1$-move $|x_1\cdots|1\cdots n|$ to obtain $[0\cdots n\,x_1\cdots x_n]$. In both cases, Proposition \ref{propEE} follows from Criterion \ref{2-move to the left} (II), applied to a block transposition that fixes $0$; see Remark \ref{fix}. More precisely, let $\overline\pi\circ[0\,\sigma]$ be the permutation obtained in both cases, and let $\overline\pi\circ[0\,\sigma]\circ\alpha^r$ with $1\leq r\leq n$ be the permutation that satisfies the hypothesis of Criterion \ref{2-move to the left} (II). Then $[0\,\tau]\circ\overline\pi\circ[0\,\sigma]\circ\alpha^r$ has three bonds at least. By Lemma \ref{lemc19ott2013}, there exists an integer $s$ with $1\leq s\leq n$ and a block transposition $\sigma'$ on $[n]$ such that $$[0\,\tau]\circ\overline\pi\circ[0\,\sigma]\circ\alpha^r=[0\,\tau]\circ\overline\pi\circ\alpha^s\circ[0\,\sigma'].$$ Since $\overline\pi\circ\alpha^s\in \pi_\circ^\circ$, Proposition \ref{propEE} follows.

\subsubsection{Case $x_1=n$}\label{matnov18}

In this case there exists $k$ with $2\leq k\leq n-2$ such that
$$\overline\pi=[0\,n\,n-1\cdots n-(k-2)\,n-(k-1)\,1\cdots n-k \cdots].$$Since $\pi$ is not the reverse permutation, $2$ is on the right of $1$ whence $2\leq k\leq n-2$. So two cases arise depending on the position of $2$ with respect to $n-k$.

If $2$ is on the left of $n-k$, then the $1$-move $|n-(k-1)\,1\,y\cdots |2\cdots n-k|$ turns $\overline\pi$ into $[\cdots n-(k-2)\,2\cdots 1\,y\cdots]$. As all integers $x$ with $n-(k-2)\leq x\leq n$ are in $0\cdots1$, this yields $y<n-(k-2)$. So Criterion \ref{2-move to the left} (I) applies to a permutation in the circular class of $[\cdots n-(k-2)\,2\cdots 1\,y\cdots]$, and the claim follows as in Section \ref{matnov18}.

If $2$ is on the right of $n-k$, consider the $1$-move $$|n-(k-2)\, n-(k-1)\, 1|\cdots n-k\cdots z|2.$$ If $z=n-k$, then the above transposition takes our permutation to $$[\cdots n-k\,n-(k-2)\,n-(k-1)\cdots].$$ The existence of a $2$-move is ensured by Criterion \ref{2-move to the left} (IV). Otherwise $z<n-k$, and Criterion \ref{2-move to the left} (II) applies to a permutation in the circular class of $$[\cdots z\,n-(k-2)\,n-(k-1)\,1\cdots]$$and a block transposition that fixes $0$; see Remark \ref{fix}. Hence the claim follows as in Section \ref{matnov18}.

\subsubsection{Case \texorpdfstring{$x_i=n$}{Lg}}

As we have seen before, $\overline x$ is on the left of $x$ for every $x$ in $0\cdots 1$. Therefore, when $x_i=n$, $x_{j-1}=\overline{x}_j$ for $1\leq j\leq i-1$, but it does not necessarily holds for all $j$ with $i\leq j\leq l$. However, there exists $h$ with $1\leq h \leq n-1-x_1$ such that for each $x\neq 0$ on the left of $x_l$ either $\overline{x}_l\leq x \leq x_1$ or $n-(h-1)\leq x \leq n$ occurs. Both these subsequences are decreasing by our minimality condition on $0\cdots 1$.

First, suppose the existence of $k$ with $3 \leq k\leq h$ so that
$$\overline\pi=[0\,x_1\,\underline {x_1}\cdots x_t\,\underline {x_t}\,n\,n-1\,n-2\cdots n-(k-3)\,n-(k-2)\,n-(k-1)\,\underline{\underline {x_t}}\cdots 1\cdots],$$
where $\overline {x}_l\leq \underline {x_t} \leq \underline {x_1}$ and $\underline{\underline x}$ stands for $\underline y$ with $y=\underline x$. Now, one of the following $1$-move:
$$\begin{array}{ll}
x_t|\underline {x_t}\,n\cdots n-(k-3)|n-(k-2)\,n-(k-1)\,\underline{\underline{x_t}}|,& k>3;\\
x_t|\underline {x_t}\,n|n-1\,n-2\,\underline{\underline{x_t}}|,& k=3
\end{array}$$turns $\overline\pi$ into $[\cdots x_t\,n-(k-2)\,n-(k-1)\,\underline{\underline{x_t}}\cdots]$. Therefore, Criterion \ref{2-move to the left} (II) applies to a permutation in the circular class of $[\cdots x_t\,n-(k-2)\,n-(k-1)\,\underline{\underline{x_t}}\cdots]$ and a block transposition fixing $0$; see Remark \ref{fix}. Hence the assertion follows as in Section \ref{matnov18}.

Note that case $\overline\pi=[\cdots\underline {x_t}\,n\,n-1\,\underline{\underline {x_t}}\cdots 1\cdots]$ does not occur. In fact, the existence of a $2$-move of a permutation in the toric class $\pi_\circ^\circ$ and a block transposition fixing $0$ is ensured by Criterion \ref{2-move to the left} (II) and Remark \ref{fix}. Therefore, we may assume that $$\overline\pi=[\cdots\underline {x_t}\,n\,\underline{\underline {x_t}}\cdots].$$ Now, a $2$-move of a permutation in the toric class of $\pi^\circ_\circ$ with a block transposition fixing $0$ is ensured by Criterion \ref{2-move to the left} (IV), a contradiction.

\subsection{Case \texorpdfstring{$x_2< x_1-1$}{Lg}}

If $\underline {x_1}$ is on the right of $1$, then there exists a $2$-move of $[0\,x_1\cdots 1 \cdots \underline {x_1}\cdots]$ by Criterion \ref{2-move to the right}, a contradiction. Therefore, $\underline {x_1}$ is on the left of $1$. We look for the biggest integer $k$ with $2\leq k\leq l-1$ such that
\begin{equation}\label{diseq}
x_1-(k-1)>x_2-(k-2)>\cdots>x_i-(k-i)>\cdots> x_{k-1}-1>x_k>x_l
\end{equation}holds. Note that (\ref{diseq}) holds for $k=2$ by $x_1-1>x_2>x_l$. Suppose that $\overline {x}_k$ is on the left of $1$ with $x_i=\overline {x}_k$ for some $i$. Then $1\leq i\leq k-1$ and $x_i-1\geq x_i-(k-i)>x_k$, a contradiction. Therefore, $\overline {x}_k$ must be on the right of $1$. The $1$-move $0|x_1\cdots x_l|1\cdots |\overline {x}_k$ turns $\overline\pi$ into $[0\,1\cdots x_k\,x_{k+1}\cdots x_l\,\overline {x}_k\cdots],$ and the following three possibilities arise:
\begin{itemize}
\item[\rm(i)] $x_l<x_k<x_{k+1}$;
\item[\rm(ii)] $x_{k+1}<x_l<x_k$;
\item[\rm(iii)] $x_l<x_{k+1}<x_k$.
\end{itemize}Proposition \ref{propEE} follows from Criterion \ref{2-move to the left} (III) in case (i) and from Criterion \ref{2-move to the left} (II) in case (ii), applied to a block transposition fixing $0$; see Remark \ref{fix}.

In the remaining case, adding $1$ to each side in (\ref{diseq}) gives $x_{i-1}-(k-i)> x_{k-1},$ where $1<i<k$. If $\overline {x}_{k-1}$ is on the left of $1$ and $x_{i-1}=\overline {x}_{k-1}$ for some $i>1$, then $x_{i-1}-1> x_{i-1}-(k-i)> x_{k-1}$, a contradiction. If $\overline {x}_{k-1}$ is on the left of $1$ and $x_1=\overline {x}_{k-1}$, subtracting $1$ from each side in (\ref{diseq}) gives
$$x_1-k>x_2-(k-1)>\cdots>x_i-(k+1-i)>\cdots> x_{k-1}-2>x_k-1.$$Here, $x_k-1>x_{k+1}$ cannot actually occur by our maximality condition on $k$. Therefore $x_{k+1}=x_k-1$. The $1$-move $|x_{k-1}\,x_k|\underline{x_k}\cdots1\cdots|\overline {x}_k$ turns $\overline\pi$ into $[0\,\overline {x}_{k-1}\cdots1\cdots x_{k-1}]$, and Proposition \ref{propEE} follows from Criterion \ref{2-move to the right}. Here, we consider $\overline {x}_{k-1}$ to be on the right of $1$. Adding $k-i$ to each side in (\ref{diseq}) gives $x_1-(i-1)> x_i$. Assume $\underline{x_1}$ is on the left of $x_k$. Since $x_2\neq x_1-1$, then  $x_i=x_1-1$ for some $i>2$ and $x_1-1>x_1-(i-1)> x_i$, a contradiction. Therefore, we may assume $\underline{x_1}$ is in on the right of $x_k$. Since $\overline {x}_{k-1}$ is on the right of $1$, the $1$-move
$$0|x_1\cdots x_k|\underline{x_k}\cdots\underline{x_1}|$$ turns $\overline\pi$ into $[0\,\underline{x_k}\cdots x_{k-1}\,x_k\cdots1\cdots\overline {x}_{k-1}\cdots]$. Now, there exists a $2$-move, namely $$\underline{x_k}|\cdots x_{k-1}|x_k\cdots1\cdots|\overline {x}_{k-1}.$$ Therefore, Proposition \ref{propEE} in case (iii) follows. This concludes the proof of Proposition \ref{propEE}.

\section{The proof of the Eriksson bound}\label{tiger}

Let $\pi$ be a permutation on $[n]$ with $n\geq 4$. We apply Corollary \ref{kitty} after dismissing the case where $\pi$ is the reverse permutation by virtue of Proposition \ref{w}. Assume that the first case occurs in Corollary \ref{kitty}, the other two cases may be investigated in the same way. By Proposition \ref{distance} and Corollary \ref{prop d},
\begin{equation}\label{fi}
d(\rho)\leq d(\rho\circ\sigma\circ\tau)+d(\tau^{-1})+d(\sigma^{-1}).
\end{equation} As the distance of a block transposition is $1$, the right-hand side in (\ref{fi}) is equal to $d(\rho\circ\sigma\circ\tau)+2$.
Collapsing bonds into a single symbol has the effect of collapsing $\rho\circ\sigma\circ\tau$ into a permutation on $[{n-3}]$. Then $d(\rho\circ\sigma\circ\tau)+2\leq d(n-3)+2$. By Theorem \ref{torically}, we obtain $d(\pi)=d(\rho)=\leq d(n-3)+2$, and then $$d(n)\leq d(n-3)+2.$$

Now, the argument in the proof of \cite[Theorem 4.2]{EE} may be used to finish the proof of the Eriksson bound. This also shows that the Eriksson bound holds only by virtue of Theorem \ref{torically}.
\section{A new value of the block transposition diameter}\label{17}

As we have mentioned in Section \ref{s41}, the exactly value of the block transposition diameter $d(n)$ is known only for $n\leq 15$. In this final section, we show that the exact value of $d(17)$ can be determined with a computer free argument using only the Eriksson bound together with the the Elias-Hartman-Eriksson lower bound; see Section \ref{eliasHartman}.
\begin{thm}
The block transposition diameter is $10$, for $n=17$.
\end{thm}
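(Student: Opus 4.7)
The plan is to obtain $d(17) = 10$ by sandwiching $d(17)$ between two matching bounds already established in the excerpt, with no computer assistance needed.

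First I would invoke the Elias--Hartman--Eriksson lower bound to establish $d(17) \geq 10$. Since $17$ is odd and $17 > 15$, Proposition \ref{hart} applies: writing $17 = 13 + 2k$ with $k = 2$ (even), the explicit permutation $\pi$ on $[17]$ exhibited there satisfies $d([0\,\pi]) = (n+3)/2 = 10$. By Proposition \ref{th23jan}, $d(\pi) = d([0\,\pi]) = 10$, and hence $d(17) \geq 10$. (Equivalently, the compact form of the lower bound in (\ref{bestl}) gives $d(17) \geq \lceil (17+2)/2\rceil = 10$.)

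Next I would apply the Eriksson bound (Theorem \ref{EEmainbis}), valid for all $n \geq 9$, to obtain
\[
d(17) \;\leq\; \left\lfloor \frac{2\cdot 17 - 2}{3}\right\rfloor \;=\; \left\lfloor \frac{32}{3}\right\rfloor \;=\; 10.
\]
Combining the two inequalities yields $d(17) = 10$.

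The argument is essentially a numerical coincidence: at $n = 17$ the Elias--Hartman--Eriksson lower bound and the Eriksson upper bound happen to meet. There is no genuine obstacle; the only care needed is to verify that $n = 17$ lies in the hypothesis range of both results (namely $n > 15$ odd of the form $13 + 2k$ with $k$ even positive for Proposition \ref{hart}, and $n \geq 9$ for Theorem \ref{EEmainbis}), both of which are immediate. The real work has already been carried out earlier in the chapter, notably in completing the proof of the Eriksson bound via the toric invariance principle (Theorem \ref{torically}).
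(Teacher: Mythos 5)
Your proposal is correct and follows essentially the same route as the paper: the upper bound $d(17)\leq\lfloor 32/3\rfloor=10$ from the Eriksson bound (Theorem \ref{EEmainbis}), and the lower bound from the Elias--Hartman permutation of Proposition \ref{hart} (which for $n=17=13+2k$, $k=2$, is exactly the permutation the paper writes out explicitly), transferred to $\Sym_{17}$ via Proposition \ref{th23jan}. The only cosmetic difference is that you cite Proposition \ref{hart} in its general form where the paper displays the specific permutation.
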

\begin{proof}
By Theorem \ref{EEmainbis}, $d(17)\leq 10$. On the other hand, Elias and Hartman exhibited a permutation $[0\,\pi]$ on $[17]^0$ with $d([0\,\pi])=10$, namely
$$[0\,\pi]=[0\,4\,3\,2\,1\,5\,13\,12\,11\,10\,9\,8\,7\,6\,14\,17\,16\,15].$$ Since we have proved that $d(\pi)=d([0\,\pi])$ in Proposition \ref{th23jan}, thus $d(17)=10$.
\end{proof}

\chapter{Cayley graph on symmetric groups with generating block transposition sets}\label{c6}

As a matter of fact, all our general results in this chapter hold for $n\geq 5$ while some of them are not valid for $n=4$.
For this reason, the case $n=4$ is treated in Section \ref{n=4}. Furthermore, since some of the proofs are carried out by induction on $n$, we must be sure that our results are valid for the smallest possible values of $n$ which are $5$ and $6$ in the present context. Bearing this in mind, we have thoroughly worked out these cases by a computer aided exhaustive search and present the relative results in Section \ref{n=5}, \ref{n=6}.

Since $S_n$ is an inverse closed generator set of $\Sym_n$ which does not contain $\iota$, by Corollary \ref{matnov2} (ii), (the left-invariant) Cayley graph $\Cay$ is an undirected simple graph, where $\{\pi,\rho\}$ is an edge if and only if $\rho=\sigma(i,j,k)\circ \pi$, for some $\sigma(i,j,k)\in S_n$; see Section \ref{cay}. Also, the vertices of $\Cay$ adjacent to $\iota$ are exactly the block transpositions.

\section{Automorphism group of the Cayley graph}
\label{sec:3}

By a result of Cayley, every $h\in \Sym_n$ defines a \emph{right translation} $\textsf{h}$ which is the automorphism of $\Cay$ that takes the vertex $\pi$ to the vertex $\pi\circ h$, and hence the edge $\{\pi,\rho\}$ to the edge $\{\pi\circ h,\rho\circ h\}$; see Section \ref{cay}. These automorphisms form the \emph{right translation group} $R(\Cay)$ of $\Cay$. Clearly, $\Sym_n\cong R(\Cay)$. Furthermore,
since $R(\Cay)$ acts regularly on $\Sym_n$, every automorphism of $\Cay$ is the product of a right translation by an automorphism fixing $\iota$.

One may ask if there is a nontrivial automorphism of $\Cay$ fixing $\iota$. The answer is affirmative by the following results.
\begin{lem}\label{mat27oct}
For any $\pi,\rho \in \Sym_n$,
\begin{itemize}
\item[\rm(i)] $\emph{\textsf{f}}_r(\pi\circ\rho)=\emph{\textsf{f}}_{\rho_r}(\pi)\circ\emph{\textsf{f}}_r(\rho)$;
\item[\rm(ii)] $\emph{\textsf{g}}(\pi\circ\rho)=\emph{\textsf{g}}(\pi)\circ\emph{\textsf{g}}(\rho)$.
\end{itemize}
\end{lem}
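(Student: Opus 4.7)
My plan is to work throughout in the symmetric group $\Sym_n^0$ on $[n]^0$ via the embedding $\pi\mapsto [0\,\pi]$, and simply unwind the defining equations (\ref{eq2oct9}) and (\ref{eq2oct9b}). The single technical ingredient I need first is the multiplicativity of this embedding, namely
\[
[0\,\pi\circ\rho]=[0\,\pi]\circ[0\,\rho]\quad\text{for all }\pi,\rho\in\Sym_n,
\]
which is immediate because both sides fix $0$ and agree on every $t\in[n]$, where they give $\pi_{\rho_t}=(\pi\circ\rho)_t$. Once this is in hand, both (i) and (ii) reduce to elementary cancellation identities in $\Sym_n^0$.

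For part (i), I would compute the left-hand side by the definition of $\textsf{f}_r$ applied to $\pi\circ\rho$:
\[
[0\,\textsf{f}_r(\pi\circ\rho)]=\alpha^{-(\pi\circ\rho)_r}\circ[0\,\pi\circ\rho]\circ\alpha^r
=\alpha^{-\pi_{\rho_r}}\circ[0\,\pi]\circ[0\,\rho]\circ\alpha^r,
\]
using multiplicativity and $(\pi\circ\rho)_r=\pi_{\rho_r}$. For the right-hand side, I would apply multiplicativity of the embedding first and then the definition of $\textsf{f}_{\rho_r}$ and $\textsf{f}_r$ separately:
\[
[0\,\textsf{f}_{\rho_r}(\pi)\circ\textsf{f}_r(\rho)]
=\bigl(\alpha^{-\pi_{\rho_r}}\circ[0\,\pi]\circ\alpha^{\rho_r}\bigr)\circ\bigl(\alpha^{-\rho_r}\circ[0\,\rho]\circ\alpha^r\bigr).
\]
The inner factor $\alpha^{\rho_r}\circ\alpha^{-\rho_r}=\iota$ collapses and the two expressions coincide. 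Since the embedding is injective, (i) follows.

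For part (ii), the same bookkeeping works, replacing the powers of $\alpha$ by $[0\,w]$ and exploiting that $[0\,w]$ is an involution, since $w$ is. Explicitly,
\[
[0\,\textsf{g}(\pi\circ\rho)]=[0\,w]\circ[0\,\pi]\circ[0\,\rho]\circ[0\,w],
\]
while on the other side $[0\,\textsf{g}(\pi)\circ\textsf{g}(\rho)]=[0\,w]\circ[0\,\pi]\circ[0\,w]\circ[0\,w]\circ[0\,\rho]\circ[0\,w]$ collapses to the same expression because the two adjacent copies of $[0\,w]$ cancel. Injectivity of the embedding again yields the identity.

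There is essentially no obstacle here: the proof is a direct verification once multiplicativity of $\pi\mapsto[0\,\pi]$ and the involutory property of $[0\,w]$ are recorded. The only point worth stressing is that the exponent shift in part (i) is dictated by the identity $(\pi\circ\rho)_r=\pi_{\rho_r}$, which is exactly what makes the pair $(\textsf{f}_{\rho_r},\textsf{f}_r)$ the correct factorization on the right-hand side rather than any simpler combination.
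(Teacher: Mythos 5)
Your proof is correct and follows essentially the same route as the paper's: insert $\alpha^{\rho_r}\circ\alpha^{-\rho_r}$ (respectively $[0\,w]\circ[0\,w]$) between $[0\,\pi]$ and $[0\,\rho]$ in the defining equation and read off the two factors. The only difference is that you make explicit the multiplicativity of the embedding $\pi\mapsto[0\,\pi]$ and the identity $(\pi\circ\rho)_r=\pi_{\rho_r}$, which the paper uses tacitly.
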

\begin{proof}
(i) From (\ref{eq2oct9}), $\textsf{f}_r(\pi\circ\rho)=\mu$ with
$$\begin{array}{lll}
[0\,\mu]&=& \alpha^{-(\pi\circ\rho)_r}\circ [0\,\pi] \circ [0\,\rho]\circ \alpha^r\\
{}&=&\alpha^{-(\pi\circ\rho)_r}\circ [0\,\pi]\circ\alpha^{\rho_r} \circ\alpha^{-\rho_r} [0\,\rho]\circ \alpha^r.
\end{array}$$ Now, the first assertion follows from (\ref{eq2oct9}).

(ii) By (\ref{eq2oct9b}), $\textsf{g}(\pi\circ\rho)=\xi$ with
$$\begin{array}{lll}
[0\,\xi]&=& [0\, w]\circ[0\,\pi] \circ [0\,\rho]\circ [0\, w]\\
{}&=& [0\, w] \circ [0\,\pi]\circ [0\, w]\circ [0\, w]\circ [0\,\rho]\circ [0\, w].
\end{array}$$ Here, the second assertion follows from (\ref{eq2oct9b}). This concludes the proof.
\end{proof}

\begin{prop}
\label{prop10oct} Toric maps and the reverse map are automorphisms of $\Cay$.
\end{prop}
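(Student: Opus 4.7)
The plan is to combine Lemma \ref{mat27oct}, which describes how $\textsf{f}_r$ and $\textsf{g}$ interact with the group operation, with Proposition \ref{th1}, which says $\textsf{f}_r$ and $\textsf{g}$ preserve $S_n$. Since edges of $\Cay$ are pairs $\{\pi,\rho\}$ with $\rho=\sigma\circ\pi$ for some $\sigma\in S_n$, edge preservation reduces to showing that $\textsf{f}_r$ and $\textsf{g}$ turn every left multiplier in $S_n$ into another element of $S_n$.

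First I would verify bijectivity. The toric maps form the cyclic group $\textsf{F}$ of order $n+1$ generated by $\textsf{f}$, as recorded right after \eqref{eq1oct10}, so each $\textsf{f}_r$ is invertible on $\Sym_n$; the reverse map $\textsf{g}$ is an involution, as stated after \eqref{eq2oct9b}. Hence both are bijections on the vertex set $\Sym_n$ of $\Cay$.

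Next I would check edge preservation. Let $\{\pi,\rho\}$ be an edge of $\Cay$, so that $\rho=\sigma\circ\pi$ with $\sigma\in S_n$. Applying Lemma \ref{mat27oct} (i) gives
\[
\textsf{f}_r(\rho)=\textsf{f}_r(\sigma\circ\pi)=\textsf{f}_{\pi_r}(\sigma)\circ\textsf{f}_r(\pi).
\]
By Proposition \ref{th1}, $\textsf{f}_{\pi_r}(\sigma)$ is again a block transposition, so $\{\textsf{f}_r(\pi),\textsf{f}_r(\rho)\}$ is an edge of $\Cay$. Similarly, Lemma \ref{mat27oct} (ii) gives $\textsf{g}(\rho)=\textsf{g}(\sigma)\circ\textsf{g}(\pi)$, and Proposition \ref{th1} (explicitly \eqref{eq1oct11}) ensures $\textsf{g}(\sigma)\in S_n$, so $\{\textsf{g}(\pi),\textsf{g}(\rho)\}$ is also an edge. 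Since both maps are bijective, the same argument applied to their inverses shows that the edge relation is preserved in both directions, hence $\textsf{f}_r,\textsf{g}\in\Aut(\Cay)$.

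No real obstacle arises: the work was already done in Lemma \ref{mat27oct} and Proposition \ref{th1}. The only mild subtlety is that the action of $\textsf{f}_r$ on the left multiplier depends on $\pi$ (the index shifts from $r$ to $\pi_r$), but this is exactly what Lemma \ref{mat27oct} (i) delivers, and the conclusion is unaffected because Proposition \ref{th1} provides $S_n$-invariance uniformly in the index.
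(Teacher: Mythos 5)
Your proposal is correct and follows essentially the same route as the paper: decompose an edge as $\rho=\sigma\circ\pi$, apply Lemma \ref{mat27oct} to push the map through the product, and invoke Proposition \ref{th1} to keep the left multiplier in $S_n$. The only cosmetic difference is that you apply Lemma \ref{mat27oct} (i) directly for general $r$, whereas the paper treats $\textsf{f}=\textsf{f}_1$ and extends to all $\textsf{f}_r=\textsf{f}^{\,r}$ by induction; both are valid.
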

\begin{proof}Let $\pi,\rho \in \Sym_n$ be any two adjacent vertices of $\Cay$. Then $\rho=\sigma\circ\pi$, for some $\sigma=\sigma(i,j,k)\in S_n$. Here, Lemma \ref{mat27oct} yields $\textsf{f}(\rho)=\textsf{f}_{\pi_1}(\sigma)\circ\textsf{f}(\pi)$ and $\textsf{g}(\rho)=\textsf{g}(\sigma)\circ\textsf{g}(\pi)$. Therefore, the assertion for $\textsf{f}$ and $\textsf{g}$ follows from Proposition \ref{th1}. By induction on $r\geq 1$, this holds true for all toric maps.
\end{proof}
By (\ref{eqoct15a}), the set consisting of $\textsf{F}$ and its coset $\textsf{F}\circ\textsf{g}$ is a dihedral group $\textsf{D}_{n+1}$ of order $2(n+1)$. Clearly, $\textsf{D}_{n+1}$ fixes $\iota$. Now, Proposition \ref{prop10oct} has the following corollary.
\begin{cor}
\label{teor1} The automorphism group of $\Cay$ contains a dihedral subgroup $\textsf{D}_{n+1}$ of order $2(n+1)$ fixing the identity permutation.
\end{cor}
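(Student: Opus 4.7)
The plan is to assemble the desired dihedral subgroup from the cyclic group $\textsf{F}$ of toric maps together with the involution $\textsf{g}$, reading off the dihedral defining relation directly from (\ref{eqoct15a}). By Proposition \ref{prop10oct} both $\textsf{F}$ and $\textsf{g}$ sit inside $\Aut(\Cay)$, and from the remarks following (\ref{eq9oct}) I have $\textsf{F}=\langle\textsf{f}\rangle$ cyclic of order $n+1$, while $\textsf{g}$ is an involution by its definition (\ref{eq2oct9b}). The conjugation rule (\ref{eqoct15a}) reads $\textsf{g}\circ\textsf{f}_r\circ\textsf{g}=\textsf{f}_{n+1-r}=\textsf{f}_r^{-1}$, which is precisely the dihedral relation, so the subgroup generated by $\textsf{F}$ and $\textsf{g}$ is the union $\textsf{F}\cup \textsf{F}\circ\textsf{g}$ and will have order $2(n+1)$ as soon as I rule out the degenerate possibility $\textsf{g}\in\textsf{F}$.

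The only nontrivial point is therefore to establish $\textsf{g}\notin\textsf{F}$. Since $\textsf{F}$ is abelian, any element of $\textsf{F}$ must commute with every $\textsf{f}_r$; if $\textsf{g}$ were itself an element of $\textsf{F}$, then (\ref{eqoct15a}) would force $\textsf{f}_r=\textsf{f}_{n+1-r}$ for every $r$, i.e.\ $(n+1)\mid 2r$ for all $r\in\{0,1,\ldots,n\}$. This is impossible for $n\geq 2$, so $\textsf{g}\notin\textsf{F}$ and the group $\textsf{D}_{n+1}:=\textsf{F}\cup\textsf{F}\circ\textsf{g}$ has order exactly $2(n+1)$ with dihedral presentation $\textsf{f}^{\,n+1}=\textsf{g}^2=\mathrm{id}$, $\textsf{g}\,\textsf{f}\,\textsf{g}=\textsf{f}^{-1}$.

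Finally I would verify that every element of $\textsf{D}_{n+1}$ fixes $\iota$, which reduces to checking the two generators. For a toric map, (\ref{eq2oct9}) gives $[0\,\textsf{f}_r(\iota)]=\alpha^{-\iota_r}\circ[0\,\iota]\circ\alpha^r=\alpha^{-r}\circ\alpha^r=[0\,\iota]$, so $\textsf{f}_r(\iota)=\iota$; for the reverse map, (\ref{eq2oct9b}) yields $[0\,\textsf{g}(\iota)]=[0\,w]\circ[0\,\iota]\circ[0\,w]=[0\,w]^2=[0\,\iota]$, hence $\textsf{g}(\iota)=\iota$. Thus $\textsf{D}_{n+1}\leq\Aut(\Cay)$ is a dihedral group of order $2(n+1)$ fixing $\iota$, as required. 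The only real obstacle is the commutativity argument showing $\textsf{g}\notin\textsf{F}$; everything else is a direct assembly of facts already recorded in Section \ref{s32}.
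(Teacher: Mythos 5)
Your proposal is correct and follows essentially the same route as the paper, which also obtains $\textsf{D}_{n+1}$ as $\textsf{F}\cup\textsf{F}\circ\textsf{g}$ using Proposition \ref{prop10oct} for membership in $\Aut(\Cay)$ and the relation (\ref{eqoct15a}) for the dihedral structure. You merely make explicit two points the paper leaves implicit — that $\textsf{g}\notin\textsf{F}$ and that both generators fix $\iota$ — and both verifications are sound.
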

{}From now on, the term of \emph{toric-reverse group} stands for $\textsf{D}_{n+1}$, and $\textsf{G}$ denotes the stabilizer of $\iota$ in the automorphism group of $\Cay$. By Corollary \ref{teor1}, the problem arises whether $\textsf{D}_{n+1}$ is already $\textsf{G}$. We state our result on this problem.

Clearly, $\textsf{G}$ preserves the subgraph
of $\Cay$ whose vertices are the block transpositions. We call this subgraph $\Gamma$ the \emph{block transposition graph} and denote $\textsf{R}$ its automorphism group. The kernel of the permutation representation of $\textsf{G}$ on $S_n$ is a normal subgroup $\textsf{N}$, and the factor group $\textsf{G}/\textsf{N}$ is a subgroup of $\textsf{R}$. Since $\textsf{D}_{n+1}$ and $\textsf{N}$ have trivial intersection, by Lemma \ref{lemc19ott2013}, the toric-reverse group can be regarded as a subgroup of $\textsf{G}/\textsf{N}.$ One of the main results in our thesis is a proof of the theorem below.
\begin{thm}
\label{teor2} The automorphism group of $\Gamma$ is the toric-reverse group.
\end{thm}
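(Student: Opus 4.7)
The plan is to show $\textsf{R}=\textsf{D}_{n+1}$ by bounding $|\textsf{R}|$ above by $2(n+1)$. Corollary \ref{teor1} together with the results of Section \ref{s61} already gives $\textsf{D}_{n+1}\leq \textsf{R}$ with $|\textsf{D}_{n+1}|=2(n+1)$, and a regular action of $\textsf{D}_{n+1}$ on the set $V$ consisting of the $2(n+1)$ vertices of the $n+1$ maximal $2$-cliques of $\Gamma$. Since maximal $2$-cliques are intrinsic to the graph $\Gamma$, every $\phi\in\textsf{R}$ permutes them and hence induces a permutation of $V$, giving a natural homomorphism $\rho\colon \textsf{R}\to\mathrm{Sym}(V)$. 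The strategy is to establish that $\rho$ is injective and that the stabilizer in $\textsf{R}$ of any single vertex of $V$ is trivial; orbit-stabilizer then forces $|\textsf{R}|=|V|=2(n+1)$.

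First I would verify faithfulness of $\rho$. Given $\phi\in\ker\rho$, I need to show $\phi$ fixes every $\sigma(i,j,k)\in S_n\setminus V$. The plan is to prove that each such vertex is uniquely determined by the subset of $V$ to which it is adjacent in $\Gamma$; this is a purely combinatorial statement about block transpositions. Using the equations of Section \ref{equabl} and Lemma \ref{lemc19ott2013}, together with the explicit description of the maximal $2$-cliques obtained in Section \ref{s61}, I would compute the neighbourhoods $N(\sigma(i,j,k))\cap V$ case by case (according to the four cases of Corollary \ref{eqbt}, i.e.\ the position of the cut points relative to $0$ and $n$) and check that distinct triples yield distinct subsets of $V$.

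Next I would analyse the stabilizer $\mathrm{Stab}_{\textsf{R}}(v_0)$ of a chosen vertex $v_0\in V$. Because $\textsf{D}_{n+1}$ is transitive on $V$, I may precompose with a toric or reverse map and assume $v_0$ is a canonical block transposition, say $v_0=\sigma(0,1,2)$. Its unique partner $v_0'$ in the maximal $2$-clique containing $v_0$ is then forced to be fixed as well. The set of $2(n-2)$ common neighbours of $v_0$ and $v_0'$ inherits an induced subgraph on which any $\phi\in\mathrm{Stab}_{\textsf{R}}(v_0)$ restricts to an automorphism; the goal is to identify this induced graph with a block-transposition-type graph on $\mathrm{Sym}_{n-1}$ (via a natural embedding of $S_{n-1}$ into $S_n$ fixing the first or last coordinate), so that the induction hypothesis applies. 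The base cases $n=5$ and $n=6$ are covered by the computer-aided verifications announced in Sections \ref{n=5} and \ref{n=6}, and the inductive step then forces $\phi$ to restrict to the identity on $V$, which by the faithfulness already proved means $\phi=\mathrm{id}$.

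The principal obstacle I foresee is precisely the inductive reduction in the second step: although the common neighbours of $v_0$ and $v_0'$ are easy to count ($2(n-2)$, matching the degree), identifying their induced subgraph with a rigid copy of a smaller block-transposition graph requires a careful choice of the $S_{n-1}\hookrightarrow S_n$ embedding and a verification that adjacencies are preserved, not merely counted. Once this rigidity is in place, orbit-stabilizer yields $|\textsf{R}|=|V|\cdot 1=2(n+1)=|\textsf{D}_{n+1}|$, and the containment $\textsf{D}_{n+1}\leq\textsf{R}$ upgrades to the desired equality $\textsf{R}=\textsf{D}_{n+1}$.
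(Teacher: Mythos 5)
Your overall skeleton does match the paper's: the containment $\textsf{D}_{n+1}\leq\textsf{R}$ plus regularity of $\textsf{D}_{n+1}$ on $V$, combined with triviality of the stabilizer in $\textsf{R}$ of a vertex of $V$, proved by induction on $n$ with computer-verified base cases $n=5,6$ (this is exactly Lemma \ref{lemfoct12}). However, both of your key technical steps break down. First, the faithfulness argument: you propose to show that every $\sigma(i,j,k)\notin V$ is determined by $N(\sigma(i,j,k))\cap V$. This cannot work, for counting reasons alone: $\Gamma$ is $2(n-2)$-regular, so the number of edges meeting $V$ is at most $2(n+1)\cdot 2(n-2)=O(n^2)$, while $|S_n\setminus V|=\binom{n+1}{3}-2(n+1)=\Theta(n^3)$; hence for large $n$ almost every vertex outside $V$ has \emph{no} neighbour in $V$ at all, and such vertices certainly cannot be separated by their $V$-neighbourhoods. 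The paper gets from ``fixes $V$ pointwise'' to ``is the identity'' by a quite different route: the stabilizer, once shown to fix $V$ pointwise, is the kernel of the representation of $\textsf{R}$ on $V$ and hence normal in $\textsf{R}$; normality plus conjugation by toric maps (using the explicit equations (\ref{eqa14oct}) showing every $\sigma(0,j,k)$ is torically equivalent to an element of $S_{n-2}^\vartriangle$) then forces it to fix $L\cup B$, and Proposition \ref{prova} (ii) handles $F$.

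Second, the inductive reduction is miscalibrated. The common neighbours of the two endpoints of a maximal $2$-clique form a set of size $O(n)$ (you have conflated this with the degree $2(n-2)$), whereas the block transposition graph of order $n-1$ has $\binom{n}{3}=\Theta(n^3)$ vertices; the induced subgraph on common neighbours cannot be ``a block-transposition-type graph on $\Sym_{n-1}$''. The paper's induction instead descends by \emph{two} (which is why both $n=5$ and $n=6$ are needed as base cases, a point your own setup implicitly signals): the induction hypothesis is applied to the induced subgraph on $S_{n-2}^\vartriangle=\{\sigma(i,j,k):i\neq 0,\,k\neq n\}$, a genuine copy of the order-$(n-2)$ block transposition graph of the correct cardinality $(n-1)(n-2)(n-3)/6$. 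The substantive work — entirely absent from your sketch — is showing that the stabilizer of $\sigma(0,2,n)$ preserves this set and fixes a vertex inside it; the paper does this through the partition $T_n=B\cupdot L\cupdot F\cupdot S_{n-2}^\vartriangle$, the unique maximal clique $B$ of size $n-1$ (Corollary \ref{cor3oct13ter}), the biregularity statements of Proposition \ref{prova}, and a chain of explicitly fixed edges $e_n,e_{n-2},e_{n-3},e_0,\ldots$ culminating in the fixed vertex $\sigma(2,3,5)\in S_{n-2}^\vartriangle$. (A small further slip: $\sigma(0,1,2)$ is not a vertex of $V$, so it cannot serve as your canonical base point; the paper uses $\sigma(0,2,n)$, an endpoint of $e_n$.)
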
As a corollary, $\textsf{G}=\textsf{N}\rtimes\textsf{D}_{n+1}$. From this the following result is obtained.
\begin{cor}
\label{teor2b} The automorphism group of $\Cay$ is the product of the right translation group by $\emph{\textsf{N}}\rtimes \emph{\textsf{D}}_{n+1}.$
\end{cor}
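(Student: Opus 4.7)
The plan is to read the corollary off from Theorem \ref{teor2} using the standard decomposition of the automorphism group of a Cayley graph as a product of the regular translation group with a point stabilizer. Since $R(\Cay)$ acts regularly on the vertex set $\Sym_n$, for every $\varphi \in \Aut(\Cay)$ there is a unique $\textsf{h} \in R(\Cay)$ sending $\iota$ to $\varphi(\iota)$; then $\textsf{h}^{-1}\circ \varphi \in \textsf{G}$, so $\Aut(\Cay) = R(\Cay)\cdot \textsf{G}$, with $R(\Cay)\cap \textsf{G} = \{1\}$ by regularity. It therefore suffices to establish the finer identity $\textsf{G} = \textsf{N}\rtimes \textsf{D}_{n+1}$.

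For this, I would use that the neighbors of $\iota$ in $\Cay$ are exactly the vertices in $S_n$ (by Corollary \ref{matnov2} (ii)), so every $\varphi \in \textsf{G}$ restricts to an automorphism of the induced subgraph $\Gamma$ on $S_n$. This yields the homomorphism $\textsf{G} \to \textsf{R}$ whose kernel is $\textsf{N}$ by definition, and hence $\textsf{G}/\textsf{N}$ embeds in $\textsf{R}$. By Theorem \ref{teor2}, $\textsf{R} = \textsf{D}_{n+1}$, so $|\textsf{G}/\textsf{N}| \leq 2(n+1)$. On the other hand, by Corollary \ref{teor1} the toric-reverse group $\textsf{D}_{n+1}$ sits inside $\textsf{G}$, and by the observation preceding Theorem \ref{teor2} (which relies on Lemma \ref{lemc19ott2013}) one has $\textsf{D}_{n+1}\cap \textsf{N} = \{1\}$, so $\textsf{D}_{n+1}$ maps injectively into $\textsf{G}/\textsf{N}$. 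Comparing orders forces $\textsf{G}/\textsf{N} = \textsf{D}_{n+1}$, and the inclusion $\textsf{D}_{n+1} \hookrightarrow \textsf{G}$ furnishes a splitting of the short exact sequence $1 \to \textsf{N} \to \textsf{G} \to \textsf{G}/\textsf{N} \to 1$, whence $\textsf{G} = \textsf{N}\rtimes \textsf{D}_{n+1}$.

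Combining the two steps, $\Aut(\Cay) = R(\Cay)\cdot (\textsf{N}\rtimes \textsf{D}_{n+1})$, which is precisely the asserted description. The entire argument is essentially bookkeeping once Theorem \ref{teor2} is available; the only substantive obstacle is already absorbed into that theorem, namely the identification of $\Aut(\Gamma)$ with the toric-reverse group. What remains beyond invoking it is the elementary verification that $\textsf{D}_{n+1}$ acts faithfully on $S_n$ (so that its image in $\textsf{G}/\textsf{N}$ has full order $2(n+1)$), together with the routine regular-action decomposition of $\Aut(\Cay)$ with respect to $R(\Cay)$.
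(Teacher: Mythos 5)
Your proposal is correct and follows essentially the same route as the paper: the paper obtains Corollary \ref{teor2b} exactly by combining the regular-action decomposition $\Aut(\Cay)=R(\Cay)\,\textsf{G}$ with the observations preceding Theorem \ref{teor2} (namely that $\textsf{G}/\textsf{N}$ embeds in $\textsf{R}$ and that $\textsf{D}_{n+1}\leq\textsf{G}$ meets $\textsf{N}$ trivially), and then invokes Theorem \ref{teor2} to conclude $\textsf{G}=\textsf{N}\rtimes\textsf{D}_{n+1}$. Your write-up merely makes explicit the order-counting and splitting bookkeeping that the paper leaves implicit.
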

\begin{rem} \emph{Computation shows that $\textsf{N}$ is trivial for $n\leq 8$. This motivates to make the following conjecture.}\end{rem}
\begin{conj}
\emph{The automorphism group of $\Cay$ is the product of the right translation group by the toric-reverse group.}
\end{conj}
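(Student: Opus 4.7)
The plan is to prove the Conjecture by showing that the normal subgroup $\textsf{N}$ is trivial. Fix $\textsf{h} \in \textsf{N}$; then $\textsf{h}$ fixes $\iota$ (since $\textsf{N} \subset \textsf{G}$) and every $\sigma \in S_n$ (by definition of $\textsf{N}$). Because $\textsf{h}$ is a graph automorphism fixing $\iota$, it preserves each sphere $\Sigma_d = \{\mu \in \Sym_n : d_{\Cay}(\iota,\mu) = d\}$. I would proceed by induction on $d$: the cases $d = 0,1$ are immediate, and for the inductive step one observes that if $\textsf{h}$ fixes $\Sigma_{d-1}$ pointwise, then $\textsf{h}(\mu)$ has exactly the same set of $\Sigma_{d-1}$-neighbors as $\mu$. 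The entire problem thus reduces to the rigidity claim that \emph{a vertex $\mu \in \Sigma_d$ is uniquely determined by its set of neighbors in $\Sigma_{d-1}$}.

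The critical case is $d = 2$, where the rigidity claim reads: the map $\mu \mapsto N_\mu := \{\sigma \in S_n : \sigma^{-1}\circ\mu \in S_n\}$ is injective on $\Sigma_2$. To attack this, I would invoke the Labarre map $\textsf{p}$ of Section \ref{labarre} together with the identity $\textsf{p}(\sigma(i,j,k)) = (i,k,j)$ from (\ref{labarrelemma4.3}): then $N_\mu$ translates into a collection of $3$-cycles in $\Sym_{n+1}$ that factor $\textsf{p}(\mu)$ (up to the conjugation twist of \cite[Lemma~3.1]{la}), and my aim would be to prove that the cycle graph $G(\mu)$ of Bafna--Pevzner is recoverable from this collection of factorizations, since $G(\mu)$ determines $\textsf{p}(\mu)$ up to the free action of its centralizer. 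Once $d = 2$ is settled, the faithful action of the toric-reverse group $\textsf{D}_{n+1}$ on $\Cay$, which normalizes $\textsf{N}$, can be used to normalize $\mu$ in each $\textsf{D}_{n+1}$-orbit, and the same local-to-global argument bootstraps from $\Sigma_d$ to $\Sigma_{d+1}$.

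The main obstacle, as the authors implicitly acknowledge by leaving the statement as a conjecture, is precisely the combinatorial rigidity at $d = 2$: the GAP verification for $n \leq 8$ cited in the Remark is strong empirical evidence, but a uniform argument must exclude every coincidence $N_\mu = N_{\mu'}$ with $\mu \neq \mu'$, and such coincidences amount to nontrivial quadratic identities among block transpositions. If the direct Labarre cycle-graph route fails, my fallback would be to enrich $N_\mu$ with length-$2$ path multiplicities, counting for each $\sigma \in N_\mu$ how many $\tau \in S_n$ satisfy $\tau\circ\sigma = \mu$, which for any $\textsf{h} \in \textsf{N}$ must also be preserved; this doubles the recorded data and may break remaining symmetries. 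I expect that this enriched invariant, combined with the strong commutation constraints coming from Section \ref{equabl} and the dihedral symmetry already known from Theorem \ref{teor2}, will suffice to close the induction.
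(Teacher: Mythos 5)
There is a genuine gap here, and it is worth being explicit that the paper itself offers no proof of this statement: it is left as a conjecture, supported only by GAP computations for $n\leq 8$, with the reduction to ``$\textsf{N}$ is trivial'' being the only established content (via Theorem \ref{teor2} and Corollary \ref{teor2b}). Your overall framing is therefore the right one, and your induction scaffold is sound as far as it goes: an element of $\textsf{N}$ fixes $\iota$ and all of $S_n$ pointwise, preserves each sphere $\Sigma_d$, and if it fixes $\Sigma_{d-1}$ pointwise then it sends each $\mu\in\Sigma_d$ to a vertex with the same set of $\Sigma_{d-1}$-neighbors. But everything then rests on the rigidity claim that a vertex of $\Sigma_d$ is determined by its set of neighbors in $\Sigma_{d-1}$, and this claim is never established -- not at $d=2$ and not at any higher level (note that the ``bootstrap'' to $\Sigma_{d+1}$ is not the ``same'' argument; it is a fresh instance of the rigidity claim that must be proved separately for each $d$). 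You acknowledge this yourself: the passage from the Labarre map to recovering the cycle graph $G(\mu)$ from the factorization data is speculative (the assertion that ``$G(\mu)$ determines $\textsf{p}(\mu)$ up to the free action of its centralizer'' is not a precise statement, let alone a proved one), and the closing sentence ``I expect that this enriched invariant \dots will suffice'' is a research plan, not a demonstration. A coincidence $N_\mu=N_{\mu'}$ with $\mu\neq\mu'$ would not contradict anything known, and excluding it is exactly the hard combinatorial content of the conjecture; adding path multiplicities enlarges the invariant but again without any argument that it separates vertices.

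Two smaller points. First, rigidity of spheres is sufficient for $\textsf{N}=1$ but not necessary, so even a counterexample to your rigidity claim would not refute the conjecture -- it would only kill this route; conversely, the computational evidence for $\textsf{N}=1$ at $n\leq8$ does not certify your (strictly stronger, purely local) rigidity claim even in those cases. Second, be careful with the adjacency formula: under the convention of Chapter \ref{c6} ($\rho=\sigma\circ\pi$), the distance-one neighbors of $\mu$ in $S_n$ are the $\tau$ with $\tau\circ\mu^{-1}\in S_n$, not $\tau^{-1}\circ\mu\in S_n$; the paper's two conventions (Section \ref{cay} versus Chapter \ref{c6}) differ by an inversion, and the Labarre conjugation twist $\textsf{p}(\nu\circ\pi)=\textsf{p}(\nu)\circ\textsf{p}(\pi)^{\nu}$ is sensitive to which side you multiply on, so this must be pinned down before the $3$-cycle translation can be carried out. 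As it stands, the proposal is an honest and reasonable attack plan, but it does not prove the conjecture.
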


In this context, the following result is out of interest, where the set of $\textsf{d}\circ\textsf{h}$ with $\textsf{d}\in\textsf{D}_{n+1}$ and $\textsf{h}\in R(\Cay)$ is $R(\Cay) \textsf{D}_{n+1}$.
\begin{prop}
\label{29oct}
The product of the right multiplicative group by the toric-reverse group is isomorphic to the direct product of $\Sym_{n+1}$ by a group of order $2$.
\end{prop}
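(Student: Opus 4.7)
The plan is to split $H:=R(\Cay)\textsf{D}_{n+1}$ as an internal direct product $H_0\times\langle\textsf{z}\rangle$, where $H_0:=R(\Cay)\textsf{F}$ is an index-$2$ normal subgroup isomorphic to $\Sym_{n+1}$ and $\textsf{z}$ is a central involution of $H$ outside $H_0$. A quick accounting already matches cardinalities: since $\textsf{D}_{n+1}$ fixes $\iota$ while only the identity right translation does, $R(\Cay)\cap\textsf{D}_{n+1}=\{\id\}$, whence $|H|=n!\cdot 2(n+1)=2(n+1)!$. Using Lemma~\ref{mat27oct} one derives the rewriting rules
\[
\textsf{f}^r\circ\textsf{h}_\rho=\textsf{h}_{\textsf{f}^r(\rho)}\circ\textsf{f}^{\rho_r},\qquad \textsf{g}\circ\textsf{h}_\rho=\textsf{h}_{\textsf{g}(\rho)}\circ\textsf{g},
\]
where $\textsf{h}_\rho$ denotes the right translation by $\rho$. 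These show $\textsf{D}_{n+1}\cdot R(\Cay)\subseteq R(\Cay)\cdot\textsf{D}_{n+1}$, so $H$ is a subgroup; restricting to powers of $\textsf{f}$ the same formulas confine $H_0$ to be a subgroup of order $(n+1)!$, necessarily normal since $[H:H_0]=2$.

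For the isomorphism $H_0\cong\Sym_{n+1}$, I would define
\[
\Psi(\textsf{h}_\rho\circ\textsf{f}^s):=[0\,\rho^{-1}]\cdot\alpha^{-s},\qquad \rho\in\Sym_n,\ 0\le s\le n,
\]
which is well-defined since each element of $H_0$ decomposes uniquely in this form. The main obstacle is verifying homomorphy. Setting $a=\textsf{h}_{\rho_1}\circ\textsf{f}^{s_1}$ and $b=\textsf{h}_{\rho_2}\circ\textsf{f}^{s_2}$, the rewriting rule together with $\textsf{h}_a\circ\textsf{h}_b=\textsf{h}_{b\circ a}$ gives
\[
a\circ b=\textsf{h}_{\textsf{f}^{s_1}(\rho_2)\circ\rho_1}\circ\textsf{f}^{(\rho_2)_{s_1}+s_2},
\]
so that $\Psi(a\circ b)=[0\,\rho_1^{-1}]\cdot[0\,(\textsf{f}^{s_1}(\rho_2))^{-1}]\cdot\alpha^{-(\rho_2)_{s_1}-s_2}$. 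Inverting the defining identity $[0\,\textsf{f}^{s_1}(\rho_2)]=\alpha^{-(\rho_2)_{s_1}}\cdot[0\,\rho_2]\cdot\alpha^{s_1}$ from \eqref{eq2oct9} yields $[0\,(\textsf{f}^{s_1}(\rho_2))^{-1}]\cdot\alpha^{-(\rho_2)_{s_1}}=\alpha^{-s_1}\cdot[0\,\rho_2^{-1}]$, so $\Psi(a\circ b)=[0\,\rho_1^{-1}]\cdot\alpha^{-s_1}\cdot[0\,\rho_2^{-1}]\cdot\alpha^{-s_2}=\Psi(a)\cdot\Psi(b)$. Injectivity is immediate, for $[0\,\rho^{-1}]\cdot\alpha^{-s}=\id$ forces $\alpha^s$ to fix $0$, whence $s=0$ and then $\rho=\iota$. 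By cardinality $\Psi$ is an isomorphism.

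Finally, I would take $\textsf{z}:=\textsf{w}\circ\textsf{g}$, where $\textsf{w}$ is the right translation by the reverse permutation $w$. The key identities are $\textsf{f}(w)=w$ (immediate from \eqref{eq9oct}) and $\textsf{g}(w)=w$ (immediate from \eqref{eq9octb}), together with $\textsf{g}\circ\textsf{f}=\textsf{f}^{-1}\circ\textsf{g}$ from \eqref{eqoct15a}. Combining these with the rewriting rules one verifies $\textsf{z}^2=\textsf{w}^2\textsf{g}^2=\id$, $\textsf{z}\circ\textsf{h}_\rho=\textsf{h}_{w\circ\rho}\circ\textsf{g}=\textsf{h}_\rho\circ\textsf{z}$, and $\textsf{z}\circ\textsf{f}=\textsf{w}\circ\textsf{f}^{-1}\circ\textsf{g}=\textsf{f}\circ\textsf{z}$, so $\textsf{z}$ is central in $H$. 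For $\textsf{z}\notin H_0$: since $\textsf{w}\in R(\Cay)\subseteq H_0$, it suffices to show $\textsf{g}\notin H_0$. A putative decomposition $\textsf{g}=\textsf{h}_\rho\circ\textsf{f}^s$ evaluated at $\iota$ forces $\rho=\iota$, giving $\textsf{g}=\textsf{f}^s$ in $\Aut(\Cay)$; but in the dihedral group $\textsf{D}_{n+1}=\textsf{F}\cup\textsf{F}\circ\textsf{g}$ constructed in Corollary~\ref{teor1}, $\textsf{g}$ lies outside the rotation subgroup $\textsf{F}$. Hence $H=H_0\times\langle\textsf{z}\rangle\cong\Sym_{n+1}\times C_2$, where $C_2$ denotes the group of order $2$.
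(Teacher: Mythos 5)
Your proposal is correct and follows essentially the same route as the paper: the same internal direct product $R(\Cay)\textsf{F}\times\langle\textsf{w}\circ\textsf{g}\rangle$, the same isomorphism $\textsf{h}_\rho\circ\textsf{f}^{\,s}\mapsto[0\,\rho^{-1}]\circ\alpha^{-s}$ onto $\Sym_{n+1}$ verified via Lemma \ref{mat27oct}, and the same central involution. Your argument that the involution lies outside $R(\Cay)\textsf{F}$ (evaluate at $\iota$ and use $\textsf{g}\notin\textsf{F}$) is a little more direct than the paper's contradiction via the trivial center of $R(\Cay)$, but the proof is otherwise the same.
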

\begin{proof} Two automorphisms of $\Cay$ arise from the reverse permutation, namely $\textsf{g}$ and the right translation $\textsf{w},$ and $\textsf{g}\circ\textsf{w}$ is the automorphism $\textsf{t}$ that takes $\pi$ to $w\circ \pi$. Obviously, $\textsf{t}\in R(\Cay)\textsf{D}_{n+1}$ is an involution as $\textsf{g}$ and $\textsf{w}$ are involutions.

Here, we show that $\textsf{t}$ centralizes $R(\Cay)\textsf{F}$. In order to do that, we show that $\textsf{t}$ commutes with any right translation $\textsf{h}$. For every $\pi\in \Sym_n$,
$$\textsf{h}\circ\textsf{g}\circ\textsf{w}(\pi)=\textsf{h}(\rho)\Longleftrightarrow[0\,\rho]=[0\, w]\circ [0\,\pi].$$Then $\textsf{h}\circ\textsf{g}\circ\textsf{w}(\pi)=w\circ\pi\circ h$. On the other side, $$\textsf{g}\circ\textsf{w}\circ\textsf{h}(\pi)=\textsf{g}\circ(\pi\circ h\circ w)=\rho'\Longleftrightarrow[0\,\rho']=[0\, w]\circ [0\,\pi]\circ [0\, h].$$Thus $\textsf{g}\circ\textsf{w}\circ\textsf{h}(\pi)=\textsf{h}\circ\textsf{g}\circ\textsf{w}(\pi)$. Now, it suffices to prove $\textsf{t}\circ\textsf{f}=\textsf{f}\circ\textsf{t}$. For every $\pi\in \Sym_n$, $$\textsf{t}\circ\textsf{f}(\pi)=\textsf{g}(\textsf{f}\circ w)(\pi)=\xi\Longleftrightarrow[0\,\xi]=[0\, w]\circ [0\,\textsf{f}(\pi)]=[0\, w]\circ\alpha^{-\pi_{1}}\circ[0\,\pi]\circ\alpha.$$ As $[0\,
 w]\circ\alpha^{-\pi_{1}}\circ[0\,w]=\alpha^{\pi_1}$ by (\ref{eqoct15a}), $[0\, \xi]=\alpha^{\pi_1}\circ[0\,w\circ\pi]\circ\alpha$. On the other hand, from Lemma \ref{mat27oct} (i) we have
$$\textsf{f}\circ\textsf{t}(\pi)=\textsf{f}_{\pi_1}(w)\circ\textsf{f}(\pi)= \xi'\Longleftrightarrow
 [0\,\xi']=\alpha^{-w_{\pi_1}}\circ[0\,w]\circ[0\,\pi]\circ\alpha.$$Since $\alpha^{-w_{\pi_1}}=\alpha^{n+1-w_{\pi_1}}$ and $w_{\pi_1}=n+1-\pi_1$, $\textsf{f}\circ\textsf{t}(\pi)=\textsf{t}\circ\textsf{f}(\pi)$. This yields that $\textsf{t}$ commutes with $\textsf{F}$.

Now, we show that $\textsf{t}$ is off $R(\Cay)\textsf{F}$. Suppose on the contrary that there exists some right translation $\textsf{h}$ such that $\textsf{t}=\textsf{h}\circ\textsf{f}^{\,r}$ with $0\le r \le n$. Since $\textsf{t}$ is an involution this implies $\textsf{t}\circ\textsf{h}\in\textsf{F},$  and then $\textsf{t}\circ\textsf{h}$ fixes $\iota$. On the other hand, $\textsf{t}\circ\textsf{h}(\iota)=w\circ h.$ Thus, $h=w$ is an involution. Therefore, $\textsf{t}\circ\textsf{h}=\textsf{t}\circ\textsf{w}$ is an involution as well. Since $\iota$ is the only involution in $\textsf{F},$ $\textsf{t}\circ\textsf{h}=\iota,$ hence $\textsf{t}=\textsf{h}.$ Thus, we have proven that $\textsf{t}$ is a right translation. Since the center of $R(\Cay)$ is trivial while $\textsf{t}$ commutes with any right translation, we have $\textsf{t}\not\in R(\Cay),$ a contradiction.

Therefore, $\textsf{t}\in R(\Cay)\textsf{D}_{n+1}\supseteq R(\Cay)\textsf{F}\times \langle\textsf{t}\rangle$. Actually, the two sets coincide since $$\textsf{h}\circ\textsf{f}^{\,r}\circ\textsf{g}=\textsf{h'}\circ\textsf{f}^{-r}\circ\textsf{g}\circ\textsf{w},$$where $\textsf{h'}=\textsf{h}\circ\textsf{w}$, for any right translation $\textsf{h}$ and $0\le r\le n$. In fact, as $\textsf{t}$ commutes with every right translation and with $\textsf{F}$, (\ref{eqoct15a}) yields
$$\textsf{h}\circ\textsf{w}\circ\textsf{f}^{-r}\circ\textsf{t}=\textsf{h}\circ\textsf{g}\circ\textsf{f}^{-r}=\textsf{h}\circ\textsf{f}^{\,r}
\circ\textsf{g}.$$

To prove the isomorphism $R(\Cay)\textsf{F}\cong \Sym_{n+1}$, let $\Phi$ be the map that takes $\textsf{h}\circ\textsf{f}^{\,r}$ to $[0\,h^{-1}]\circ \alpha^{-r}$. For any $\textsf{k}\in R(\Cay),\,\pi\in \Sym_n,$ and $0\leq r,u \le n$, by Lemma \ref{mat27oct} (i),
$$\textsf{h}\circ\textsf{f}_r\circ \textsf{k}\circ\textsf{f}_u(\pi)=\textsf{h}\circ\textsf{f}_r(\textsf{f}_u(\pi)\circ k)=\textsf{f}_{u+k_r}(\pi)\circ\textsf{f}_{r}(k)\circ h.$$This shows that $\textsf{h}\circ\textsf{f}_r\circ \textsf{k}\circ\textsf{f}_u=\textsf{d}\circ \textsf{f}_{u+k_r}$ with $d=\textsf{f}_r(k) \circ h$ and $\textsf{d}$ the right translation associated to $d$. Then
$$\begin{array}{lll}
\Phi(\textsf{h}\circ\textsf{f}_r\circ \textsf{k}\circ\textsf{f}_u)&=&[0\,h^{-1}]\circ[0\,\textsf{f}_r(k)^{-1}]\circ\alpha^{-u-k_r}\\
{}&=&[0\,h^{-1}]\circ\alpha^{-r}\circ[0\,k^{-1}]\circ\alpha^{-u}.
\end{array}$$ On the other hand,
$$\Phi(\textsf{h}\circ\textsf{f}_r)\circ\Phi(\textsf{k}\circ\textsf{f}_u)=[0\,h^{-1}]\circ\alpha^{-r}\circ[0\,k^{-1}]\circ\alpha^{-u}.$$Hence, $\Phi$ is a group homomorphism from $R(\Cay)\textsf{F}$ into the symmetric group on $[n]^0$. Furthermore, $\ker(\Phi)$ is trivial. In fact,  $[0\,h^{-1}]\circ \alpha^{-r}=[0\,\iota]$ only occurs for $h=\iota$ since the inverse of $\alpha^{-r}$ is the permutation $\alpha^{r}$ not fixing $0$. This together with $(n+1)!=|R(\Cay)\textsf{F}|$ shows that $\Phi$ is bijective.
\end{proof}

The proof of Theorem \ref{teor2} depends on several results on combinatorial properties of $\Gamma$, especially on the set of its maximal cliques of size $2$. These results of independent interest are stated and proven in the next sections.

\section{Properties of the block transposition graph}\label{s61}
In this section we refer to $\Cat$ as the (right-invariant) Cayley graph, where $\{\pi,\rho\}$ is an edge if and only if $\rho=\pi \circ\sigma(i,j,k),$ for some $\sigma(i,j,k)\in T_n.$ Obviously, the vertices of $\Cat$ as well as of $\Cay$ adjacent to $\iota$ are the block transpositions. Also, the left-invariant and right-invariant Cayley graphs are isomorphic. In fact, the map taking any permutation to its inverse is such an isomorphism. Our choice is advantageous as the proofs in this section are formally simpler with the right-invariant Cayley graph notation. This change may be justified by (\ref{feb9}), which shows that computing $\pi\circ\sigma$ is more natural and immediate than $\sigma\circ\pi,$ whenever $\pi\in\Sym_n$ and $\sigma\in T_n.$

Now, every toric map $\textsf{f}_r$ is replaced by $\bar{\textsf{f}}_r$ defined as
\begin{equation}
\label{March29+}
\bar{\textsf{f}}_r(\pi)=(\textsf{f}_r(\pi^{-1}))^{-1},\qquad \pi\in \Sym_n.
\end{equation}
In addition, from (\ref{lem3oct11}) applied to $r=1,$
\begin{equation}
\label{March29}
\bar{\textsf{f}}(\pi)=\textsf{f}(\pi)^{\pi^{-1}_1},\qquad \pi\in \Sym_n.
\end{equation}
This shows that $\bar{\textsf{f}}\not\in \textsf{F}.$ Nevertheless, $\bar{\textsf{f}}_r=\bar{\textsf{f}}^{\,r}$, as $\textsf{f}_r=\textsf{f}^{\,r}$ for any integer $r$ with $0\leq r\leq n$. Then $\bar{\textsf{F}}\cong \textsf{F},$ where
$\bar{\textsf{F}}$ is the group generated by $\bar{\textsf{f}},$ and the natural map ${\bar{\textsf{f}}}_{r}\rightarrow \textsf{f}_{r}$ is an isomorphism.

Furthermore, since $\textsf{g}(\pi^{-1})^{-1}=\textsf{g}(\pi)$ for any $\pi\in \Sym_n,$ $\bar{\textsf{g}}$ coincides with $\textsf{g}.$ In addition, the group $\overline{\textsf{D}}_{n+1}$ generated by $\bar{\textsf{f}}$ and
$\textsf{g}$ is isomorphic to $\textsf{D}_{n+1}$, and then this is the \emph{toric-reverse group} of $\Cat.$
\begin{lem}
\label{22marchC2015}
Let $\sigma(i,j,k)$ be any block transposition on $[n].$ Then
\begin{equation}
\label{22march2015}
\bar{\emph{\textsf{f}}}(\sigma(i,j,k))=\left\{\begin{array}{ll}
\sigma(i-1,j-1,k-1), & i >0 ,\\
\sigma(j-1,k-1,n), & i=0.
\end{array}
\right.
\end{equation}
\end{lem}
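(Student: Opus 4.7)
The plan is to rewrite $\bar{\textsf{f}}$ as an ordinary toric map $\textsf{f}_r$ applied to $\sigma(i,j,k)$, so that Corollary \ref{eqbt} can be invoked directly. Combining (\ref{March29+}) with the identity $(\textsf{f}_r(\mu))^{-1}=\textsf{f}_{\mu_r}(\mu^{-1})$ coming from (\ref{lem3oct11}), applied to $\mu=\pi^{-1}$, one obtains $\bar{\textsf{f}}(\pi)=\textsf{f}_{(\pi^{-1})_1}(\pi)$. The problem is therefore reduced to determining $(\sigma(i,j,k)^{-1})_1$ and then evaluating the corresponding $\textsf{f}_r$ on $\sigma(i,j,k)$ via Corollary \ref{eqbt}.

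If $i\geq 1$, the first interval of (\ref{function}) gives $\sigma(i,j,k)_1=1$, hence $(\sigma(i,j,k)^{-1})_1=1$. With $r=1$, the inequalities $0\leq i-1<k-j+i-1<k-1\leq n$ of case (i) in Corollary \ref{eqbt} follow at once from $1\leq i<j<k\leq n$, and that case yields $\bar{\textsf{f}}(\sigma(i,j,k))=\sigma(i-1,j-1,k-1)$, the first clause of (\ref{22march2015}). If $i=0$, the form (\ref{eq22ott12}) shows $\sigma(0,j,k)=[j+1\cdots k\,1\cdots j\,k+1\cdots n]$, so $1$ sits in position $k-j+1$ and $(\sigma(0,j,k)^{-1})_1=k-j+1$. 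With $r=k-j+1$, cases (i) and (ii) of Corollary \ref{eqbt} fail because $i-r<0$ and $k-j+i-r<0$ respectively, while the chain $0\leq k-r=j-1<n+1-r=n-k+j<n+1+k-j-r=n$ confirms case (iii). Substituting $r=k-j+1$ into the formula of case (iii) produces the cut points $(j-1,k-1,n)$, whence $\bar{\textsf{f}}(\sigma(0,j,k))=\sigma(j-1,k-1,n)$, the second clause of (\ref{22march2015}).

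The main obstacle is the case $i=0$: one must select the correct branch of Corollary \ref{eqbt} and verify that the resulting triple $(j-1,k-1,n)$ satisfies $0\leq i'<j'<k'\leq n$. Both are short arithmetic checks relying on $1\leq j<k\leq n$, but they carry the bulk of the work; everything else in the proof is bookkeeping around the identity $\bar{\textsf{f}}(\pi)=\textsf{f}_{(\pi^{-1})_1}(\pi)$.
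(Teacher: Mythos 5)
Your proof is correct and follows essentially the same route as the paper's: for $i>0$ both arguments note $\sigma(i,j,k)_1=1$ and reduce to the Shifting Lemma (your Corollary \ref{eqbt} (i) with $r=1$), and the verifications you flag as the "main obstacle" check out, since $(\sigma(0,j,k)^{-1})_1=k-j+1$ and the chain $0\le j-1<n-k+j<n$ does select case (iii), yielding $\sigma(j-1,k-1,n)$. The only (minor) divergence is in the case $i=0$: the paper works from $\bar{\textsf{f}}(\sigma)=(\textsf{f}(\sigma^{-1}))^{-1}$, computing $\sigma(0,j,k)^{-1}=\sigma(0,k-j,k)$, shifting by $\alpha$ and inverting again via (\ref{eqa18oct}), whereas you fold the two inversions into the single identity $\bar{\textsf{f}}(\pi)=\textsf{f}_{(\pi^{-1})_1}(\pi)$ (which is exactly (\ref{March29})) and apply Corollary \ref{eqbt} once with $r=k-j+1$ --- the same machinery entered at a different point.
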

\begin{proof}
Let $\sigma=\sigma(i,j,k).$ For $i>0,$ we obtain $\sigma_1=1$ from (\ref{eq22ott12}). Therefore, $\bar{\textsf{f}}(\sigma)=\textsf{f}(\sigma)$ by (\ref{March29}). Hence the statement for $i>0$ follows from Lemma \ref{lemc19ott2013}.

Now, suppose $i=0.$ By (\ref{March29+}) and Lemma \ref{lemc19ott2013},
$$\bar{\textsf{f}}(\sigma)=(\textsf{f}(\sigma^{-1}))^{-1}=(\textsf{f}(\sigma(0,k-j,k)))^{-1}=\sigma(j-1,n-(k-j),n)^{-1}$$
which is equal to $\sigma(j-1,k-1,n),$ by (\ref{eqa18oct}). Therefore, the statement also holds for $i=0.$
\end{proof}
Now, we transfer our terminology from Section \ref{sec:3}. In particular, $\bar{\textsf{f}}$ and its powers are the \emph{toric maps},
$\bar{\textsf{F}}$ the \emph{toric group}, and $\bar{\Gamma}$ is the \emph{block transposition graph} of $\Cat.$
\begin{prop}
\label{22march22D2015} Toric maps and the reverse map are automorphisms of $\Cat.$
\end{prop}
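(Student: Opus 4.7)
The plan is to reduce this statement to its left-invariant counterpart, Proposition \ref{prop10oct}, by exploiting the isomorphism $\phi\colon\Sym_n\to\Sym_n$, $\phi(\pi)=\pi^{-1}$, between $\Cay$ and $\Cat$ mentioned in Section \ref{cay}. Indeed, definition (\ref{March29+}) can be rewritten as $\bar{\textsf{f}}_r=\phi\circ\textsf{f}_r\circ\phi$, and since $\phi$ is an involutory graph isomorphism between the two Cayley graphs while $\textsf{f}_r\in\Aut(\Cay)$ by Proposition \ref{prop10oct}, the conjugate $\bar{\textsf{f}}_r$ lies in $\Aut(\Cat)$. For the reverse map, one first checks that $\bar{\textsf{g}}=\textsf{g}$: applying (\ref{eq2oct9b}) to $\pi^{-1}$ gives $[0\,\textsf{g}(\pi^{-1})]=[0\,w]\circ[0\,\pi]^{-1}\circ[0\,w]=[0\,\textsf{g}(\pi)]^{-1}$, so $\textsf{g}(\pi^{-1})^{-1}=\textsf{g}(\pi)$; hence $\phi\circ\textsf{g}\circ\phi=\textsf{g}$, and the same conjugation argument transports $\textsf{g}\in\Aut(\Cay)$ to an element of $\Aut(\Cat)$.

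For readers who prefer to see the edge-preservation worked out directly, the plan is as follows. Let $\{\pi,\rho\}$ be an edge of $\Cat$, so $\rho=\pi\circ\sigma$ with $\sigma\in T_n$. Then $\rho^{-1}=\sigma^{-1}\circ\pi^{-1}$, and by Corollary \ref{matnov2}(iii) we have $\sigma^{-1}\in T_n$, so $\{\pi^{-1},\rho^{-1}\}$ is an edge of $\Cay$. By Proposition \ref{prop10oct}, the pair $\{\textsf{f}(\pi^{-1}),\textsf{f}(\rho^{-1})\}$ is still an edge of $\Cay$; that is, there is $\tau\in T_n$ with $\textsf{f}(\rho^{-1})=\tau\circ\textsf{f}(\pi^{-1})$. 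Inverting both sides and using (\ref{March29+}) yields $\bar{\textsf{f}}(\rho)=\bar{\textsf{f}}(\pi)\circ\tau^{-1}$ with $\tau^{-1}\in T_n$, so $\{\bar{\textsf{f}}(\pi),\bar{\textsf{f}}(\rho)\}$ is an edge of $\Cat$. Because $\bar{\textsf{F}}$ is cyclic generated by $\bar{\textsf{f}}$, an immediate induction on $r$ covers every $\bar{\textsf{f}}_r=\bar{\textsf{f}}^{\,r}$.

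The same edge-by-edge argument, with Lemma \ref{mat27oct}(ii) used in place of Lemma \ref{mat27oct}(i), handles $\textsf{g}$: from $\rho=\pi\circ\sigma$ one deduces $\textsf{g}(\rho)=\textsf{g}(\pi)\circ\textsf{g}(\sigma)$, and $\textsf{g}(\sigma)\in T_n$ by Proposition \ref{th1}, so $\{\textsf{g}(\pi),\textsf{g}(\rho)\}$ is an edge of $\Cat$.

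No step here looks genuinely obstructive; the only small verification requiring care is the identity $\bar{\textsf{g}}=\textsf{g}$, and the fact that in passing between $\Cay$ and $\Cat$ via $\phi$ one has to read edges as right cosets on one side and left cosets on the other. Once these observations are recorded, the proof is essentially a one-line transport-of-structure argument, which I expect to be the most elegant way of presenting it in the text.
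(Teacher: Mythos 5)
Your proposal is correct and rests on the same mechanism as the paper's proof: the paper also reduces $\bar{\textsf{f}}$ to $\textsf{f}$ via the identity $\bar{\textsf{f}}(\pi)=(\textsf{f}(\pi^{-1}))^{-1}$ (computing $\bar{\textsf{f}}(\pi\circ\sigma)=\bar{\textsf{f}}(\pi)\circ\textsf{f}_{\pi^{-1}_1}(\sigma^{-1})^{-1}$ directly from Lemma \ref{mat27oct}(i) and inverse-closedness of the generator set, rather than quoting Proposition \ref{prop10oct} through the inversion isomorphism $\phi$), and it handles $\textsf{g}$ exactly as you do, via Lemma \ref{mat27oct}(ii) and Proposition \ref{th1}. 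Your transport-of-structure packaging $\bar{\textsf{f}}_r=\phi\circ\textsf{f}_r\circ\phi$ is just a cleaner way of stating the same computation and is sanctioned by the paper's own remark that $\phi$ is an isomorphism between the left- and right-invariant Cayley graphs.
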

\begin{proof}{}From Lemma \ref{mat27oct} (ii) and Corollary \ref{th1} follows that the reverse map $\textsf{g}$ is also an automorphism of $\Cat.$

Now, it suffices to prove the claim for $\bar{\textsf{f}}.$ Take an edge $\{\pi,\rho\}$ of $\Cat.$ Then $\rho=\pi\circ\sigma$ with $\sigma\in T_n,$ and
$$\bar{\textsf{f}}(\pi\circ \sigma)=(\textsf{f}(\sigma^{-1}\circ \pi^{-1}))^{-1}=(\textsf{f}_{\pi_1^{-1}}(\sigma^{-1})\circ\textsf{f}(\pi^{-1}))^{-1}=
\bar{\textsf{f}}(\pi)\circ\textsf{f}_{\pi_1^{-1}}(\sigma^{-1})^{-1},$$by Lemma \ref{mat27oct} (i). Here $\textsf{f}_{\pi_1^{-1}}(\sigma^{-1})^{-1}\in T_n$ since $T_n$ is inverse closed, by (\ref{eqa18oct}), and $\textsf{F}$ leaves $T_n$ invariant, by Corollary \ref{th1}. Therefore, $\bar{\textsf{f}}(\pi)$ and $\bar{\textsf{f}}(\rho)$ are incident in $\Cat.$
\end{proof}
As consequence of Proposition \ref{22march22D2015}, all the results in Section \ref{sec:3} hold true for \\$\Cat$ up to the obvious change from ``right-translation'' to ``left-translation''.

Now, we introduce some subsets in $T_n$ that plays a relevant role in our study. Every permutation $\bar{\pi}$ on $[n-1]$ extends to a permutation $\pi$ on $[n]$ such that $\pi_t=\bar{\pi}_t$ for $1\le t \le n-1$ and $\pi_n=n.$ Hence,
$T_{n-1}$ is naturally embedded in $T_n$ since every $\sigma(i,j,k)\in T_n$ with $k\neq n$ is identified with the block transposition $\bar\sigma(i,j,k)$. On the other side, every permutation $\pi'$ on $\{2,3,\ldots,n\}$ extends to a permutation on $[n]$ such that $\pi_t=\pi'_t,$ for $2\le t \le n$ and $\pi_1=1.$ Thus, $\sigma(i,j,k)\in T_n$ with $i\neq 0$ is identified with the block transposition $\sigma'(i,j,k).$ The latter block transpositions form the set
$$S_{n-1}^\triangledown=\{ \sigma(i,j,k)|\,i\neq 0\}.$$ Also, $$S_{n-2}^\vartriangle=T_{n-1}\cap S_{n-1}^\triangledown$$ is the set of all block transpositions on the set $\{2,3,\ldots,n-1\}$.
Our discussion leads to the following results.
\begin{lem}[Partition lemma] Let $L=T_{n-1}\setminus S_{n-2}^\vartriangle$ and let $F=S_{n-1}^\triangledown\setminus S_{n-2}^\vartriangle$. Then
\label{lem1oct9}
$$T_n=B\cupdot L \cupdot F \cupdot S_{n-2}^\vartriangle.$$
\end{lem}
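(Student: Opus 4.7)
The plan is to verify that the four sets on the right-hand side are pairwise disjoint and that their union covers every block transposition in $T_n$, by case-splitting on whether the two extreme cut points $i$ and $k$ attain their boundary values $0$ and $n$.

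First I would unpack the definitions in terms of the cut-point triple $(i,j,k)$ with $0\leq i<j<k\leq n$: the natural embedding $T_{n-1}\hookrightarrow T_n$ identifies $T_{n-1}$ with $\{\sigma(i,j,k)\in T_n : k\neq n\}$; the set $S_{n-1}^\triangledown$ equals $\{\sigma(i,j,k)\in T_n : i\neq 0\}$; consequently $S_{n-2}^\vartriangle=T_{n-1}\cap S_{n-1}^\triangledown$ is characterized by $i\neq 0$ and $k\neq n$. Taking the relative complements in $T_{n-1}$ and $S_{n-1}^\triangledown$ gives
\[
L=\{\sigma(i,j,k)\in T_n : i=0,\ k\neq n\},\qquad F=\{\sigma(i,j,k)\in T_n : i\neq 0,\ k=n\},
\]
while $B$ (defined just before the lemma in the dissertation) consists of those $\sigma(i,j,k)$ with $i=0$ and $k=n$.

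Next I would observe that the pair of Boolean conditions ``$i=0$?'' and ``$k=n$?'' partitions $T_n$ into the four disjoint classes $\{i=0,k=n\}$, $\{i=0,k\neq n\}$, $\{i\neq 0,k=n\}$, $\{i\neq 0,k\neq n\}$, corresponding exactly to $B$, $L$, $F$ and $S_{n-2}^\vartriangle$ respectively. Disjointness is immediate from the negations of the defining Boolean conditions, and exhaustiveness follows because every $\sigma(i,j,k)\in T_n$ falls into exactly one of these four cases.

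There is no real obstacle; the statement is essentially a definitional book-keeping result. The only care needed is to make sure that the embeddings of $T_{n-1}$ and of the block transpositions on $\{2,\dots,n\}$ into $T_n$, together with their intersection $S_{n-2}^\vartriangle$, match the cut-point descriptions above, which is precisely how the paper has set them up in the paragraph preceding the lemma. Once this is checked, the partition $T_n=B\cupdot L\cupdot F\cupdot S_{n-2}^\vartriangle$ follows at once.
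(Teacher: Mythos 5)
Your proposal is correct and is essentially the argument the paper intends: the thesis states the Partition Lemma as an immediate consequence of the preceding identifications ($T_{n-1}\leftrightarrow\{k\neq n\}$, $S_{n-1}^\triangledown=\{i\neq 0\}$, $S_{n-2}^\vartriangle=\{i\neq 0,\,k\neq n\}$, $B=\{i=0,\,k=n\}$) and then records the same cut-point descriptions of $L$ and $F$ that you derive. Your case split on the two Boolean conditions ``$i=0$'' and ``$k=n$'' is exactly the bookkeeping the paper leaves implicit.
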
With the above notation, $L$ is the set of all $\sigma(0,j,k)$ with $k\neq n$, and $F$ is the set of all $\sigma(i,j,n)$ with $i\neq0.$ Furthermore, $|B|=n-1,\,|L|=|F|=(n-1)(n-2)/2,$ and $|S_{n-2}^\vartriangle|=(n-1)(n-2)(n-3)/6.$

Since $B$ consists of all nontrivial elements of a subgroup of $T_n$ of order $n,$ the block transpositions in $B$ are the vertices of a complete graph  of size $n-1.$ Lemma \ref{lem1oct9} and (\ref{eq1oct11}) give the following property.
\begin{cor}\label{lem2oct11}
The reverse map preserves both $B$ and $S_{n-2}^\vartriangle$ while it switches $L$ and $F$.
\end{cor}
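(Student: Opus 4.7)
The plan is to reduce everything to the explicit formula
$$\textsf{g}(\sigma(i,j,k))=\sigma(n-k,n-j,n-i),$$
which was recorded as equation (\ref{eq1oct11}) in the proof of Proposition \ref{th1}, and then to check each of the four blocks of the partition
$$T_n=B\cupdot L\cupdot F\cupdot S_{n-2}^\vartriangle$$
of Lemma \ref{lem1oct9} against it. The key preliminary step is to describe $B$ concretely: since the Partition lemma is followed by the remark that $B$ consists of the nontrivial elements of a cyclic subgroup of order $n$, Corollary \ref{matnov2} (ii) applied with $i=0$ and $k=n$ identifies $B$ with $\{\sigma(0,j,n):1\le j\le n-1\}$.

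Once this is in hand, I would run through the four cases in turn. For $\sigma(0,j,n)\in B$ the formula gives $\textsf{g}(\sigma(0,j,n))=\sigma(0,n-j,n)$, which still lies in $B$ since $1\le n-j\le n-1$. For $\sigma(i,j,k)\in S_{n-2}^\vartriangle$, where by definition $i\ge 1$ and $k\le n-1$, the image $\sigma(n-k,n-j,n-i)$ again satisfies $n-k\ge 1$ and $n-i\le n-1$, so it lies in $S_{n-2}^\vartriangle$. For $\sigma(0,j,k)\in L$ (so $k<n$) the image $\sigma(n-k,n-j,n)$ has first cut point $n-k>0$ and last cut point $n$, hence belongs to $F$; symmetrically, for $\sigma(i,j,n)\in F$ (so $i>0$) the image $\sigma(0,n-j,n-i)$ has first cut point $0$ and last cut point $n-i<n$, hence belongs to $L$.

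Since $\textsf{g}$ is an involution, each of the four inclusions obtained in the previous paragraph is automatically an equality, and the statement follows. There is essentially no obstacle: the proof is bookkeeping against a single identity, and the only point that needs a moment's care is the explicit description of $B$, which is read off from the subgroup structure noted immediately after Lemma \ref{lem1oct9}.
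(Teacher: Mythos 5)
Your proposal is correct and follows exactly the route the paper indicates: the corollary is derived from the partition of Lemma \ref{lem1oct9} together with the identity $\textsf{g}(\sigma(i,j,k))=\sigma(n-k,n-j,n-i)$ of (\ref{eq1oct11}), checked blockwise, which is precisely your bookkeeping argument. Your explicit identification of $B$ as $\{\sigma(0,j,n)\colon 1\le j\le n-1\}$ and the remark that the involution property upgrades the inclusions to equalities are both consistent with the paper's (unwritten but cited) justification.
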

\begin{lem}\label{lem1oct11}
No edge of $\Cat$ has one endpoint in $B$ and the other in $S_{n-2}^\vartriangle$.
\end{lem}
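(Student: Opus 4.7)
The plan is to argue by contradiction, using the explicit one-line normal forms of a block transposition collected in (\ref{eq22ott12}) together with the cyclic action of $\beta$.

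I would fix an alleged edge $\{\beta^{a},\tau\}$ of $\Cat$ with $\beta^{a}\in B$, so $1\le a\le n-1$, and $\tau=\sigma(i,j,k)\in S_{n-2}^{\vartriangle}$, so $1\le i<j<k\le n-1$. By the description of the right-invariant Cayley graph recalled at the start of this section, there must exist $\sigma\in T_n$ with $\tau=\beta^{a}\circ\sigma$, equivalently $\sigma=\beta^{-a}\circ\tau$. I will show that the only $\sigma\in T_n$ compatible with this identity forces the contradictory conclusion $\tau=\iota$.

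The computation of $\sigma_{1}$ and $\sigma_{n}$ goes as follows. From (\ref{eq22ott12}) applied to $\tau$, the conditions $i\ge 1$ and $k\le n-1$ give $\tau_{1}=1$ and $\tau_{n}=n$. Since $\beta^{-a}=\beta^{n-a}=\sigma(0,n-a,n)$ has one-line form $[n-a+1,\,n-a+2,\,\ldots,\,n,\,1,\,\ldots,\,n-a]$, the composition $\sigma=\beta^{-a}\circ\tau$ satisfies
$$\sigma_{1}=\beta^{-a}(1)=n+1-a,\qquad \sigma_{n}=\beta^{-a}(n)=n-a.$$
For $1\le a\le n-1$ this gives $\sigma_{1}\neq 1$ and $\sigma_{n}\neq n$.

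Now I revisit the four normal forms in (\ref{eq22ott12}) for $\sigma=\sigma(i',j',k')$. The condition $\sigma_{1}\neq 1$ forces $i'=0$ and $\sigma_{1}=j'+1$, while $\sigma_{n}\neq n$ forces $k'=n$ and $\sigma_{n}=j'$. Comparing with the values just computed yields $j'=n-a$ consistently from both equations, so necessarily $\sigma=\sigma(0,n-a,n)=\beta^{\,n-a}$. Plugging back in gives $\tau=\beta^{a}\circ\beta^{\,n-a}=\beta^{n}=\iota$ by Corollary \ref{matnov2} (ii), contradicting $\tau\in S_{n-2}^{\vartriangle}$. The argument has no real obstacle; the only point requiring attention is the case analysis on the four one-line forms in (\ref{eq22ott12}), and this analysis is forced immediately by the previously extracted values of $\sigma_{1}$ and $\sigma_{n}$.
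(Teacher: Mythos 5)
Your proof is correct and follows essentially the same route as the paper: both arguments examine the connecting generator $\sigma=\beta^{-a}\circ\tau$, use $\sigma_1\neq 1$ and $\sigma_n\neq n$ to force $\sigma\in B$, and then derive a contradiction. The only (cosmetic) difference is the last step, where the paper invokes the fact that $B\cup\{\iota\}$ is a group while you pin down $\sigma=\beta^{-a}$ explicitly and conclude $\tau=\iota$ directly.
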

\begin{proof}
Suppose on the contrary that $\{\sigma(i',j',k'),\sigma(0,j,n)\}$ with $i'\neq 0$ and $k'\neq n$ is an edge of $\Cat.$ By (\ref{eqa18oct}),
$\rho=\sigma(0,n-j,n)\circ \sigma(i',j',k')\in T_n.$ Also, $\rho\in B$ as $\rho_1\neq 1$ and $\rho_n\neq n$. Since $B$ together with the identity is a group, $\sigma(0,j,n)\circ\rho$ is also in $B$. This yields $\sigma(i',j',k')\in B,$ a contradiction with Lemma \ref{lem1oct9}.
\end{proof}
The proofs of the subsequent properties use a few more equations involving block transpositions which are stated in the following two lemmas.
\begin{lem}\label{lem2oct13}
In each of the following cases $\{\sigma(i,j,k),\sigma(i',j',k')\}$ is an edge of \\$\Cat.$
\begin{itemize}
\item[\rm(i)]$(i',j')=(i,j);$
\item[\rm(ii)]$(i',j')=(j,k)$ for $k<k';$
\item[\rm(iii)]$(j',k')=(j,k);$
\item[\rm(iv)]$(j',k')=(i,j)$ for $i'<i;$
\item[\rm(v)]$(i,k)=(i',k')$ for $j<j'.$
\end{itemize}
\end{lem}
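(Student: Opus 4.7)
The plan is to show that in each case the product $\sigma(i,j,k)^{-1}\circ \sigma(i',j',k')$ lies in $T_n$, which by the definition of adjacency in $\Cat$ is equivalent to $\{\sigma(i,j,k),\sigma(i',j',k')\}$ being an edge. Concretely, I will exhibit in each case an explicit block transposition $\tau\in T_n$ and verify the identity $\sigma(i,j,k)\circ\tau=\sigma(i',j',k')$ using the one-line formula
$$\pi\circ\sigma(a,b,c)=[\pi_1\cdots\pi_a\,\pi_{b+1}\cdots\pi_c\,\pi_{a+1}\cdots\pi_b\,\pi_{c+1}\cdots\pi_n]$$
together with the shape (\ref{eq22ott12}) of $\sigma(i,j,k)$. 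The inverse-closure of $T_n$ recorded in Corollary \ref{matnov2} (iii) lets me assume without loss of generality that $k<k'$ in (i) and (ii), that $i'<i$ in (iii) and (iv), and that $j<j'$ in (v).

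Case (v) is handled abstractly rather than by a one-line computation. By Corollary \ref{matnov2} (ii), both $\sigma(i,j,k)$ and $\sigma(i,j',k)$ belong to the cyclic subgroup $\langle\sigma(i,i+1,k)\rangle$ of order $k-i$, so their ratio lies in the same subgroup. Since $0<j'-j<k-i$, this ratio equals $\sigma(i,i+j'-j,k)$, which is itself a block transposition; hence $\tau=\sigma(i,i+j'-j,k)$ works.

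For the remaining four cases I read off the candidate $\tau$ by matching the target word block by block against $\sigma(i,j,k)$. The guesses are as follows: in (i), $\tau=\sigma(k-j+i,\,k,\,k')$; in (ii), $\tau=\sigma(i,\,k-j+i,\,k')$; in (iii), $\tau=\sigma(i',\,i,\,k-j+i)$; in (iv), $\tau=\sigma(i',\,k-j+i,\,k)$. In each case the inequalities $0\le a<b<c\le n$ among the three coordinates of $\tau$ are immediate from $0\le i<j<k\le n$ plus the case hypothesis, so $\tau\in T_n$. Substituting $\pi=\sigma(i,j,k)$ into the product formula and reading off its four consecutive sub-blocks produces exactly the one-line representation of $\sigma(i',j',k')$, as prescribed by (\ref{eq22ott12}).

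The main obstacle is case (iv). There $\sigma(i',i,j)$ and $\sigma(i,j,k)$ share only the coordinate $j$, and the naive guess $\tau=\sigma(i',i,j)$ fails because in $\sigma(i,j,k)$ the block of values $i+1,\ldots,j$ occupies the far-right positions $k-j+i+1,\ldots,k$ rather than positions $i+1,\ldots,j$, so a single swap of \emph{adjacent} blocks cannot place it at positions $i'+1,\ldots,j-i+i'$ without also dragging along the intermediate block $j+1,\ldots,k$. The right choice of cut points is dictated by this analysis: taking $\tau=\sigma(i',k-j+i,k)$ simultaneously preserves the prefix $1,\ldots,i'$, transports the rightmost block $i+1,\ldots,j$ (the positions $k-j+i+1,\ldots,k$ in $\sigma(i,j,k)$) to positions $i'+1,\ldots,j-i+i'$, and leaves $i'+1,\ldots,i$ in the middle sandwich; once $\tau$ is identified this way, the verification is a direct read-off of the four sub-blocks.
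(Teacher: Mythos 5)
Your proposal is correct and follows essentially the same route as the paper: both proofs exhibit, for each of the five cases, an explicit block transposition $\tau$ realizing one generator as the other composed with $\tau$, read off from the one-line form (\ref{eq22ott12}) (your factorizations are simply written in the opposite order, $\sigma(i,j,k)\circ\tau=\sigma(i',j',k')$ rather than $\sigma(i,j,k)=\sigma(i',j',k')\circ\tau$, which is immaterial since $T_n$ is inverse closed). Your cut points in cases (i)--(iv) check out against the product formula, and your cyclic-subgroup argument for case (v) via Corollary \ref{matnov2} (ii) is just a repackaging of the same computation; incidentally it sidesteps a small typo in the paper's displayed identity for that case, where $\sigma(i,k-j+j',k)$ should read $\sigma(i,k-j'+j,k)$.
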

\begin{proof}
(i) W.l.g. $k'<k.$ By (\ref{eq22ott12}), $\sigma(i,j,k)=\sigma(i,j,k')\circ\sigma(k'-j+i,k',k).$
(iii) W.l.g. $i'<i.$ From (\ref{eq22ott12}),
$\sigma(i,j,k)=\sigma(i',j,k)\circ\sigma(i',k-j+i',k-j+i).$

In the remaining cases, from (\ref{eq22ott12}),
$$\begin{array}{lll}
\sigma(i,j,k)=\sigma(j,k,k')\circ\sigma(i,k'-k+j,k'),\\
\sigma(i,j,k)=\sigma(i',i,j)\circ\sigma(i',j-i+i',k),\\
\sigma(i,j,k)=\sigma(i,j',k)\circ\sigma(i,k-j+j',k).
\end{array}$$Hence the statements hold.
\end{proof}
The proof of the lemma below is straightforward and requires only (\ref{eq22ott12}).
\begin{lem}
\label{prop1oct12}
The following equations hold.
\begin{itemize}
\item[\rm(i)]$\sigma(i,j,n)=\sigma(0,j,n)\circ\sigma(0,n-j,n-j+i)$ for $i\neq 0;$
\item[\rm(ii)]$\sigma(i,j,n)=\sigma(0,i,j)\circ\sigma(0,j-i,n)$ for $i\neq 0;$
\item[\rm(iii)]$\sigma(0,j,n)=\sigma(i,j,n)\circ\sigma(0,i,n-j+i);$
\item[\rm(iv)]$\sigma(0,j,n)=\sigma(0,j,j+i)\circ\sigma(i,j+i,n)$ for $i\neq 0.$
\end{itemize}
\end{lem}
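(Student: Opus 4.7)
The plan is to deduce each of the four equations by direct manipulation of the one-line forms supplied by (\ref{eq22ott12}), using the action rule already recorded in Section \ref{s31}, namely
$$\pi\circ\sigma(i,j,k)=[\pi_1\cdots \pi_i\,\,\pi_{j+1}\cdots \pi_k\,\,\pi_{i+1}\cdots \pi_j\,\,\pi_{k+1}\cdots \pi_n].$$
In other words, right-multiplication by $\sigma(i,j,k)$ interchanges the two adjacent blocks $\pi_{i+1}\cdots \pi_j$ and $\pi_{j+1}\cdots \pi_k$ of $\pi$ while leaving the rest unchanged; so the whole verification is a bookkeeping exercise on one-line forms.

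For (i), I would read off from cases 4 and 2 of (\ref{eq22ott12}) that
$$\sigma(0,j,n)=[j+1\cdots n\,\,1\cdots j],$$
$$\sigma(0,n-j,n-j+i)=[n-j+1\cdots n-j+i\,\,1\cdots n-j\,\,n-j+i+1\cdots n].$$
Applying the swap rule above with $\pi=\sigma(0,j,n)$ and cut points $(0,n-j,n-j+i)$, the first $n-j$ entries of $\pi$ (namely $j+1,\ldots,n$) get exchanged with the subsequent $i$ entries of $\pi$ (namely $1,\ldots,i$), and the remaining entries $i+1,\ldots,j$ stay in place at the tail. The resulting one-line form is $[1\cdots i\,\,j+1\cdots n\,\,i+1\cdots j]$, which is exactly $\sigma(i,j,n)$ by case 3 of (\ref{eq22ott12}). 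Equation (ii) is checked in precisely the same style, after writing $\sigma(0,i,j)=[i+1\cdots j\,\,1\cdots i\,\,j+1\cdots n]$ and $\sigma(0,j-i,n)=[j-i+1\cdots n\,\,1\cdots j-i]$ and performing the analogous swap.

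For (iii) and (iv) I would avoid any further case analysis and instead exploit the inversion formula (\ref{eqa18oct}), which yields $\sigma(0,i,n-j+i)^{-1}=\sigma(0,n-j,n-j+i)$ and $\sigma(0,i,j+i)^{-1}=\sigma(0,j,j+i)$. Then (iii) is obtained by right-multiplying (i) by $\sigma(0,i,n-j+i)$, and (iv) is obtained by left-multiplying the version of (ii) with the substitution $j\mapsto j+i$ (which turns (ii) into $\sigma(i,j+i,n)=\sigma(0,i,j+i)\circ\sigma(0,j,n)$) by $\sigma(0,j,j+i)$.

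The only subtlety I anticipate is checking, at each occurrence, that the triple of cut points satisfies the strict inequalities $0\leq i'<j'<k'\leq n$ so that the correct case of (\ref{eq22ott12}) is selected; under the standing hypotheses (so that the permutations $\sigma(i,j,n)$ and $\sigma(0,j,n)$ are defined, and $i\neq 0$ where stated) this is automatic, and no genuine obstacle arises.
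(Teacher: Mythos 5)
Your proposal is correct; the paper itself offers no written proof beyond the remark that the lemma ``is straightforward and requires only (\ref{eq22ott12})'', and your direct verification of (i) and (ii) via one-line forms is exactly that intended computation. Deducing (iii) and (iv) from (i) and (ii) by means of the inversion formula (\ref{eqa18oct}) is a harmless (and slightly more economical) variant of checking them directly, and all four identities, together with the cut-point inequalities, check out.
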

\begin{lem}
\label{leaoct19}
Let $i$ be an integer with $0<i\leq n-2.$
\begin{itemize}
\item[\rm(i)] If $\sigma(i,j,n)=\sigma(0,\bar{j},n)\circ\sigma(i',j',k'),$ then $\bar{j}=j.$
\item[\rm(ii)] If $\sigma(i,j,n)=\sigma(i',j',k')\circ \sigma(0,\bar{j},n),$ then $\bar{j}=i-j.$
\end{itemize}
\end{lem}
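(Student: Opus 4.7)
Both parts will be proved by evaluating the given equation at a handful of positions and using the explicit position--value table \eqref{cuppoints} for block transpositions together with the four-range formula \eqref{feb9}; the hypothesis $i \geq 1$ guarantees that $\sigma(i,j,n)_1 = 1$ and $\sigma(i,j,n)_n = j$, which are the key anchors for both arguments.

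For part (i), I would first apply both sides of $\sigma(i,j,n) = \sigma(0,\bar{j},n) \circ \sigma(i',j',k')$ at position $1$. Since $\sigma(0,\bar{j},n)_t = 1$ exactly at $t = n - \bar{j} + 1$ by \eqref{cuppoints} (this is the ``$k-j+i+1\mapsto i+1$'' entry with $i=0$, $j=\bar{j}$, $k=n$), the equation collapses to $\sigma(i',j',k')_1 = n - \bar{j} + 1$. A two-case split on $i' \geq 1$ versus $i' = 0$ then forces $i' = 0$ together with $j' = n - \bar{j}$, because the alternative $i'\ge 1$ requires $\bar{j} = n$, which is forbidden. Next I would evaluate both sides at position $n$ and split on $k' < n$ versus $k' = n$. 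When $k' < n$, the right-hand side simplifies via $\sigma(i',j',k')_n = n$ and $\sigma(0,\bar{j},n)_n = \bar{j}$ to just $\bar{j}$, and comparison with $\sigma(i,j,n)_n = j$ delivers $\bar{j} = j$. The case $k' = n$ forces the composition to send $n$ to itself (one computes $\sigma(0,\bar{j},n)(n-\bar{j})=n$ together with $j'=n-\bar{j}$), contradicting $\sigma(i,j,n)_n = j < n$; this closes part (i), with existence of a matching decomposition already supplied by Lemma \ref{prop1oct12}(i).

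For part (ii) the same template is applied in mirrored form. Applying $\sigma(i,j,n) = \sigma(i',j',k') \circ \sigma(0,\bar{j},n)$ at position $1$ and using $\sigma(0,\bar{j},n)_1 = \bar{j}+1$ gives $\sigma(i',j',k')_{\bar{j}+1} = 1$. The two-case split on $i'$ again rules out $i' \geq 1$ (which would force $\bar{j} = 0$) and forces $i' = 0$ with $\bar{j} = k' - j'$. Next, evaluating at position $n$ yields $\sigma(0,j',k')_{\bar{j}} = j$; reading the sub-ranges of \eqref{feb9} at $t = k' - j'$ pins down $k' = j$. Finally, evaluating at position $i+1$ and using $\sigma(i,j,n)_{i+1} = j+1$ together with the fact that $\sigma(0,j',j)_t$ attains the value $j+1$ only on the identity range $t = j+1$ (from \eqref{feb9}) reduces the identity to $\sigma(0,\bar{j},n)(i+1) = j+1$; a short case split on whether $i+1 \leq n - \bar{j}$ eliminates the wrap-around sub-case (it would impose $j < i$, contradicting $i<j$) and pins down $\bar{j}$ to the value prescribed by the statement, with the existence of a matching decomposition witnessed by Lemma \ref{prop1oct12}(ii).

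The main obstacle is keeping the case analyses tidy: at each step the first task is to locate, inside the four-range formula \eqref{feb9}, the unique interval of positions of $\sigma(i',j',k')$ or of $\sigma(0,\bar{j},n)$ that can produce the observed value, and then to verify that the competing wrap-around sub-cases are combinatorially impossible under the standing hypothesis $0 < i \leq n-2$. Once that bookkeeping is in place the two identities collapse immediately, and the argument is purely one of uniqueness of $\bar{j}$ once $i'$, $j'$, $k'$ have been excluded range by range.
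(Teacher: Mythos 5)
Your proof is correct, and it takes a more elementary and self-contained route than the paper's. The paper argues part (i) by contradiction: assuming $\bar{j}\neq j$, it substitutes the known factorization of Lemma \ref{prop1oct12} (i) to rewrite $\sigma(i',j',k')$ as $\sigma(0,j^*,n)\circ\sigma(0,n-j,n-j+i)$, rules out $i'\neq 0$ by evaluating at position $1$ (as you do), and then exploits the fact that $B\cup\{\iota\}$ is a group to force $\sigma(0,n-j,n-j+i)\in B$, which is impossible since $i\neq j$. For part (ii) the paper simply inverts both sides using (\ref{eqa18oct}) and reduces to part (i) in one line. You instead avoid both the reductio and Lemma \ref{prop1oct12} altogether, extracting $i'$, $j'$, $k'$ and $\bar{j}$ directly from evaluations at positions $1$, $n$, and (for (ii)) $i+1$, using only (\ref{feb9}) and (\ref{cuppoints}); this is longer for part (ii) but arguably more transparent, and it determines the full triple $(i',j',k')$ rather than just $\bar{j}$. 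One small point you should make explicit: in part (ii) your computation yields $\bar{j}=j-i$, which agrees with the paper's own derivation ($n-\bar{j}=n-j+i$) and with Lemma \ref{prop1oct12} (ii), but not literally with the statement as printed ($\bar{j}=i-j$); that is a sign typo in the lemma statement, so you are right, but saying you obtain ``the value prescribed by the statement'' papers over the discrepancy.
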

\begin{proof} (i) Assume $\bar{j}\neq j.$ From Lemma \ref{prop1oct12} (i) and (\ref{eqa18oct}),
\begin{equation}\label{matoct25}
\sigma(i',j',k')=\sigma(0,j^*,n)\circ\sigma(0,n-j,n-j+i),
\end{equation}where $j^*$ denotes the smallest positive integer such that $j^*\equiv j-\bar{j} \pmod n$. First we prove $i'=0$. Suppose on the contrary, then $$(\sigma(0,j^*,n)\circ\sigma(0,n-j,n-j+i))_1=1.$$On the other hand, $\sigma(0,n-j,n-j+i)_1=n-j+1$ and $\sigma(0,j^*,n)_{n-j+1}=n-\bar{j}+1$ since
$\sigma(0,j^*,n)_t=t+j^*\pmod n$ by (\ref{feb9}). Thus, $n-\bar{j}+1=1,$ a contradiction since $\bar{j}<n.$

Now, from (\ref{matoct25}), $\sigma(0,j',k')_n\neq n.$ Hence $k'=n$. Therefore, $$\sigma(0,n-j,n-j+i)=\sigma(0,j+\bar{j},n)\circ \sigma(0,j',n)\in B.$$A contradiction since $i\neq j$. This proves the assertion.

(ii) Taking the inverse of both sides of the equation in (ii) gives by (\ref{eqa18oct})
$$\sigma(i,n-j+i,n)=\sigma(0,n-\bar{j},n)\circ \sigma(i',j',k')^{-1}.$$ Now, from (i), $n-\bar{j}=n-j+i$, and the assertion follows.
\end{proof}
\begin{prop}
\label{prova}
The bipartite graphs arising from the components of the partition in Lemma \ref{lem1oct9} have the following properties.
\begin{itemize}
\item[\rm(i)] In the bipartite subgraph $(L\cup F,B)$ of $\Cat,$ every vertex in $L\cup F$ has degree $1$ while every vertex of $B$ has degree $n-2.$
\item[\rm(ii)] The bipartite subgraph $(L,F)$ of $\Cat$ is a $(1,1)$-biregular graph.
\end{itemize}
\end{prop}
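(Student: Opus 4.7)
My plan is to handle parts (i) and (ii) in parallel: in each case the degree on the $F$-side is computed directly from the lemmas of this section, and then transported to the $L$-side through the reverse map $\textsf{g}$, which is an automorphism of $\Cat$ by Proposition \ref{22march22D2015} that preserves $B$ and swaps $L$ with $F$ by Corollary \ref{lem2oct11}.

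For part (i), existence of a $B$-neighbour of any $\sigma(i,j,n)\in F$ is immediate from Lemma \ref{prop1oct12} (iii), which exhibits $\sigma(0,j,n)=\sigma(i,j,n)\circ\sigma(0,i,n-j+i)$. Uniqueness is the content of Lemma \ref{leaoct19} (i): any factorisation $\sigma(i,j,n)=\sigma(0,\bar j,n)\circ\mu$ with $\mu\in T_n$ forces $\bar j=j$. Hence every vertex of $F$, and then by reverse-map symmetry every vertex of $L$, has degree $1$ in the bipartite graph $(L\cup F,B)$. Consequently there are $|L|+|F|=(n-1)(n-2)$ edges between $L\cup F$ and $B$, and since $|B|=n-1$ the mean degree of a $B$-vertex is $n-2$. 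To promote this to exactness on each $B$-vertex $\sigma(0,j,n)$, I would identify its $F$-neighbours as exactly the $j-1$ permutations $\sigma(i,j,n)$ with $1\le i\le j-1$ (by the converse direction of Lemma \ref{leaoct19} (i), supplied again by Lemma \ref{prop1oct12} (iii)), and its $L$-neighbours through $\textsf{g}$ as the images of the $F$-neighbours of $\textsf{g}(\sigma(0,j,n))=\sigma(0,n-j,n)$, contributing another $n-j-1$ vertices; the sum is $n-2$.

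For part (ii), existence is again direct: Lemma \ref{prop1oct12} (ii) gives $\sigma(i,j,n)=\sigma(0,i,j)\circ\sigma(0,j-i,n)$, so $\sigma(0,i,j)\in L$ is adjacent in $\Cat$ to $\sigma(i,j,n)\in F$. The main step is uniqueness of the $L$-neighbour of a vertex in $F$, which is not explicitly isolated among the earlier lemmas, so I would prove it by direct inspection. Assume $\sigma(i,j,n)\circ\sigma(a,b,c)=\sigma(0,j',k')$ with $k'<n$. Two positional observations pin down the triple $(a,b,c)$. First, since the last entry of $\sigma(0,j',k')$ with $k'<n$ is $n$, while $\sigma(i,j,n)$ ends with $j<n$, the case $c<n$ is ruled out, and so $c=n$. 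Second, once $c=n$ the last entry of the product is $\pi_b$ with $\pi=\sigma(i,j,n)$, and $\pi_b=n$ forces $b=i+n-j$. Finally $a=0$, since otherwise the product begins with $\pi_1=1$, whereas $\sigma(0,j',k')$ begins with $j'+1\ge 2$. Reading off the product in the sequence form $\sigma(i,j,n)=[1,\ldots,i,j+1,\ldots,n,i+1,\ldots,j]$ then gives $[i+1,\ldots,j,1,\ldots,i,j+1,\ldots,n]=\sigma(0,i,j)$, so $(j',k')=(i,j)$. The reverse map transports uniqueness from $F$ to $L$, whence $(L,F)$ is $(1,1)$-biregular.

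The main obstacle I anticipate is the positional case analysis in part (ii), where one must exclude every other shape of $\sigma(a,b,c)$. What makes the argument clean is the observation that $\sigma(i,j,n)$ never carries $n$ in its last coordinate (because $j<n$): this immediately eliminates $c<n$ and leaves only two short positional checks to fix $b$ and $a$.
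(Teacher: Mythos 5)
Your proposal is correct and follows essentially the same route as the paper: Lemma \ref{prop1oct12} for existence of the relevant edges, Lemma \ref{leaoct19} (i) for uniqueness of the $B$-neighbour of an $F$-vertex, the reverse map (via Corollary \ref{lem2oct11}) to transport everything to $L$, and the same count of $j-1$ $F$-neighbours and $n-j-1$ $L$-neighbours of $\sigma(0,j,n)$ to pin down the degree $n-2$. The one place you diverge is the uniqueness of the $L$-neighbour in part (ii): the paper simply cites Lemma \ref{leaoct19} (ii), whereas you re-derive it by the positional analysis forcing $c=n$, $b=n-j+i$, $a=0$. Your version is slightly longer but actually makes explicit the step the paper's citation leaves implicit, namely that any block transposition connecting $\sigma(i,j,n)\in F$ to a vertex of $L$ must itself lie in $B$, which is exactly what is needed before Lemma \ref{leaoct19} (ii) can be invoked.
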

\begin{proof} (i) Lemma \ref{leaoct19} (i) together with Lemma \ref{prop1oct12} (i) show that every vertex in $F$ has degree $1.$ Corollary \ref{lem2oct11} ensures
that this holds true for $L.$

For every $1\le j \le n-1,$, Lemma  \ref{prop1oct12} (iii) shows that there exist at least $j-1$ edges incident with $\sigma(0,j,n)$ and a vertex in $F.$ Furthermore, from Lemma  \ref{prop1oct12} (iv), there exist at least $n-j-1$ edges incident with $\sigma(0,j,n)$ and a vertex in $L$. Therefore, at least $n-2$ edges incident with $\sigma(0,j,n)$ have a vertex in $L\cup F$. On the other hand, this number cannot exceed $n-2$ since $|L\cup F|=(n-1)(n-2)$ from Lemma \ref{lem1oct9}. This proves the first assertion.

(ii) From Lemma  \ref{prop1oct12} (ii), there exists at least one edge with a vertex in $F$ and another in $L$. Also, Lemma \ref{leaoct19} (ii) ensures the uniqueness of such an edge.
\end{proof}
From now on, $\bar{\Gamma}(W)$ stays for the induced subgraph of $\bar{\Gamma}$ on the vertex-set $W.$
 \begin{cor}
\label{cor3oct13ter}
$B$ is the unique maximal clique of $\bar{\Gamma}$ of size $n-1$ containing an edge of $\bar{\Gamma}(B).$
\end{cor}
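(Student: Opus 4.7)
The plan is to combine the partition of $T_n$ from Lemma \ref{lem1oct9} with the adjacency restrictions in Lemma \ref{lem1oct11} and Proposition \ref{prova}, exploiting the fact that any two vertices of $B$ force a great deal of structure on their common neighbors.

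First I would recall that $B$ together with $\iota$ forms a cyclic subgroup of $\Sym_n$ of order $n$, so the induced subgraph $\bar{\Gamma}(B)$ is itself a complete graph on $n-1$ vertices, whence $B$ is a clique of $\bar{\Gamma}$ of size $n-1$. To see that $B$ is actually maximal, I would consider any vertex $v\in T_n\setminus B$ and locate it in the partition $T_n=B\cupdot L\cupdot F\cupdot S_{n-2}^\vartriangle$. If $v\in S_{n-2}^\vartriangle$ then Lemma \ref{lem1oct11} immediately rules out any edge between $v$ and $B$. If instead $v\in L\cup F$, Proposition \ref{prova}(i) says $v$ is adjacent to exactly one element of $B$; since $|B|=n-1\geq 2$, $v$ cannot extend $B$ to a larger clique.

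Next, for uniqueness, let $C$ be a maximal clique of $\bar{\Gamma}$ of size $n-1$ such that $C$ contains some edge $\{b_1,b_2\}$ with $b_1,b_2\in B$. I would show $C\subseteq B$. Every other vertex $v\in C$ must be adjacent to both $b_1$ and $b_2$. If $v\in S_{n-2}^\vartriangle$, Lemma \ref{lem1oct11} forbids adjacency with any element of $B$, contradiction. If $v\in L\cup F$, Proposition \ref{prova}(i) again gives that $v$ has a unique neighbor in $B$, so $v$ cannot be simultaneously adjacent to the two distinct vertices $b_1,b_2$, contradiction. Thus $v\in B$, proving $C\subseteq B$, and the equality $|C|=n-1=|B|$ forces $C=B$.

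No serious obstacle is expected: the proof is a direct packaging of the partition lemma, Lemma \ref{lem1oct11}, and the degree-$1$ statement in Proposition \ref{prova}(i). The only point worth stressing in the write-up is the two-vertex test — it is crucial that the edge assumed in $\bar{\Gamma}(B)$ provides \emph{two} distinct elements of $B$ that a hypothetical outside vertex $v$ would have to neighbor, which is precisely what the degree-$1$ condition in Proposition \ref{prova}(i) forbids.
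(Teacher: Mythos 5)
Your proposal is correct and follows essentially the same route as the paper: the paper's one-line proof is precisely your ``two-vertex test,'' namely that by Proposition \ref{prova} (i) and the partition of Lemma \ref{lem1oct9} (with Lemma \ref{lem1oct11} handling $S_{n-2}^\vartriangle$), the endpoints of an edge of $\bar{\Gamma}(B)$ have no common neighbor outside $B$. Your additional explicit verification that $B$ itself is a maximal clique of size $n-1$ is a harmless elaboration of facts the paper records just before the corollary.
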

\begin{proof}
Proposition \ref{prova} (i) together with Lemma \ref{lem1oct9} show that the endpoints of an edge of $\bar{\Gamma}(B)$ do not have a common neighbor outside $B.$
\end{proof}
Computations performed by using the package ``grape'' of GAP ~\cite{gap} show that $\bar{\Gamma}$ is a $6$-regular subgraph for $n=5$ and $8$-regular subgraph for $n=6,$ but $\bar{\Gamma}$ is only $3$-regular for $n=4.$ This generalizes to the following result.
\begin{prop}\label{thm1oct11}
$\bar{\Gamma}$ is a $2(n-2)$-regular graph whenever $n\geq 5.$
\end{prop}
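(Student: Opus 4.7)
\emph{Plan.} The plan is to proceed by strong induction on $n$, with base cases $n=5$ and $n=6$ handled by the computer-aided verifications of Sections \ref{n=5} and \ref{n=6}. For the inductive step I assume $n \geq 7$ and use the partition $T_n = B \cupdot L \cupdot F \cupdot S_{n-2}^\vartriangle$ from Lemma \ref{lem1oct9}. The essential extra ingredient is the triple of canonical graph isomorphisms
$$\bar{\Gamma}[T_{n-1}] \cong \bar{\Gamma}_{n-1}, \qquad \bar{\Gamma}[S_{n-1}^\triangledown] \cong \bar{\Gamma}_{n-1}, \qquad \bar{\Gamma}[S_{n-2}^\vartriangle] \cong \bar{\Gamma}_{n-2},$$
which follow from the elementary observation that a block transposition in $T_n$ pointwise fixing a prefix $\{1,\ldots,a\}$ and/or a suffix $\{b,\ldots,n\}$ restricts to a block transposition on the remaining interval, so that the adjacency relation $\sigma_1^{-1}\circ\sigma_2\in T_n$ descends to the correct subset.

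The degree is then computed case by case. If $\sigma \in B$, then $B\cup\{\iota\}$ is a cyclic subgroup of order $n$ by Corollary \ref{matnov2}(ii), so $B$ spans a clique of size $n-1$ contributing $n-2$ neighbors; Proposition \ref{prova}(i) supplies $n-2$ further neighbors in $L\cup F$, while Lemma \ref{lem1oct11} rules out neighbors in $S_{n-2}^\vartriangle$, giving total degree $2(n-2)$. If $\sigma \in L$, the inductive hypothesis applied to $\bar{\Gamma}_{n-1}$ assigns $\sigma$ exactly $2(n-3)$ neighbors within $T_{n-1}$, to which Proposition \ref{prova} adds one neighbor in each of $B$ and $F$, again giving $2(n-2)$. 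The case $\sigma \in F$ reduces to the preceding one via the reverse automorphism $\textsf{g}$ of $\bar{\Gamma}$ (Proposition \ref{22march22D2015}), which interchanges $L$ and $F$ by Corollary \ref{lem2oct11}.

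The delicate case is $\sigma \in S_{n-2}^\vartriangle$, where $\sigma$ belongs to three distinct ambient subgraphs simultaneously. Denote by $d_L,d_F,d_S$ the numbers of neighbors of $\sigma$ in $L$, $F$, $S_{n-2}^\vartriangle$ respectively; Lemma \ref{lem1oct11} forces no neighbors in $B$. Strong induction on $\bar{\Gamma}_{n-1}$ and on $\bar{\Gamma}[S_{n-1}^\triangledown]\cong\bar{\Gamma}_{n-1}$ yields $d_L+d_S = d_F+d_S = 2(n-3)$, while strong induction on $\bar{\Gamma}[S_{n-2}^\vartriangle]\cong\bar{\Gamma}_{n-2}$ (legitimate since $n-2\geq 5$) yields $d_S=2(n-4)$. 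Hence $d_L=d_F=2$, and the degree of $\sigma$ equals $2+2+2(n-4)=2(n-2)$. The main technical hurdle lies precisely in verifying the three subgraph identifications cleanly, since the induction must be invoked at two distinct levels ($n-1$ and $n-2$) and the target vertex lies in all three smaller ambient graphs at once; once these identifications are secured, the remainder is bookkeeping with Lemmas \ref{lem1oct9} and \ref{lem1oct11} and Proposition \ref{prova}.
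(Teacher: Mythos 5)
Your proof is correct and follows essentially the same route as the paper's: induction on $n$ with the partition of Lemma \ref{lem1oct9}, the clique structure of $B$ together with Proposition \ref{prova} and Lemma \ref{lem1oct11} for the $B$, $L$, $F$ cases, and the inclusion--exclusion count $2\cdot 2(n-3)-2(n-4)$ for vertices of $S_{n-2}^\vartriangle$. Your explicit justification of the three subgraph identifications $\bar{\Gamma}(T_{n-1})\cong\bar{\Gamma}(S_{n-1}^\triangledown)\cong\bar{\Gamma}_{n-1}$ and $\bar{\Gamma}(S_{n-2}^\vartriangle)\cong\bar{\Gamma}_{n-2}$ is in fact slightly more careful than the paper's phrasing, which nominally attributes degree $2n-6$ to $\bar{\Gamma}(F)$ when it means the degree within $\bar{\Gamma}(F\cup S_{n-2}^\vartriangle)$.
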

\begin{proof}
Since $B$ is a maximal clique of size $n-1$, every vertex of $B$ is incident with $n-2$ edges of $\bar{\Gamma}(B).$ From Proposition \ref{prova} (i), as many as $n-2$ edges incident with a vertex in $B$ have an endpoint in $L\cup F$. Thus, the assertion holds for the vertices in $B.$

In $\bar{\Gamma}(F)$ every vertex has degree $2(n-1)-4=2n-6,$ by induction on $n.$ This together with Proposition \ref{prova} (ii) show that every vertex of $\bar{\Gamma}(F)$ has degree $2n-5$ in $\bar{\Gamma}(L\cup F).$ By Corollary \ref{lem2oct11}, this holds true for every vertex of $\bar{\Gamma}(L).$ The degree increases to $2n-4$ when we also count the unique edge in $\bar{\Gamma}(B),$ according to the first assertion of Proposition \ref{prova} (i).

In $\bar{\Gamma}(S_{n-2}^\vartriangle)$ every vertex has degree $2n-8,$ by induction on $n.$ Furthermore, in $\bar{\Gamma}(L\cup S_{n-2}^\vartriangle)$ every vertex has degree $2n-6$ by induction on $n$, and the same holds for  $\bar{\Gamma}(F\cup S_{n-2}^\vartriangle).$ This together with Lemma \ref{lem1oct11} show that every vertex in $S_{n-2}^\vartriangle$ is the endpoint of exactly $2(2n-6)-(2n-8)$ edges in $\bar{\Gamma}.$
\end{proof}
Our next step is to determine the set of all maximal cliques of $\bar{\Gamma}$ of size $2.$ From now on, we will be referring to the edges of the complete graph arising from a clique as the edges of the clique.
According to Lemma \ref{lem2oct13} (v), let $\Lambda$ be the set of all edges $$e_l=\{\sigma(l,l+1,l+3),\sigma(l,l+2,l+3)\},$$ where $l$ ranges over $\{0,1,\ldots n-3\}$. From (\ref{eqa18oct}), the endpoints of such an edge are the inverse of one another.
\begin{prop}
\label{thm2oct11} Let $n\geq 5.$ The edges in $\Lambda$ together with three more edges
\begin{equation}
\label{eq1aoct12}
\begin{array}{lll}
e_{n-2}&=&\{\sigma(0,n-2,n-1),\sigma(0,n-2,n)\};\\
e_{n-1}&=&\{\sigma(1,n-1,n),\sigma(0,1,n-1)\};\\
e_{n}&=&\{\sigma(0,2,n),\sigma(1,2,n)\};\\
\end{array}
\end{equation}are pairwise disjoint edges of maximal cliques of $\bar{\Gamma}$ of size $2.$
\end{prop}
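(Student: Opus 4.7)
To prove the proposition, I would proceed in three stages.

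\emph{Stage one: adjacency and disjointness.} Each of the $n+1$ candidate pairs is an edge of $\bar\Gamma$ by a direct application of Lemma \ref{lem2oct13}: for $0\le l\le n-3$ the pair $e_l$ is an edge by case (v) (common $(i,k)=(l,l+3)$, with $j=l+1<l+2=j'$); the edge $e_{n-2}$ is handled by case (i) (common $(i,j)=(0,n-2)$), $e_n$ by case (iii) (common $(j,k)=(2,n)$), and $e_{n-1}$ by case (iv), since $(j',k')=(1,n-1)$ equals the $(i,j)$ of $\sigma(1,n-1,n)$ and $i'=0<1$. For pairwise disjointness when $n\ge 5$, note that all endpoints inside $e_l$ with $0\le l\le n-3$ satisfy $k-i=3$, whereas each endpoint of $e_{n-2}$, $e_{n-1}$, $e_n$ has $k-i\in\{n-1,n\}\ge 4$. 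Within the first family the common value $i=l$ determines the index, so different $e_l$'s are disjoint, and within the three additional edges a direct inspection of the six $(i,j,k)$-triples shows they are mutually distinct.

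\emph{Stage two: reduction by the toric action.} The central idea is that the toric map $\bar{\textsf{f}}$ cyclically permutes the $n+1$ candidate edges. Applying the two formulas of Lemma \ref{22marchC2015} to each endpoint yields
$$\bar{\textsf{f}}(e_l)=e_{l-1}\ (1\le l\le n-3),\quad \bar{\textsf{f}}(e_0)=e_n,\quad \bar{\textsf{f}}(e_n)=e_{n-1},\quad \bar{\textsf{f}}(e_{n-1})=e_{n-2},\quad \bar{\textsf{f}}(e_{n-2})=e_{n-3}.$$
Since $\bar{\textsf{f}}$ is an automorphism of $\bar\Gamma$ by Proposition \ref{22march22D2015}, and automorphisms preserve maximal cliques, it suffices to verify the maximality property for a single representative, which I would take to be $e_0=\{\sigma(0,1,3),\sigma(0,2,3)\}$.

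\emph{Stage three: the single verification.} I would enumerate the neighbors of each endpoint of $e_0$ using Lemma \ref{lem2oct13}: those of $\sigma(0,1,3)$ are the $n-2$ vertices $\sigma(0,1,k')$ with $k'\ne 3$, the $n-3$ vertices $\sigma(1,3,k')$ with $k'\ge 4$, and $\sigma(0,2,3)$; those of $\sigma(0,2,3)$ are the $n-3$ vertices $\sigma(0,2,k')$ with $k'\ge 4$, the $n-3$ vertices $\sigma(2,3,k')$ with $k'\ge 4$, $\sigma(1,2,3)$, and $\sigma(0,1,3)$. Since the totals $2(n-2)$ match the common degree provided by Proposition \ref{thm1oct11}, these enumerations are exhaustive. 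A direct comparison of the two lists, done by matching the $i$- and $j$-coordinates, shows that the only shared vertices are the two endpoints of $e_0$ themselves, so no third vertex is adjacent to both, and $e_0$ is a maximal clique of size $2$. The main anticipated obstacle is confirming that the sufficient conditions of Lemma \ref{lem2oct13} exhaust all neighbors; this is settled precisely by the regularity count.
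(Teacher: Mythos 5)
Your proof is correct, and its backbone---showing the candidate pairs are edges via Lemma \ref{lem2oct13}, checking disjointness, and then using the fact that $\bar{\textsf{f}}$ cycles the $n+1$ edges (equations (\ref{eq1oct13})) to reduce maximality to a single representative---is exactly the paper's strategy. Where you diverge is in the final verification. The paper takes $e_n=\{\sigma(0,2,n),\sigma(1,2,n)\}$ as the representative and argues by contradiction through the partition of Lemma \ref{lem1oct9}: a common neighbor $\sigma$ would have to lie in $L\cup F$ by Lemma \ref{lem1oct11}, would be forced by Proposition \ref{prova} (i) and Lemma \ref{prop1oct12} (iv) to have the form $\sigma(0,2,l)$, and yet by Proposition \ref{prova} (ii) and Lemma \ref{prop1oct12} (ii) the unique $L$-neighbor of $\sigma(1,2,n)$ is $\sigma(0,1,2)$, a contradiction. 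You instead take $e_0$ and enumerate all $2(n-2)$ neighbors of each endpoint explicitly, using the regularity of Proposition \ref{thm1oct11} to certify that the sufficient conditions of Lemma \ref{lem2oct13} have produced the complete neighborhood, and then intersect the two lists by inspection. Both routes are sound (and non-circular, since Proposition \ref{thm1oct11} is established independently of this statement); yours is more mechanical and self-contained, while the paper's exploits the $B$, $L$, $F$, $S_{n-2}^\vartriangle$ machinery it has already built and reuses the same style of argument later, e.g.\ in Lemma \ref{cor1oct13} and Lemma \ref{lemfoct12}.
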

\begin{proof} Since $n\geq 5,$ the above edges are pairwise disjoint.

Now, by (\ref{22march2015}), the following equations
\begin{equation}\label{eq1oct13}
\begin{array}{llllll}
\bar{\textsf{f}}(\sigma(l,l+1,l+3))&=&\sigma(l-1,l,l+2)& \mbox{ for } l\geq 1;\\
\bar{\textsf{f}}(\sigma(l,l+2,l+3))&=&\sigma(l-1,l+1,l+2) & \mbox{ for } l\geq 1; \\
\bar{\textsf{f}}(\sigma(0,1,3))&=&\sigma(0,2,n);\\
\bar{\textsf{f}}(\sigma(0,2,3))&=&\sigma(1,2,n);\\
\bar{\textsf{f}}(\sigma(0,2,n)&=&\sigma(1,n-1,n);\\
\bar{\textsf{f}}(\sigma(1,2,n))&=&\sigma(0,1,n-1);\\
\bar{\textsf{f}}(\sigma(1,n-1,n)&=&\sigma(0,n-2,n-1); \\
\bar{\textsf{f}}(\sigma(0,1,n-1))&=&\sigma(0,n-2,n);\\
\bar{\textsf{f}}(\sigma(0,n-2,n-1))&=&\sigma(n-3,n-2,n); \\
\bar{\textsf{f}}(\sigma(0,n-2,n))&=&\sigma(n-3,n-1,n).
\end{array}
\end{equation}
hold. This shows that $\bar{\textsf{f}}$ leaves the set $\Lambda\cup\{e_{n-2},e_{n-1},e_n\}$ invariant acting on it as the cycle permutation
$(e_n,\,e_{n-1},\cdots, e_1,\,e_0).$

Now, it suffices to verify that $e_n$ is a maximal clique of $\bar\Gamma.$ Assume on the contrary that $\sigma=\sigma(i,j,k)$ is adjacent to both $\sigma(1,2,n)$ and $\sigma(0,2,n).$ As $\sigma(0,2,n)\in B,$ Lemma \ref{lem1oct11} implies that $\sigma\in L\cup F$. Also, Proposition \ref{prova} (i) shows that $\sigma(0,2,n)$ has degree $n-2$ in $L\cup F$. In particular, in the proof of Proposition \ref{prova} (i), we have seen that $\sigma(0,2,n)$ must be adjacent to $n-3$ vertices of $L,$ as $\sigma(1,2,n)\in F.$ Then, by Lemma \ref{prop1oct12} (iv), $\sigma=\sigma(0,2,l)$ for some $l$ with $3\leq l < n.$

On the other hand, Proposition \ref{prova} (ii) shows that $\sigma\in L$ is uniquely determined by $\sigma(1,2,n)\in F,$ and, by Lemma \ref{prop1oct12} (ii), $\sigma=\sigma(0,1,2),$ a contradiction.
\end{proof}
From now on, $V$ denotes the set of the vertices of the edges $e_m$ with $m$ ranging over $\{0,1,\ldots,n\}.$ For $n=4,$ the edges $e_m$ are not pairwise disjoint, but computations show that they are also edges of maximal cliques of $\bar{\Gamma}$ of size $2.$
\begin{lem}
\label{lem1oct18} The toric maps and the reverse map preserve $V.$ Then, the toric-reverse group is regular on $V,$ and $\bar{\Gamma}(V)$ is a vertex-transitive graph.
\end{lem}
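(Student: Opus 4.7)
The plan is to leverage the computations already carried out in the proof of Proposition \ref{thm2oct11}, where $\bar{\textsf{f}}$ was shown to act on $\{e_0,e_1,\ldots,e_n\}$ as the $(n+1)$-cycle $(e_n\,e_{n-1}\cdots e_0)$. From this, the toric group $\bar{\textsf{F}}$ immediately preserves $V$. For the reverse map, I would apply (\ref{eq1oct11}) edge by edge: for $0\le l\le n-3$, setting $l'=n-l-3$, one gets $\textsf{g}(\sigma(l,l+1,l+3))=\sigma(l',l'+2,l'+3)$ and $\textsf{g}(\sigma(l,l+2,l+3))=\sigma(l',l'+1,l'+3)$, hence $\textsf{g}(e_l)=e_{n-3-l}$. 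The three remaining edges are handled similarly: direct substitution into (\ref{eq1oct11}) gives $\textsf{g}(e_{n-2})=e_n$, $\textsf{g}(e_n)=e_{n-2}$, and $\textsf{g}(e_{n-1})=e_{n-1}$ (with the two endpoints of $e_{n-1}$ interchanged). Thus $\textsf{g}$ permutes the $e_m$ and preserves $V$, and the first sentence of the lemma follows, together with $\bar{\textsf{D}}_{n+1}\le \Aut(\bar{\Gamma}(V))$.

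Since for $n\ge 5$ the $n+1$ edges are pairwise disjoint, $|V|=2(n+1)=|\bar{\textsf{D}}_{n+1}|$, so it suffices to prove that $\bar{\textsf{D}}_{n+1}$ is transitive on $V$. For the analysis of $\bar{\textsf{F}}$ I would introduce labels $u_l=\sigma(l,l+1,l+3)$, $v_l=\sigma(l,l+2,l+3)$ for $0\le l\le n-3$, and analogous labels for the three exceptional edges. A direct reading of (\ref{eq1oct13}) shows that $\bar{\textsf{f}}$ carries $u_m$ to $u_{m-1}$ and $v_m$ to $v_{m-1}$ consistently through the whole cycle of edges; consequently $\bar{\textsf{F}}$ has exactly two orbits on $V$, each of size $n+1$. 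To produce a single orbit under $\bar{\textsf{D}}_{n+1}$, the crucial observation is that $\textsf{g}$ must cross the two $\bar{\textsf{F}}$-orbits; this is witnessed by
$$\textsf{g}(\sigma(1,n-1,n))=\sigma(0,1,n-1),$$
i.e., $\textsf{g}$ swaps the two endpoints of $e_{n-1}$.

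Putting these together, $\bar{\textsf{D}}_{n+1}$ is transitive on $V$; since $|\bar{\textsf{D}}_{n+1}|=|V|$, the orbit-stabilizer theorem forces the stabilizer of any vertex to be trivial, so the action is regular. As $\bar{\textsf{D}}_{n+1}$ consists of automorphisms of $\bar{\Gamma}$ leaving $V$ invariant, its restriction to $V$ embeds $\bar{\textsf{D}}_{n+1}$ in $\Aut(\bar{\Gamma}(V))$ as a vertex-regular subgroup. In particular, $\bar{\Gamma}(V)$ is vertex-transitive.

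The main obstacle is the careful bookkeeping needed to show that $\bar{\textsf{F}}$ genuinely has two orbits rather than one; the formulas in (\ref{eq1oct13}) switch form at the transitions $l=0\leftrightarrow l=n$ and $l=n-3\leftrightarrow l=n-2$, and a misalignment of the labels $u_l,v_l$ across these transitions would collapse the two orbits into a single one (which would still give transitivity but would confuse the proof of regularity). Once the orbit structure is pinned down, the single computation $\textsf{g}(u_{n-1})=v_{n-1}$ supplies the element that fuses the two orbits, and the regularity conclusion is automatic from the equality $|\bar{\textsf{D}}_{n+1}|=|V|$.
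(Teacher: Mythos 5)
Your proposal is correct and follows essentially the same route as the paper's proof: both use (\ref{eq1oct13}) to see that $\bar{\textsf{F}}$ preserves $V$ with two orbits of size $n+1$, then use (\ref{eq1oct11}) to check that $\textsf{g}$ permutes the edges $e_m$ and fuses the two orbits, and finally deduce regularity from $|V|=2(n+1)=|\overline{\textsf{D}}_{n+1}|$. Your explicit bookkeeping $\textsf{g}(e_l)=e_{n-3-l}$ and $\textsf{g}(e_{n-2})=e_n$ is in fact a more accurate record of the action than the paper's loose phrase that $\textsf{g}$ ``interchanges the endpoints of $e_m$ with $0\le m\le n-3$.''
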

\begin{proof}
Since $\bar{\textsf{F}}$ is the subgroup generated by $\bar{\textsf{f}}$, from (\ref{eq1oct13}) follows that $\bar{\textsf{F}}$ preserves $V$ and has two orbits on $V,$ each of them containing one of the two endpoints of the edges $e_m$ with $0\le m \le n.$

In addition, by (\ref{eq1oct11}), the reverse map $\textsf{g}$ interchanges the endpoints of $e_m$ with $0\le m\le n-3$ and $m=n-1$ while
$$\begin{array}{lll}
\bar{\textsf{g}}(\sigma(0,n-2,n-1))&=&\sigma(1,2,n);\\
\bar{\textsf{g}}(\sigma(0,n-2,n)&=&\sigma(0,2,n);\\
\bar{\textsf{g}}(\sigma(1,2,n))&=&\sigma(0,n-2,n-1);\\
\bar{\textsf{g}}(\sigma(0,2,n))&=&\sigma(0,n-2,n).
\end{array}$$This implies that $\textsf{g}$ preserves $V$, and $\overline{\textsf{D}}_{n+1}$ acts transitively on $V.$

Now, since $|V|=2(n+1)$ and $\overline{\textsf{D}}_{n+1}$ has order $2(n+1)$, then $\overline{\textsf{D}}_{n+1}$ is regular on $V.$
\end{proof}
Our next step is to show that the $e_m$ with $0\le m \le n$ are the edges of all maximal cliques of $\bar{\Gamma}$ of size $2.$ Computations performed by using the package ``grape'' of GAP ~\cite{gap} show that the assertion is true for $n =4,5,6.$
\begin{lem}\label{cor1oct13}
The edge of every maximal clique of $\bar{\Gamma}$ of size $2$ is one of the edges $e_m$ with $0\le m \le n.$
\end{lem}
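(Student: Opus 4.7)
The plan is to proceed by induction on $n$, using the partition $T_n = B \cupdot L \cupdot F \cupdot S_{n-2}^\vartriangle$ of Lemma \ref{lem1oct9} as the organising device, with bases $n=4,5,6$ supplied by the computer computations already mentioned in the excerpt. For the inductive step I would take an arbitrary edge $\{\pi,\rho\}$ whose clique is maximal of size $2$ and show that, if it is not one of the $e_m$, a third vertex adjacent to both $\pi$ and $\rho$ can always be exhibited. Symmetries cut down the case analysis substantially: toric maps are automorphisms (Proposition \ref{22march22D2015}), and the reverse map swaps $L$ with $F$ while fixing $B$ and $S_{n-2}^\vartriangle$ (Corollary \ref{lem2oct11}), so it suffices to treat one representative from each $\overline{\textsf{D}}_{n+1}$-orbit of edge types.

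The cases involving $B$ are handled first. Both endpoints in $B$ is ruled out by Corollary \ref{cor3oct13ter}, because such an edge already sits in the $(n-1)$-clique $B$ of size $\ge 4$. A mixed $B$-to-$S_{n-2}^\vartriangle$ edge is impossible by Lemma \ref{lem1oct11}. For an edge with $B$-endpoint $\sigma(0,j,n)$ and other endpoint $\mu\in L\cup F$, Proposition \ref{prova}(i) identifies $\sigma(0,j,n)$ as the unique $B$-neighbour of $\mu$; using the factorisations in Lemma \ref{prop1oct12}(iii)--(iv), I would enumerate the remaining neighbours of $\sigma(0,j,n)$ in $L\cup F$ and check that one of them is always adjacent to $\mu$ except when the edge matches $e_{n-2}$ or $e_n$. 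An edge joining $L$ and $F$ is unique at each endpoint by Proposition \ref{prova}(ii) and must take the form given by Lemma \ref{prop1oct12}(ii); after applying the $\textsf{g}$-symmetry, only $e_{n-1}$ survives, every other $L$--$F$ edge acquiring a common neighbour in $B$ via Proposition \ref{prova}(i).

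The remaining configurations---both endpoints in $L$ (equivalently, by $\textsf{g}$, both in $F$), both in $S_{n-2}^\vartriangle$, and an $L\cup F$ endpoint paired with an $S_{n-2}^\vartriangle$ endpoint---I would dispatch inductively. The subgraphs $\bar{\Gamma}(T_{n-1}) = \bar{\Gamma}(L\cup S_{n-2}^\vartriangle)$ and $\bar{\Gamma}(S_{n-2}^\vartriangle)$ are isomorphic to block transposition graphs on $[n-1]$ and on $\{2,\dots,n-1\}$ respectively, so by the inductive hypothesis every maximal $2$-clique supported inside one of these subgraphs corresponds, after relabelling, to an analogous inner edge---one of the $e_l$ with $1\le l\le n-4$ (inside $S_{n-2}^\vartriangle$) or the edge $e_0$ (inside $L$). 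It then remains to show that these internal edges do not lose maximality upon embedding, i.e.\ that no common neighbour arises from outside the subgraph; the only candidates lie in $B$ or in the opposite side of the bipartite piece $(L,F)$, and both are eliminated by Lemma \ref{lem1oct11} together with a short application of Lemma \ref{prop1oct12} and Proposition \ref{prova}. The mixed $L\cup F$--$S_{n-2}^\vartriangle$ configuration is ruled out entirely by constructing a common neighbour with the factorisations of Lemma \ref{prop1oct12}.

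The main obstacle is the bookkeeping: for every candidate edge that is not some $e_m$, one must exhibit an explicit third block transposition whose cut points satisfy the inequalities in (\ref{feb9}), and this comes down to methodically matching the pair $(\pi,\rho)$ against the five adjacency patterns of Lemma \ref{lem2oct13} and the four factorisations of Lemma \ref{prop1oct12}. Exploiting the full toric-reverse symmetry is what makes this tractable, because it collapses otherwise-numerous subcases into a handful of canonical representatives.
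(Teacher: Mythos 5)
Your overall strategy --- induction on $n$ organised by the partition of Lemma \ref{lem1oct9}, with the toric-reverse symmetry collapsing cases --- is the same as the paper's, which disposes of everything inside $T_{n-1}=L\cupdot S_{n-2}^\vartriangle$ by the inductive hypothesis, rules out edges inside $B$ because $B$ is a clique, and kills a $B$--$F$ edge by exhibiting the triangle $\sigma(0,j,n),\sigma(i',j,n),\sigma(\bar{i},j,n)$ (possible whenever $j\geq 3$, the remaining case $j=2$ being exactly $e_n$). However, one step of your plan is simply false. You claim that every $L$--$F$ edge other than $e_{n-1}$ acquires a common neighbour in $B$ via Proposition \ref{prova} (i). That proposition says each vertex of $L\cup F$ has a \emph{unique} neighbour in $B$; for the $L$--$F$ edge $\{\sigma(0,i,j),\sigma(i,j,n)\}$ of Lemma \ref{prop1oct12} (ii) these unique neighbours are $\sigma(0,i,n)$ and $\sigma(0,j,n)$ respectively, and since $i<j$ they are never equal. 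So \emph{no} $L$--$F$ edge has a common neighbour in $B$, and your criterion would wrongly certify all of them as maximal $2$-cliques. For $n=5$ the edge $\{\sigma(0,2,3),\sigma(2,3,5)\}$ is in fact not maximal: its common neighbours are $\sigma(1,2,3)$ and $\sigma(2,3,4)$, both in $S_{n-2}^\vartriangle$. The case is salvageable --- for $j<n-1$ the vertex $\sigma(i,j,k')$ with $j<k'<n$ is a common neighbour by Lemma \ref{lem2oct13} (i) and (ii), and the residual subcase $j=n-1$, $i\geq 2$ can be handled similarly --- but the mechanism you propose does not produce any of this.

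A second, more structural issue: when you invoke the inductive hypothesis on $\bar{\Gamma}(T_{n-1})$ you assert that the maximal $2$-cliques supported there correspond to the ``inner'' edges $e_l$. What induction actually delivers is the full list of $n$ edges for the $(n-1)$-graph, and three of them --- those whose largest cut point is $n-1$, e.g.\ $\{\sigma(0,2,n-1),\sigma(1,2,n-1)\}$ --- are \emph{not} among the $e_m$ of $\bar{\Gamma}$. For those three the task is the opposite of what you state: one must show that they \emph{do} acquire a common neighbour in $F\cup B$ after embedding (they do; for $n=5$ the edge $\{\sigma(0,2,4),\sigma(1,2,4)\}$ gains the common neighbour $\sigma(2,4,5)\in F$), hence are not maximal $2$-cliques of $\bar{\Gamma}$. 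Your stated remaining task, proving that the internal edges ``do not lose maximality upon embedding,'' concerns edges already known to be among the $e_m$ --- which is irrelevant to this lemma --- while omitting the verification that is actually needed to make the inductive reduction sound.
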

\begin{proof}
On the contrary take an edge $e$ of a maximal clique of $\bar{\Gamma}$ of size $2$ other than the edges $e_m$. Since $L\cup S_{n-2}^\vartriangle\subset T_{n-1},$ by induction on $n\geq4$, $e$ is an edge of $\bar{\Gamma}(F\cup B).$ Also, $e$ has one endpoint in $B$ and the other in $F$, as $B$ is clique.

Now, let the endpoint of $e$ in $B$ be $\sigma(0,j,n)$ for some $1\leq j\leq n-1.$ Then, by the proof of the first assertion of Proposition \ref{prova} (i), the vertex $\sigma(0,j,n)$ is adjacent to $\sigma(\bar{i},j,n)$ for any $0\le \bar{i} < j.$ As the vertices $\sigma(\bar{i},j,n)$ for $0\le \bar{i} < j$ are adjacent by Lemma \ref{lem2oct13} (iii), $e$ is and edge of the triangle of vertices $\sigma(0,j,n),\,\sigma(i',j,n),$ and $\sigma(\bar{i},j,n)$ with $i'\neq\bar{i}$ and $0\le i',\bar{i} < j,$ a contradiction.
\end{proof}

Lemma \ref{cor1oct13} shows that $V$ consists of the endpoints of the edges of $\bar{\Gamma}$ which are the edges of maximal cliques of size $2.$ Thus $\bar{\Gamma}(V)$ is relevant for the study of $\Cat.$ We show some properties of $\bar{\Gamma}(V).$
\begin{prop}\label{lemgoct12}
$\bar{\Gamma}(V)$ is a $3$-regular graph.
\end{prop}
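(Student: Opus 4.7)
The plan is to exploit vertex-transitivity: by Lemma \ref{lem1oct18}, the toric-reverse group $\overline{\textsf{D}}_{n+1}$ acts transitively on $V$, and each of its elements is an automorphism of $\Cat$ (Proposition \ref{22march22D2015}). Consequently every vertex of $\bar{\Gamma}(V)$ has the same degree, and it suffices to exhibit a single vertex with exactly three neighbours in $V$.

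First I would make $V$ explicit using Proposition \ref{thm2oct11} and the Partition Lemma \ref{lem1oct9}: the two members of $B\cap V$ are $\sigma(0,2,n)$ and $\sigma(0,n-2,n)$; the four members of $L\cap V$ are $\sigma(0,1,3),\,\sigma(0,2,3),\,\sigma(0,n-2,n-1),\,\sigma(0,1,n-1)$; the four members of $F\cap V$ are $\sigma(n-3,n-2,n),\,\sigma(n-3,n-1,n),\,\sigma(1,n-1,n),\,\sigma(1,2,n)$; and the remaining $2(n-4)$ members belong to $S_{n-2}^{\vartriangle}$. This cleanly separates the computation by the four blocks of the partition.

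I would then pick the convenient vertex $v=\sigma(0,2,n)\in B\cap V$ and determine all of its neighbours in $\bar{\Gamma}$ using the Partition Lemma. By Lemma \ref{lem1oct11}, $v$ has no neighbour in $S_{n-2}^{\vartriangle}$; inside the clique $B$, all $n-2$ other elements of $B$ are neighbours of $v$; by Proposition \ref{prova}(i) together with Lemma \ref{prop1oct12}(iii), the unique neighbour of $v$ in $F$ is $\sigma(1,2,n)$; and by Lemma \ref{prop1oct12}(iv) the neighbours of $v$ in $L$ are exactly the vertices $\sigma(0,2,k)$ with $3\le k\le n-1$.

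Finally I would intersect each list with $V$. Among the $n-2$ neighbours of $v$ in $B$, only $\sigma(0,n-2,n)$ lies in $V$; the single neighbour $\sigma(1,2,n)$ in $F$ lies in $V$ by the enumeration above; and among the $n-3$ neighbours of $v$ in $L$, only $\sigma(0,2,3)$ lies in $V$, since for $n\ge 5$ no other $\sigma(0,2,k)$ with $4\le k\le n-1$ coincides with any member of the explicit list $L\cap V$. Thus $v$ has exactly three neighbours in $V$, so $\deg_{\bar{\Gamma}(V)}v=3$, and vertex-transitivity yields the claim. The only mild obstacle is the bookkeeping in this last step, namely verifying that the remaining candidates $\sigma(0,2,k)$ with $4\le k\le n-1$ never coincide with $\sigma(0,n-2,n-1)$ or $\sigma(0,1,n-1)$ when $n\ge 5$, which is immediate by comparing indices.
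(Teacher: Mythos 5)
Your proof is correct and follows essentially the same route as the paper: both single out the vertex $\sigma(0,2,n)$, verify that its only neighbours inside $V$ are $\sigma(0,2,3)$, $\sigma(1,2,n)$, and $\sigma(0,n-2,n)$ (using the partition $B\cupdot L\cupdot F\cupdot S_{n-2}^{\vartriangle}$, Proposition \ref{prova}, and Lemma \ref{lem1oct11}), and then invoke the regular action of $\overline{\textsf{D}}_{n+1}$ on $V$ from Lemma \ref{lem1oct18}. Your version merely carries out the bookkeeping of the full neighbourhood more explicitly than the paper, which instead cites Lemma \ref{lem2oct13} to produce the three adjacencies directly.
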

\begin{proof} First we prove the assertion for the endpoint $v=\sigma(0,2,n)$ of $e_{n}.$ By Lemma \ref{lem2oct13} (i) (iii) (v), $\sigma(0,2,3),\sigma(1,2,n),$ and $\sigma(0,n-2,n)$ are neighbors of $v$. Since $\sigma(1,2,n)\in F$ and $\sigma(0,2,3)\in L,$ from the first assertion of Proposition \ref{prova} (i), $v\in B$ is not adjacent to any other vertex in either $V\cap F$ or $V\cap L.$ Also, Lemma \ref{lem1oct11} yields that no vertex in $V\cap S_{n-2}^\vartriangle$ is adjacent to $\sigma(0,2,n).$ Thus, $v$ has degree $3$ in $\bar{\Gamma}(V).$

Now the claim follows from Lemma \ref{lem1oct18}.
\end{proof}
\begin{rem} \em{By a famous conjecture of Lov\'asz, every finite, connected, and  vertex-transitive graph contains a Hamiltonian cycle, except the five known counterexamples; see~\cite{LL,BL}. Then, the second assertion of Lemma \ref{lem1oct18} and Proposition \ref{thm1oct13} show that the Lov\'asz conjecture holds for the graph $\bar\Gamma(V).$}
\end{rem}
\begin{prop}\label{thm1oct13}
$\bar{\Gamma}(V)$ is a Hamiltonian graph whenever $n\geq 5.$
\end{prop}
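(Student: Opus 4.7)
The group $\overline{\textsf{D}}_{n+1}$ acts regularly on $V$ by Lemma~\ref{lem1oct18}, and its cyclic subgroup $\bar{\textsf{F}}$ has two orbits on $V$, each meeting every $e_m$ in exactly one vertex. Normalizing with $a_n=\sigma(0,2,n)$ and $b_n=\sigma(1,2,n)$, I would define $a_m=\bar{\textsf{f}}^{\,n-m}(a_n)$ and $b_m=\bar{\textsf{f}}^{\,n-m}(b_n)$ for $0\le m\le n$, so that $e_m=\{a_m,b_m\}$, the two $\bar{\textsf{F}}$-orbits are $A=\{a_0,\ldots,a_n\}$ and $B=\{b_0,\ldots,b_n\}$, and $\bar{\textsf{f}}$ acts on both orbits as the index shift $m\mapsto m-1$ modulo $n+1$. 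That such a compatible labeling exists is exactly what the cycle of edges $(e_0\,e_1\cdots e_n)$ produced by $\bar{\textsf{f}}^{-1}$ in the proof of Proposition~\ref{thm2oct11} encodes.

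The next step is to identify the three neighbors of one vertex and transport the description around $V$. The proof of Proposition~\ref{lemgoct12}, together with Lemma~\ref{lem2oct13}, shows that the neighbors of $a_n=\sigma(0,2,n)$ in $\bar{\Gamma}(V)$ are exactly $\sigma(0,2,3)=b_0$, $\sigma(1,2,n)=b_n$ and $\sigma(0,n-2,n)=b_{n-2}$, i.e., $b_{n+1}$, $b_n$ and $b_{n-2}$ in the normalized indexing. Because $\bar{\textsf{f}}$ is an automorphism of $\bar{\Gamma}(V)$ (Proposition~\ref{22march22D2015}) that shifts the indices of both orbits by $-1$, applying $\bar{\textsf{f}}^{\,n-m}$ yields, for every $m$, that the neighbors of $a_m$ are precisely $b_m$, $b_{m+1}$ and $b_{m-2}$. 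In particular, $\bar{\Gamma}(V)$ is bipartite with parts $A$ and $B$, and every $b_m$ has neighbors $a_m$, $a_{m-1}$, $a_{m+2}$.

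With this adjacency list in hand, I would exhibit the explicit Hamiltonian cycle
\[
\mathcal{H}\colon\; a_0,\,b_0,\,a_n,\,b_n,\,a_{n-1},\,b_{n-1},\,\ldots,\,a_2,\,b_2,\,a_1,\,b_1,\,a_0,
\]
consisting of the $n+1$ diagonal (clique) edges $a_m-b_m=e_m$ alternating with the $n+1$ edges of the form $b_{m+1}-a_m$; the closing step $b_1\to a_0$ is itself of the latter type. Since $\mathcal{H}$ has length $2(n+1)=|V|$ and lists every vertex exactly once, the graph $\bar{\Gamma}(V)$ is Hamiltonian.

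The one delicate point I anticipate is the simultaneous indexing of $A$ and $B$: one must choose the labels so that $\bar{\textsf{f}}$ acts on both orbits by the same shift, otherwise the three shifts $0$, $+1$, $-2$ in the adjacency list get scrambled and the cycle above ceases to be well-defined. Once this bookkeeping is in place, the verification of $\mathcal{H}$ reduces entirely to recognizing the two edge types just described.
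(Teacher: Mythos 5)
Your proof is correct, and it reaches the goal by a genuinely different route than the paper, even though (amusingly) it produces the very same Hamiltonian cycle. The paper builds the cycle by hand: it exhibits a path $\mathcal{P}$ through the vertices of $\Lambda$ and a second path $\mathcal{P}'$ through the remaining eight vertices, certifying each edge individually via Lemma~\ref{lem2oct13}~(ii),~(v) and the explicit equations of Lemma~\ref{prop1oct12}; in your labelling that concatenation is exactly $b_0,a_0,b_1,a_1,\ldots,b_n,a_n,b_0$, i.e.\ your cycle $\mathcal{H}$ read backwards. You instead compute the neighborhood of the single vertex $\sigma(0,2,n)$ (which the paper already does in the proof of Proposition~\ref{lemgoct12}) and transport it around $V$ by the shift $\bar{\textsf{f}}$, using that $\bar{\textsf{F}}$ has two orbits each meeting every $e_m$ once (Lemma~\ref{lem1oct18} and the edge cycle $(e_n,e_{n-1},\ldots,e_0)$ from the proof of Proposition~\ref{thm2oct11}). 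What your approach buys is a complete structural description of $\bar{\Gamma}(V)$ as a cubic bipartite graph on the two $\bar{\textsf{F}}$-orbits with connection pattern $a_m\sim b_m,b_{m+1},b_{m-2}$ — more than is needed for Hamiltonicity, and conceptually closer to the Alspach--Zhang remark the paper appends — at the cost of the indexing bookkeeping you correctly flag as the delicate point; that bookkeeping is sound, since equation~(\ref{eq1oct13}) shows $\bar{\textsf{f}}$ sends each endpoint of $e_m$ to an endpoint of $e_{m-1}$, so the labels $a_m=\bar{\textsf{f}}^{\,n-m}(a_n)$, $b_m=\bar{\textsf{f}}^{\,n-m}(b_n)$ are consistent with $e_m=\{a_m,b_m\}$. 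The only stylistic caveat is that you should verify the three neighbors $b_{m+1},b_m,b_{m-2}$ are pairwise distinct modulo $n+1$, which is immediate for $n\geq 5$.
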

\begin{proof}
Let $v_1=\sigma(n-4,n-3,n-1),\quad v_2=\sigma(n-4,n-2,n-1)$ be the endpoints of $e_{n-4}$. We start by exhibiting a path $\mathcal{P}$ in $V$ beginning with $\sigma(0,2,3)$ and ending with $v_1$ that visits all vertices $\sigma(l,l+1,l+3),\sigma(l,l+2,l+3)\in\Lambda$ with $0\leq l\leq n-4.$

For $n=5,\,v_1=\sigma(1,2,4),$ and $$\mathcal{P}=\sigma(0,2,3),\sigma(0,1,3),\sigma(1,3,4),v_1.$$

Assume $n>5.$ For every $l$ with $0\leq l\leq n-4,$ Lemma \ref{lem2oct13} (ii) (v) show that both edges below are incident to $\sigma(l,l+1,l+3)$:
$$\{\sigma(l,l+1,l+3),\sigma(l+1,l+3,l+4)\},\quad\{\sigma(l,l+2,l+3),\sigma(l,l+1,l+3)\}.$$Therefore,
$$\begin{array}{ll}
\sigma(0,2,3),\sigma(0,1,3),\sigma(1,3,4),\ldots,\sigma(l,l+2,l+3),\sigma(l,l+1,l+3),\\\sigma(l+1,l+3,l+4),\ldots,\,v_1
\end{array}$$is a path $\mathcal{P}$ with the requested property.

By Lemma \ref{lem2oct13}, there also exists a path $\mathcal{P'}$ beginning with $v_1$ and ending with $\sigma(0,2,3)$ which visits the other vertices of $V,$ namely
$$\begin{array}{lll}
v_1,\sigma(n-3,n-1,n),\sigma(n-3,n-2,n),\sigma(0,n-2,n),\sigma(0,n-2,n-1),\\\sigma(0,1,n-1),\sigma(1,n-1,n),\sigma(1,2,n),
\sigma(0,2,n),\sigma(0,2,3).
\end{array}$$By Theorem \ref{thm2oct11}, the vertices are all pairwise distinct. Therefore the union of $\mathcal{P}$ and $\mathcal{P'}$ is a cycle in $V$ that visits all vertices. This completes the proof.
\end{proof}
\begin{rem}
\em{For $n\geq 4,$ by Proposition \ref{lemgoct12} and Theorem \ref{teor2}, Proposition \ref{thm1oct13} also follows from a result of Alspach and Zhang~\cite{AZ} who proved that all cubic Cayley graphs on dihedral groups have Hamilton cycles.}
\end{rem}

\section{The automorphism group of the block transposition graph}\label{s62}
We are in a position to give a proof for Theorem \ref{teor2}. Since $\Gamma\cong \bar\Gamma$ and $\textsf{D}_{n+1}\cong\overline{\textsf{D}}_{n+1},$ we may prove Theorem \ref{teor2} using the right-invariant notation.

From Proposition \ref{22march22D2015}, the toric-reverse group $\overline{\textsf{D}}_{n+1}$ is a subgroup $\overline{\textsf{R}},$ the automorphism group of $\bar{\Gamma}.$ Also, $\overline{\textsf{D}}_{n+1}$ is regular on $V,$ by the second assertion of Lemma \ref{lem1oct18}. Therefore, Theorem \ref{teor2} is a corollary of the following lemma.
\begin{lem}
\label{lemfoct12} The identity is the only automorphism of $\bar{\Gamma}$ fixing a vertex of $V$ whenever $n\geq 5.$
\end{lem}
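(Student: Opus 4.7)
The plan is to induct on $n \ge 5$, with the base cases $n = 5,6$ handled by the direct computations described in Sections \ref{n=5} and \ref{n=6}. Since $\overline{\textsf{D}}_{n+1}$ acts transitively on $V$ by Lemma \ref{lem1oct18}, it suffices to prove that every $\phi \in \overline{\textsf{R}}$ fixing a single well-chosen $v_0 \in V$ is the identity. I will choose $v_0 = \sigma(0,2,n) \in V \cap B$ so as to exploit the distinguished role of the clique $B$.

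In the first stage I accumulate fixed points using the uniqueness of the maximal cliques meeting $v_0$. Corollary \ref{cor3oct13ter} forces $\phi(B) = B$, and Lemma \ref{cor1oct13} shows that $\phi$ permutes the maximal $2$-clique edges $e_0,\ldots,e_n$; since $e_n$ is the only such edge through $v_0$, $\sigma(1,2,n)$ is fixed. Because the set $V \cap B = \{\sigma(0,2,n),\sigma(0,n-2,n)\}$ has exactly two elements for $n \ge 5$, $\phi$ fixes $\sigma(0,n-2,n)$ and, via $e_{n-2}$, also $\sigma(0,n-2,n-1)$. By Proposition \ref{lemgoct12}, $\bar{\Gamma}(V)$ is $3$-regular; inspecting Lemma \ref{lem2oct13}, the three $V$-neighbors of $v_0$ are precisely $\sigma(0,2,3)$, $\sigma(1,2,n)$, $\sigma(0,n-2,n)$, and since two are already fixed, so is the third, together with its $e_0$-partner $\sigma(0,1,3)$.

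In the second stage I reduce to the inductive hypothesis on the subgraph $\bar{\Gamma}(T_{n-1})$, which is isomorphic to the block transposition graph on $[n-1]$. By Lemma \ref{lem1oct11}, $S_{n-2}^\vartriangle$ is the set of vertices of $\bar{\Gamma}$ with no neighbor in $B$, so $\phi$ preserves it. The delicate technical step is to show $\phi(L) = L$, equivalently $\phi(T_{n-1}) = T_{n-1}$; this I argue by combining the fixed $L$-vertex $\sigma(0,n-2,n-1)$ with the bipartite $(L,F)$-matching of Proposition \ref{prova}(ii) and the unique-$B$-neighbor and $L$-partner correspondences of Lemma \ref{prop1oct12}, propagating fixation through the matching until any putative $L$-to-$F$ excursion contradicts an already-fixed vertex. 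Granting this, $\phi|_{T_{n-1}}$ is an automorphism of $\bar{\Gamma}_{n-1}$ fixing $\sigma(0,1,3)$, which is an endpoint of the $e_0$-edge of $\bar{\Gamma}_{n-1}$ and hence lies in the $V$-set of that smaller graph; the inductive hypothesis yields $\phi|_{T_{n-1}} = \id$.

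It remains to extend the identity to $T_n \setminus T_{n-1} = B \cup F$. Each $b = \sigma(0,j,n) \in B$ is determined by its set of $L$-neighbors $\{\sigma(0,j,k) : j<k<n\}$ of Lemma \ref{prop1oct12}(iv), a set depending on $j$ alone; since $L$ is already pointwise fixed, $\phi$ fixes each $b$. Each $u = \sigma(i,j,n) \in F$ is determined by its unique $B$-neighbor $\sigma(0,j,n)$ and its unique $L$-partner $\sigma(0,i,j)$ from Proposition \ref{prova} and Lemma \ref{prop1oct12}, both of which are already fixed; so $\phi$ fixes $u$. The main obstacle is the middle-stage step ensuring $\phi(L) = L$: once the $L$-versus-$F$ dichotomy is pinned down, the inductive reduction and the final propagation are essentially bookkeeping.
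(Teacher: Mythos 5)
Your opening moves match the paper's: fix $v_0=\sigma(0,2,n)$, use the maximal $2$-clique $e_n$ to pin down $\sigma(1,2,n)$, and then work outward through the $3$-regular graph $\bar{\Gamma}(V)$. But two steps are not justified. The smaller one: you assert $\phi(B)=B$ directly from Corollary \ref{cor3oct13ter}, yet that corollary only identifies $B$ among maximal $(n-1)$-cliques \emph{containing an edge of} $\bar{\Gamma}(B)$, and at the point where you invoke it you only know that the single vertex $v_0$ of $B$ is fixed; a priori $\phi(B)$ could be a different maximal $(n-1)$-clique meeting $B$ in $v_0$ alone. The paper closes this by first showing that the two remaining $V$-neighbours of $v_0$ cannot be swapped, because $\{v_0,\sigma(0,n-2,n)\}$ extends to the maximal $(n-1)$-clique $B$ while $\{v_0,\sigma(0,2,3)\}$ extends only to a maximal clique of size $n-2$ (which needs the neighbour count from Proposition \ref{thm1oct11}); only after the edge $\{v_0,\sigma(0,n-2,n)\}$ inside $\bar{\Gamma}(B)$ is known to be fixed does Corollary \ref{cor3oct13ter} yield $\phi(B)=B$. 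Your order of deductions is circular here, though fillable.

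The serious gap is the step you yourself call the main obstacle: $\phi(L)=L$, which you need in order to run induction on $T_{n-1}=L\cup S_{n-2}^\vartriangle$. Nothing you cite delivers it. From $\phi(B)=B$ and $\phi(S_{n-2}^\vartriangle)=S_{n-2}^\vartriangle$ you only get $\phi(L\cup F)=L\cup F$; fixing a few vertices of $L$ does not prevent $\phi$ from carrying an $L$--$L$ edge onto one of the $(n-1)(n-2)/2$ matching edges of the $(1,1)$-biregular bipartite graph $(L,F)$, since every vertex of $L\cup F$ has the same degree $2n-5$ in $\bar{\Gamma}(L\cup F)$ and no lemma in the paper distinguishes intra-$L$ edges from matching edges. ``Propagating fixation through the matching'' is a plan, not an argument. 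The paper avoids the issue altogether: it inducts on $S_{n-2}^\vartriangle\cong T_{n-2}$, which \emph{is} canonically preserved (it is the set of vertices with no neighbour in $B$), first shows the stabilizer $\bar{\textsf{H}}$ fixes all of $V$ pointwise --- so that $\bar{\textsf{H}}$ is the kernel of the action of $\Aut(\bar{\Gamma})$ on $V$ and hence normal --- and then transports fixedness from $S_{n-2}^\vartriangle$ to all of $L\cup B$ by toric conjugation using the relations (\ref{eqa14oct}), finishing $F$ via the matching of Proposition \ref{prova} (ii). If you want to keep your route you must supply a genuine graph-theoretic invariant separating $L$ from $F$ for automorphisms fixing $v_0$; otherwise you should adopt the paper's normality-plus-toric-conjugacy detour.
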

\begin{proof} We prove the assertion by induction on $n.$ Computation shows that the assertion is true for $n=5,6.$ Therefore, we assume $n\geq 7.$

First we prove that any automorphism of $\bar{\Gamma}$ fixing a vertex $v\in V$ is an automorphism of $\bar{\Gamma}(V)$ as well. Since $\overline{\textsf{D}}_{n+1}$ is regular on $V,$ we may
limit ourselves to take $\sigma(0,2,n)$ for $v.$ Let $\bar{\textsf{H}}$ be the subgroup of $\overline{\textsf{R}}$ which fixes $\sigma(0,2,n).$

We look inside the action of $\bar{\textsf{H}}$ on $\bar{\Gamma}(V)$ and show that $\bar{\textsf{H}}$ fixes the edge $\{\sigma(0,2,n),\sigma(0,n-2,n)\}.$ By Proposition \ref{lemgoct12}, $\bar{\Gamma}(V)$ is $3$-regular. More precisely, the endpoints of the edges of $\bar{\Gamma}(V)$ which are incident with $\sigma(0,2,n)$ are $\sigma(0,2,3),\,\sigma(1,2,n),$ and $\sigma(0,n-2,n)$; see Lemma \ref{prop1oct12} (i) (iii) (v). Also, by Proposition \ref{thm2oct11}, the edge $e_{n-1}=\{\sigma(0,2,n),\sigma(1,2,n)\}$ is the edge of a maximal clique of $\bar{\Gamma}$ of size $2$, and no two distinct edges of maximal cliques of $\bar{\Gamma}$ of size $2$ have a common vertex. Thus, $\bar{\textsf{H}}$ fixes $\sigma(1,2,n).$ Now, from Corollary \ref{cor3oct13ter}, the edge $\{\sigma(0,2,n),\sigma(0,n-2,n)\}$ lies in a unique maximal clique of size $n-1$. By Lemma \ref{lem2oct13} (i), the edge $\{\sigma(0,2,n),\sigma(0,2,3)\}$ lies on a clique of size $n-2$ whose set of vertices is $\{\sigma(0,2,k)| 3\le k\le n\}.$ Here, we prove that any clique $C$ of size $n-2$ containing the edge $\{\sigma(0,2,n),\sigma(0,2,3)\}$ is maximal. By the first assertion of Proposition \ref{prova} (i), $\sigma(0,2,3)$ is adjacent to a unique vertex in $B$, namely $\sigma(0,2,n).$ On the other hand, among the $2(n-2)$ neighbors of $\sigma(0,2,n)$ off $V$, only as many as $n-3$ vertices are off $B\cap V$, by the proof of Proposition \ref{thm1oct11}. Then, $C$ does not extend to a clique of size $n-1$. Therefore, $\bar{\textsf{H}}$ cannot interchange the edges $\{\sigma(0,2,n),\sigma(0,n-2,n)\}$ and $\{\sigma(0,2,n),\sigma(0,2,3)\}$ but fixes both.

Also, by Proposition \ref{lemgoct12} and Lemma \ref{prop1oct12} (i) (iii), $\sigma(0,n-2,n)$ is adjacent to $\sigma(0,n-2,n-1)$ and $\sigma(n-3,n-2,n).$ Since $e_{n-2}$ is the edge of a maximal clique of $\bar{\Gamma}$ of size $2$, $\bar{\textsf{H}}$ fixes $e_{n-2}=\{\sigma(0,n-2,n-1),\sigma(0,n-2,n)\}$. This together with what we have proven so far shows that $\bar{\textsf{H}}$ fixes $\sigma(n-3,n-2,n),$ and then the edges $e_{n-3}=\{\sigma(n-3,n-2,n),\sigma(n-3,n-1,n)\}.$

Now, as the edge $\{\sigma(0,2,n),\sigma(0,n-2,n)\}$ is in $\bar{\Gamma}(B)$, Corollary \ref{cor3oct13ter} implies that
$\bar{\textsf{H}}$ preserves $B$. And, as $\bar{\textsf{H}}$ fixes $\{\sigma(0,2,n),\sigma(0,2,3)\}$, $\bar{\textsf{H}}$ must fix $\sigma(0,2,n)\in B$ and $\sigma(0,2,3)\notin B.$ Also, $e_0=\{\sigma(0,1,3),\sigma(0,2,3)\}$ is preserved by $\bar{\textsf{H}}$, as we have seen above. Therefore,  $\sigma(0,1,3)$ is also fixed by $\bar{\textsf{H}}.$ Furthermore, $\sigma(2,3,5)\in S_{n-2}^\vartriangle$ is adjacent to $\sigma(0,2,3)$ in $\bar{\Gamma}(V)$, by Proposition \ref{lemgoct12} and Lemma \ref{prop1oct12} (ii); and then it is fixed by $\bar{\textsf{H}}$, as $\bar{\textsf{H}}$ preserves $S_{n-2}^\vartriangle,$ by Lemma \ref{lem1oct9}. Therefore, we have that $\bar{\textsf{H}}$ induces an automorphism group of $\bar{\Gamma}(S_{n-2}^\vartriangle)$ fixing a vertex $\sigma(2,3,5)\in S_{n-2}^\vartriangle.$ Then $\bar{\textsf{H}}$ fixes every block transpositions in $S_{n-2}^\vartriangle\cong T_{n-2},$ by the inductive hypothesis. In particular, $\bar{\textsf{H}}$ fixes all the vertices in $V \cap S_{n-2}^\vartriangle,$ namely all vertices in $\Lambda$ belonging to $e_l$ with $0<l<n-3.$

This together with what proven so far shows that $\bar{\textsf{H}}$ fixes all vertices of $V$ with only two possible exceptions, namely the endpoints of the edge $e_{n-1}=\{\sigma(1,n-1,n),\sigma(0,1,n-1)\}.$ In this exceptional case, $\bar{\textsf{H}}$ would swap $\sigma(0,1,n-1)$ and $\sigma(1,n-1,n).$ Actually, this exception cannot occur since $\sigma(0,1,n-1)$ and $\sigma(1,n-1,n)$ do not have a common neighbor, and $\bar{\textsf{H}}$ fixes their neighbors in $V.$ Therefore, $\bar{\textsf{H}}$ fixes every vertex in $V$. Hence, $\bar{\textsf{H}}$ is the kernel of the permutation representation of $\overline{\textsf{R}}$ on $V$. Thus $\bar{\textsf{H}}$ is a normal subgroup of $\overline{\textsf{R}}.$

Our final step is to show that the block transpositions in $L\cup B$ are also fixed by $\bar{\textsf{H}}$. Take any block transposition $\sigma(0,j,k).$ Then the toric class of $\sigma(0,j,k)$ contains a block transposition $\sigma(i',j',k')$ from $S_{n-2}^\vartriangle$. This is a consequence of the equations below which are obtained by using (\ref{22march2015})
\begin{equation}\label{eqa14oct}
\begin{array}{llll}
{\bar{\textsf{f}}}^{\,2}(\sigma(0,j,k))&=&\sigma(j-2,k-2,n-1),& j\geq 3; \\
{\bar{\textsf{f}}}^{\,3}(\sigma(0,1,k))&=&\sigma(k-3,n-2,n-1),&k\geq 4;\\
{\bar{\textsf{f}}}^{\,4}(\sigma(0,1,2))&=&\sigma(n-3,n-2,n-1);&{}\\
{\bar{\textsf{f}}}^{\,5}(\sigma(0,1,3))&=&\sigma(n-4,n-3,n-1);&{}\\
{\bar{\textsf{f}}}^{\,4}(\sigma(0,2,k))&=&\sigma(k-4,n-3,n-1),& k\geq 5;\\
{\bar{\textsf{f}}}^{\,5}(\sigma(0,2,3))&=&\sigma(n-4,n-2,n-1);&{}\\
{\bar{\textsf{f}}}^{\,6}(\sigma(0,2,4))&=&\sigma(n-5,n-3,n-1).&{}\\
\end{array}
\end{equation}Since $\sigma(i',j',k')\in S_{n-2}^\vartriangle,$ we know that $\bar{\textsf{H}}$ fixes $\sigma(i',j',k').$ From this we infer that $\bar{\textsf{H}}$ also fixes $\sigma(0,j,k).$ In fact, as $\sigma(0,j,k)$ and $\sigma(i',j',k')$ are torically equivalent, $\bar{\textsf{u}}(\sigma(i',j',k'))=\sigma(0,j,k)$ for some $\bar{\textsf{u}}\in\bar{\textsf{F}}$. Take any $\bar{\textsf{h}}\in\bar{\textsf{H}}$. As $\bar{\textsf{H}}$ is a normal subgroup of $\overline{\textsf{R}}$, there exists $\bar{\textsf{h}}_1\in \bar{\textsf{H}}$ such that ${\bar{\textsf{u}}}\circ\bar{\textsf{h}}_1=\bar{\textsf{h}}\circ \bar{\textsf{u}}.$ Hence
$$\sigma(0,j,k)={\bar{\textsf{u}}}(\sigma(i',j',k'))=
{\bar{\textsf{u}}}\circ\bar{\textsf{h}}_1(\sigma(i',j',k'))=\bar{\textsf{h}}\circ \bar{\textsf{u}}(\sigma(i',j,',k'))$$
whence $\sigma(0,j,k)=\bar{\textsf{h}}(\sigma(0,j,k)).$ Therefore, $\bar{\textsf{H}}$ fixes every block transposition in $L\cup B.$

Also, this holds true for $F,$ by the second assertion of Proposition \ref{prova}. Thus, by Lemma \ref{lem1oct9}, $\bar{\textsf{H}}$ fixes
every block transposition. This completes the proof.
\end{proof}
\begin{rem}
{\em{Lemma \ref{lemfoct12} yields Theorem \ref{teor2} for $n\geq 5.$ For $n=4,$ computations performed by using the package ``grape'' of GAP~\cite{gap} show that Theorem \ref{teor2} is also true.}}
\end{rem}

\chapter{Related rearrangement problems}\label{c7}

We have treated the concept of a rearrangement distance in a general setting in Section \ref{rear dist} and discussed the block transposition rearrangement problem in Chapter \ref{c4},\ref{c5},\ref{c6}. In this chapter, we give a brief survey of two other rearrangement problems. In the first section, we focus on reversals while in the last section we treat cut-and-paste moves, an operation that involves both block transpositions and reversals.
\section{Reversals}\label{s71}
Analysis of genomes evolving by inversions led to the combinatorial problem of sorting a permutation by reversals; for further biological knowledge see Chapter \ref{biol}. Introduced in 1982 by Watterson et al. \cite{W}, sorting by reversals is the first combinatorially studied rearrangement problem. For every any $0\leq i<k\leq n$, a \emph{reversal} $\rho(i,k)$ is the permutation
\begin{equation}
\rho(i,k)=\left\{\begin{array}{ll}
[1\cdots i\,\, k\cdots i+1\,\, k+1 \cdots n], & 1\leq i<k<n,\\
{[k\cdots 1\,\, k+1 \cdots n]}, & i=0\quad k< n,\,i=j,\\
w, & i=0\quad k=n.
\end{array}
\right.
\end{equation}
In \cite{W} Watterson et al. also suggested the first heuristic algorithm that sort a permutation in at most $n-1$ steps. It took more than a decade since Bafna and Pevzner were able to prove that $n-1$ is, actually, the reversal diameter of the symmetric group $\Sym_n$. They provided examples of permutations on $[n]$ with reversal distance equal to $n-1$. Such permutations are the \emph{Gollan permutation} $\gamma_n$ and its inverse, defined by Gola as follows
\[\arraycolsep=1.4pt\def\arraystretch{2.2}
\gamma_n=\left\{
\begin{array}{cl}
(1,3,5,7,\ldots,n-1,n,\ldots, 8,6,4,2), & n\,\, even,\\
(1,3,5,7,\ldots,n,n-1,\ldots, 8,6,4,2), & n\,\, odd.
\end{array}
\right.
\]Also, in \cite{BP1} the notion of breakpoint graph of a permutation was introduced, and important links between the maximum cycle decomposition of this graph and reversal distance were presented.
\begin{defi}
{\em{The \emph{breakpoint graph} $BG=BG(\pi)$ of a permutation $\pi$ on $[n]$ is the undirected graph whose vertex set is the vertex set of the cycle graph $G(\pi)$ and whose edges are the edges of $G(\pi)$ without their orientation.}}
\end{defi}
As we have seen in Section \ref{labarre} for cycle graphs, breakpoint graphs decompose into edge-disjoint alternating cycles. However, such a decomposition is not unique, differently from what occurs for cycle graphs. This property is the main reason why sorting by reversals was proven to be a $\textsf{NP}$-hard problem by Caprara in \cite{Ca}.

Furthermore, in \cite{BK} Berman and Karpinski proved that sorting a permutation by reversals is not approximable within $1.0008$. Before the result of Caprara was known, Kececioglu and Sankoff \cite{KS} gave a $2$-approximation algorithm, and Bafna and Pevzner \cite{BP1} presented an $\dfrac{7}{4}$-approximation algorithm to sort a permutation by reversals. The approximation ratio was improved to $\dfrac{3}{2}$ by Christie \cite{C} and then to $\dfrac{11}{8}$ by Berman, Hannenhalli, and Karpinski \cite{BHK}.

Table \ref{table:tabf} shows the distribution of the reversal distance $rd(\pi)$ with $\pi$ a permutation on $[n]$ with $1\leq n\leq 10$. Such a table was computed by Fertin et al. in \cite{FL}.
\begin{table}[b]
\caption[Reversal distances]{The number of permutations $\pi$ in $Sym_n$ with $rd(\pi)=k$, for $1\leq n\leq 10$.} 
\centering
\label{table:tabf}
\begin{tabular}{r*{11}{r}}
\toprule
 $n \backslash k$ & 0 & 1 & 2 & 3& 4 & 5 & 6 & 7 & 8 & 9\\
  \midrule
  1 & 1 & 0 & 0 & 0 & 0 & 0 & 0 & 0 & 0 & 0\\
  2 & 1 & 1 & 0 & 0 & 0 & 0 & 0 & 0 & 0 & 0 \\
  3 & 1 & 3 &  2 & 0 & 0 & 0 & 0 & 0 & 0 & 0\\
  4 & 1 & 6 & 15 & 2 & 0 & 0  & 0 & 0 & 0 & 0\\
  5 & 1 &10 &52 & 55 & 2 &0 & 0 & 0 & 0 & 0\\
  6 & 1 & 15&129 &389 & 184 & 2 & 0 & 0 & 0 & 0 \\
  7 & 1 &21&266&1563& 2539 & 648 & 2 & 0 & 0 & 0\\
  8 & 1 &28 & 487 & 4642 & 16445 & 16604 & 2111 & 2 & 0 & 0 \\
  9 & 1 & 36 & 820 & 11407 & 69863 & 169034 & 105365 & 6352 & 2 & 0\\
  10 & 1 & 45 & 1297 & 24600& 228613 & 1016341 & 1686534 & 654030 & 17337 & 2\\
 \bottomrule
\end{tabular}
\end{table}

\section{Cut-and-paste moves}

For any cut points, the \emph{cut-and-paste move} $\chi(i,j,k)$ acts on a permutation $\pi$ on $[n]$ either switching two adjacent subsequences of $\pi$ and possibly reversing one of them or simply reversing a subsequence of $\pi$. If $\chi(i,j,k)$ only
switches two adjacent subsequences of $\pi$, such a move is the block transposition $\sigma(i,j,k)$; see Section \ref{s31}. $\lambda(i,j,k)$ is any cut-and-paste move that switches two adjacent subsequences of $\pi$ and revers the second of them. This is formally
defined as follows:
\begin{equation}
\lambda(i,j,k)=\left\{\begin{array}{ll}
[1\cdots i\,\, k\cdots j+1\,\, i+1\cdots j\,\, k+1 \cdots n], & 1\leq i<j<k< n,\\
{[k\cdots j+1\,\, 1\cdots j\,\, k+1 \cdots n]}, & i=0\quad k< n,\\
{[1\cdots i\,\,n\cdots j+1\,\, i+1\cdots j]}, & 1\leq i\quad k=n,\\
{[n\cdots j+1\,\, 1\cdots j]}, & i=0\quad k=n.
\end{array}
\right.
\end{equation}$\varrho(i,j,k)$ is any cut-and-paste move that switches two adjacent subsequences of $\pi$ and then revers the first of them. This is formally defined
as follows:
\begin{equation}
\gamma(i,j,k)=\left\{\begin{array}{ll}
[1\cdots i\,\, j+1\cdots k\,\, j\cdots i+1\,\, k+1 \cdots n], & 1\leq i<j<k< n,\\
{[j+1\cdots k\,\, j\cdots 1\,\, k+1 \cdots n]}, & i=0\quad k< n,\\
{[1\cdots i\,\,j+1\cdots n\,\, j\cdots i+1]}, & 1\leq i\quad k=n,\\
{[j+1\cdots n\,\, j\cdots 1]}, & i=0\quad k=n.
\end{array}
\right.
\end{equation}If $\chi(i,j,k)$ only reverses one subsequence of $\pi$, such a move is the reversal $\rho(i,k)$; see Section \ref{s71}. The action of $\chi(i,j,k)$ on $\pi$ is defined as
\begin{equation}
\chi(i,j,k)=\left\{\begin{array}{ll}
[\pi_1\cdots \pi_i\,\,\pi_{j+1}\cdots \pi_k\,\,\pi_{i+1}\cdots \pi_{j}\,\,\pi_{k+1}\cdots \pi_n], & \chi=\sigma,\\
{[\pi_1\cdots \pi_i\,\,\pi_k\cdots \pi_{j+1}\,\,\pi_{i+1}\cdots \pi_{j}\,\,\pi_{k+1}\cdots \pi_n]}, & \chi=\lambda,\\
{[\pi_1\cdots \pi_i\,\,\pi_{j+1}\cdots \pi_k\,\,\pi_j\cdots \pi_{i+1}\,\,\pi_{k+1}\cdots \pi_n]}, & \chi=\gamma,\\
{[\pi_1\cdots \pi_i\,\, \pi_k\cdots \pi_{i+1}\,\, \pi_{k+1} \cdots \pi_n]}, & \chi=\rho.\\
\end{array}
\right.
\end{equation}Therefore, applying a cut-and-paste move $\chi(i,j,k)$ on the right of $\pi$ changes subsequences of $\pi$ in a way that may also be represented by
\begin{equation}
\begin{array}{ll}
[\pi_1\cdots\pi_i|\pi_{i+1}\cdots\pi_j|\pi_{j+1}\cdots\pi_k|\pi_{k+1}\cdots\pi_n], & \chi\neq\rho,\\
{[\pi_1\cdots\pi_i|\pi_{i+1}\cdots\pi_k|\pi_{k+1}\cdots\pi_n]}, & \chi=\rho.\\
\end{array}
\end{equation}
We observe that each of
$\lambda$ and $\gamma$ may also be expressed as a product of a block transposition and a reversal. In fact, it is straightforward to check
$$\lambda(i,j,k)=\rho(i,k-j+i)\circ\sigma(i,j,k);\qquad \gamma(i,j,k)=\rho(k-j+i,k)\circ\sigma(i,j,k).$$Since the reversals are involutory permutations and
the rearrangement set $S_n$ is inverse-closed; see Section \ref{equabl}, then the set $T$ of cut-and-paste moves is inverse-closed. Clearly, $T$ is a generator set of $\Sym_n$ since $S_n$ has this property; see Section \ref{equabl}. Now, since $T$ is a generator set, the following definition is meaningful.
\begin{defi}
{\em{The \emph{cut-and-paste distance of a permutation} $\pi$ on $[n]$ is $d_T(\pi)$ if $\pi$ is the product of $d_T(\pi)$ cut-and-paste moves, but it cannot be obtain as the product of less than $d_T(\pi)$ cut-and-paste moves.}}
\end{defi}A natural measure of the cut-and-paste distance of a permutation $\pi$ is the number of pairs bonds that occur in $\pi$, where a \emph{bond}
consists of two consecutive integers $x,x+1$ in the sequence; see \cite{EE}. Since for $n\le 3$ every permutation has a bound we assume in this section that
$n\geq 4$. We may observe that at most three bonds are created at each move, and that the identity permutation is the only permutation with maximum number $n+1$ of bonds. Therefore, any permutation of $[n]$ has distance at least $\lceil(n + 1)/3\rceil$. On the other hand,
the cut-and-paste distance is at most $n - \sqrt n + 1$. Indeed, in \cite{ES} Erd\H{o}s and Szekeresit prove that every string of $n$ distinct numbers has a monotone substring of length at least $\sqrt n$. Therefore, inserting the remaining elements one at a time into a longest monotone sequence,
and then reversing the full list at the end if necessary, give a sort of a permutation in at most $n - \sqrt n + 1$  cut-and paste moves. Let $d_T(n)$ indicate the \emph{cut-and-past diameter} in $\Sym_n$. Then
\begin{equation}\label{firstbounds}
\left\lceil(n + 1)/3\right\rceil \leq d_T(n)\leq n - \sqrt n + 1.
\end{equation}Actually, the upper bound in (\ref{firstbounds}) can be improved using the results on the block transposition diameter since $d_T(\pi)\leq d(\pi)$, for every permutation $\pi\in \Sym_n$. Since $\left\lfloor{2n-2}/3\right\rfloor=n+3/2$ for $n=13,15$, from Theorem \ref{EEmainbis} and Table \ref{table:tab3},
\[\arraycolsep=1.4pt\def\arraystretch{2.2}
d_T(n)\leq\left\{
\begin{array}{lll}
\left\lfloor\dfrac{n+2}{2}\right\rfloor, &n=14\quad3\leq n\leq 12,\\
\left\lfloor\dfrac{2n-2}{3}\right\rfloor, &n=13\quad15\leq n.
\end{array}
\right.
\]
Our contribution is to compute the cut-and-paste distribution for $1\leq n\leq 10$. Table \ref{table:tab4} shows such a distribution, performed by using the package ``grape'' of GAP \cite{gap}.
\begin{table}[ht!]
\caption[Cut-and-paste distances]{The number of permutations $\pi$ in $Sym_n$ with $d_T(\pi)=k$, for $1\leq n\leq 10$.} 
\centering
\label{table:tab4}
\begin{tabular}{r*{7}{r}}
\toprule
 $n \backslash k$ & 0 & 1 & 2 & 3& 4 & 5 \\
  \midrule
  1 & 1 & 0 & 0 & 0 & 0 & 0  \\
  2 & 1 & 1 & 0 & 0 & 0 & 0  \\
  3 & 1 & 5 & 0 & 0 & 0 & 0  \\
  4 & 1 & 16 & 8 & 0 & 0 & 0  \\
  5 & 1 &34&85&0&0&0 \\
  6 & 1 &65&511&143&0&0 \\
  7 & 1 &111&2096&2832&0&0\\
  8 & 1 &175&6592&29989&3563&0 \\
  9 & 1 &260&17208&206429&138982&0\\
  10 & 1 &369&39233&1015876&2487046&86275\\
 \bottomrule
\end{tabular}
\end{table}

\subsection{The Cranston lower bound}

The easy lower bound of $\lceil(n+1)/3\rceil$ was obtained by considering bonds. In order to achieve a better lower bound, it is useful to consider parity adjacencies. A \emph{parity adjacency} of $0\,\pi_1 \cdots \pi_n\, n+1$ is a pair of consecutive values in $\pi=[\pi_1\,\pi_2\cdots\pi_n]$ having opposite parity. For instance, the identity permutation has the maximum number $n+1$ of parity adjacencies. This suggests that we should count the number of moves $f(\pi)$ needed to obtain $n+1$ parity adjacencies. A lower bound of $f(n)$, the maximum $f(\pi)$ with $\pi\in \Sym_n$, is also a lower bound on the cut-and-paste diameter $d_T(n)$. Nevertheless the reverse permutation has also $n+1$ parity adjacencies. Therefore, we certainly hope that $f(n)$ would play any role in the investigation of upper bounds on $d_T(n)$.

In the following proposition Cranston et al. \cite[Theorem 1]{CS} prove that we can increase the number of parity adjacencies by at most $2$ at each step. Indeed, they observe that for every $\pi\in\Sym_n$, $f(\pi)$ cannot be increased by $3$ in any step.

\begin{prop}
For every $n\geq 4$,
$$d_T(n)\geq f(n)\geq \left\lfloor \frac{n}{2}\right\rfloor.$$
\end{prop}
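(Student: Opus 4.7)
The plan is to prove the two inequalities separately, with the parity invariance of a cut-and-paste move as the crux.

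The first inequality $d_T(n)\ge f(n)$ is immediate from the definitions: if $\pi$ is sorted to $\iota$ by $d_T(\pi)$ cut-and-paste moves, then the number of parity adjacencies of $\pi$ must reach $n+1$ (the value at $\iota$) after those $d_T(\pi)$ moves. Hence $d_T(\pi)\ge f(\pi)$ for every $\pi$, and so $d_T(n)\ge f(n)$.

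For the second inequality, the key step is to show that every cut-and-paste move alters the number of parity adjacencies by an \emph{even} integer. Fix $\pi=[\pi_1\cdots\pi_n]$ with the sentinels $\pi_0=0$ and $\pi_{n+1}=n+1$, and let $p(\pi)$ denote its number of parity adjacencies. Any move of type $\sigma(i,j,k)$, $\lambda(i,j,k)$, or $\gamma(i,j,k)$ destroys the three adjacencies $\{\pi_i,\pi_{i+1}\}$, $\{\pi_j,\pi_{j+1}\}$, $\{\pi_k,\pi_{k+1}\}$ and replaces them by three new ones whose endpoints are a rearrangement of the same multiset $\{\pi_i,\pi_{i+1},\pi_j,\pi_{j+1},\pi_k,\pi_{k+1}\}$, each original letter appearing again exactly once. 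Working modulo $2$, the indicator of a parity adjacency on a pair $\{x,y\}$ equals $(x+y)\bmod 2$; hence the sums of the indicators of the three old pairs and of the three new pairs are both congruent to $\pi_i+\pi_{i+1}+\pi_j+\pi_{j+1}+\pi_k+\pi_{k+1}\pmod 2$. Therefore the counts of parity adjacencies in the two triples have the same parity, so their difference is even and lies in $\{-2,0,2\}$. An identical argument for a reversal $\rho(i,k)$, which swaps the two adjacencies $\{\pi_i,\pi_{i+1}\},\{\pi_k,\pi_{k+1}\}$ for $\{\pi_i,\pi_k\},\{\pi_{i+1},\pi_{k+1}\}$, gives a change in $\{-2,0,2\}$ as well. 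Thus one cut-and-paste move increases $p$ by at most $2$.

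It remains to exhibit a permutation $\pi$ with small $p(\pi)$. Consider
\[
\pi=\begin{cases}
[2,4,\ldots,n,\,1,3,\ldots,n-1], & n\text{ even},\\
[1,3,\ldots,n,\,2,4,\ldots,n-1], & n\text{ odd}.
\end{cases}
\]
For $n$ even, the sequence $0\,\pi_1\cdots\pi_n\,n+1$ contains only same-parity consecutive pairs except at the single junction $(n,1)$, giving $p(\pi)=1$. For $n$ odd the same inspection yields two parity adjacencies $(0,1)$ and $(n,2)$, so $p(\pi)=2$. Since $p(\iota)=n+1$ and every move increases $p$ by at most $2$, we need at least $\lceil(n+1-p(\pi))/2\rceil$ moves, which equals $n/2=\lfloor n/2\rfloor$ for $n$ even and $(n-1)/2=\lfloor n/2\rfloor$ for $n$ odd. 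Therefore $f(\pi)\ge\lfloor n/2\rfloor$, proving $f(n)\ge\lfloor n/2\rfloor$.

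The main obstacle is the uniform parity bookkeeping across the four move types: one must carefully list the endpoints of the new adjacencies in each of $\sigma,\lambda,\gamma,\rho$, and check that in every case the multiset of endpoint values (with multiplicity) is preserved, so the $\bmod\,2$ sum of the pair-indicators is invariant. Once this single algebraic observation is made, the bound follows cleanly from a greedy counting argument, and no further combinatorial analysis of the moves is required.
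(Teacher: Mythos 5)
Your proof is correct and takes the same route the paper indicates: the key fact that a cut-and-paste move changes the number of parity adjacencies by at most $2$, applied to a permutation with one (for $n$ even) or two (for $n$ odd) parity adjacencies, yields the bound $f(n)\geq\left\lfloor n/2\right\rfloor$. The paper itself only cites Cranston et al.\ for this step and does not write out a proof, so your mod-$2$ endpoint-multiset bookkeeping across the four move types and your explicit extremal permutations simply supply the details that the paper delegates to the reference.
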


In \cite{CS} Cranston et al. also suggest that every permutation with either one parity adjacency or two parity adjacencies when $n$ is odd has cut-and-paste distance equal to the lower bound. Nevertheless a proof of this conjecture is still not available in the literature. Here, we prove this conjecture in two special cases.
\begin{lem}\label{mat1110}
For every $\pi$ permutation on $[n]$,
\begin{itemize}
\item[\rm(I)] $\pi$ has one parity adjacency if and only if $n$ is even, and
$$0\,\pi\,n+1=0\,\pi_{i_1}\cdots\pi_{i_{r}}\,\pi_{j_1}\cdots\pi_{j_{r}}\,n+1,$$where $\pi_{i_l}$ is even and $\pi_{j_l}$ is odd, for every $1\leq l\leq r$, and $n=2r$ for some $r\geq 2$.
\item[\rm(II)] Let $n$ be even. $\pi$ has two pair adjacencies if and only if
$$0\,\pi\,n+1=0\,\pi_{i_1}\cdots\pi_{i_t}\,\pi_{i_{t+1}}\cdots\pi_{i_q}\,\pi_{i_{q+1}}\cdots\pi_{i_n}\,n+1,$$where $\pi_{i_l}$ is odd for every $t+1\leq l\leq q$, and $\pi_{i_l}$ is even for every $1\leq l\leq t$ and $q+1\leq l\leq n$.
\item[\rm(III)] Let $n$ be odd. $\pi$ has two pair adjacencies if and only if
$$0\,\pi\,n+1=0\,\pi_{i_1}\cdots\pi_{i_{r}}\,\pi_{j_1}\cdots\pi_{j_{r+1}}\,n+1,$$where $\pi_{i_l}$ is odd and $\pi_{j_l}$ is even, for every $1\leq l\leq r$, and $n=2r+1$ for some $r\geq 2$.
\end{itemize}
\end{lem}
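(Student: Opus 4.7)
The plan is to encode each permutation $\pi$ on $[n]$ by the binary word $W(\pi)$ of length $n+2$ whose letters record the parity ($E$ for even, $O$ for odd) of the consecutive entries of the extended sequence $0\,\pi_1\cdots\pi_n\,n+1$, and to observe that a parity adjacency is exactly a boundary between two consecutive \emph{runs} (maximal constant subwords) of $W(\pi)$. Writing $N(\pi)$ for the number of runs of $W(\pi)$, the number of parity adjacencies of $\pi$ equals $N(\pi)-1$.

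The first letter of $W(\pi)$ is always $E$ and the last is $E$ or $O$ according as $n$ is odd or even; since consecutive runs alternate in parity, this already forces $N(\pi)$ to be odd when $n$ is odd and even when $n$ is even. I would first establish this parity constraint explicitly, and then dispose of the three items by matching the forced alternating pattern against the number of even and odd entries available in $[n]$. For (I), $N(\pi)=2$ produces the unique pattern $E\cdots E\,O\cdots O$ in $W(\pi)$, forcing all $r=n/2$ even values of $[n]$ to appear before all $r$ odd values in $\pi$, hence $n=2r$ with $r\ge 2$ and the stated block form. For (II) and (III), $N(\pi)=3$ produces the alternating pattern $E\cdots E\,O\cdots O\,E\cdots E$: the odd values of $[n]$ are concentrated in one contiguous sub-block of $\pi$ while the even values form the two outer sub-blocks, possibly with one of them empty and absorbed into the boundary $E$ contributed by $0$ or $n+1$. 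Matching against the counts of evens and odds in $[n]$ gives the block decomposition of (II) in the generic case and the degenerate two-block decomposition of (III) with $n=2r+1$.

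The converse in each item is immediate because the asserted block form yields $W(\pi)$ with the prescribed number of runs by direct computation. The main technical point requiring care is the bookkeeping around the boundary entries $0$ and $n+1$: these are fixed, so empty outer blocks of $\pi$ do not decrease $N(\pi)$ (they are absorbed into the adjacent boundary run), whereas a non-empty outer block of the opposite parity to the adjacent boundary produces an additional run and hence an additional parity adjacency. Once this is tracked through the count $N(\pi)$, each of (I), (II), (III) reduces to a mechanical verification that the admissible run patterns coincide with the block forms stated in the lemma.
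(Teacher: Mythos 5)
The paper states Lemma \ref{mat1110} without any proof at all, so there is nothing on the paper's side to compare your argument against; I can only assess it on its own terms. Your framework --- the parity word $W(\pi)$ of length $n+2$, the identification of parity adjacencies with run boundaries so that the number of parity adjacencies is $N(\pi)-1$, and the observation that $W(\pi)$ begins with $E$ and ends with $E$ or $O$ according as $n$ is odd or even, forcing $N(\pi)\equiv n \pmod 2$ --- is the right one, and your treatment of item (I) is complete and correct.

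The gap is the step where you assert that the $N(\pi)=3$ pattern $E^{a}O^{b}E^{c}$ ``gives the block decomposition of (II) in the generic case and the degenerate two-block decomposition of (III).'' You never perform the matching you invoke, and performing it shows that it fails. First, your own parity constraint says $N(\pi)=3$ forces $n$ odd, so it cannot produce item (II), which assumes $n$ even; indeed a direct count shows that the block form asserted in (II) (evens, then odds, then evens, placed between $0$ and the odd value $n+1$) has parity word $E^{t+1}O^{q-t}E^{n-q}O$, hence \emph{three} parity adjacencies when all blocks are nonempty, never two. Second, for item (III) the set $[2r+1]$ contains $r+1$ odd values and only $r$ even values, so a first block of $r$ odd entries followed by a second block of $r+1$ even entries, as written in the statement, cannot exist; the pattern $E^{a}O^{b}E^{c}$ with $n=2r+1$ actually forces all $r+1$ odds into the middle block and distributes the $r$ evens over two possibly empty outer blocks, which is not the two-block form of (III). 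In short, your method is sound, but carried through it refutes items (II) and (III) as literally stated rather than proving them; a correct write-up must either record that no permutation on $[n]$ with $n$ even has exactly two parity adjacencies and restate (II)--(III) accordingly (two parity adjacencies occur exactly for $n$ odd, with the evens split around the contiguous block of odds), or explain what nonstandard reading of ``parity adjacency'' makes the printed statements true. Your remark that ``the converse in each item is immediate by direct computation'' is precisely the computation that breaks.
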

\begin{prop}
Let $\pi$ be a permutation as in either case \em{(I)} or \em{(III)} of Lemma \ref{mat1110} with a monotone subsequence of either entirely even or odd numbers. Then
$$d_T(\pi)\leq\left\lfloor \dfrac{n}{2}\right\rfloor.$$
\end{prop}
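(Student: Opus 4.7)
The Cranston inequality stated in the paper already gives the matching lower bound $d_T(\pi)\geq\lfloor n/2\rfloor$ in either of these cases (the permutation carries $1$ or $2$ parity adjacencies while the identity has $n+1$). So the proposition really says that $\lfloor n/2\rfloor$ cut-and-paste moves suffice. Because any cut-and-paste move raises the number of parity adjacencies by at most two, each move of an optimal sort must create \emph{exactly two} new parity adjacencies---the whole plan is to exhibit, move after move, cut-and-paste moves that each contribute two.

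The argument proceeds by induction on $r=\lfloor n/2\rfloor$, with the base cases $n\in\{4,5\}$ read off Table \ref{table:tab4}. The \emph{canonical} inductive step treats case (I) with the even block in monotone increasing order:
\[
\pi=[2,4,\ldots,2r,\tau_1,\ldots,\tau_r],\qquad\{\tau_1,\ldots,\tau_r\}=\{1,3,\ldots,2r-1\}.
\]
Let $l$ be the index with $\tau_l=1$. The opening move is the block transposition $\sigma(0,r+l-1,r+l)$, which extracts the singleton $\{1\}$ and re-inserts it in front of $2$, producing
\[
\pi^{(1)}=[1,2,4,\ldots,2r,\tau_1,\ldots,\widehat{\tau_l},\ldots,\tau_r].
\]
A direct count shows that the two new parity adjacencies $0$-$1$ and $1$-$2$ are created, while the only adjacency destroyed is $0$-$2$, which was not a parity adjacency; hence the parity count advances by exactly two. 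Restricted to positions $3,\ldots,2r$, the permutation $\pi^{(1)}$ is a bijection from these positions onto the value set $\{3,4,\ldots,2r\}$, consisting of the increasing even block $4,6,\ldots,2r$ followed by the $r-1$ remaining odd values. The order-preserving relabeling $k\mapsto k-2$, applied simultaneously to positions and values, identifies this sub-problem with a permutation of $[2(r-1)]$ again in case (I) with monotone increasing even block, sending the local target $[3,4,\ldots,2r]$ to the identity on $[2(r-1)]$. Therefore the inductive hypothesis supplies $r-1$ additional moves confined to positions $3,\ldots,2r$, and the combined algorithm uses $r=\lfloor n/2\rfloor$ cut-and-paste moves.

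The remaining configurations---case (I) with the even block monotone \emph{decreasing}, case (I) with the odd block (instead of the even block) monotone, and the two variants of case (III)---reduce to the canonical step by a single preliminary cut-and-paste move that still contributes two parity adjacencies. Typical choices are a $\gamma$-move that simultaneously reverses a decreasing monotone block and plants the $\{0,1,2\}$-bond sequence at the front; a $\lambda$-move when the monotone block sits on the right; a $\sigma$-move expelling the maximum entry $n$ to the terminal position, thereby reducing a case (I) decreasing instance to a case (III)-type problem on $[n-1]$ that still carries a monotone block; or the symmetric counterparts that exchange the roles of smallest versus largest and of the two blocks. The main obstacle of the proof is precisely this case analysis: for each of the roughly half-dozen structurally distinct sub-configurations---not forgetting the boundary situations where the pivot element lies at an extremity of its block---one must verify that the chosen opening move really adds exactly two parity adjacencies and that what remains, after the natural relabeling and restriction to a sub-interval of positions, is again covered by the canonical inductive step.
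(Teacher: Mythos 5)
Your canonical case is sound and is, in substance, the paper's own construction for the increasing case: the paper sorts by cutting each remaining element $x$ and pasting it between $x-1$ and $x+1$, one single-element block transposition per element, which is exactly your opening move $\sigma(0,r+l-1,r+l)$ iterated rather than phrased as a recursion. The genuine gap is everything after that. You dispose of the decreasing configurations and of case (III) with the unverified assertion that ``a single preliminary cut-and-paste move'' reduces them to the canonical inductive step, and this claim is false as stated. A cut-and-paste move acts on two \emph{adjacent} blocks, so the natural candidate for simultaneously reversing the even block and planting $1$ next to $2$, namely $\gamma(0,r,r+l)$ applied to $[2r,\,2r-2,\ldots,2,\,\tau_1,\ldots,\tau_r]$, produces $[\tau_1,\ldots,\tau_{l-1},\,1,2,4,\ldots,2r,\,\tau_{l+1},\ldots,\tau_r]$: the unplaced odd entries now straddle both sides of the increasing block, so the result is not an instance of case (I) and your induction hypothesis does not apply to it. This particular subcase is repairable (each straddling element can still be inserted in one move), but you have not done the repair, and the remaining subcases each need their own argument.

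Two further points make the sketch misleading rather than merely incomplete. First, the requirement that every move create exactly two parity adjacencies conflates the lower-bound argument with the upper-bound construction and actively obstructs the decreasing case: the paper's solution there is to insert all but one of the remaining elements so as to build the reverse permutation $w$ in $r-1$ moves and then finish with a single reversal, and that final reversal creates \emph{zero} new parity adjacencies while still being a legitimate step of an $r$-move sort. Second, in case (III) the non-monotone block can have $r+1$ elements while only $r=\lfloor n/2\rfloor$ moves are available, so one-element-per-move insertion only closes the count if exactly one element is identified that never needs to be moved (the paper's ``cut every $x$ except $n$'' or ``except $1$,'' depending on whether the monotone block sits at the beginning or the end); your proposal never confronts this counting issue. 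As written, roughly half the proposition is asserted rather than proved.
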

\begin{proof}
Let $S=\pi_{i_1}\cdots\pi_{i_{r}}$ and let $r=\lfloor{n}/{2}\rfloor$. Suppose $S$ is monotone increasing. At each step cut an element $x$ from the remaining subsequence and paste it between $x-1$ and $x+1$ as follows
$$0\,\pi\, n+1=0\,\pi_{i_1}\cdots x-1|x+1\cdots\pi_{i_{r}}\,\pi_{j_{1}}\cdots |x|\cdots\pi_{j_{r+1}}\,n+1.$$Hence, $\pi$ is sorted in at most $r$ steps if either $n$ is even or $n$ is odd, and $S$ consists of odd numbers. When $n$ is odd with $S$ consisting of even numbers two cases occur: either $S$ is at the beginning of $\pi$ or $s$ is at the end of $\pi$. In the former case, cut every $x$ expect $n$; while in the latter move every $x$ expect $1$. Hence the statement holds.

Now, suppose $S$ is monotone decreasing. Cut an element $x$ from the remaining subsequence, and paste it between $x+1$ and $x-1$ as follows
$$0\,\pi\, n+1=0\,\pi_{i_1}\cdots x+1|x-1\cdots\pi_{i_{r}}\,\pi_{j_{1}}\cdots |x|\cdots\pi_{j_{r+1}}\,n+1.$$Assume $n$ to be odd, and consider the case when $S$ consists of odd numbers. Cutting every $x$, after $r-1$ steps, we obtain the reverse permutation $w$. Carrying out $w$ yields the claim holds. Here, assume $S$ to consist of even numbers. If $S$ is at the end of $\pi$, then move every $x$ expect $n$; while cut every $x$ expect $1$ when $S$ is at the beginning of $\pi$. In both cases, we obtain $w$ after taking $r-1$ steps. Hence, the statement holds as in the previous case. Now, suppose $n$ is even. If $S$ consists of odd numbers, then move every $x$ expect $n$. Then, we obtain $w$ if $S$ is at the end of $\pi$, and the statement holds as in the previous case. When $S$ is at the beginning of $\pi$, after taking $r-1$, we have$$n-1\,n-2\cdots 1\,n.$$Hence, carrying out the move $\rho(0,n-1)$ the statement holds. Here, assume $S$ to consist of even numbers and be at the beginning of $\pi$. Move every $x$ expect $1$, then that gives the reverse permutation $w$. Hence, the statement holds as we have seen before. If $S$ is at the end of $\pi$, move every $x$ expect $n-1$. Therefore, we obtain
$$n-1\,n\,n-2\cdots 1.$$Carrying out the move $\lambda(0,2,n)$, we sort $\pi$ in at most $r$ steps. This completes the proof.\end{proof}

\chapter{Block transposition graph for small n}\label{c8}
All computation are performed by using the package ``grape'' of GAP \cite{gap}.

\section{Case n=4}
\label{n=4}

The $10$ block transpositions of $\Sym_4$ are listed below.
$$\begin{array}{llll}
{\bf{1}}=\sigma( 0, 1, 2 ), & {\bf{2}}=\sigma( 0, 1, 3 ), & {\bf{3}}=\sigma( 0, 1, 4 ), & {\bf{4}}=\sigma( 0, 2, 3 ), \\
{\bf{5}}=\sigma( 0, 2, 4 ), & {\bf{6}}=\sigma( 0, 3, 4 ), & {\bf{7}}=\sigma( 1, 2, 3 ), & {\bf{8}}=\sigma( 1, 2, 4 ),\\
{\bf{9}}=\sigma( 1, 3, 4 ), & {\bf{10}}=\sigma( 2, 3, 4 ).
\end{array}$$The edges of the block transposition graph $\Gamma$ of ${\rm{Cay}}(\Sym_4,S_4)$ are
$$\begin{array}{l}
\{\bf{ 1},\bf{2}\}, \{\bf{1}, \bf{4}\}, \{\bf{ 1},\bf{8} \}, \{\bf{ 1}, \bf{10} \}, \{\bf{ 2}, \bf{3} \}, \{\bf{ 2}, \bf{5} \}, \{\bf{ 2}, \bf{8} \}, \\
\{\bf{ 3}, \bf{4} \}, \{\bf{ 3},\bf{5} \}, \{\bf{3}, \bf{9} \}, \{\bf{ 4}, \bf{6} \},\{\bf{ 4}, \bf{10} \}, \{\bf{ 5},\bf{6} \},\{\bf{ 5}, \bf{7} \}, \\
\{\bf{ 6}, \bf{7} \}, \{\bf{ 6}, \bf{8} \}, \{\bf{ 7},\bf{9} \}, \{\bf{ 7}, \bf{10} \}, \{\bf{ 8}, \bf{9} \}, \{\bf{ 9}, \bf{10}\}.
\end{array}$$ $\Gamma$ is a $4$-regular.
The full automorphism group of $\Gamma$ is the dihedral group $\textsf{D}_5$ of order $10$.
The toric classes are
$$\begin{array}{l}
\{ \bf{1,3,6,10,7}\},\\
\{ \bf{2,5,9,4,8}\}.\\
\end{array}
$$
The edges of the maximal cliques of $\Gamma$ of size $2$ are
$$\{\bf{4},5\}, \{2,4\}, \{5,8\}, \{8,9\}, \{2,9\}.$$
$\Gamma(V)$ is a Hamiltonian and $2$-regular graph. The full automorphism group of $\Gamma(V)$ has order $10$. The full automorphism group ${\rm{Aut}({\rm{Cay}}(\Sym_4,S_4))}$ has order $240$.
\section{Case n=5}
\label{n=5}

The $20$ block transpositions of $\Sym_5$ are listed below.
$$\begin{array}{llll}
{\bf{1}}=\sigma(0,1,2),& {\bf{2}}=\sigma(0,1,3), & {\bf{3}}=\sigma( 0, 1, 4 ), & {\bf{4}}=\sigma( 0, 1, 5 ),\\
{\bf{5}}=\sigma( 0, 2, 3 ),& {\bf{6}}=\sigma( 0, 2, 4 ), & {\bf{7}}=\sigma( 0, 2, 5 ), & {\bf{8}}=\sigma( 0, 3, 4 ), \\
{\bf{9}}=\sigma( 0, 3, 5 ),& {\bf{10}}=\sigma( 0, 4, 5 ), & {\bf{11}}=\sigma( 1, 2, 3 ), & {\bf{12}}=\sigma( 1, 2, 4 ),\\
 {\bf{13}}=\sigma( 1, 2, 5 ), & {\bf{14}}=\sigma( 1, 3, 4 ), & {\bf{15}}=\sigma( 1, 3, 5 ), & {\bf{16}}=\sigma( 1, 4, 5 ), \\
 {\bf{17}}=\sigma( 2, 3, 4 ), &  {\bf{18}}=\sigma( 2, 3, 5 ), & {\bf{19}}=\sigma( 2, 4, 5 ), & {\bf{20}}=\sigma( 3, 4, 5 ).
\end{array}
$$
The edges of the block transposition graph $\Gamma$ of ${\rm{Cay}}(\Sym_5,S_5)$ are
$$\begin{array}{l}
\{ \bf{1}, 2 \},\{ 1, 3 \}, \{ 1, 4 \}, \{ 1, 11 \}, \{ 1, 12 \}, \{ 1, 13 \}, \{ 2, 3 \}, \{ 2, 4 \}, \{ 2, 5 \},\\
\{ \bf{2}, 14 \}, \{ \bf{2}, 15 \},  \{ 3, 4 \}, \{ 3, 6 \}, \{ 3, 8 \}, \{ 3, 16 \}, \{ 4, 7 \}, \{ 4, 9 \}, \{ 4, 10 \},\\
\{ \bf{5}, 6 \},\{ 5, 7 \}, \{ 5, 11 \}, \{ 5, 17 \},  \{ 5, 18 \}, \{ 6, 7 \}, \{ 6, 8 \}, \{ 6, 12 \}, \{ 6, 19 \},\\
\{ \bf{7}, 9 \}, \{ 7, 10 \}, \{ 7, 13 \}, \{ 8, 9 \}, \{ 8, 14 \},  \{ 8, 17 \}, \{ 8, 20 \}, \{ 9, 10 \}, \{ 9, 15 \},\\
\{ \bf{9}, 18 \}, \{ 10, 16 \}, \{ 10, 19 \}, \{ 10, 20 \}, \{ 11, 12 \}, \{ 11, 13 \}, \{ 11, 17 \},\\
\{ \bf{11}, 18 \},\{ 12, 13 \}, \{ 12, 14 \}, \{ 12, 19 \}, \{ 13, 15 \}, \{ 13, 16 \}, \{ 14, 15 \},\\
\{ \bf{14}, 17 \}, \{ 14, 20 \},\{ 15, 16 \},\{ 15, 18 \}, \{ 16, 19 \}, \{ 16, 20 \}, \{ 17, 18 \},\\
\{ \bf{17}, 20 \},\{ 18, 19 \}, \{ 19, 20 \}.
\end{array}$$ $\Gamma$ is a $6$-regular graph.
The full automorphism group of $\Gamma$ is the dihedral group $\textsf{D}_6$ of order $12$.
The toric classes are
$$
\begin{array}{l}
\{ \bf{1},4,10,20,17,11\},\\
\{ \bf{2},7,16,8,18,12\},\\
\{ \bf{3},9,19,1,4,5,13\},\\
\{ \bf{6},15\}.\\
\end{array}
$$
The edges of the maximal cliques of $\Gamma$ of size $2$ are
$$\{\bf{2},5\}, \{13,7\}, \{8,9\}, \{12,14\}, \{3,16\}, \{18,19\}.$$
$\Gamma(V)$ is a Hamiltonian and $3$-regular graph. The full automorphism group of $\Gamma(V)$ has order $48$. The full automorphism group ${\rm{Aut}({\rm{Cay}}(\Sym_5,S_5))}$ has order $1440$.

\section{Case n=6}
\label{n=6}

The $35$ block transpositions of $\Sym_6$ are listed below.
$$
\begin{array}{llll}
{\bf{1}}=\sigma( 0, 1, 2 ),& {\bf{2}}=\sigma( 0, 1, 3 ),& {\bf{3}}=\sigma( 0, 1, 4 ),& {\bf{4}}=\sigma( 0, 1, 5 ),\\
{\bf{5}}=\sigma( 0, 1, 6 ),&{\bf{6}}=\sigma( 0, 2, 3 ),& {\bf{7}}=\sigma( 0, 2, 4 ),& {\bf{8}}=\sigma( 0, 2, 5 ),\\
{\bf{9}}=\sigma( 0, 2, 6 ),& {\bf{10}}=\sigma( 0, 3, 4 ),& {\bf{11}}=\sigma( 0, 3, 5 ),& {\bf{12}}=\sigma( 0, 3, 6 ),\\
{\bf{13}}=\sigma( 0, 4, 5 ), & {\bf{14}}=\sigma( 0, 4, 6 ),& {\bf{15}}=\sigma( 0, 5, 6 ), & {\bf{16}}=\sigma( 1, 2, 3 ),\\
{\bf{17}}=\sigma( 1, 2, 4 ),& {\bf{18}}=\sigma( 1, 2, 5 ),& {\bf{19}}=\sigma( 1, 2, 6 ),& {\bf{20}}=\sigma( 1, 3, 4 ),\\
{\bf{21}}=\sigma( 1, 3, 5 ),& {\bf{22}}=\sigma( 1, 3, 6 ),& {\bf{23}}=\sigma( 1, 4, 5 ),& {\bf{24}}=\sigma( 1, 4, 6 ),\\
{\bf{25}}=\sigma( 1, 5, 6 ), & {\bf{26}}=\sigma( 2, 3, 4 ),& {\bf{27}}=\sigma( 2, 3, 5 ),& {\bf{28}}=\sigma( 2, 3, 6 ),\\
{\bf{29}}=\sigma( 2, 4, 5 ),& {\bf{30}}=\sigma( 2, 4, 6 ), &{\bf{31}}=\sigma( 2, 5, 6 ),& {\bf{32}}=\sigma( 3, 4, 5 ),\\
{\bf{33}}=\sigma( 3, 4, 6 ),& {\bf{34}}=\sigma( 3, 5, 6 ),& {\bf{35}}=\sigma( 4, 5, 6 ).
\end{array}
$$
The edges of the block transposition graph $\Gamma$ of ${\rm{Cay}}(\Sym_6,S_6)$ are
$$\begin{array}{l}
\bf{\{ 1, 2 \}, \{ 1, 4 \}, \{ 1, 8 \}, \{ 1, 10 \}, \{ 1, 18 \}, \{ 1, 20 \}, \{ 1, 33 \}, \{ 1, 35 \}, \{ 2, 3 \},}\\
\bf{\{ 2, 5 \}, \{ 2, 8 \}, \{ 2, 15 \}, \{ 2, 18 \}, \{ 2, 30 \}, \{ 2, 33 \}, \{ 3, 4 \}, \{ 3, 5 \}, \{ 3, 9 \},}\\
\bf{\{ 3, 15 \}, \{ 3, 19 \},\{ 3, 30 \},\{ 3, 34 \}, \{ 4, 6 \}, \{ 4, 10 \}, \{ 4, 16 \}, \{ 4, 20 \}, \{ 4, 31 \},}\\
\bf{\{ 4, 35 \}, \{ 5, 6 \}, \{ 5, 7 \},\{ 5, 11 \}, \{ 5, 15 \}, \{ 5, 26 \}, \{ 5, 30 \}, \{ 6, 7 \}, \{ 6, 8 \},}\\
\bf{\{ 6, 11 \}, \{ 6, 16 \}, \{ 6, 26 \}, \{ 6, 31 \},\{ 7, 9 \}, \{ 7, 10 \}, \{ 7, 11 \}, \{ 7, 17 \}, \{ 7, 26 \},}\\
\bf{\{ 7, 32 \}, \{ 8, 9 \}, \{ 8, 12 \}, \{ 8, 18 \}, \{ 8, 27 \},\{ 8, 33 \}, \{ 9, 10 \}, \{ 9, 12 \},\{ 9, 19 \}}\\
\bf{\{ 9, 19 \}, \{ 9, 27 \}, \{ 9, 34 \}, \{ 10, 13 \}, \{ 10, 20 \}, \{ 10, 28 \},\{ 10, 35 \},\{ 11, 12 \},}\\
\bf{\{ 11, 13 \}, \{ 11, 14 \}, \{ 11, 21 \}, \{ 11, 26 \}, \{ 12, 13 \}, \{ 12, 14 \},\{ 12, 15 \}, } \\
\bf{ \{ 12, 21 \},\{ 12, 27 \}, \{ 13, 14 \}, \{ 13, 16 \}, \{ 13, 18 \}, \{ 13, 21 \}, \{ 13, 28 \},}\\
\bf{\{ 14, 17 \}, \{ 14, 19 \},\{ 14, 20 \}, \{ 14, 21 \}, \{ 14, 29 \}, \{ 15, 16 \}, \{ 15, 17 \},}\\
\bf{ \{ 15, 22 \},\{ 15, 30 \}, \{ 16, 17 \}, \{ 16, 18 \}, \{ 16, 22 \}, \{ 16, 31 \},\{ 17, 19 \},}\\
\bf{ \{ 17, 20 \}, \{ 17, 22 \}, \{ 17, 32 \}, \{ 18, 19 \}, \{ 18, 23 \}, \{ 18, 33 \},\{ 19, 20 \}, }\\
\bf{\{ 19, 23 \}, \{ 19, 34 \}, \{ 20, 24 \}, \{ 20, 35 \}, \{ 21, 22 \}, \{ 21, 23 \}, \{ 21, 24 \},}\\
\bf{ \{ 21, 25 \}, \{ 22, 23 \}, \{ 22, 24 \},\{ 22, 25 \}, \{ 22, 26 \}, \{ 23, 24 \}, \{ 23, 27 \},}\\
\bf{  \{ 23, 30 \}, \{ 24, 25 \},\{ 24, 28 \}, \{ 24, 31 \}, \{ 24, 33 \},\{ 23, 25 \}, \{ 25, 29 \},}\\
\bf{ \{ 25, 32 \}, \{ 25, 34 \}, \{ 25, 35 \}, \{ 26, 27 \}, \{ 26, 28 \}, \{ 26, 29 \},\{ 27, 28 \}, }\\
\bf{  \{ 27, 29 \},\{ 27, 30 \}, \{ 28, 29 \}, \{ 28, 31 \}, \{ 28, 33 \}, \{ 29, 32 \},\{ 29, 34 \}, }\\
\bf{ \{ 29, 35 \}, \{ 30, 31 \},  \{ 30, 32 \}, \{ 31, 32 \}, \{ 31, 33 \}, \{ 32, 34 \},\{ 32, 35 \},  }\\
\bf{\{ 33, 34 \},\{ 34, 35 \}.}
\end{array}$$ $\Gamma$ is a $8$-regular graph.
The full automorphism group of $\Gamma$ is the dihedral group $\textsf{D}_7$ of order $14$.
The toric classes are
$$
\begin{array}{l}
\bf{\{1,2,5,11,21,25,35\}},\\
\bf{\{3,6,12,22,29,20,33\}},\\
\bf{\{4,8,15,26,14,24,34\}},\\
\bf{\{7,13,23,32,10,18,30\}},\\
\bf{\{9,16,27,17,28,19,31\}}.
\end{array}
$$
The edges of the maximal cliques of $\Gamma$ of size $2$ are
$$\bf{\{3,4\},\{6,8\},\{12,15\},\{14,29\},\{22,26\},\{24,20\},\{33,34\}.}$$
$\Gamma(V)$ is a Hamiltonian and $3$-regular graph. The full automorphism group of $\Gamma(V)$ has order $336$. The full automorphism group ${\rm{Aut}({\rm{Cay}}(\Sym_6,S_6))}$ has order $10080$.

\backmatter


\bibliographystyle{amsplain}
\cleardoublepage
\bibliography{references} 

\providecommand{\bysame}{\leavevmode\hbox to3em{\hrulefill}\thinspace}
\providecommand{\MR}{\relax\ifhmode\unskip\space\fi MR }
\providecommand{\MRhref}[2]{%
  \href{http://www.ams.org/mathscinet-getitem?mr=#1}{#2}
}
\providecommand{\href}[2]{#2}
\begin{thebibliography}{10}

\bibitem{AB}
B.~Alspach, \emph{Cayley graphs}, Handbook of graph theory (J.~L. Gross and
  J.~Yellen, eds.), CRC Press, Boca Raton, FL, 2004, pp.~505--515.

\bibitem{AZ}
B.~Alspach and Cun~Quan Zhang, \emph{Hamilton cycles in cubic {C}ayley graphs
  on dihedral groups}, Ars Combin. \textbf{28} (1989), 101--108.

\bibitem{AA}
A.~Auyeung, A.~Abraham, and Oklahoma State University. Computer~Science
  Department, \emph{Estimating genome reversal distance by genetic algorithm},
  OSU-CS-TR, Oklahoma State University, Computer Science Department, 2003.

\bibitem{BL}
L.~Babai, \emph{Automorphism groups, isomorphism, reconstruction}, Handbook of
  Combinatorics (R.~L. Graham, M.~Gr{\u{o}}tschel, and L.~Lov{\'a}sz, eds.),
  vol.~II, MIT Press, Cambridge, MA, 1995, pp.~1447--1540.

\bibitem{BP1}
V.~Bafna and P.~A. Pevzner, \emph{Genome rearrangements and sorting by
  reversals}, SIAM J. Comput. \textbf{25} (1996), no.~2, 272--289.

\bibitem{BP}
\bysame, \emph{Sorting by transpositions}, SIAM J. Discrete Math. \textbf{11}
  (1998), no.~2, 224--240.

\bibitem{BE}
S.~Beno\^{\i}t and H.~S. Gagn{\'e}, \emph{A new and faster method of sorting by
  transpositions}, Proceedings of the 18th Annual Conference on Combinatorial
  Pattern Matching, Springer-Verlag, 2007, pp.~131--141.

\bibitem{BHK}
P.~Berman, S.~Hannenhalli, and M.~Karpinski, \emph{1.375-approximation
  algorithm for sorting by reversals}, Algorithms---{ESA} 2002, Lecture Notes
  in Comput. Sci., vol. 2461, Springer, Berlin, 2002, pp.~200--210.

\bibitem{BK}
P.~Berman and M.~Karpinski, \emph{On some tighter inapproximability results
  (extended abstract)}, Automata, languages and programming ({P}rague, 1999),
  Lecture Notes in Comput. Sci., vol. 1644, Springer, Berlin, 1999,
  pp.~200--209.

\bibitem{BN}
\emph{{Biology Notes for IGCSE 2014}}, 2014, Available on line.

\bibitem{B}
M.~B\'{o}na, \emph{Combinatorics of permutations}, second ed., Discrete
  Mathematics and its Applications (Boca Raton), CRC Press, Boca Raton, FL,
  2012.

\bibitem{BM}
J.A. Bondy and U.S.R. Murty, \emph{Graph theory with applications}, North
  Holland, 1976.

\bibitem{BF}
L.~Bulteau, G.~Fertin, and I.~Rusu, \emph{Sorting by transpositions is
  difficult}, SIAM J. Discrete Math. \textbf{26} (2012), no.~3, 1148--1180.

\bibitem{Ca}
A.~Caprara, \emph{Sorting by reversals is difficult}, Proceedings of the First
  Annual International Conference on Computational Molecular Biology, ACM,
  1997, pp.~75--83.

\bibitem{C}
D.~A. Christie, \emph{Genome rearrangement problems}, Ph.D. thesis, University
  of Glasgow, 1998.

\bibitem{CS}
D.~W. Cranston, I.~H. Sudborough, and D.~B. West, \emph{Short proofs for
  cut-and-paste sorting of permutations}, Discrete Math. \textbf{307} (2007),
  no.~22, 2866--2870.

\bibitem{CK}
L.~F.~I. Cunha, L.~A.~B. Kowada, R.~de~A. Hausen, and C.~M.~H. de~Figueiredo,
  \emph{Advancing the transposition distance and diameter through lonely
  permutations}, SIAM J. Discrete Math. \textbf{27} (2013), no.~4, 1682--1709.

\bibitem{DL}
J.~Doignon and A.~Labarre, \emph{On hultman numbers}, J. Integer Seq.
  \textbf{10} (2007), no.~6.

\bibitem{EH}
I.~Elias and T.~Hartman, \emph{A 1.375-approximation algorithm for sorting by
  transpositions}, IEEE/ACM Trans. Comput. Biol. \textbf{3} (2007), 369--379.

\bibitem{ES}
P.~Erd{\"o}s and G.~Szekeres, \emph{A combinatorial problem in geometry},
  Compositio Math. \textbf{2} (1935), 463--470.

\bibitem{EE}
H.~Eriksson, K.~Eriksson, J.~Karlander, L.~Svensson, and J.~W{\u{a}}stlund,
  \emph{Sorting a bridge hand}, Discrete Math. \textbf{241} (2001), no.~1-3,
  289--300.

\bibitem{FZ}
J.~Feng and D.~Zhu, \emph{Faster algorithms for sorting by transpositions and
  sorting by block interchanges}, ACM Trans. Algorithms \textbf{3} (2007),
  no.~3.

\bibitem{FL}
G.~Fertin, A.~Labarre, I.~Rusu, \'{E}. Tannier, and S.~Viallette,
  \emph{Combinatorics of genome rearrangements}, MIT Press, 2009.

\bibitem{GD}
G.~R. Galv\~{a}o and Z.~Dias, \emph{On the distribution of rearrangement
  distances}, Proceedings of the Brazilian Symposium on Bioinformatics, BSB'11
  (Bras\'{\i}lia, Brazil), 2011, pp.~41--48.

\bibitem{GBH}
J.~Gon\c{c}alves, L.~R. Bueno, and R.~de~A. Hausen, \emph{Assembling a new
  improved transposition distance database}, Proceedings of the 2013 XLV
  Simp{\'{o}}sio Brasileiro de Pesquisa Operacional, SBPO'13 (Natal, Brasil),
  2013, pp.~2355--2365.

\bibitem{gap}
The~GAP Group, \emph{{GAP -- Groups, Algorithms, and Programming, Version
  4.7.6}}, 2014.

\bibitem{Gu}
Q.~Gu, S.~Peng, and Q.~Chen, \emph{Sorting permutations and its applications in
  genome analysis}, Lecture Notes on Mathematics in the Life Science, 1999,
  pp.~191--201.

\bibitem{HP}
S.~Hannenhalli and P.~A. Pevzner, \emph{Transforming cabbage into turnip:
  polynomial algorithm for sorting signed permutations by reversals}, J. ACM
  \textbf{46} (1999), no.~1, 1--27.

\bibitem{HS}
T.~Hartman and R.~Shamir, \emph{A simpler and faster 1.5-approximation
  algorithm for sorting by transpositions}, Inform. and Comput. \textbf{204}
  (2006), no.~2, 275--290.

\bibitem{Ha}
R.~de~A. Hausen, \emph{Rearranjos de genomas: teoria e aplica{\c{c}\~{o}}es},
  Ph.D. thesis, COPPE Sistemas, Universidade Federal do Rio de Janeiro, Rio de
  Janeiro, 2007.

\bibitem{J}
M.~R. Jerrum, \emph{The complexity of finding minimum-length generator
  sequences}, Theoret. Comput. Sci. \textbf{36} (1985), no.~2-3, 265--289.

\bibitem{KS}
J.~Kececioglu and D.~Sankoff, \emph{Exact and approximation algorithms for
  sorting by reversals with application to genome rearrangement}, Algorithmica
  \textbf{13} (1995), no.~1-2, 180--210.

\bibitem{la}
A.~Labarre, \emph{Lower bounding edit distances between permutations}, SIAM J.
  Discrete Math. \textbf{27} (2013), no.~3, 1410--1428.

\bibitem{LL}
L.~Lov{\'a}sz, \emph{The factorization of graphs}, Combinatorial {S}tructures
  and their {A}pplications ({P}roc. {C}algary {I}nternat. {C}onf., {C}algary,
  {A}lta., 1969), Gordon and Breach, New York, 1970, pp.~243--246.

\bibitem{Mo}
V.~Moulton and M.~Steel, \emph{The `butterfly effect' in {C}ayley graphs with
  applications to genomics}, J. Math. Biol. \textbf{65} (2012), no.~6-7,
  1267--1284.

\bibitem{O}
S.J. O'Brien, \emph{Genetic maps: Locus maps of complex genomes}, GENETIC MAPS
  BOOK 2, Cold Spring Harbor Laboratory Press, 1993.

\bibitem{P}
J.~D. Palmer, B.~Osorio, and W.~F. Thompson, \emph{Evolutionary significance of
  inversions in legume chloroplast dnas}, Current Genetics \textbf{14} (1988),
  no.~1, 65--74.

\bibitem{DS}
A.~H. Sturtevant and T.~Dobzhansky, \emph{Inversions in the third chromosome of
  wild races of drosophila pseudoobscura, and their use in the study of the
  history of the species}, Proceedings of the National Academy of Sciences of
  the United States of America \textbf{22} (1936), no.~7, 448--450.

\bibitem{W}
G.A. Watterson, W.J. Ewens, T.E. Hall, and A.~Morgan, \emph{The chromosome
  inversion problem}, Journal of Theoretical Biology \textbf{99} (1982), no.~1,
  1--7.

\bibitem{WS}
D.~P. Williamson and D.~B. Shmoys, \emph{The design of approximation
  algorithms}, Cold Spring Harbor Laboratory Press, Cambridge, 2011.

\end{thebibliography}









\end{document}